\documentclass[11pt,a4paper]{amsart}

\addtolength{\textwidth}{1.96cm}
\addtolength{\hoffset}{-0.98cm}
\addtolength{\textheight}{2.6cm}
\addtolength{\voffset}{-1.5cm}


\addtolength{\marginparwidth}{-15pt}

\usepackage{comment}

\usepackage{framed}
\usepackage{xargs}
\usepackage[dvipsnames]{xcolor}
\usepackage[colorinlistoftodos,prependcaption,textsize=tiny]{todonotes}
\newcommandx{\commentdaniele}[2][1=]{\todo[linecolor=Blue,backgroundcolor=Blue!25,bordercolor=Blue,#1]{D: #2}}
\newcommandx{\commentvalentina}[2][1=]{\todo[linecolor=Green,backgroundcolor=Green!25,bordercolor=Green,#1]{V: #2}}
\usepackage[colorlinks, linktocpage=true]{hyperref}
\usepackage{rotating}
\usepackage{psfrag}
\usepackage{epigraph}
\usepackage[english]{babel}
\usepackage{epsfig}
\usepackage{epstopdf}
\usepackage{faktor}
\usepackage{amsfonts} 
\usepackage{mathrsfs} 
\usepackage{graphicx, color, booktabs, xypic}
\usepackage{caption}
\usepackage{subcaption}
\usepackage{amssymb, array}  
\usepackage{stmaryrd}
\usepackage{MnSymbol}
\usepackage{amsmath} 
\usepackage{amscd, amsthm, dsfont} 
\usepackage{tabu}

\usepackage{thmtools}
\usepackage{thm-restate} 



\newcommand{\R}{\ensuremath{\mathbb{R}}}
\newcommand{\bbS}{\ensuremath{\mathbb{S}}}

\newcommand{\T}{\ensuremath{\mathrm{Teich}}}

\newcommand{\G}{\ensuremath{\mathcal{G}}}

\newcommand{\PP}{\ensuremath{\mathcal{P}}}
\newcommand{\Q}{\ensuremath{\mathcal{Q}}}
\newcommand{\TT}{\ensuremath{\mathcal{T}}}
\newcommand{\HH}{\ensuremath{\mathcal{H}}}
\newcommand{\FF}{\ensuremath{\mathcal{F}}}

\newcommand{\C}{\ensuremath{\mathcal{C}}}
\newcommand{\A}{\ensuremath{\mathcal{A}}}

\newcommand{\Lip}{\ensuremath{\mathrm{Lip}}}

\newcommand{\vleq}{\text{\rotatebox[origin=c]{-90}{$\leq$}}}
\newcommand{\veq}{\text{\rotatebox[origin=c]{-90}{$=$}}}


\newtheorem{theorem}{Theorem}[section]
\newtheorem{lemma}[theorem]{Lemma}
\newtheorem{proposition}[theorem]{Proposition}
\newtheorem{conjecture}[theorem]{Conjecture}

\theoremstyle{definition}

\newtheorem{definition}[theorem]{Definition}

\newtheorem{notation}[theorem]{Notation}

\theoremstyle{remark}

\newtheorem{remark}[theorem]{Remark}

\usepackage[hang,flushmargin]{footmisc}

\usepackage{enumitem}

\makeatother

\title{Generalized stretch lines for surfaces with boundary}

\author{Daniele Alessandrini} 
\address{Department of Mathematics, Columbia University, 2990 Broadway, New York, USA.}
\email{daniele.alessandrini@gmail.com}

\author{Valentina Disarlo} 
\address{Mathematisches Institut, Universit\"at Heidelberg, INF 205, Heidelberg, Germany.}
\email{vdisarlo@mathi.uni-heidelberg.de}


\begin{document}

\begin{abstract}
In 1986 William Thurston introduced the celebrated (asymmetric) Lipschitz distance on the Teichm\"uller space of closed or punctured surfaces. We extend his theory to the Teichm\"uller space of surfaces with boundary endowed with the arc distance. We construct a large family of geodesics for the Teichm\"uller space of a surface with boundary, generalizing Thurston's stretch lines. We prove that the Teichm\"uller space of a surface with boundary is a geodesic and Finsler metric space with respect to the arc distance. As a corollary, we find a new class of geodesics in the Teichm\"uller space of a closed surface that are not stretch lines in the sense of Thurston. 
\end{abstract}

\maketitle



\section{Introduction}

In this paper we will study the geometry of the Teichm\"uller space of an oriented surface of finite type with non-empty boundary when it is endowed with the \emph{arc distance}. This is an asymmetric distance, which generalizes  the celebrated \emph{Thurston's asymmetric distance} on the Teichm\"uller space of a closed surface defined by William P. Thurston.

\subsection{Thurston's theory for closed or punctured surfaces} 
In \cite{Thurston} Thurston defines two asymmetric distances on the Teichm\"uller space of a closed or punctured surface, which naturally mimics the Teichm\"uller distance in a hyperbolic setting: the distance $d_{Th}$, which encodes the changes in the 
length spectrum of simple closed curves; the Lipschitz distance $d_{Lh}$, which measures the optimal Lipschitz constant of a homeomorphism isotopic to the identity (for precise definitions see Section \ref{subsec:5 functionals}). Following the analogy with the 
Teichm\"uller distance, Thurston constructs a family of paths, called \emph{stretch lines}, which are geodesics for both distances. Using these paths, Thurston proves that the two distances always coincide and turn the Teichm\"uller space into a geodesic 
Finsler metric space (see Section \ref{subsec:empty boundary}). Understanding the structure and behavior of \emph{all} the geodesics for Thurston's distance is still an open problem. The geometry of Thurston's distance of the Teichm\"uller space of closed or punctured surfaces was further studied by many authors, including Bonahon \cite{Bonahon_pleated, Bonahon_holder}, 
Papadopoulos \cite{Papadopoulos}, Th\'eret \cite{TheretThese}, Walsh \cite{Walsh}, Dumas-Lenzhen-Rafi-Tao \cite{Dumas-Lenzhen-Rafi-Tao}, Lenzhen-Rafi-Tao \cite{Lenzhen-Rafi-Tao1, Lenzhen-Rafi-Tao2}, Choi-Rafi 
\cite{Choi-Rafi}.

\subsection{The theory for surfaces with boundary}
In this paper we study similar asymmetric distances on the Teichm\"uller space $\T(S)$ of a surface $S$ with non-empty boundary. The case of surfaces with boundary is particularly interesting, see for example the beautiful applications by Gu\'eritaud-Kassel \cite{Kassel-Gueritaud} 
on proper affine actions of free groups and Margulis spacetime (notice that the presence of a non-empty boundary is crucial in their work). In the case of surfaces with boundary Thurston's original formula $d_{Th}$ does not give a distance 
anymore (see Parlier \cite{Parlier}, Papadopoulos-Th\'eret \cite{PapTher1} and Section \ref{subsec:non-empty boundary}). In spite of this, Liu-Papadopoulos-Su-Th\'eret \cite{LiuPapSuTh} defined a new distance, the \emph{arc distance} $d_A$,  which considers the length spectrum 
of simple closed curves and simple proper arcs orthogonal to the boundary (for details see Section \ref{subsec:5 functionals}). A few examples of geodesics for $d_A$ were given by Papadopoulos-Th\'eret \cite{PapTher3} and Papadopoulos-Yamada \cite{PapYam}. Alessandrini-Liu-Papadopoulos-Su \cite{ALPS} studied the close relationship between the arc distance and Thurston's compactification. 

The arc distance is the main object of study in this paper. Motivated by \cite{PapTher3, PapYam, ALPS},  we will study  the property of the metric space $(\T(S), d_A)$. We will prove that it is a geodesic Finsler metric space. We will construct some special paths, the \emph{generalized stretch lines}, which mimic the properties of Thurston's original stretch lines in this new setting. Our results will be useful in work in progress by Calderon-Farre to produce their shear-shape coordinates for Teichm\"uller space. In their recent preprint \cite{PapHuang} Huang-Papadopoulos study similar questions in the special case of the one-holed torus in the different setting of Teichm\"uller spaces with fixed boundary length.  All the results in this paper were announced at the Oberw\"olfach conference ``New Trends in Teichm\"uller Theory and Mapping Class Groups'' in 2018 (see the report \cite{Oberwolfach}). 
    
\subsection{Our results}
Let $S$ be an orientable surface of finite-type with non-empty boundary. In this paper we introduce a new asymmetric distance on $\T(S)$, denoted by $d_{L\partial}$, which measures the optimal Lipschitz constant of a continuous map preserving $\partial S$ isotopic to the identity. We now have three distances on $\T(S)$, which satisfy the following inequalities (for precise definitions and statements, see Section \ref{subsec:5 functionals}):
$$d_A \leq d_{L\partial} \leq d_{Lh}\,. $$ 

In analogy with Thurston's theory, we will construct a large family of geodesics for the two distances $d_A$ and $d_{L\partial}$, which we call generalized stretch lines. For any two points on the same generalized stretch line we will construct an optimal Lipschitz map, which we call a generalized stretch map. The constructions of generalized stretch lines and stretch maps are the most important contributions of this paper. This construction can be summarized in the following statement (the terminology will be introduced later in Sections \ref{sec:background}, \ref{sec:laminations} and \ref{sec:stretch_1}). 

\begin{restatable}{theorem}{thurstonstretch}
\label{theorem:S}
Let $S$ be a surface with non-empty boundary and fix $X \in \T(S)$. For every maximal lamination $\lambda$ on $X$ and for every $t \geq 0$ there exists $X_\lambda^t \in \T(S)$ and a Lipschitz map $\Phi^t:X \to X_\lambda^t$, called \emph{generalized stretch map}, with the following properties: 
\begin{enumerate} 
\item $X_\lambda^0 = X$;
\item $\mathrm{Lip }(\Phi^t) = e^t$;
\item $\Phi^t(\partial X) = \partial X_\lambda^t$; 
\item $\Phi^t$ stretches the arc length of the leaves of $\lambda$ by the factor $e^t$; 
\item for every geometric piece $\mathcal{G}$ in $X \setminus \lambda$, the map $\Phi^t$ restricts to a generalized stretch map $\phi^t: \mathcal{G} \to \mathcal{G}_t$ as described in Lemmas \ref{lemma:triangle}, \ref{lemma:quad};
\item if $\lambda$ contains a non-empty measurable sublamination, we have
$$\mathrm{Lip}(\Phi^t) = \mathrm{min }\{ \mathrm{Lip}(\psi) ~|~\psi\in \mathrm{Lip}_0(X, X_\lambda^t),  \psi(\partial X) \subset \partial X_\lambda^t\}, $$
where $\mathrm{Lip}_0(X, Y)$ is the set of all Lipschitz maps 
homotopic to the identity.
\end{enumerate}
\end{restatable}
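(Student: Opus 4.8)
The plan is to construct the generalized stretch map $\Phi^t$ piece by piece and then verify that the assembled map has the stated global properties, following Thurston's original strategy adapted to the boundary setting.

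The construction proceeds as follows. First I would cut the surface $X$ along the maximal lamination $\lambda$. The complementary regions $X \setminus \lambda$ decompose into finitely many geometric pieces: ideal triangles in the interior and the boundary pieces (half-ideal triangles and right-angled ``quadrilaterals'' bordering $\partial X$) whose existence and classification I take as given from Sections \ref{sec:laminations} and \ref{sec:stretch_1}. On each such piece I would invoke Lemmas \ref{lemma:triangle} and \ref{lemma:quad}, which supply the local generalized stretch map $\phi^t : \G \to \G_t$ stretching the boundary leaves of $\lambda$ by $e^t$ and having pointwise Lipschitz constant exactly $e^t$. The key compatibility point is that on an ideal triangle Thurston's stretch map is governed by a horocyclic foliation, and these local maps are designed (via the matching of horocyclic measures along the leaves of $\lambda$) to agree on the shared boundary leaves. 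I would then check that the foliations glue coherently across $\lambda$, so the pieces can be reassembled into a single hyperbolic surface $X_\lambda^t$ and the maps $\phi^t$ patch to a well-defined homeomorphism $\Phi^t$. Properties (1)--(5) then follow essentially by construction: $t = 0$ gives the identity local maps and hence $X_\lambda^0 = X$; each $\phi^t$ has Lipschitz constant $e^t$ so $\mathrm{Lip}(\Phi^t) \le e^t$; the boundary pieces are built so that $\Phi^t(\partial X) = \partial X_\lambda^t$; and the stretching of $\lambda$ by $e^t$ is built into the local construction.

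The substantive analytic content is the \emph{lower} bound in property (6), establishing that $\Phi^t$ is genuinely optimal when $\lambda$ contains a measurable (i.e.\ measured, chain-recurrent) sublamination $\mu \subseteq \lambda$. For the upper bound $\mathrm{Lip}(\Phi^t) \le e^t$ one simply uses that the local pieces glue to a map with pointwise dilatation $e^t$. For the matching lower bound I would argue that any $\psi \in \mathrm{Lip}_0(X, X_\lambda^t)$ with $\psi(\partial X) \subset \partial X_\lambda^t$ must already have $\mathrm{Lip}(\psi) \ge e^t$. The idea is to integrate the infinitesimal length-distortion against the transverse measure of $\mu$: because $\Phi^t$ multiplies the length of every leaf of $\mu$ by exactly $e^t$, and because a measured sublamination is recurrent, one extracts from $\mu$ a sequence of long simple arcs or closed curves whose $X_\lambda^t$-length is asymptotically $e^t$ times their $X$-length. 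Any Lipschitz map homotopic to the identity can only increase the length of such a curve by at most its Lipschitz constant, which forces $\mathrm{Lip}(\psi) \ge e^t$. This is the step where the hypothesis that $\lambda$ contain a non-empty measured sublamination is essential: without a transverse measure there is no recurrent object to certify the lower bound, which is exactly why property (6) is stated conditionally.

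The main obstacle I expect is the gluing and the boundary analysis, rather than the length-spectrum lower bound (which closely parallels Thurston's argument for closed surfaces). The delicate points are: (a) verifying that the horocyclic foliations on adjacent pieces induce the \emph{same} transverse measure along each shared leaf of $\lambda$, so that $\Phi^t$ is continuous and the glued surface $X_\lambda^t$ is a well-defined complete hyperbolic structure; and (b) handling the boundary pieces so that $\Phi^t$ maps $\partial X$ to $\partial X_\lambda^t$ while keeping the pointwise Lipschitz constant equal to $e^t$ right up to the boundary. In the classical closed/punctured case there are no boundary pieces, so the quadrilateral analysis of Lemma \ref{lemma:quad} is the genuinely new ingredient; I would spend most of the effort confirming that its local stretch map has dilatation exactly $e^t$ and that its prescribed boundary behavior is compatible with the interior triangles across the common leaves of $\lambda$. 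Once the gluing is established and the measured sublamination is used to certify optimality, properties (1)--(6) follow.
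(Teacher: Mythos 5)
Your proposal contains a genuine gap at its central step: the claim that the local stretch maps ``agree on the shared boundary leaves'' so that the pieces ``can be reassembled'' directly. This is precisely what fails, and overcoming it is the main content of the paper's proof. First, note that the complementary pieces are of four types (triangles, quadrilaterals with two ideal vertices, pentagons with one ideal vertex, right-angled hexagons, Proposition \ref{prop:lamination}), not half-triangles and quadrilaterals; on the non-triangular pieces the maps of Lemma \ref{lemma:quad} are \emph{not} explicit foliation-driven homeomorphisms but implicit averages of two Lipschitz maps on the double (the Gu\'eritaud--Kassel averaging of Section \ref{sec:Lip}), their horocyclic foliations are only partial (empty on hexagons), and they are not known to be injective --- so your assertion that the patched $\Phi^t$ is a homeomorphism is unsupported (indeed the paper explicitly cannot prove it; see Conjecture \ref{conj:distances} and the remark following it). More seriously, the shear data do not match across the bi-infinite leaves: stretching a quadrilateral piece by the map of Lemma \ref{lemma:quad} moves the center $O_\Q$ relative to the center of the adjacent ideal triangle by the nonlinear displacement function $\delta(s)$, so the shears along the special edges satisfy $s^t(e) \neq e^t s^0(e)$ (this is quantified by the stretch difference formula, Proposition \ref{cocycle_difference}). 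Consequently the naively reglued object is not well defined, and no ``matching of horocyclic measures'' rescues it. The paper's actual route is to decompose $X$ into the boundary block $B$ and its complement, attach auxiliary triangulated cylinders to the crowns to form the triangulated surface $X_A$, and define the stretched structure on (the double of) $X_A$ via Bonahon's theory of transverse cocycles, using the corrected cocycle $\rho^t = e^t\rho^0 + \epsilon^t$, where $\epsilon^t$ is built exactly to absorb the shear discrepancy; one must then verify $\epsilon^t$ satisfies the switch relations and $\omega(\epsilon^t,\mu)=0$ for all transverse measures (Lemma \ref{lemma_boh}) so that $\rho^t$ is a genuine shearing cocycle (Proposition \ref{shearing_nu}). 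None of this machinery appears in your outline, and without it the construction of $X_\lambda^t$ does not get off the ground --- even in Thurston's closed case the stretched surface is defined through shear/foliation coordinates, not by literally cutting along $\lambda$ (which generically has uncountably many leaves) and regluing pieces one at a time.

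A second, subordinate gap: even granting the two stretched halves $B^t$ and $(X_C)^t$, the continuity of the glued map along $\partial^{nc}B$ is a nontrivial computation, not a bookkeeping check. The two one-sided extensions of the map to a bi-infinite leaf differ by $\delta(e^t s) - e^t\delta(s)$ (coming from the cataclysm/shear map), and this must cancel exactly against the offset $-\delta(e^t s) + e^t\delta(s)$ between $\beta^t$ and the triangulated-side map; this is Lemma \ref{lemma:alpha}. Moreover the map is thereby defined only off the accumulation lamination $\nu_X$ of the non-compact boundary leaves, and extending it continuously over $\nu_X$ requires the local analysis of the horocyclic foliation near such leaves (Lemmas \ref{lemma:foliazione}, \ref{lemma:leaves}, Proposition \ref{prop:extension}). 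By contrast, your treatment of property (6) is essentially sound and matches the paper in spirit: the upper bound comes from $\mathrm{Lip}(\Phi^t)=e^t$, and the lower bound from the fact that $\Phi^t$ scales the length of the measurable sublamination by $e^t$, which via Theorem \ref{thm:sup on laminations} forces $d_A(X,X_\lambda^t) \geq t$ --- the paper phrases this through the extended length function on $\mathcal{ML}(S)$ rather than through an approximating sequence of long arcs, but the content is the same.
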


\begin{restatable}{corollary}{thurstonstretchline}
\label{corollary:S}
For every $X \in \T(S)$ and every maximal lamination $\lambda$ on $X$, if $\lambda$ contains a non-empty measurable sublamination then the \emph{generalized stretch line}
\begin{align*} 
s_{X,\lambda}: \R_{\geq 0} &\longrightarrow \T(S) \\ 
t &\mapsto X_\lambda^t 
\end{align*} 
is a geodesic path parametrized by arc-length for both $d_A$ and $d_{L\partial}$. 
\end{restatable}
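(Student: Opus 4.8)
The plan is to prove the sharp two–sided estimate
\[ d_A(X_\lambda^s, X_\lambda^t) = d_{L\partial}(X_\lambda^s, X_\lambda^t) = t-s \qquad (0 \le s \le t), \]
from which both the geodesic property and the arc-length parametrization follow at once. Throughout I will use the inequality $d_A \le d_{L\partial}$ recorded in the introduction, so that it is enough to bound $d_A$ from below and $d_{L\partial}$ from above by $t-s$.

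For the lower bound I would use the hypothesis that $\lambda$ contains a non-empty measured sublamination $\mu$. By property (4) of Theorem \ref{theorem:S}, the stretch map $\Phi^r$ multiplies the arc length of the leaves of $\lambda$ by $e^r$; since $\mu$ carries a transverse measure, integrating against it shows that the length of $\mu$ scales the same way, namely $\ell_{X_\lambda^r}(\mu) = e^r \ell_X(\mu)$. Hence the ratio of the lengths of $\mu$ on $X_\lambda^t$ and on $X_\lambda^s$ equals $e^{t-s}$. Approximating the measured lamination $\mu$ by the weighted simple closed curves and proper arcs orthogonal to the boundary that enter the definition of $d_A$, and using the continuity and homogeneity of length in the weights, this ratio is realized as a limit of ratios $\ell_{X_\lambda^t}(\alpha)/\ell_{X_\lambda^s}(\alpha)$ over admissible $\alpha$; therefore $d_A(X_\lambda^s, X_\lambda^t) \ge \log e^{t-s} = t-s$.

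For the upper bound I would first record a flow (cocycle) property of the construction: the generalized stretch maps compose, in the sense that there is a generalized stretch map $\Psi^{s,t}\colon X_\lambda^s \to X_\lambda^t$, associated with the image lamination $\Phi^s(\lambda)$ and elapsed time $t-s$, satisfying $\Phi^t = \Psi^{s,t}\circ\Phi^s$ and $\mathrm{Lip}(\Psi^{s,t}) = e^{t-s}$. Because Theorem \ref{theorem:S}(5) describes $\Phi^r$ piece by piece on the geometric pieces $\mathcal{G}$ of $X \setminus \lambda$ through the explicit models of Lemmas \ref{lemma:triangle} and \ref{lemma:quad}, and on each piece the model depends only on the elapsed time, this property can be verified locally on each $\mathcal{G}$ and then glued along $\lambda$; by property (3) the map $\Psi^{s,t}$ sends $\partial X_\lambda^s$ to $\partial X_\lambda^t$ and is homotopic to the identity. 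Thus $\Psi^{s,t}$ is admissible in the definition of $d_{L\partial}$, giving $d_{L\partial}(X_\lambda^s, X_\lambda^t) \le \log \mathrm{Lip}(\Psi^{s,t}) = t-s$.

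Combining the two bounds with $d_A \le d_{L\partial}$ yields $t-s \le d_A(X_\lambda^s,X_\lambda^t) \le d_{L\partial}(X_\lambda^s,X_\lambda^t) \le t-s$, so all three agree and equal $t-s$. Additivity along the path is then immediate, since $(t-s)+(u-t) = u-s$ for $s \le t \le u$, so $s_{X,\lambda}$ is a geodesic parametrized by arc length for both distances. I expect the genuine difficulty to lie in the lower bound, specifically in the approximation step that realizes the length ratio of $\mu$ inside the length spectrum defining $d_A$: this is precisely where the assumption that $\lambda$ carries a non-empty measured sublamination is indispensable (for a lamination with no transverse measure the stretch would not register in $d_A$). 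The flow property, though requiring some care near the boundary pieces, should reduce to the explicit local descriptions already provided.
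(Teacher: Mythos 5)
Your proposal is correct and takes essentially the same route as the paper: the paper's proof combines Theorem \ref{theorem:S} (whose part (6) contains exactly your two bounds, with the lower bound obtained from the measurable sublamination via Theorem \ref{thm:sup on laminations} rather than your curve/arc approximation argument) with the semigroup identity ${\left(X_\lambda^{s}\right)}_\lambda^{t-s} = X_\lambda^{t}$, which is your flow property at the level of hyperbolic structures. The only caveat is that the exact factorization $\Phi^t = \Psi^{s,t}\circ\Phi^s$ of the maps is neither needed nor established anywhere — the identity of the structures alone lets one apply Theorem \ref{theorem:S} with basepoint $X_\lambda^{s}$, which is all the upper bound requires.
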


Our construction presents new challenges when compared with Thurston's work. In Thurston's case of closed or punctured surfaces, every maximal lamination decomposes the surface into ideal triangles. Thurston constructs the stretch map between two ideal triangles explicitly via the horocyclic foliation. 
In the case of surfaces with boundary a maximal lamination decomposes the surface in geometric pieces of four different types (see Figure \ref{geom_pieces}).  Unlike Thurston \cite{Thurston}, we do not construct explicit maps between the geometric 
pieces. Instead, we use a trick of its own interest, which allows to ``average'' two Lipschitz maps. Our \emph{average map} will be a Lipschitz map whose Lipschitz constant is bounded above by the average of the two Lipschitz constants. Our construction is obtained by adapting a result of Gu\'eritaud-Kassel \cite{Kassel-Gueritaud}. 
Using generalized stretch lines, we prove: 

\begin{restatable}{theorem}{geodesicspace} \label{thm:geodesic space}
The space $(\T(S), d_A)$ is a geodesic metric space. Every two points $X,Y \in \T(S)$ can be joined by a segment that is geodesic for both $d_A$ and $d_{L\partial}$ and is a finite concatenation of generalized stretch segments.
\end{restatable}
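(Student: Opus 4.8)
The plan is to establish both assertions at once by joining two arbitrary points $X, Y \in \T(S)$ with a finite chain of generalized stretch segments and then certifying that the chain is globally distance‑realizing. The guiding principle is a soft criterion for when a concatenation of geodesic segments is again a geodesic. Parametrize the concatenation $X = Z_0, Z_1, \dots, Z_n = Y$ by arc length, where $Z_k \to Z_{k+1}$ is a generalized stretch segment of length $t_k$ along a maximal lamination $\lambda_k$, so that the total length is $L = \sum_k t_k$. Composing the generalized stretch maps of Theorem \ref{theorem:S} between any two parameter values produces a map homotopic to the identity, preserving the boundary, whose Lipschitz constant is the exponential of the parameter gap; hence the path is $1$‑Lipschitz for $d_{L\partial}$, and \emph{a fortiori} for $d_A$, in the arc‑length parameter. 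Using the elementary fact that a $1$‑Lipschitz path whose endpoints satisfy $d_A(X,Y) = L$ is automatically geodesic (the asymmetric triangle inequality then forces equality on every subinterval), it remains only to prove the single lower bound
\[
d_A(X, Y) \;\geq\; \sum_{k} t_k .
\]

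I would obtain this bound by arranging the chain so that the function $d_A(\,\cdot\,, Y)$ decreases at unit rate along each segment. At each stage I choose $\lambda_k$ to be a maximal completion of the \emph{ratio‑maximizing lamination} $\mu_k$ for the pair $(Z_k, Y)$, that is, a measured lamination realizing the supremum defining $d_A(Z_k, Y)$; such a $\mu_k$ exists by compactness of the space of projective measured laminations. Since $\mu_k$ carries a transverse measure, any maximal completion $\lambda_k \supseteq \mu_k$ contains a non‑empty measurable sublamination, so Corollary \ref{corollary:S} applies and each segment $Z_k \to Z_{k+1}$ is a genuine geodesic for both distances. The key \emph{descent identity} to be proved is
\[
d_A(Z_{k+1}, Y) \;=\; d_A(Z_k, Y) - t_k .
\]
The inequality $\geq$ is just the triangle inequality together with $d_A(Z_k, Z_{k+1}) = t_k$; the substantive inequality $\leq$ says that the generalized stretch map points in the direction of the optimal Lipschitz map $Z_k \to Y$. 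I expect to prove it by factoring that optimal map through the stretch map: property (4) of Theorem \ref{theorem:S} forces the stretch to realize the maximal length ratio precisely on $\mu_k$, and $\mu_k$ is exactly where the optimal map already attains its dilatation, so one can post‑compose to produce a map $Z_{k+1} \to Y$ of Lipschitz constant $e^{d_A(Z_k,Y) - t_k}$, giving $d_A(Z_{k+1}, Y) \leq d_{L\partial}(Z_{k+1}, Y) \leq d_A(Z_k, Y) - t_k$. Summing the descent identity over $k$ then yields $d_A(X, Y) = \sum_k t_k$, the bound required above.

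It remains to show that this greedy procedure reaches $Y$ exactly after finitely many steps. Local reachability comes from the openness of the generalized stretch construction: the assignment $(\lambda, t) \mapsto X_\lambda^t$ of Theorem \ref{theorem:S}, with $\lambda$ ranging over maximal completions and $t$ over small times, covers a full neighborhood of $X$ in $\T(S)$ — a Finsler analogue of Thurston's local surjectivity of the stretch map, most transparently read off in shear coordinates. Combined with the connectedness of $\T(S)$ and a compactness argument applied to a fixed path from $X$ to $Y$, this yields a finite chain. Each segment is geodesic, and the descent identity certifies that the whole concatenation is globally geodesic for both $d_A$ and $d_{L\partial}$, which establishes simultaneously that $(\T(S),d_A)$ is a geodesic metric space and that the connecting segment is a finite concatenation of generalized stretch segments.

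The principal obstacle is the descent identity, specifically the inequality $d_A(Z_{k+1},Y) \leq d_A(Z_k,Y) - t_k$: this is exactly the point where one must control the optimal Lipschitz map \emph{transversally} to the stretched lamination rather than merely along it, and where the realization of the Lipschitz constant as a minimum (property (6) of Theorem \ref{theorem:S}) is essential — which is why the hypothesis of Corollary \ref{corollary:S} is invoked through the measured lamination $\mu_k$. A secondary difficulty is finiteness of the greedy chain: since the ratio‑maximizing lamination $\mu_k$ could in principle vary continuously, I would bound the number of steps either by observing that the combinatorial type of the support of $\mu_k$ changes only finitely often, or, more robustly, by covering a chosen connecting path with finitely many stretch‑reachable neighborhoods supplied by the openness statement.
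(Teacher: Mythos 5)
Your overall greedy scheme is the same as the paper's (stretch along a maximal completion of the ratio-maximizing lamination of $(Z_k,Y)$, prove a descent identity, sum), but the step you yourself flag as the principal obstacle — the inequality $d_A(Z_{k+1},Y)\leq d_A(Z_k,Y)-t_k$ — cannot be proved the way you propose, and this is a genuine gap. Factoring the optimal Lipschitz map $Z_k\to Y$ through the stretch map requires two things you do not have: (i) a globally defined map realizing $d_A(Z_k,Y)$ as its log-Lipschitz constant, which is precisely Corollary \ref{cor:distances coincide} and in the paper is \emph{deduced from} Theorem \ref{thm:geodesic space}, so invoking it here is circular; and (ii) an inverse of the generalized stretch map $\Phi^{t_k}$, but these maps are not known to be injective — the paper states this explicitly as the obstruction to Conjecture \ref{conj:distances}. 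The paper's actual argument (Lemma \ref{lemma_1}) is local and avoids both issues: by definition a ratio-maximizing lamination comes with a homeomorphism $f$ of a \emph{neighborhood} $N_X$ of $\mu(Z_k,Y)$ with $\mathrm{Lip}(f)=e^{d_A(Z_k,Y)}$; Lemma \ref{lemma:ratio-maximizing} supplies a homeomorphism $\phi$ with $\mathrm{Lip}(\phi)=e^t$ on a neighborhood of $\lambda$ (built where $\Phi^t$ \emph{is} known to be a homeomorphism, namely on the support of the horocyclic foliation plus the triangular pieces); and $f'=f\circ\phi^{-1}$ is then an $e^{d_A(Z_k,Y)-t}$-Lipschitz map defined only near $\mu$. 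To convert this local bound into $d_A(Z_{k+1},Y)\leq d_A(Z_k,Y)-t$ one needs the \emph{new} ratio-maximizing lamination $\mu(Z_{k+1},Y)$ to lie inside that neighborhood, which holds only for $t$ small, by the Hausdorff semicontinuity of ratio-maximizing laminations (Lemma \ref{lemma_0}, Thurston's Theorem 8.4 via doubling), combined with the characterization of $d_A$ as a maximum of length ratios over measured laminations (Theorem \ref{thm:sup on laminations}). So the descent identity is valid only up to a critical time $\overline{t}$, not along a segment of arbitrary prescribed length $t_k$ as your chain assumes.

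This small-time restriction also undercuts your finiteness argument and hides the actual termination mechanism. In the paper, when the critical time $\overline{t}_i<d_A(X_i,Y)$ is reached, the ratio-maximizing lamination strictly grows, $\mu(X_i,Y)\subsetneq\mu(X_{i+1},Y)$ (Lemma \ref{lemma_2}, again powered by Lemma \ref{lemma_0}), and a strictly increasing chain of geodesic laminations has topologically bounded length, giving $k\leq 2|\chi(S)|$ steps. Your proposed substitutes — local surjectivity of $(\lambda,t)\mapsto X_\lambda^t$ onto a neighborhood of $X$, or covering a connecting path by stretch-reachable neighborhoods — are established nowhere in the paper, and even granted, a covering argument would only produce \emph{some} chain of stretch segments with no reason for the concatenation to realize $d_A(X,Y)$ additively: geodesicity is exactly what the descent identity certifies, and that identity is tied to the greedy choice and to stopping at $\overline{t}_i$. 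Your fallback remark that the support of $\mu_k$ "changes combinatorial type only finitely often" is the right intuition, but it is the \emph{conclusion} of Lemma \ref{lemma_2} rather than an available hypothesis; to repair the proof you would need to prove that strict-growth statement at the critical time, which is where the real work of the paper's argument lies.
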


\begin{restatable}{corollary}{finslerspace} \label{cor:finsler space}
The arc distance $d_A$ is induced by  a Finsler metric on $\T(S)$.
\end{restatable}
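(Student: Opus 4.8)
The plan is to produce an explicit (asymmetric) Finsler metric $F$ on $\T(S)$ whose associated length metric coincides with $d_A$, and then to identify the two. Recall from Section \ref{subsec:5 functionals} that, following \cite{LiuPapSuTh},
$$d_A(X,Y) \;=\; \log\,\sup_{\alpha}\frac{\ell_Y(\alpha)}{\ell_X(\alpha)},$$
where $\alpha$ runs over the simple closed curves and the simple arcs orthogonal to $\partial S$, and $\ell_X(\alpha)$ denotes the length on $X$. Each length function $\ell_\alpha$ is real-analytic on $\T(S)$ and extends continuously and positively-homogeneously to the compact space of projective measured laminations adapted to $(S,\partial S)$. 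Accordingly, for $X\in\T(S)$ and $v\in T_X\T(S)$ I would set
$$F_X(v) \;=\; \sup_{[\mu]}\; d(\log\ell_\mu)_X(v),$$
the supremum ranging over that compact space of projective classes. As a supremum of linear functionals, $F_X$ is convex and positively homogeneous of degree one; joint continuity of $(X,[\mu])\mapsto d(\log\ell_\mu)_X$ together with compactness of the index set makes $F_X(v)$ continuous in $X$ and ensures the supremum is attained, while nondegeneracy ($F_X(v)>0$ for $v\neq 0$) is inherited from the fact that $d_A$ separates points. This exhibits $F$ as a genuine weak Finsler structure.

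The inequality $d_A\leq d_F$, where $d_F$ denotes the length metric of $F$, is the routine direction. For any piecewise-$C^1$ path $\gamma\colon[0,1]\to\T(S)$ from $X$ to $Y$ and any $\mu$, the fundamental theorem of calculus gives
$$\log\frac{\ell_Y(\mu)}{\ell_X(\mu)}=\int_0^1 d(\log\ell_\mu)_{\gamma(t)}(\dot\gamma(t))\,dt\leq\int_0^1 F_{\gamma(t)}(\dot\gamma(t))\,dt = L_F(\gamma);$$
taking the supremum over $\mu$ and then the infimum over $\gamma$ yields $d_A(X,Y)\leq d_F(X,Y)$.

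For the reverse inequality I would exploit the geodesics of Theorem \ref{thm:geodesic space}. Since those geodesics are finite concatenations of generalized stretch segments, it suffices to show that along a single generalized stretch segment $s(t)=X_\lambda^t$, $t\in[0,T]$, with $\lambda$ carrying a non-empty measurable sublamination $\mu$, the Finsler speed is identically one, so that its Finsler length equals the $d_A$-length $d_A(s(0),s(T))=T$ supplied by Corollary \ref{corollary:S}. This speed computation is the crux. For the upper bound, Corollary \ref{corollary:S} gives $d_A(s(t_1),s(t_2))=t_2-t_1$, and the formula for $d_A$ then forces $\log\ell_{s(t_2)}(\mu)-\log\ell_{s(t_1)}(\mu)\leq t_2-t_1$ for every $\mu$; dividing by $t_2-t_1$ and letting $t_2\to t_1$ gives $F_{s(t)}(\dot s(t))\leq 1$. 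For the matching lower bound, Theorem \ref{theorem:S}(4) stretches the arc length of the leaves of $\lambda$ by the factor $e^t$, so the measured sublamination obeys $\ell_{s(t)}(\mu)=e^{t}\ell_{s(0)}(\mu)$ exactly, whence $d(\log\ell_\mu)_{s(t)}(\dot s(t))=1$ and therefore $F_{s(t)}(\dot s(t))\geq 1$. Thus $F_{s(t)}(\dot s(t))\equiv 1$ and $L_F(s|_{[0,T]})=T=d_A(s(0),s(T))$. Summing over the finitely many segments of a geodesic $\gamma$ from $X$ to $Y$, and using that $\gamma$ realizes $d_A(X,Y)$, we obtain $L_F(\gamma)=d_A(X,Y)$, hence $d_F(X,Y)\leq d_A(X,Y)$. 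Combined with the previous paragraph, $d_A=d_F$.

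The main obstacle is the verification that $F$ is a bona fide continuous Finsler metric and not merely a lower-semicontinuous gauge: one must isolate the correct compact index set of measured laminations — including the boundary-orthogonal objects proper to the with-boundary setting — and establish joint continuity of $(X,[\mu])\mapsto d(\log\ell_\mu)_X$, so that the defining supremum is attained and depends continuously on the base point. By contrast, the infinitesimal ``stretch speed equals one'' identity is conceptually transparent once Theorem \ref{theorem:S} is available, its only delicate input being that the geodesics of Theorem \ref{thm:geodesic space} may be taken with a non-empty measurable sublamination on each segment, so that Corollary \ref{corollary:S} applies verbatim.
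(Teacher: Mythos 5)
Your route is genuinely different from the paper's. The paper never constructs a norm on $\T(S)$ directly: it embeds $(\T(S),d_A)$ into $(\T(S^d),d_{Th})$ via the doubling map, invokes Thurston's theorem that $d_{Th}$ comes from a Finsler metric on $\T(S^d)$, restricts that norm to the submanifold of symmetric structures, and then uses the geodesic embedding (Corollary \ref{cor:geodesic}, i.e.\ Theorem \ref{thm:geodesic space} combined with Proposition \ref{prop:doubling}) to identify the induced length metric with $d_A$: the $d_A$-length of the stretch-segment geodesic equals its length in $(\T(S^d),d_{Th})$, which equals its Finsler length because Thurston's norm induces $d_{Th}$. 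You instead build the norm intrinsically, $F_X(v)=\sup_{[\mu]\in\mathcal{PML}(S)}d(\log\ell_\mu)_X(v)$, and verify the two inequalities by hand: the integration argument for $d_A\leq d_F$, and the unit-speed computation along generalized stretch segments for $d_F\leq d_A$ (which is legitimate, since each segment produced in the proof of Theorem \ref{thm:geodesic space} is directed by a maximal $\lambda_i\supset\mu(X_i,Y)$, and $\mu(X_i,Y)$ does contain a measurable sublamination, so Corollary \ref{corollary:S} and Theorem \ref{theorem:S}(4) apply). What your approach buys is an explicit formula for the norm in terms of the arc-curve-lamination data of $S$ itself, a with-boundary analogue of Thurston's Section 8 description; what the paper's approach buys is that all the analytic content (continuity of the norm, attainment of the supremum, positivity) is inherited from Thurston's theorem on the double rather than re-proved.

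Two points in your write-up need more than you give them. First, the obstacle you flag is real, and your one-line justification of nondegeneracy (``inherited from the fact that $d_A$ separates points'') does not work as stated: a gauge with a degenerate direction can still induce a point-separating length metric, so positivity of $F_X$ on nonzero vectors must be established at the level of the norm, not deduced from properties of $d_F$. The efficient fix is precisely the paper's doubling trick: since $\ell_X(\mu)=\tfrac12\ell_{X^d}(\mu^d)$, your $F$ is the supremum of Thurston's defining functionals over the $\sigma$-symmetric laminations of $S^d$, and an averaging argument (replace a maximizing $\mu$ by $\mu+\sigma_*\mu$, using that $v$ is $\sigma$-invariant) shows this equals the restriction of Thurston's norm, whence continuity and positivity — but at that point your construction has essentially collapsed into the paper's proof. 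Second, your speed computation uses $\dot s(t)$, i.e.\ differentiability of $t\mapsto X_\lambda^t$ in $\T(S)$; this is plausible from the cocycle description $\rho^t=e^t\cdot\rho^0+\epsilon^t$ with $\epsilon^t$ built from the displacement function $\delta$, but it is nowhere established in the paper, and the paper's argument deliberately avoids it by comparing metric lengths of geodesic segments rather than integrating $F$ along the path. Neither point is fatal, but both must be discharged for your proof to be complete.
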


We find that $d_A$ and $d_{L\partial}$ coincide, this completes our generalization of Thurston's \cite[Theorem 8.5]{Thurston}. 
\begin{restatable}{corollary}{distancescoincide} \label{cor:distances coincide}
Given $X,Y \in \T(S)$, there exists a continuous map $\phi\in \mathrm{Lip}_0(X,Y)$, with $\phi(\partial X) \subset \partial Y$ and with optimal Lipschitz constant such that $\log(\mathrm{Lip}(\phi)) = d_A(X,Y).$ 
In particular, we have 
$$d_A(X,Y) = d_{L\partial}(X,Y).$$
\end{restatable}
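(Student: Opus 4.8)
The plan is to derive Corollary~\ref{cor:distances coincide} directly from the constructions already in hand, rather than proving anything new from scratch. The two inequalities $d_A \leq d_{L\partial} \leq d_{Lh}$ are given for free; the content is the reverse inequality $d_{L\partial}(X,Y) \leq d_A(X,Y)$ together with the existence of a single map realizing the optimal Lipschitz constant. The key observation is that Theorem~\ref{thm:geodesic space} provides, for any $X,Y \in \T(S)$, a geodesic segment joining them which is a finite concatenation of generalized stretch segments and which is geodesic for \emph{both} $d_A$ and $d_{L\partial}$ simultaneously. This double-geodesic property is exactly what forces the two distances to agree at the endpoints.

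First I would fix $X,Y$ and invoke Theorem~\ref{thm:geodesic space} to obtain such a concatenation $X = Z_0, Z_1, \dots, Z_n = Y$, where each consecutive pair $Z_{i-1}, Z_i$ lies on a common generalized stretch line. On each stretch segment, Theorem~\ref{theorem:S} supplies a generalized stretch map $\Phi_i : Z_{i-1} \to Z_i$ with $\mathrm{Lip}(\Phi_i) = e^{t_i}$, and by part (6) of that theorem this Lipschitz constant is \emph{optimal} among maps in $\mathrm{Lip}_0$ preserving the boundary, so that $\log \mathrm{Lip}(\Phi_i) = d_{L\partial}(Z_{i-1}, Z_i)$; since the segment is also a $d_A$-geodesic we likewise have $\log \mathrm{Lip}(\Phi_i) = t_i = d_A(Z_{i-1}, Z_i)$. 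Each $\Phi_i$ carries $\partial Z_{i-1}$ onto $\partial Z_i$ by part (3), and is homotopic to the identity. The natural candidate for the global map is the composition $\phi = \Phi_n \circ \cdots \circ \Phi_1 : X \to Y$.

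Next I would assemble the estimates. On the one hand $\phi \in \mathrm{Lip}_0(X,Y)$ with $\phi(\partial X) \subset \partial Y$, being a composition of such maps, and its Lipschitz constant satisfies $\mathrm{Lip}(\phi) \leq \prod_i \mathrm{Lip}(\Phi_i) = e^{\sum_i t_i}$, whence $\log \mathrm{Lip}(\phi) \leq \sum_i d_A(Z_{i-1}, Z_i) = d_A(X,Y)$, the last equality holding because the concatenation is a $d_A$-geodesic so the stretch factors add up along it. On the other hand, by definition of $d_{L\partial}$ as an infimum of $\log \mathrm{Lip}$ over precisely this class of maps, we have $d_{L\partial}(X,Y) \leq \log \mathrm{Lip}(\phi)$. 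Combining these with the given inequality $d_A \leq d_{L\partial}$ yields
\[
d_A(X,Y) \leq d_{L\partial}(X,Y) \leq \log \mathrm{Lip}(\phi) \leq d_A(X,Y),
\]
so equality holds throughout; in particular $\log \mathrm{Lip}(\phi) = d_A(X,Y) = d_{L\partial}(X,Y)$, and $\phi$ is the desired optimal map.

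The main obstacle I anticipate is purely bookkeeping rather than conceptual: I must make sure the additivity of stretch factors along the concatenation is legitimate, i.e.\ that $d_A$ being a genuine (asymmetric) distance makes the total $d_A$-length of the geodesic equal to $\sum_i d_A(Z_{i-1}, Z_i)$, which is precisely the meaning of the segment being a $d_A$-geodesic from Theorem~\ref{thm:geodesic space}. I would also need to confirm that composition behaves well with respect to the measurable-sublamination hypothesis in part (6) of Theorem~\ref{theorem:S}, which is what upgrades each $\Phi_i$ from merely \emph{a} Lipschitz map to an \emph{optimal} one; but since the optimality on each piece is only used to identify $t_i$ with $d_{L\partial}(Z_{i-1},Z_i)$ and we ultimately squeeze via the two global inequalities, even the weaker upper bound $\mathrm{Lip}(\phi) \leq e^{\sum t_i}$ suffices to close the argument. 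Thus the corollary follows essentially formally once Theorems~\ref{theorem:S} and~\ref{thm:geodesic space} are in place.
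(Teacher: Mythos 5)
Your proposal is correct and follows essentially the same route as the paper: the paper likewise takes the concatenation $X = X_0, \dots, X_k = Y$ from the proof of Theorem \ref{thm:geodesic space}, composes the generalized stretch maps $\Phi_i^{\overline{t}_i}$ from Theorem \ref{theorem:S}, bounds $\mathrm{Lip}(\phi) \leq e^{\sum_i \overline{t}_i} = e^{d_A(X,Y)}$, and closes with the identical squeeze $d_A(X,Y) \leq d_{L\partial}(X,Y) \leq \log(\mathrm{Lip}(\phi)) \leq d_A(X,Y)$. Your observation that the per-segment optimality from part (6) of Theorem \ref{theorem:S} is dispensable is also accurate: the paper's proof never invokes it, using only the Lipschitz constant, the boundary condition from part (3), and the additivity of the $\overline{t}_i$ along the geodesic.
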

As a byproduct of our constructions, we also find new geodesics for the Teichm\"uller space of closed or punctured surfaces endowed with Thurston's distance. Indeed, Liu-Papadopoulos-Su-Th{\'e}ret \cite{LiuPapSuTh} proved that the doubling map 
$$\jmath: (\T(S), d_A) \ni X \hookrightarrow X^d \in (\T(S^d), d_{Th})$$
is an isometry. By doubling our generalized stretch lines, we can construct many new geodesics for $(\T(S^d),d_{Th})$ that lie completely in the submanifold of \emph{symmetric hyperbolic structures}.  

\begin{restatable}{corollary}{geodesicembedding} \label{cor:geodesic}
The map $(\T(S), d_{A}) \hookrightarrow (\T(S^d), d_{Th})$ is a geodesic embedding.
\end{restatable}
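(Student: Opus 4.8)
The plan is to deduce the statement directly from the two structural results already in hand: the isometry property of the doubling map proved by Liu--Papadopoulos--Su--Th\'eret \cite{LiuPapSuTh}, and our Theorem~\ref{thm:geodesic space}. Recall that a \emph{geodesic embedding} is an isometric embedding that carries geodesics to geodesics, so there are really two things to check: that $\jmath$ preserves distances, and that it preserves minimizing paths. The first is exactly \cite{LiuPapSuTh}, which gives $d_{Th}(X^d, Y^d) = d_A(X,Y)$ for all $X, Y \in \T(S)$, with the arguments in the same order (both distances are asymmetric, and this ordered equality is precisely what we will use). In particular $\jmath$ is injective and is $1$-Lipschitz in both directions onto its image, hence a topological embedding that sends continuous paths to continuous paths.

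First I would record the elementary fact that an isometric embedding of (possibly asymmetric) metric spaces preserves the length of an oriented path. If $\gamma\colon[0,L]\to\T(S)$ is a path, then for every partition $0 = t_0 < \cdots < t_n = L$ the isometry property gives $\sum_i d_{Th}(\jmath\gamma(t_{i}), \jmath\gamma(t_{i+1})) = \sum_i d_A(\gamma(t_{i}), \gamma(t_{i+1}))$; taking the supremum over all partitions yields $\ell_{d_{Th}}(\jmath\circ\gamma) = \ell_{d_A}(\gamma)$. The only point demanding care is to keep the two arguments of each asymmetric distance in the correct order along the oriented path, which is automatic since the partition points are traversed monotonically.

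Next, by Theorem~\ref{thm:geodesic space} any two points $X, Y \in \T(S)$ are joined by a $d_A$-geodesic $\gamma$ (a finite concatenation of generalized stretch segments, and in the basic case a single generalized stretch line $s_{X,\lambda}$ as in Corollary~\ref{corollary:S}). Combining the two previous paragraphs gives
\[
\ell_{d_{Th}}(\jmath\circ\gamma) \;=\; \ell_{d_A}(\gamma) \;=\; d_A(X,Y) \;=\; d_{Th}(X^d, Y^d),
\]
so $\jmath\circ\gamma$ is a path in $\T(S^d)$ whose $d_{Th}$-length equals the $d_{Th}$-distance between its endpoints. Hence $\jmath\circ\gamma$ is a $d_{Th}$-geodesic, which is exactly the assertion that $\jmath$ is a geodesic embedding.

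I do not expect a serious obstacle here: the genuine difficulty of the paper has already been absorbed into the construction of the generalized stretch maps and into the proof of Theorem~\ref{thm:geodesic space}, while the isometry is imported from \cite{LiuPapSuTh}; the present corollary is a clean formal consequence, and the only delicate bookkeeping is the order of the arguments in the asymmetric distances above. If one also wants the sharper observation advertised in the introduction---that the resulting geodesics lie in the locus of \emph{symmetric} hyperbolic structures---it suffices to note that every point $X^d$ in the image of $\jmath$ is by construction invariant under the orientation-reversing involution exchanging the two copies of $S$, so the entire image $\jmath(\T(S))$, and in particular each doubled geodesic $\jmath\circ s_{X,\lambda}$, lies in that locus.
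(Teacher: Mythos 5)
Your proposal is correct and follows exactly the route the paper intends: the paper states this corollary without a written proof, noting only that it is a consequence of Proposition \ref{prop:doubling} (the Liu--Papadopoulos--Su--Th\'eret isometry) and Theorem \ref{thm:geodesic space}, which are precisely your two ingredients. Your partition argument for length preservation is fine, though even simpler would be to note that a $d_A$-geodesic parametrized by arc-length satisfies $d_A(\gamma(s),\gamma(t)) = t-s$ for $s \leq t$, so the isometry gives $d_{Th}(\jmath\gamma(s),\jmath\gamma(t)) = t-s$ directly.
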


Notice that Thurston's construction of stretch lines in general breaks the symmetry of hyperbolic structures, see for instance the examples by Th\'eret \cite{TheretThese}. Our construction, instead, provides new geodesics which preserve symmetric hyperbolic structures. 

\begin{restatable}{corollary}{newgeodesics} \label{cor:new geodesics}
Let $X \in \T(S)$ and let $\lambda$ be a maximal lamination of $X$ containing a measurable sublamination with at least one leaf orthogonal to the boundary of $X$. Then, the line $t \mapsto 
(X_\lambda^t)^d \in \T(S^d)$ is a geodesic for $(\T(S^d), d_{Th})$ that is not a stretch line in the sense of Thurston \cite{Thurston}.
\end{restatable}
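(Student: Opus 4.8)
The plan is to split the statement into two parts: first that $t\mapsto (X_\lambda^t)^d$ is a $d_{Th}$-geodesic, which is essentially formal, and second that it is not a Thurston stretch line, which is where the work lies. For the first part, the hypothesis guarantees that $\lambda$ contains a measurable sublamination, so Corollary~\ref{corollary:S} applies and $s_{X,\lambda}\colon t\mapsto X_\lambda^t$ is a $d_A$-geodesic; since the doubling map $\jmath$ is a geodesic embedding $(\T(S),d_A)\hookrightarrow(\T(S^d),d_{Th})$ by Corollary~\ref{cor:geodesic}, its image $t\mapsto(X_\lambda^t)^d$ is a geodesic for $d_{Th}$.

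For the second part I would argue by contradiction. Write $\sigma\colon S^d\to S^d$ for the orientation-reversing involution exchanging the two copies of $S$, so that its fixed locus is the seam $\Sigma=\partial S\subset S^d$, a disjoint union of simple closed geodesics, and each $(X_\lambda^t)^d$ is $\sigma$-symmetric, i.e.\ a fixed point of $\sigma$ acting on $\T(S^d)$. Suppose the line were a stretch line in the sense of Thurston, directed by a complete geodesic lamination $\mu$ on $S^d$; by definition the complementary regions of $\mu$ are ideal triangles. Two observations pin down $\mu$. First, since $\sigma$ is an isometry of $X^d$ and of every $(X_\lambda^s)^d$, and the directing lamination of a stretch line is the maximally stretched lamination of the associated optimal maps, which is an invariant of the ordered pair of endpoints, naturality under the mapping class $\sigma$ together with $\sigma\cdot(X_\lambda^t)^d=(X_\lambda^t)^d$ forces $\sigma(\mu)=\mu$. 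Second, by hypothesis $\lambda$ has a leaf orthogonal to $\partial X$; its double $\tilde\ell$ is a bi-infinite geodesic of $S^d$ crossing $\Sigma$ orthogonally, and by property~(4) of Theorem~\ref{theorem:S} the doubled generalized stretch map stretches $\tilde\ell$ by the maximal factor $e^t=\Lip(\Phi^t)$, so $\tilde\ell$ is a leaf of the maximally stretched lamination $\mu$.

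From these two facts the contradiction is geometric. Because $\tilde\ell\subset\mu$ crosses $\Sigma$, the seam is not contained in $\mu$ (distinct leaves of a lamination are disjoint), and since a complementary ideal triangle contains no closed geodesic, some component $\Sigma_0$ of $\Sigma$ must cross $\mu$ transversally. At any crossing point $p\in\Sigma_0\cap\mu$ the leaf $m$ through $p$ satisfies $\sigma(m)\subset\sigma(\mu)=\mu$ and $\sigma(m)\ni\sigma(p)=p$; as leaves are pairwise disjoint, $m$ is the unique leaf through $p$, whence $\sigma(m)=m$. A $\sigma$-invariant geodesic through a point of $\mathrm{Fix}(\sigma)=\Sigma$ that is not a component of $\Sigma$ must be orthogonal to $\Sigma$, since $d\sigma_p$ is the reflection fixing $T_p\Sigma$. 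Thus every leaf of $\mu$ meeting $\Sigma_0$ does so orthogonally. Lifting to the universal cover $\h$ and following $\Sigma_0$ through one complementary ideal triangle, it enters through one side and, crossing each side at most once, exits through a different side, meeting both orthogonally; the geodesic carrying $\Sigma_0$ would then be a common perpendicular to two sides of an ideal triangle. But those two sides share an ideal vertex, hence are asymptotic and admit no common perpendicular, a contradiction.

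I expect the main obstacle to be the two ``pinning down $\mu$'' steps rather than the final geometric contradiction: one must cleanly justify that $\mu$ is $\sigma$-invariant and that the doubled orthogonal leaf lies in $\mu$. Both rest on the intrinsic, mapping-class-natural characterization of the directing lamination of a Thurston stretch line as the maximally stretched lamination of its stretch maps; once this is in place, the disjointness of leaves and the absence of a common perpendicular to asymptotic geodesics finish the proof. The role of the hypothesis ``at least one leaf orthogonal to the boundary'' is exactly to produce the leaf $\tilde\ell\subset\mu$ that crosses the seam and thereby forces $\Sigma$ to cross $\mu$.
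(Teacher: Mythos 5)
Your proposal is correct in substance, but it proves the ``not a stretch line'' half by a genuinely different route than the paper. The paper's proof is three lines: the line stretches $\lambda^d$ by $e^t$, so a Thurston stretch line through its points would be directed by a maximal lamination containing $\lambda^d$; since $\lambda$ has a leaf orthogonal to $\partial X$, no maximal completion of $\lambda^d$ is symmetric, and by the results of Th\'eret cited there a stretch line directed by a non-symmetric maximal lamination does not stay in the submanifold of symmetric hyperbolic structures, contradicting the $\sigma$-symmetry of every $(X_\lambda^t)^d$. You instead make the contradiction self-contained: you pin down the directing lamination $\mu$ as the largest ratio-maximizing lamination of an ordered pair of points on the line (whence $\sigma(\mu)=\mu$ by uniqueness and naturality --- the same mechanism the paper itself uses to define $\mu(X,Y)$ for surfaces with boundary), place $\tilde\ell$ in $\mu$, and then rule out a $\sigma$-invariant maximal lamination met transversally by the seam, via orthogonality of $\sigma$-invariant leaves at fixed points and the absence of a common perpendicular between the asymptotic sides of an ideal triangle. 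Your route buys independence from the citation to Th\'eret and makes the obstruction concrete; the paper's buys brevity by outsourcing that geometry.

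Two steps in your argument need tightening, both fillable with the paper's own tools. First, ``$\tilde\ell$ is stretched by the optimal factor, hence $\tilde\ell\subset\mu$'' does not follow from Theorem \ref{theorem:S}(4) alone: \emph{every} leaf of $\lambda$ is stretched by $e^t$, and membership in the largest ratio-maximizing lamination requires the neighborhood homeomorphism in its definition, not just maximal stretching of one leaf. The bridge is Lemma \ref{lemma:ratio-maximizing}, which gives $\mu(X,X_\lambda^t)=\lambda$, i.e. $\mu\bigl(X^d,(X_\lambda^t)^d\bigr)=\lambda^d$ (alternatively, route through the measurable sublamination in the hypothesis and Proposition \ref{prop:ratio of ratio maximizing}); note the measurable sublamination is also what guarantees $d_A(X,X_\lambda^t)=t$ so that these apply. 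Once this is in place your geometric endgame becomes optional: a Thurston directing lamination would satisfy $\mu\subseteq\mu\bigl(X^d,(X_\lambda^t)^d\bigr)=\lambda^d$, and maximality of $\mu$ forces $\mu=\lambda^d$, which is not maximal precisely because $\lambda$ has a leaf orthogonal to $\partial X$ (see the remark on doubles in Section \ref{sec:laminations}) --- an immediate contradiction. Second, in the endgame you should justify that a complementary arc of $\Sigma_0\setminus\mu$ exists, i.e. that $\Sigma_0\cap\mu$ has empty interior in $\Sigma_0$: if an interval of $\Sigma_0$ were covered by (orthogonal) crossings, the leaf segments through it would sweep out a rectangle of positive area inside $\mu$, contradicting that geodesic laminations have measure zero. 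Since $\Sigma_0$ is a closed geodesic, each component of $\Sigma_0\setminus\mu$ is then a bounded open arc with both endpoints on $\mu$, contained in one complementary ideal triangle, and your common-perpendicular contradiction applies as stated.
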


\subsection{Sketch of the proof of Theorem \ref{theorem:S}}   \label{sec:sketch}


Let $X \in \T(S)$ and $\lambda$ be a maximal lamination on $X$. We want to construct a generalized stretch line starting from $X$ and directed by $\lambda$, i.e. for every $t\geq 0$ we want to construct $X_\lambda^t \in \T(S)$ satisfying the properties of Theorem \ref{theorem:S}. 

\subsubsection{Geometric pieces}
As a first step we characterize the connected components of $X \setminus \lambda$, that is, the geometric pieces. We will see in Proposition \ref{prop:lamination} that there are only four types of pieces (see Figure \ref{geom_pieces}). 

Given a geometric piece $\mathring{\G} \subset X \setminus \lambda$ we define a suitable generalized stretch map $\phi^t : \G \to \G^t$ from the original piece $\G$ to its ``stretched'' analogue $\G^t$. The map has optimal Lipschitz constant $\Lip(\phi^t) = e^t$. When $\G$ is an ideal triangle we use the homeomorphism defined by Thurston. In the other cases we use an implicit construction, which generalizes an argument by Gu\'eritaud-Kassel \cite{Kassel-Gueritaud} (see Section \ref{sec:Lip} and \ref{sec:stretch_1}). 

\subsubsection{Decomposition of \texorpdfstring{$X$}{X}} 

If $\lambda$ has no leaves that hit the boundary of $X$, all the geometric pieces are triangles, and Theorem \ref{theorem:S} follows by Thurston \cite{Thurston}. We are interested in the case where at least one leaf of $\lambda$ is orthogonal to $\partial X$.   
We define the boundary  block of $\lambda$ in $X$ in Section \ref{subsec:def boundary block} as the (possibly disconnected) subsurface given by  the union of all the non-triangular geometric pieces (Figure \ref{BCdecomposition}):  
\[ B = \bigsqcup \{ \mathcal G_i ~| ~\mathcal G_i \mbox{ is a geometric piece that is not an ideal triangle }\} \subseteq X\,.\]
The boundary block is a complete hyperbolic surface of finite volume, whose boundary might be non compact. It is equipped with a finite maximal lamination $\lambda_B \subset \lambda$. The boundary $\partial B$ will contain a finite union of \emph{cycles} $c_j$, each determining a \emph{crown} $C_j$ as in Figures \ref{Thurston:cycles} and \ref{crowns} (respectively Definitions \ref{def:cycle} and \ref{def:crown}).
Denote the union of all such crowns by $C$ and the complement of $C$ in $B$ by $B_C:= \overline{B \setminus C} \subset B$. Denote by $X_C := \overline{X \setminus B_C} \subset X$ the complement of $B_C$ in $X$, defined in Section \ref{subsec:def boundary block}, see Figure \ref{BCdecomposition}. The surface $X_C$ is equipped with a lamination $\lambda_{X_C} \subset \lambda$ where no leaf is orthogonal to $\partial X_C$. We have the following commutative diagram, where $\lambda = \lambda_B \cup \lambda_{X_C}$.
$$\xymatrix{
   & (B, \lambda_B) \ar[dr] & \\ 
C \ar[ur] \ar[dr] & & (X, \lambda)  \\ 
& (X_C, \lambda_{X_C}) \ar[ur] &  
}$$
Every arrow is the canonical inclusion, which is also a Riemannian isometry.  

\subsubsection{Strategy to define \texorpdfstring{$X_\lambda^t$}{X lambda t}} 

For $t\geq 0$ we construct suitable complete hyperbolic surfaces $B^t$, $C^t$ and $(X_C)^t$ homeomorphic to $B$, $C$ and $X_C$ respectively (see Sections \ref{sec:stretchB}, \ref{sec:S_A} and \ref{sec:gsl}). The new surfaces come with preferred Riemannian isometries $\iota^t: C^t \hookrightarrow B^t$ and $h^t: C^t \hookrightarrow X_C^t$.  We then define $X_\lambda^t$ as follows:
$$X_\lambda^t:= B^t \bigsqcup (X_C)^t/\sim~,$$ 
where $\iota^t(z) \sim h^t(z)$ for every $z \in C^t$. The quotient projection $\pi: B^t \bigsqcup (X_C)^t \to X_\lambda^t$ restricts to Riemannian isometries on $(X_C)^t$ and $B^t$ (Proposition \ref{S^t}):
$$\xymatrix{
   & B^t \ar[dr]^{\pi_|} & \\ 
C^t \ar[ur]^{\iota^t} \ar[dr]_{h^t} & & X_\lambda^t  \\ 
& (X_C)^t \ar[ur]_{\pi_|} &  
}~.$$
The generalized stretch map $\Phi^t: X \to X_\lambda^t$ is defined by glueing together suitable stretch maps $\beta^t: B \to B^t$ and $\psi^t$ from an open dense subset of $X_C$ to $(X_C)^t$, with the required properties. 
The details about the construction of $(B^t, \beta^t)$ are given in Section \ref{sec:stretchB}. The details about the construction of $((X_C)^t, \psi^t)$ are given in Section \ref{subsec:gsl} and Section \ref{sec:S_A}. The details about how to glue the maps 
$\beta^t$ and $\psi^t$ are discussed in Section \ref{subsec:stretch maps}.

\subsection{Open problems}
Our work leads us to conjecture that the three natural distances $d_A, d_{L\partial}, d_{Lh}$ on $\T(S)$ are all equal:   
\begin{conjecture} \label{conj:distances}
For every $X,Y \in \T(S)$ we have: 
$$d_A (X, Y) = d_{Lh} (X,Y)~.$$ 
\end{conjecture}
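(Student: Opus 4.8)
First I would note that the conjecture reduces to a single inequality. By the chain $d_A \leq d_{L\partial} \leq d_{Lh}$ together with Corollary \ref{cor:distances coincide}, which gives $d_A = d_{L\partial}$, we already know $d_A(X,Y) \leq d_{Lh}(X,Y)$ for all $X,Y$. Thus the entire content of Conjecture \ref{conj:distances} is the reverse inequality $d_{Lh}(X,Y) \leq d_A(X,Y)$, i.e. one must realize the optimal Lipschitz constant $e^{d_A(X,Y)}$ by an actual \emph{homeomorphism} isotopic to the identity, rather than merely by the continuous map $\Phi \in \mathrm{Lip}_0(X,Y)$ furnished by Corollary \ref{cor:distances coincide}.

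A first, structural route goes through the doubling isometry. Since $\jmath:(\T(S),d_A)\hookrightarrow(\T(S^d),d_{Th})$ is an isometry onto the symmetric locus and, by Thurston \cite[Theorem 8.5]{Thurston}, $d_{Th}=d_{Lh}$ on the closed (or punctured) double $S^d$, we have
$$ d_A(X,Y) = d_{Th}(X^d,Y^d) = d_{Lh}(X^d,Y^d), $$
and the right-hand side is realized by a homeomorphism $\Psi:X^d\to Y^d$ isotopic to the identity with $\log\Lip(\Psi)=d_A(X,Y)$, obtained as a concatenation of Thurston stretch maps. The plan is then to replace $\Psi$ by a homeomorphism that is \emph{symmetric}, i.e. equivariant for the reflection involution $\sigma$ exchanging the two copies of $S$, so that it descends to a homeomorphism $X\to Y$ of the same Lipschitz constant and yields $d_{Lh}(X,Y)\leq d_A(X,Y)$. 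Both $\Psi$ and its reflected conjugate $\sigma\circ\Psi\circ\sigma$ are isotopic to the identity (conjugation by $\sigma$ fixes the trivial mapping class) and share the Lipschitz constant of $\Psi$, and the natural way to symmetrize two such maps is to average them by the Gu\'eritaud--Kassel trick used throughout this paper.

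Here lies the crux, and the reason the statement is only a conjecture. The averaging trick---and, on $S$ itself, the generalized stretch maps $\phi^t$ on the non-triangular geometric pieces of Lemmas \ref{lemma:triangle}--\ref{lemma:quad}---produces Lipschitz maps that are in general not injective. A symmetric average of $\Psi$ and $\sigma\Psi\sigma$ descends only to a continuous map and therefore merely reproves $d_{L\partial}\leq d_A$, which we already know; equivalently, the doubled generalized stretch map is symmetric by construction but collapses the crowns $C_j$ and the boundary block $B$, failing to be a homeomorphism precisely on the non-triangular pieces. Thus the whole difficulty is injectivity on the crown and boundary-block regions, which have no analogue in Thurston's closed-surface theory, where the optimal maps are the genuinely injective horocyclic stretch maps on ideal triangles.

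To overcome this I would attempt an approximation argument rather than seek an exactly optimal symmetric homeomorphism. Given $\epsilon>0$, I would modify the generalized stretch map on each crown $C_j$ by ``opening up'' the collapsed spikes near the ideal vertices, replacing the non-injective model map by an injective one that agrees with $\phi^t$ away from a thin neighborhood of the cusps and whose Lipschitz constant exceeds $e^t$ by a factor tending to $1$ as that neighborhood shrinks; since the collapsing concentrates in the exponentially thin spike regions, one may hope to pay only an arbitrarily small multiplicative cost. Assembling these local modifications into a global homeomorphism $\Phi_\epsilon\in\mathrm{Lip}_0(X,Y)$ with $\Phi_\epsilon(\partial X)\subset\partial Y$ and $\log\Lip(\Phi_\epsilon)\leq d_A(X,Y)+\epsilon$, and letting $\epsilon\to 0$, would give $d_{Lh}(X,Y)\leq d_A(X,Y)$. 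The hard part will be exactly this quantitative control: keeping the Lipschitz constant within $\epsilon$ of optimal while restoring injectivity near the cusps of the crowns. The averaging construction that makes the optimal \emph{continuous} maps tractable is inherently non-injective, so a genuinely new estimate on the local model of $\phi^t$ near a crown end---and not merely a reapplication of the tools of this paper---appears to be required.
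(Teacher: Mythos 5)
This statement is Conjecture \ref{conj:distances}: the paper does \emph{not} prove it, and explicitly records why it cannot --- since the generalized stretch maps on the non-triangular pieces come from the implicit averaging construction, the authors cannot tell whether they are injective. So there is no paper proof to compare against, and your proposal must stand or fall as a proof on its own; it falls, for the reason you yourself concede at the end. Your reduction is sound: by $d_A \leq d_{L\partial} \leq d_{Lh}$ and Corollary \ref{cor:distances coincide} the whole content is $d_{Lh}(X,Y)\leq d_A(X,Y)$, and your diagnosis of why symmetrizing Thurston's optimal homeomorphism on $X^d$ by the Gu\'eritaud--Kassel average fails (the average need not be injective, so it only reproves $d_{L\partial}\leq d_A$) is exactly the paper's own diagnosis. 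But the decisive step --- producing, for each $\epsilon>0$, a homeomorphism $\Phi_\epsilon\in\mathrm{Lip}_0(X,Y)$ with $\log\Lip(\Phi_\epsilon)\leq d_A(X,Y)+\epsilon$ --- is never constructed, only hoped for, and you acknowledge it would require ``a genuinely new estimate.'' An argument whose key lemma is announced as requiring new ideas is a research plan, not a proof.

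Moreover, the plan mislocates the difficulty, in two ways. First, you assert that the doubled generalized stretch map ``collapses the crowns $C_j$ and the boundary block $B$'': non-injectivity is nowhere established in the paper --- it is merely not excluded, which is a weaker and importantly different situation. Second, and more consequentially for your $\epsilon$-scheme: by Lemmas \ref{lem:horocyclic foliation square} and \ref{lem:horocyclic foliation penta} and the proof of Proposition \ref{prop:extension} (invoked in Lemma \ref{lemma:ratio-maximizing}), $\Phi^t$ \emph{is} a homeomorphism on the support $K$ of the horocyclic foliation $\mathcal{K}$, and $K$ contains neighborhoods of all the spikes of the quadrilateral and pentagonal pieces, where $\Phi^t$ maps horocyclic leaves affinely to horocyclic leaves. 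The region where injectivity is genuinely open is the complement of $K$: the compact unfoliated cores of the quadrilateral and pentagonal pieces, and the hexagonal pieces in their entirety (for a hexagonal piece $\mathcal{K}_i$ is empty by definition). So ``opening up the collapsed spikes near the ideal vertices'' targets precisely the part of the map that is already under control, while the problematic set is compact and carries no exponentially thin region over which a perturbation could be made cheap. Even granting your unproven quantitative estimate near the cusps, the approximation argument as stated would not close the gap; the conjecture remains open exactly where the paper says it does.
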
 
Notice that our generalized stretch map $\Phi^t$ is a homeomorphism if and only if its restriction to each geometric piece $\Phi^t|_{\mathcal G}: \mathcal G \to \mathcal G^t$ is also a homeomorphism. Since our construction of the maps is  not explicit, we cannot  
tell whether they are injective. 

\subsection{Organization of the paper}
This paper is organized as follows. 
In Section \ref{sec:background}, we introduce the main definitions that we use throughout the paper and we give a more detailed account of the theory of asymmetric distances on Techm\"uller spaces. 
In Section \ref{sec:laminations}, we introduce the notion of geodesic laminations, maximal and measured laminations for surfaces with boundary.
In Section \ref{sec:sketch} we give a sketch of the proof of the main theorem (Theorem \ref{theorem:S}), this is a preview of the following sections.
In Section \ref{sec:Lip} we describe an averaging procedure for Lipschitz maps between convex hyperbolic surfaces.
In Section \ref{sec:stretch_1} we use it  to construct optimal Lipschitz maps between the geometric pieces. 
After these preliminary sections, we describe the construction of our generalized stretch lines. 
In Sections \ref{sec:bdry_block} and \ref{sec:triangulated_surface} we construct some auxiliary surfaces (the boundary block and the triangulated surface) and we describe how to stretch them. 
In Section \ref{sec:gsl} we glue the stretched boundary block and part of the stretched triangulated surface together in a suitable way, in order to construct the generalized stretch lines. This will prove Theorem \ref{theorem:S}. 
In Section \ref{sec:geometryT} we prove all the other results stated in the introduction. 

\subsection{Acknowledgements} 

The authors thank Maxime Fortier Bourque, Athanase Papadopoulos, Kasra Rafi, Weixu Su and Dylan Thurston for enlightening conversations.
The authors also thank Indiana University Bloomington for the nice working environment provided during Spring 2016. The authors acknowledge support from U.S. National Science Foundation grants DMS 1107452, 1107263, 1107367 ``RNMS: GEometric structures And Representation varieties'' (the GEAR Network). The second author acknowledges support by the European Research Council under Anna Wienhard's ERC-Consolidator grant 614733 (GEOMETRICSTRUCTURES) and by the Deutsche Forschungsgemeinschaft (DFG, German Research Foundation) under Germany's Excellence Strategy EXC-2181/1 - 390900948 (the Heidelberg STRUCTURES Cluster of Excellence) and under the Priority Program SPP 2026
``Geometry at Infinity'' (DI 2610/2-1).

\section{Background}    \label{sec:background}

In this section, we introduce the main definitions that we use throughout the paper and we give an account of the theory of asymmetric distances on Techm\"uller spaces.  
\subsection{Hyperbolic surfaces}

We start by recalling some basic definitions about hyperbolic surfaces. We denote the \emph{hyperbolic plane} by
$$\mathbb{H}^2 = \{z\in \mathbb{C} \mid \mathrm{Im}(z) >0 \}, $$
endowed with the Riemannian metric 
$$g_{\mathbb{H}^2} = \frac{dx^2 + dy^2}{y^2}, \mbox{ with } z = x + iy. $$\
The \emph{hyperbolic half-plane} is the subset
$$ \{z\in \mathbb{C} \mid \mathrm{Re}(z) \geq 0, \mathrm{Im}(z) >0 \} $$
where the positive $y$-axis is its geodesic boundary. 

\begin{definition}[Hyperbolic surface]
A \emph{hyperbolic surface} is a Riemannian manifold $X$ (possibly with boundary) where every point has a neighborhood isometric to an open subset of the hyperbolic half-plane. A \emph{complete hyperbolic surface} is a hyperbolic surface that is complete as a metric space.
\end{definition}

\begin{definition}[Convex hyperbolic surface] \label{def:convex surface}
A \emph{convex hyperbolic surface} is a connected hyperbolic surface whose universal covering is isometric to a convex subset of the hyperbolic plane. 
\end{definition}

Note that the boundary of a complete hyperbolic surface is a union of geodesics (circles or infinite geodesic lines). Moreover, the connected components of a complete hyperbolic surface are always convex. Examples of complete hyperbolic surfaces are ideal polygons in the hyperbolic plane. 


\begin{definition}[Finite hyperbolic surface]
A \emph{finite hyperbolic surface} is a complete hyperbolic surface with finite volume and compact boundary.
\end{definition}

For such finite hyperbolic surfaces, every boundary component is a closed geodesic, topologically a circle, and every puncture is isometric to a cusp.

\begin{definition}[Local isometry]
A \emph{local isometry} between two hyperbolic surfaces $X$ and $Y$ is a local diffeomorphism such that the pull-back of the metric on $Y$ is equal to the metric on $X$. 
\end{definition}

\begin{remark}
In this paper, we will need to consider many local isometries that are 1-1 but still are not isometric embeddings in the sense of metric spaces, i.e. they do not preserve the distances between pairs of points. This is because their image is not a convex subset of the target surface. 
\end{remark}

\subsection{Teichm\"uller space} In this paper we will denote by $S$ an orientable surface of finite type of genus $g$, with $b\geq 0$ compact boundary components and $p\geq 0$ punctures. The boundary of $S$ will be denoted by $\partial S$. We assume the Euler characteristic $\chi(S) = 2-2g-b-p $ to be negative. 
\begin{definition}[Hyperbolic structure]
A \emph{hyperbolic structure} on $S$ is a pair $(X,m)$, where $X$ is a finite hyperbolic surface, and $m:S \rightarrow X$ is a homeomorphism, called the \emph{marking}. 
\end{definition}
\begin{definition}[Teichm\"uller space]
The \emph{Teichm\"uller space} $\T(S)$ is the space of all hyperbolic structures on $S$ up to isometries that commute with the markings (up to homotopy). We will denote an element $[(X,m)] \in \T(S)$ by $X$ for short.
\end{definition}

The Teichm\"uller space $\T(S)$ is diffeomorphic to $\R^{6g-6+2p+3b}$. 
Let $S^d$ be the surface obtained doubling $S$ along its boundary, and let $\sigma: S^d \to S^d$ be the associated involution. A hyperbolic structure on $S$ can be equivalently defined as a hyperbolic structure on $S^d$ whose isometry group contains $\sigma$. 
\begin{definition}[Doubling embedding]
If $X$ is a hyperbolic structure on $S$, its \emph{double} $X^d$ is the hyperbolic structure on $S^d$ obtained by doubling $X$. This gives an embedding 
$$\T(S) \ni X \hookrightarrow X^d \in \T(S^d).$$
\end{definition}
 
\subsection{Curves and arcs} \label{length}

A simple closed curve in $S$ is \emph{trivial} if is either null-homotopic or homotopic to a puncture of $S$. We will denote by $\C$ the set of homotopy classes of non-trivial simple closed curves on $S$, and by $\mathcal B$ the boundary components of $\partial S$.  We recall that for every $X \in \T(S)$ and for every $\gamma \in \C$, there is a unique $X$-geodesic curve in the homotopy class $\gamma$, which is the shortest curve in $\gamma$. We will define the \emph{length} $\ell_X(\gamma)$ to be the length of this geodesic curve.

A \emph{proper arc} in $S$ is a continuous map $\alpha:[0,1]\rightarrow S$ with $\{\alpha(0), \alpha(1)\} \subset \partial S$. Our arcs are \emph{unoriented}, i.e. we will consider $\alpha(t)$ equivalent to $\alpha(1-t)$ 
A proper arc is \emph{simple} if it is injective.  Two proper arcs are \emph{properly homotopic} if they are connected by a homotopy where the extremes of the arcs never leave $\partial S$ at any time. 
\begin{definition}[Essential arc]
An \emph{essential arc} is a proper arc that is not properly homotopic to a proper arc contained in $\partial S$. We will denote by $\A$ the set of proper homotopy classes of essential simple arcs. Recall that the elements of $\A$ are unoriented. 
\end{definition}
It is well-known that for every $X \in \T(S)$ and for every $\alpha \in \A$, there is a unique $X$-geodesic arc in the proper homotopy class $\alpha$ which is orthogonal to $\partial S$. This arc is the shortest in its proper homotopy class, its length is denoted by $\ell_X(\alpha)$.

\subsection{Five functionals}    \label{subsec:5 functionals}
The length functions $\ell_X(\cdot)$ of curves and arcs can be used to compare two hyperbolic structures and, in some cases, define distances on Teichm\"uller spaces. We will be interested in the following functionals:
\begin{align}
d_{Th}(X,Y) &= \sup_{\gamma \in \mathcal B \cup \C} \log \frac{\ell_Y(\gamma)}{\ell_X(\gamma)}, \\
d_{A}(X,Y) &= \sup_{\delta \in \mathcal{A}\cup \mathcal B \cup \mathcal C} \log \frac{\ell_Y(\delta)}{\ell_X(\delta)}. \label{form:distanceA}
\end{align}

Another natural way to compare two elements $X,Y \in \T(S)$ is to consider Lipschitz maps between them. Let $\mathrm{Lip}_0(X,Y)$ be the set of Lipschitz maps between $X$ and $Y$ that commute with the markings up to homotopy. Denote by $\mathrm{Lip}(\phi)$ the Lipschitz constant of a map $\phi$. We will consider the functionals: 
\begin{align}
d_L(X,Y) &= \inf\{\log \mathrm{Lip}(\phi) \ |\  \phi\in \mathrm{Lip}_0(X,Y)   \}\,; \\
d_{L\partial}(X,Y) &= \inf\{\log \mathrm{Lip}(\phi) \ |\  \phi\in \mathrm{Lip}_0(X,Y), \phi(\partial X) \subset \partial Y  \} \,; \\ 
d_{Lh}(X,Y) &= \inf\{\log \mathrm{Lip}(\phi) \ |\  \phi\in \mathrm{Lip}_0(X,Y), \phi \text{ is a homeomorphism}   \} \,. 
\end{align}

It is immediate that the above five functionals satisfy the following inequalities:
\begin{equation*} 
\begin{tabu}{ccccc}
d_{Th}(X,Y) & \leq & d_L(X,Y)            & \leq & d_{Lh}(X,Y)\\
   \vleq    &      &   \vleq             &      &   \veq\\
d_A(X,Y)    & \leq & d_{L\partial}(X,Y)  & \leq & d_{Lh}(X,Y) ~.
\end{tabu}
\end{equation*}
It is not difficult to see that they all satisfy the triangular inequality
$$d_*(X,Z) \leq  d_*(X,Y) + d_*(Y,Z) ~,$$
and they are not symmetric 
$$\exists X, Y: d_\star (X, Y) \neq d_{\star}(Y, X)\,.$$
A method to produce such $X,Y$ is given in \cite[Section 2]{Thurston}, and works also for surfaces with boundary. 
In the following we will discuss when the axiom of positivity 
$$d_\star (X, Y) \geq 0 \mbox{ and } d_\star(X, Y) = 0 \Leftrightarrow X = Y ~$$ 
holds, that is, the functionals actually define asymmetric distances. We will see that the answer depends on whether $\partial S$ is empty or not. 

\subsection{Closed or punctured surfaces}   \label{subsec:empty boundary} The case of closed or punctured surfaces was the case originally studied by Thurston \cite{Thurston}. He introduced the functionals $d_{Th}$ and $d_{Lh}$, and proved that they satisfy the positivity axiom.
 
One of the main results of \cite{Thurston} is that given $X,Y\in \T(S)$, there exists a homeomorphism $\phi\in \mathrm{Lip}_0(X,Y)$ such that $\log(\mathrm{Lip}(\phi)) = d_{Th}(X,Y)$. This implies 
$$d_{Th} = d_L = d_{Lh} ~.$$
 This distance is usually called \emph{Thurston's asymmetric distance}, or the \emph{Lipschitz distance}. 
A crucial step in the proof is the construction of a special family of lines in $\T(S)$, the \emph{stretch lines}, which are geodesics for the three distances. Given two points on the same stretch line, there is an optimal Lipschitz homeomorphism between them, the so-called \emph{stretch map}. Using these techniques, given two points $X, Y \in \T(S)$ he constructed a geodesic segment between $X$ and $Y$ by concatenating a finite number of such stretch lines. This proves that the Teichm\"uller space with Thurston's asymmetric distance is a geodesic metric space. Thurston also  studied the infinitesimal behaviour of his distance. He proved that it agrees with the asymmetric distance associated with a certain Finsler metric on the Teichm\"uller space.

\subsection{Surfaces with boundary} \label{subsec:non-empty boundary}
In the case of surfaces with non-empty boundary, Thurston's functional $d_{Th}$ does not  
satisfy the axiom of positivity. Indeed, Parlier \cite{Parlier} found two points $X\neq Y \in \T(S)$ with $d_{Th}(X,Y)\leq 0$. Papadopoulos-Th\'eret \cite{PapTher1} found elements $X\neq Y \in \T(S)$ with $d_{Th}(X,Y) < 0$. 

The properties of the functional $d_{Th}$ for surfaces with boundary were studied in detail by Gu\'eritaud-Kassel \cite{Kassel-Gueritaud}, who also introduced the functional $d_L$. They proved that the two functionals are related as follows:
\begin{itemize}
\item if $d_{Th}(X,Y) \geq 0$, then $d_{Th}(X,Y) = d_L(X,Y)$; 
\item if $d_{Th}(X,Y) < 0$, then $d_{L}(X,Y) < 0$. 
\end{itemize} 
They give applications to the theory of affine actions on $\R^3$ and Margulis space-times. 

The functional $d_A$ was introduced by Liu-Papadopoulos-Th{\'e}ret-Su \cite{LiuPapSuTh}. They proved that $d_A$ satisfies the axiom of positivity, therefore it defines an asymmetric distance on $\T(S)$, which they called the \emph{arc distance}. They proved the following: 
\begin{proposition}[Liu-Papadopoulos-Th{\'e}ret-Su {\cite[Corollary 2.8]{LiuPapSuTh}}]  \label{prop:doubling}
The doubling map $(\T(S),d_A) \hookrightarrow (\T(S^d),d_{Th})$ is isometric.
\end{proposition}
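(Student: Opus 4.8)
The plan is to establish the two inequalities $d_A(X,Y)\le d_{Th}(X^d,Y^d)$ and $d_{Th}(X^d,Y^d)\le d_A(X,Y)$ separately, the whole argument being driven by the $\sigma$-symmetry of the doubled structures $X^d$ and $Y^d$.

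First I would record a length dictionary for the doubling. Since the inclusion $X\hookrightarrow X^d$ is a local isometry onto a totally geodesic half bounded by the fixed locus of $\sigma$, an interior simple closed geodesic $\gamma$ and a boundary geodesic $\beta$ stay geodesic of the same length in $X^d$, so $\ell_{X^d}(\gamma)=\ell_X(\gamma)$ and $\ell_{X^d}(\beta)=\ell_X(\beta)$. For an essential arc $\alpha\in\A$ the geodesic representative is orthogonal to $\partial X$, hence $\alpha\cup\sigma(\alpha)$ closes up into a smooth $\sigma$-invariant closed geodesic $\hat\alpha$ of $X^d$ with $\ell_{X^d}(\hat\alpha)=2\,\ell_X(\alpha)$; the same holds with $Y$ in place of $X$. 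Using this, every ratio occurring in the supremum defining $d_A(X,Y)$ equals a curve ratio on $S^d$: for curves and boundary components the ratios are literally equal, and for arcs the factor $2$ cancels, $\ell_{Y^d}(\hat\alpha)/\ell_{X^d}(\hat\alpha)=\ell_Y(\alpha)/\ell_X(\alpha)$. As these curves form a subfamily of all curves on $S^d$, taking suprema yields $d_A(X,Y)\le d_{Th}(X^d,Y^d)$.

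For the reverse inequality I would pass to Thurston's lamination formulation: $e^{d_{Th}(X^d,Y^d)}=\sup_{\mu}\ell_{Y^d}(\mu)/\ell_{X^d}(\mu)$ over measured geodesic laminations $\mu$ on $S^d$, where by \cite{Thurston} the supremum is attained and there is a canonical maximal \emph{maximally stretched} lamination $\Lambda=\Lambda(X^d,Y^d)$ supporting a ratio-maximizing measure. Because $\sigma$ is an isometry of both $X^d$ and $Y^d$, the length-ratio functional is $\sigma$-invariant, so $\sigma$ preserves the canonically defined $\Lambda$; replacing a maximizing measure $\mu$ by $\mu+\sigma_*\mu$ produces a genuinely $\sigma$-invariant ratio-maximizing measured lamination $\mu^\ast$ (both summands maximize and are supported on $\Lambda$, so by additivity of length the sum still maximizes). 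A $\sigma$-invariant lamination cannot meet the fixed geodesic $\partial S$ transversally at a non-right angle, since its image leaf would then cross it and violate disjointness of leaves; hence $\mu^\ast$ is orthogonal to $\partial S$ and descends to a measured lamination on $S$ made of closed leaves and boundary-orthogonal arcs, with $\ell_{X^d}(\mu^\ast)=2\,\ell_X(\mu^\ast|_S)$ and likewise for $Y$. The factor $2$ cancels once more, and approximating $\mu^\ast|_S$ by weighted multicurves-and-arcs together with the mediant inequality bounds its ratio by $\sup_{\delta\in\A\cup\mathcal B\cup\C}\ell_Y(\delta)/\ell_X(\delta)=e^{d_A(X,Y)}$. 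This gives $d_{Th}(X^d,Y^d)\le d_A(X,Y)$, and combined with the previous paragraph proves that the doubling map is isometric.

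The hard part will be the reverse inequality, and within it the claim that a ratio-maximizing lamination can be chosen $\sigma$-invariant: this relies on the canonicity and naturality of Thurston's maximally stretched lamination under isometries of the pair $(X^d,Y^d)$, and on the orthogonality forced on symmetric leaves along the fixed locus. Establishing the precise correspondence between $\sigma$-invariant measured laminations on $S^d$ and measured laminations with boundary-orthogonal leaves on $S$ — isometric up to the global factor $2$, together with the continuity needed to approximate by multiarcs in the final mediant estimate — is the technical heart of the argument; the two length identities and the supremum comparisons are then routine.
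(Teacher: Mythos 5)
The paper does not actually prove this proposition: it is imported verbatim from Liu--Papadopoulos--Su--Th\'eret \cite[Corollary 2.8]{LiuPapSuTh}, so there is no internal proof to compare against; your argument is, in substance, a correct reconstruction of the argument in that reference, which likewise gets the easy inequality by matching $\A\cup\mathcal B\cup\C$ with $\sigma$-symmetric closed geodesics on $S^d$ (with the factor $2$ cancelling for doubled arcs), and the hard inequality by symmetrizing a maximizing measured lamination on the double. Your key steps are sound: the supremum for $d_{Th}(X^d,Y^d)$ is attained on $\mathcal{ML}(S^d)$ by \cite{Thurston}; the supports of $\mu$ and $\sigma_*\mu$ are both contained in the largest ratio-maximizing lamination (cf.\ Proposition \ref{prop:ratio of ratio maximizing}), which is exactly what guarantees they do not cross, so $\mu+\sigma_*\mu$ is a genuine measured lamination; and the orthogonality dichotomy is right, since a leaf crossing the fixed geodesic non-orthogonally would meet its mirror leaf transversally, while an orthogonal crossing forces the leaf to equal its own mirror.

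Two details in your write-up need (easy) repair. First, the identity $\ell_{X^d}(\mu^\ast)=2\,\ell_X(\mu^\ast|_S)$ fails if $\mu^\ast$ carries an atom on a component of the fixed locus: such a leaf is \emph{not} doubled, and its $X^d$-length equals its $X$-length. The fix is to split $\mu^\ast$ into the part supported on the fixed locus and the part meeting it orthogonally or not at all; the mediant inequality bounds the ratio of the sum by the larger of the two ratios, the first being a $\mathcal B$-ratio and the second obeying your factor-$2$ cancellation. Second, $\mu^\ast|_S$ is not in general ``made of closed leaves and boundary-orthogonal arcs'': the correct structure statement is finitely many weighted boundary-orthogonal arcs plus boundary atoms plus a compactly supported lamination in the interior, which may well be minimal irrational. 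This costs you nothing, because your final approximation step (density of weighted multicurves-and-arcs in $\mathcal{ML}(S)$, continuity of length, mediant inequality) is designed for arbitrary laminations --- indeed, within this paper you could replace that step by a direct appeal to Theorem \ref{thm:sup on laminations}, which says precisely that every nonzero $\mu\in\mathcal{ML}(S)$ has ratio at most $e^{d_A(X,Y)}$. With those two adjustments your proposal is a complete and correct proof of the cited result, not merely a sketch.
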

This proposition will be useful for the present work: whenever possible, we use doubling arguments to reduce our questions to well understood questions about surfaces without boundary. In any case, we remark that it is not possible to construct many geodesics for the arc distance using just doubling arguments.  
Other properties of the distance $d_A$ were studied in \cite{ALPS,PapTher2,PapYam}. 

The functional $d_{L\partial}$ is introduced in this paper in order to interpolate between $d_A, d_{Lh}$: 
$$d_A(X,Y) \leq d_{L\partial}(X,Y) \leq d_{Lh}(X,Y)\,.$$
Here we prove that $d_A = d_{L\partial}$ (Corollary \ref{cor:distances coincide}), that the Teichm\"uller space with this distance is a geodesic metric space (Theorem \ref{thm:geodesic space}) and that this distance is induced by a Finsler metric (Corollary \ref{cor:finsler space}).

\section{Geodesic laminations for surfaces with boundary}  \label{sec:laminations}

In this section, we will review the main definitions and results about geodesic laminations for surfaces with boundary. 

\subsection{Maximal laminations}

Let $X \in \T(S)$. A \emph{geodesic lamination} on $X$ is a (possibly empty) closed subset $\lambda \subset X$ that is a union of pairwise disjoint simple $X$-geodesics, called the \emph{leaves} of $\lambda$, satisfying the following additional condition: if a leaf of $\lambda$ intersects the boundary, then the leaf must be a boundary component, or it intersects the boundary orthogonally.  

It is well-known that geodesic laminations do not depend on the hyperbolic structure $X$, but only on the topological surface $S$. More precisely, given $X, Y \in \T(S)$, for every geodesic lamination $\lambda$ on $X$, there exists a unique geodesic lamination $\lambda'$ on $Y$ and a homeomorphism $f:X\rightarrow Y$ consistent with the markings that maps $\lambda$ to $\lambda'$. 
 In light of this, we will often consider geodesic laminations as topological objects on $S$, without specifying the underlying hyperbolic structure. 
 
A \emph{sublamination} $\lambda'$ of a geodesic lamination $\lambda$ is a closed subset $\lambda' \subset \lambda$ that is itself a geodesic lamination. A \emph{maximal lamination} is a geodesic lamination that is maximal with respect to inclusion, that is, it is not a sublamination of a strictly larger geodesic lamination. When $\partial X = \emptyset$ a maximal lamination decomposes $X$ into finitely many ideal triangles. When $\partial X \neq \varnothing$ a maximal lamination also decomposes $X$ into finitely many pieces, in general not always triangles. We will classify them in Proposition \ref{prop:lamination}. 

Let $\lambda$ be a geodesic lamination on $X$. The \emph{double} of $\lambda$ is the lamination $\lambda^d$ on $X^d$ obtained by doubling $\lambda$. Note that $\lambda^d$ is maximal if and only if $\lambda$ is maximal and does not contain leaves orthogonal to $\partial X$.  

\begin{definition}[Geometric piece]
A \emph{geometric piece} is a polygon in $\mathbb H^2$ as in Figure \ref{geom_pieces}:
\begin{itemize}
\item an ideal triangle, called \emph{triangular piece}; 
\item a right-angled quadrilateral with two consecutive ideal vertices, called \emph{quadrilateral piece}; 
\item a right-angled pentagon with one ideal vertex, called \emph{pentagonal piece}; 
\item a right-angled hexagon, called \emph{hexagonal piece}.  
\end{itemize}

\begin{figure}[htbp]
\begin{center}
\psfrag{s1}{$l_1$}
\psfrag{s2}{$l_2$}
\psfrag{s3}{$l_3$}
\psfrag{A1}{$a_1$}
\psfrag{A2}{$a_2$}
\psfrag{A3}{$a_3$}
\psfrag{A}{$a_1$}
\psfrag{a}[b]{$l_1$}
\psfrag{b}{$l_2$}
\psfrag{c}{$l_3$}
\includegraphics[width=8cm]{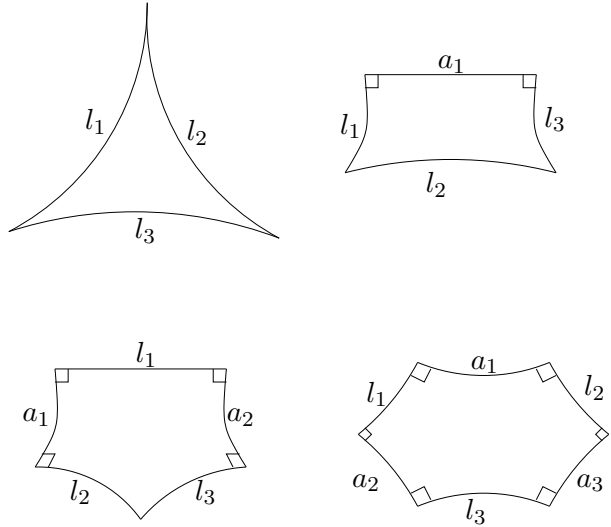}
\caption{The four geometric pieces: the edges $a_i$ correspond to segments in $\partial X$, the edges $l_i$ correspond to leaves of $\lambda$.}\label{geom_pieces}
\end{center}
\end{figure}
\end{definition}

\begin{proposition}\label{prop:lamination}
If $\lambda$ is a maximal lamination on $X$, then $X \setminus \lambda$ has $2|\chi(S)| = 4g-4+2p+2b$ connected components. Each connected component is locally isometric to the interior of a geometric piece, where the edges labeled $a_i$ correspond to segments in $\partial X$ and edges labeled $l_i$ correspond to leaves of $\lambda$ (see Figure \ref{geom_pieces}).
\end{proposition}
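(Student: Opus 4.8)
The plan is to prove Proposition \ref{prop:lamination} by combining a counting argument for the number of complementary regions with a local analysis of what each region can look like. First I would establish the count $2|\chi(S)|$ via the doubling construction, which the excerpt has already set up and which converts the problem into the well-known closed/punctured case. Specifically, recall that when $\partial X = \emptyset$ a maximal lamination cuts $X$ into ideal triangles, and a standard Euler characteristic count shows there are exactly $2|\chi|$ of them (each ideal triangle contributes $\chi = +1/2$ after the usual accounting, or equivalently $\#\text{triangles} = -2\chi = 4g-4+2p$). For a surface with boundary, I would pass to the double $X^d$, which is a closed/punctured surface with $\chi(S^d) = 2\chi(S)$, and relate the complementary regions of $\lambda$ in $X$ to those of an auxiliary maximal lamination on $X^d$. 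The subtle point is that $\lambda^d$ need \emph{not} be maximal on $X^d$ (precisely when $\lambda$ has leaves orthogonal to $\partial X$, as the excerpt notes), so I would either complete $\lambda^d$ to a maximal lamination on $X^d$ or do the count directly on $X$.

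The cleaner route for the count is to compute directly on $X$ using a Gauss--Bonnet / Euler characteristic argument on the cut-open surface. Cutting $X$ along all the leaves of $\lambda$ yields a disjoint union of the complementary regions $\mathring{\G}_i$; each is a hyperbolic polygon (finite-area, with some ideal vertices and some right-angled corners where leaves meet $\partial X$). The key identity is that $\chi(X) = \sum_i \chi(\G_i)$ suitably interpreted, combined with the fact that each geometric piece, being a disk, has $\chi = 1$ but the gluing along leaves identifies edges in pairs. I expect the most efficient bookkeeping to use the formula $\text{Area}(X) = -2\pi\chi(S)$ together with the area of each geometric piece: an ideal triangle has area $\pi$, and by Gauss--Bonnet each of the four piece-types has area exactly $\pi/2 \cdot(\text{number of ideal vertices plus appropriate boundary contribution})$, so that every piece contributes area $\pi$ after normalization, yielding $-2\pi\chi / \pi = 2|\chi(S)|$ regions. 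I would verify that each of the four polygon types has area $\pi$ (triangle: $\pi$; quadrilateral with two ideal vertices and two right angles: $2\pi - (0+0+\pi/2+\pi/2) = \pi$; pentagon with one ideal vertex and four right angles: $3\pi - 4\cdot\pi/2 = \pi$; hexagon with six right angles: $4\pi - 6\cdot\pi/2 = \pi$), which simultaneously confirms the count and motivates the classification.

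For the classification into exactly these four types, I would argue locally. Let $\mathring{\G}$ be a connected component of $X \setminus \lambda$; its metric completion $\G$ is a complete finite-area convex hyperbolic surface whose boundary consists of leaves of $\lambda$ (infinite geodesics), arcs of $\partial X$ (compact geodesic segments), and ideal points. Maximality of $\lambda$ forces $\G$ to be a polygon with no interior leaves and with the combinatorial structure as tight as possible: each boundary vertex of $\G$ is either an ideal vertex (where two leaves, or a leaf and a boundary arc, spiral together at infinity) or a right-angled vertex (where a leaf meets $\partial X$ orthogonally, by the orthogonality condition in the definition of a geodesic lamination). Since leaf-edges and boundary-arc-edges must alternate around $\G$ in a constrained way — two leaf-edges meet only at an ideal vertex, a leaf and a boundary arc meet only at a right angle, and the universal cover is convex — a short case analysis on the cyclic sequence of edge-types, bounded by the area $=\pi$ constraint, leaves exactly the four possibilities: $(l,l,l)$, $(l,a,l,a)$, $(l,a,l,a,l)$, $(l,a,l,a,l,a)$, which are the triangular, quadrilateral, pentagonal, and hexagonal pieces respectively.

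The main obstacle I anticipate is handling the completion and the ideal/right-angled vertex structure rigorously: I must rule out degenerate or non-compact components (e.g., a region that is an infinite strip, or one that spirals infinitely around a boundary geodesic without closing up into a finite polygon), and confirm that maximality genuinely forbids any component with more than three ``free'' ideal vertices or any uncut bigon-like piece. I would address this by invoking finiteness of the lamination (from $X$ being a finite hyperbolic surface, so $\lambda$ has finitely many leaves and the complement has finitely many components, each of finite area), and by using the orthogonality hypothesis in the definition of geodesic lamination to pin down that every intersection with $\partial X$ is a genuine right angle, thereby discretizing the corner data and reducing the classification to the finite combinatorial case check above.
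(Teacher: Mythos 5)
Your overall architecture (first classify the complementary pieces, then count them by the area formula $\mathrm{Area}(X)=2\pi|\chi(S)|$ with each piece of area $\pi$) is the same as the paper's, and your Gauss--Bonnet computations for the four types are correct. However, the classification step has a genuine gap: you propose to bound your ``short case analysis on the cyclic sequence of edge-types'' by the ``area $=\pi$ constraint'', but area $\pi$ is a \emph{consequence} of the classification, not an input, so this is circular. Nothing in your argument excludes, say, a right-angled octagon (area $2\pi$) or an ideal quadrilateral with four spikes as a complementary region. The missing idea --- and the heart of the paper's proof --- is the maximality mechanism that bounds the combinatorics: given any two of the ``ends'' of a piece (two spikes, a spike and a segment of $\partial X$, or two segments of $\partial X$), there is a simple complete geodesic inside the piece joining them, orthogonal to $\partial X$ where it meets it and disjoint from $\lambda$; since the definition of geodesic lamination used here allows leaves hitting the boundary orthogonally, maximality forces this geodesic to already be a boundary edge of the piece. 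This is exactly what yields $s+n\leq 3$ (with $s$ the number of ideal vertices and $n$ the number of boundary segments), and then $s+n=3$, giving precisely the four types of Figure \ref{geom_pieces}. Your vague appeal to maximality forbidding ``more than three free ideal vertices'' needs to be replaced by this concrete construction.

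There is a second gap in the topological step. You assert that the metric completion of a component is a polygon and defer non-disk components to an ``obstacle'' paragraph, but your proposed fix --- that $X$ finite implies $\lambda$ has finitely many leaves --- is false: a geodesic lamination on a finite hyperbolic surface can have uncountably many leaves (e.g.\ when $\lambda$ contains a minimal irrational sublamination), and finiteness of the set of complementary components is itself a byproduct of the area count you are trying to establish. The paper handles this directly: a component contains no essential simple closed curve (otherwise $\lambda$ could be extended by adding one), so it is a disk, a cylinder, or a pair of pants; and if it had two or more ends, one could add a new leaf of $\lambda$ joining two of them, again contradicting maximality --- this also kills the annular ``crown-like'' regions you worry about, which do occur for non-maximal laminations. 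With that argument in place, your alternation-and-convexity analysis plus the $s+n=3$ bound completes the classification, and the area count then gives $2|\chi(S)|$ components as you indicate.
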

\begin{proof}
Let $C$ be a connected component of $X \setminus \lambda$. Note that $C$ contains no essential simple closed curve, otherwise we could extend $\lambda$ further by adding such a curve. Hence there are three possibilities for the topology of $C$: a pair of pants, a cylinder or a disk. If $C$ were a cylinder or a pair of pants, it would have $2$ or $3$ ends, but this is impossible: every end of $C$ contains a spike, or a segment of $\partial X$, or a simple closed curve in $X$ coming from a leaf of $\lambda$ or an entire boundary component in $\partial X$. If there is more than one end, we could add one more leaf to $\lambda$ joining two of them. 

We conclude that $C$ is topologically a disk, i.e. it is isometric to a hyperbolic polygon whose boundary contains segments in $\partial X$ or leaves of $\lambda$ and whose vertices are right-angled or ideal. Denote by $s$ the number of its ideal vertices and by $n$ the number of segments in $\partial X$. Given two spikes, a spike and a segment, or two segments, we can join them with a geodesic perpendicular to $\partial X$ that, by maximality, must lie in the boundary of $C$. Hence $s+n \leq 3$, and it must be $s+n=3$. Now we can see that the possibilities are $s=3, n=0$ (ideal triangle), $s=2, n=1$ (quadrilateral), $s=1, n=2$ (pentagon), $s=0, n=3$ (right-angled hexagon) as in Figure (\ref{geom_pieces}). 

Now we count the number of connected components in $X \setminus \lambda$. Since each one has area $\pi$ and the surface has area $2\pi|\chi(S)|$, we find  $2|\chi(S)|$ connected components. 
\end{proof}

\begin{proposition}
Every lamination $\lambda$ can be extended to a maximal lamination by adding finitely many leaves.
\end{proposition}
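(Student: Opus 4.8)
The plan is to recast maximality as a condition on the complementary regions and then to reduce everything to cutting one region at a time. First I would note that $\lambda$ is maximal if and only if every connected component of $X\setminus\lambda$ is isometric to the interior of one of the four geometric pieces of Proposition \ref{prop:lamination}. Indeed, if some component is not a geometric piece, then the dichotomy used in the proof of that proposition produces a leaf that can be adjoined (an essential simple closed geodesic, or a geodesic orthogonal to $\partial X$ joining two spikes/boundary segments), so $\lambda$ is not maximal; conversely, inside a geometric piece every relevant common perpendicular or ideal-vertex-joining geodesic is already one of its boundary leaves, so no simple geodesic disjoint from $\lambda$ fits inside, and nothing can be added. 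Thus it suffices to show that each component of $X\setminus\lambda$ can be cut into geometric pieces by adjoining only finitely many disjoint leaves.

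Next I would set up the termination and the counting. Passing to the metric completion $\hat C$ of a component $C$, one obtains a complete finite-area hyperbolic surface whose boundary is a union of arcs of $\partial X$ and leaves of $\lambda$, meeting orthogonally or at ideal vertices; by Gauss--Bonnet every such non-degenerate region has area at least $\pi$ (the common value for all four geometric pieces). Since $\mathrm{Area}(X)=2\pi|\chi(S)|$ and $\lambda$ has zero area, there are at most $2|\chi(S)|$ components, and the same bound on the number of regions persists at every stage of the construction. To cut a single region I would iterate the construction of Proposition \ref{prop:lamination}: as long as $\hat C$ is not a geometric piece, adjoin an essential simple closed geodesic or an orthogonal arc as above. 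The leaves adjoined in this way are disjoint, essential, and may be taken pairwise non-homotopic; any such family of disjoint pairwise non-isotopic essential arcs and simple closed curves on a finite-type surface has cardinality bounded in terms of $\chi(S)$, so the process stops after finitely many additions, terminating, by Proposition \ref{prop:lamination}, with exactly $2|\chi(S)|$ geometric pieces.

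The hard part will not be the counting but the bookkeeping at the level of completions. The completion $\hat C$ may glue distinct boundary leaves of $\lambda$ together---a single leaf can border $C$ on both sides, and crown-type ends can appear---so I must verify that each geodesic produced inside $\hat C$ descends to an \emph{embedded} simple geodesic in $X$ that is disjoint from $\lambda$ and from the previously adjoined leaves, that it meets $\partial X$ orthogonally, and that adjoining these finitely many geodesics to the closed set $\lambda$ again yields a closed set, hence a genuine geodesic lamination in the sense of Section \ref{sec:laminations}. An alternative route would be to double, extend $\lambda^d$ on $X^d$ using the classical extension result for closed surfaces in a $\sigma$-equivariant form, and descend; but controlling the symmetry together with the orthogonality condition along $\partial X$ makes the direct argument above the cleaner one.
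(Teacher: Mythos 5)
Your strategy is essentially the paper's (iterate the dichotomy from the proof of Proposition \ref{prop:lamination}, with an area/Euler-characteristic count), but as written it has a concrete gap: the list of leaves you allow yourself to adjoin --- essential simple closed geodesics and geodesics ``orthogonal to $\partial X$'' --- is not sufficient to reach a maximal lamination. (Note also that a geodesic joining two spikes is ideal at both ends, not orthogonal to $\partial X$.) Since no geometric piece has a smooth closed geodesic among its sides, whenever a complementary region has an end which is a closed geodesic --- a closed leaf of $\lambda$, a boundary circle of $X$ met by no leaf, or indeed one of the simple closed geodesics you adjoin in your own first step --- the region can only be cut further by \emph{bi-infinite leaves spiraling onto that geodesic}. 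Your iteration stalls at such regions: for instance, if $\lambda$ consists of a single interior closed geodesic $\gamma$, a region adjacent to $\gamma$ (say a crown, after some cutting) contains no essential simple closed geodesic and no arc meeting $\partial X$, yet it is not a geometric piece. The same omission breaks your termination argument: spiraling leaves are neither essential arcs nor simple closed curves, so they are not counted by ``disjoint and pairwise non-isotopic'' (a maximal extension typically uses several disjoint leaves spiraling onto the same closed geodesic, one per spike of the adjacent crown). Finiteness should instead be extracted from your area bound together with the observation that each added leaf either disconnects a region or strictly simplifies its topology, both quantities being bounded in terms of $\chi(S)$.

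For comparison, the paper's proof runs in the two stages you envisage --- first finitely many simple closed curves so that every region is a disk, a cylinder or a pair of pants, then finitely many further leaves subdividing these --- and the subdividing leaves there implicitly include the spiraling ones and the leaves running into cusps and spikes; your proposal is the only place where the allowed leaf types are pinned down incorrectly. Two smaller remarks: your Gauss--Bonnet bound (area at least $\pi$ per region, hence at most $2|\chi(S)|$ regions at every stage) is correct, but it uses that right-angled corners come in pairs along each side contained in $\partial X$, which rules out, e.g., right-angled pentagonal regions of area $\pi/2$; you should say this. And the completion/embeddedness issues you flag in your last paragraph are genuine but routine, and are not where the argument is at risk.
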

\begin{proof}
If $\lambda$ is not maximal then $X \setminus \lambda$ is a finite union of finite-area connected subsurfaces with boundary. Up to extending $\lambda$ with finitely many simple closed 
curves, we can assume 
that each connected component is either a disk, a cylinder or a pair of pants. Since the area of each piece is $\pi$, each of its boundary component is a finite polygonal with ideal or right-angled vertices. It can thus be further subdivided with at most finitely many simple essential arcs. 
\end{proof}

\subsection{Transverse measures} Let $\lambda$ be a geodesic lamination on $X$, and $k$ be an arc transverse to $\lambda$. A \emph{transverse isotopy} of $k$ is an isotopy that preserves the transversality of $k$ and such that the endpoints of $k$ either remain in the complement of $\lambda$ or remain in the same respective leaves during the entire isotopy. A \emph{transverse measure} on $\lambda$ is a function $\mu$ that associates to every arc in $X$ transverse to $\lambda$ a measure $\mu_k$ on $k$ satisfying the following conditions: 
\begin{itemize}
\item $\mu$ is invariant under transverse isotopies of arcs; 
\item if $k \subset k'$ then $\mu_{k}= {\mu_{k'}}|_{k}$;
\item $\mathrm{supp}(\mu_k)= k \cap \lambda$.
\end{itemize}

A geodesic lamination is \emph{compactly supported} if it is contained in a compact subset of $X$. A geodesic lamination is \emph{measurable} if it is compactly supported and it admits a transverse measure. If two sublaminations of a geodesic lamination are both measurable then their union is also measurable. The largest measurable sublamination of a geodesic lamination is called its \emph{stump}. The stump can possibly be empty. 

\subsection{Measured laminations} A \emph{measured lamination} is a pair given by a compactly supported geodesic lamination and a transverse measure on it. The space of measured laminations on $S$ is denoted by $\mathcal{ML}(S)$. It is a topological space homeomorphic to $\R^{6g-6+2p+3b}$ by \cite[Proposition 3.9]{ALPS}. Two measured laminations are \emph{projectively equivalent} if their underlying geodesic laminations coincide and their transverse measures differ by multiplication by a positive real number. A \emph{projectivized measured lamination} is a projective equivalence class of non-trivial measured laminations. The space of projectivized measured laminations on $S$ will be denoted by $\mathcal{PML}(S)$. It is homeomorphic to a sphere $\bbS^{6g-7+2p+3b}$ by \cite[Proposition 3.9]{ALPS}.
The simplest examples of measured laminations are given by the elements of  $\A \cup \mathcal{B} \cup \C$. Each becomes a measured lamination once it is endowed with the \emph{counting measure}, that is, the measure that counts the number of intersection points with a transverse arc. 


The length function $\ell$ from Section \ref{length} continuously extends to $\mathcal{ML}(S)$ (see \cite{ALPS}): 
$$ \ell : \T(S) \times \mathcal{ML}(S) \ni (X, \mu) \longrightarrow \ell_X(\mu) \in \mathbb R_{+}\,. $$

The arc distance $d_A$ in (\ref{form:distanceA})  
can be also computed as follows.
\begin{theorem}
[{\cite[Proposition 3.3]{ALPS}}]    \label{thm:sup on laminations}
The following holds: 
\begin{align}\label{form:distanceA2}
d_{A} (X,Y) = \max_{\mu \in \mathcal{ML}(S)\setminus \{\emptyset\}} \log\frac{\ell_Y(\mu)}{\ell_X(\mu)} = \max_{[\mu] \in \mathcal{PML}(S)} \log \frac{\ell_Y(\mu)}{\ell_X(\mu)}~.\end{align} 
\end{theorem}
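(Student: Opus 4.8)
The plan is to reduce both equalities to three ingredients already available: the homogeneity of the extended length function, the compactness of $\mathcal{PML}(S)$, and the density of weighted simple arcs and curves. First I would record that $\ell_X(c\mu) = c\,\ell_X(\mu)$ for every $c>0$ and every $\mu \in \mathcal{ML}(S)$, and that $\ell_X(\mu)>0$ whenever $\mu \neq \emptyset$. Homogeneity makes the ratio $\ell_Y(\mu)/\ell_X(\mu)$ invariant under rescaling the transverse measure, so it descends to a well-defined function of the projective class $[\mu]$; this at once identifies the supremum over $\mathcal{ML}(S)\setminus\{\emptyset\}$ with the supremum over $\mathcal{PML}(S)$, giving the right-hand equality in the statement.

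Next I would upgrade these suprema to maxima. Since $\ell$ is continuous and strictly positive off the empty lamination, the function $[\mu] \mapsto \log\!\big(\ell_Y(\mu)/\ell_X(\mu)\big)$ is continuous on $\mathcal{PML}(S)$, which is homeomorphic to the sphere $\bbS^{6g-7+2p+3b}$ and therefore compact; a continuous function on a compact space attains its maximum. This justifies the notation $\max$ and produces a projective class $[\mu_0]$ realizing the value $M := \max_{[\mu]} \log(\ell_Y(\mu)/\ell_X(\mu))$. One inequality with $d_A$ is now immediate: every $\delta \in \A \cup \mathcal B \cup \C$, endowed with its counting measure, is a non-trivial measured lamination and hence a point of $\mathcal{PML}(S)$, so the defining supremum of $d_A(X,Y)$ runs over a subset of $\mathcal{PML}(S)$ and is bounded by $M$.

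For the reverse inequality I would use that the projective classes of the elements of $\A \cup \mathcal B \cup \C$ are dense in $\mathcal{PML}(S)$. Taking a sequence $[\delta_n] \to [\mu_0]$ with $\delta_n \in \A \cup \mathcal B \cup \C$, the continuity established above gives $\log(\ell_Y(\delta_n)/\ell_X(\delta_n)) \to M$; as each term is at most $d_A(X,Y)$, we obtain $M \le d_A(X,Y)$, and combined with the previous paragraph this yields $d_A(X,Y) = M$, completing all three identifications.

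The one substantive input is the density used in the last step. For closed or punctured surfaces it is the classical density of simple closed curves in $\mathcal{PML}$; in the bordered case one must also incorporate essential arcs, which rests on the structural description of $\mathcal{ML}(S)$ as the completion of the weighted objects in $\A \cup \mathcal B \cup \C$ — the same description underlying the homeomorphisms $\mathcal{ML}(S) \cong \R^{6g-6+2p+3b}$ and $\mathcal{PML}(S) \cong \bbS^{6g-7+2p+3b}$ recorded above. The hard part will therefore be this density statement; once it is in hand, the theorem reduces to the short continuity-and-compactness argument outlined here.
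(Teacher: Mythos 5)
This statement is not proved in the paper at all: it is imported verbatim as \cite[Proposition 3.3]{ALPS}, so there is no in-paper argument to compare against. Your reconstruction is the standard one (and, as far as the cited source goes, essentially the intended one): homogeneity of $\ell$ reduces $\mathcal{ML}(S)\setminus\{\emptyset\}$ to $\mathcal{PML}(S)$; continuity of the extended length function (which the paper does record, citing \cite{ALPS}) together with positivity of $\ell_X(\mu)$ for $\mu\neq\emptyset$ and compactness of $\mathcal{PML}(S)\cong \bbS^{6g-7+2p+3b}$ turns the supremum into a maximum; the inequality $d_A\leq M$ is immediate since each $\delta\in\A\cup\mathcal B\cup\C$ with its counting measure is a point of $\mathcal{ML}(S)$; and the reverse inequality is where all the content sits.

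The one soft spot is exactly the step you flag, but your proposed source for it does not quite deliver what you use. The ``structural description of $\mathcal{ML}(S)$'' (train-track or Dehn--Thurston-type coordinates, as behind the homeomorphisms $\mathcal{ML}(S)\cong\R^{6g-6+2p+3b}$) gives density of \emph{rational} points, i.e.\ projective classes of weighted \emph{multi}-curves-and-arcs $\mu=\sum_i c_i\delta_i$, not of single elements of $\A\cup\mathcal B\cup\C$. Single-element density is a strictly stronger statement (classical for closed surfaces, and plausible but not free in the bordered, arcs-included setting), and you should either cite it properly or avoid it. The cheap repair is the mediant inequality: for $\mu=\sum_i c_i\delta_i$ one has $\frac{\ell_Y(\mu)}{\ell_X(\mu)}=\frac{\sum_i c_i\,\ell_Y(\delta_i)}{\sum_i c_i\,\ell_X(\delta_i)}\leq \max_i \frac{\ell_Y(\delta_i)}{\ell_X(\delta_i)}$, so the supremum over rational laminations already equals the supremum over single elements of $\A\cup\mathcal B\cup\C$, and density of rational points suffices to conclude $M\leq d_A(X,Y)$. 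With that one-line substitution your argument is complete; as written, the last step rests on a density claim stronger than the one your cited input justifies.
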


Note that the supremum over $\A \cup \mathcal B \cup \C$ in (\ref{form:distanceA})  
is now replaced by a maximum over $\mathcal{PML}(S)$ in (\ref{form:distanceA2}).  
The measured laminations where the maximum is achieved are called \emph{ratio-maximizing measured laminations} (see Subsection \ref{subsec:geodesics}).

\subsection{Generalities about Bonahon-Thurston's cocycles and cataclysms}\label{Bonahon}

In this subsection, we will recall some basics facts about the shearing cocycles of a hyperbolic structure, following Bonahon \cite{Bonahon_pleated,Bonahon_holder}. This tool will be essential for us later, in Section \ref{sec:S_A}. 

In \cite{Bonahon_pleated}, Bonahon works for most of the paper under the hypothesis that the surface is closed. At the end, in Section 12.2 and 12.3, he explains how to extend his results to the case of a finite hyperbolic surface. Notice that in Bonahon's setting, laminations are not allowed to hit the boundary orthogonally, as we allow here.
Hence a maximal lamination decompose the surface into triangles. 

For the rest of this subsection, we fix a maximal lamination $\lambda$ on $S$ that does not hit the boundary of $S$ orthogonally. A \emph{transverse cocycle} for $\lambda$ can be thought of as a finitely additive signed measure for $\lambda$. 

\begin{definition}[Transverse cocycle] 
A \emph{transverse cocycle} for $\lambda$ is a map associating a number $\alpha(k) \in \mathbb{R}$ to each unoriented arc $k$ transverse to $\lambda$ such that $\alpha$ is \emph{additive}, and $\alpha$ is $\lambda$-\emph{invariant} (see \cite{Bonahon_holder} for more details). A transverse cocycle $\alpha$ satisfies the \emph{cusp condition} if every simple closed curve transverse to $\lambda$ and going once around a puncture of $S$ has zero total measure for $\alpha$. We denote by $H^0(\lambda; \mathbb R)$ the set of all transverse cocycles for $\lambda$ satisfying the cusp condition. 
\end{definition}

We will represent transverse cocycles using \emph{train tracks}, see Bonahon \cite[Section 3]{Bonahon_holder} for the definition.  
Following \cite{Bonahon_holder}, a train track $\tau$ \emph{snugly carries} a geodesic lamination $\lambda$ if $\tau$ carries $\lambda$, if $\lambda$ meets every tie of $\tau$, and if there is no curve carried by $\tau$ which is disjoint from $\lambda$ and which joins an endpoint of a spike of $S \setminus \tau$ to another one. Every train track which carries $\lambda$ can be transformed into one that snugly carries $\lambda$ after a finite sequence of splittings. 

\begin{definition}[Switch condition]
Let $\tau$ be a train track, with set of edges $E_\tau$. A function $\alpha:E_\tau \rightarrow \R$ is said to \emph{satisfy the switch relations} if the following condition holds: for every switch $v$ of $\tau$, if $e_1, \ldots , e_m$ are the edges arriving on one side of $v$ and $f_1, \ldots , f_n$ are the edges arriving on the other side, then 
$$\sum_{i=1}^m \alpha(e_i) = \sum_{j=1}^n \alpha(f_i).$$
A function $\alpha:E_\tau \rightarrow \R$ is said to \emph{satisfy the cusp condition} if for every puncture of $S$, the sum of the $\alpha$-values of the edges of $\tau$ going into the puncture is zero. 
\end{definition}

Combining Theorem 11 and Theorem 17 in Bonahon's paper \cite{Bonahon_holder}, we have: 

\begin{theorem}[Bonahon \cite{Bonahon_holder}]\label{Bonahon3}
Let $\tau$ be a train track that snugly carries $\lambda$. There is a one-to-one correspondence between the set of all transverse cocycles for $\lambda$ satisfying the cusp condition and the set of all the functions $\alpha:E_\tau \rightarrow \R$ which satisfy the switch relations and the cusp condition. In particular, the set $H^0(\lambda; \mathbb R)$ is a finite dimensional vector space.
\end{theorem}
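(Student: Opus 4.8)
The plan is to establish the bijection by exhibiting two mutually inverse linear maps and then to read off finite-dimensionality from the fact that the space of edge-functions is $\R^{E_\tau}$. This is precisely the content obtained by combining Theorems 11 and 17 of \cite{Bonahon_holder}, so I would follow Bonahon's strategy.

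First I would construct the restriction map from cocycles to edge weights. Given a transverse cocycle $\alpha$ and an edge $e \in E_\tau$, I pick a tie $k_e$ of $\tau$ crossing $e$ transversally and set $\hat\alpha(e) := \alpha(k_e)$. The $\lambda$-invariance of $\alpha$ guarantees this is independent of the chosen tie, since any two ties over the same edge are connected by a transverse isotopy sweeping across the edge. To see that $\hat\alpha$ satisfies the switch relations, I would fix a switch $v$ with incoming edges $e_1,\dots,e_m$ on one side and $f_1,\dots,f_n$ on the other: a short transverse arc crossing the $e_i$ near $v$ can be pushed across the switch to a transverse arc crossing the $f_j$, and additivity of $\alpha$ together with its invariance under the homotopy between the two positions yields $\sum_i \hat\alpha(e_i) = \sum_j \hat\alpha(f_j)$. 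The cusp condition transfers verbatim, since the curve encircling a puncture is assembled from the same ties.

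Conversely, I would reconstruct a cocycle from an edge-function $\alpha$ satisfying the switch relations. For an arbitrary arc $k$ transverse to $\lambda$, I would homotope it so that it is carried by $\tau$, decompose it into finitely many sub-arcs each crossing a single edge, and define $\alpha(k)$ as the sum of the corresponding edge weights, counted with multiplicity. The crux, and the step I expect to be the main obstacle, is well-definedness: I must check that this value is independent both of the decomposition and of the transverse isotopy class of $k$. Independence under the elementary moves relating two carried positions of $k$ reduces exactly to the switch relations at each switch that $k$ sweeps across, which is why the snug-carrying hypothesis is essential: it ensures $\lambda$ meets every tie of $\tau$, so that every transverse arc is faithfully recorded by the train-track combinatorics and no carried arc can slip past $\lambda$ through a spike of $S\setminus\tau$. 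Granting well-definedness, additivity and $\lambda$-invariance of the reconstructed $\alpha$ follow immediately from the additive definition, and the two constructions are visibly mutual inverses.

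Finally, finite-dimensionality is a formality once the correspondence is recognized as linear: the switch relations and the cusp condition are finitely many linear equations on $\R^{E_\tau}$, so they cut out a finite-dimensional linear subspace, and the bijection identifies $H^0(\lambda;\mathbb R)$ with this subspace.
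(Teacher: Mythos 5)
Your outline is correct and takes the same route as the source: the paper gives no independent proof of this statement, quoting it verbatim as the combination of Theorems 11 and 17 of Bonahon's paper, and your two mutually inverse linear maps --- restriction of a cocycle to ties, and reconstruction of a cocycle from edge weights, with well-definedness reduced to the switch relations and with snugness ruling out transverse arcs slipping past $\lambda$ through spikes of $S\setminus\tau$ --- is precisely Bonahon's argument, with finite-dimensionality read off from the finitely many linear conditions on $\R^{E_\tau}$ exactly as the paper's statement intends. The only slip is terminological: a transverse arc is never ``carried by $\tau$'' (carried curves run along the branches, while your arcs cross them); what you actually need, and what your subsequent decomposition implicitly uses, is a transverse isotopy putting the arc in efficient position so that each component of its intersection with the tie neighborhood of $\tau$ lies in a tie.
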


Every $X \in \T(S)$ induces a special transverse cocycle for $\lambda$, the \emph{shearing cocycle}.

\begin{proposition}[Bonahon \cite{Bonahon_pleated}]
Every $X \in \T(S)$ determines a unique transverse cocycle $\rho_X \in H^0(\lambda; \mathbb R)$, called the \emph{shearing cocycle} of $X$.
\end{proposition}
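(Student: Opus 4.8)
The plan is to reconstruct Bonahon's shearing cocycle directly from the hyperbolic geometry of $X$, following \cite{Bonahon_pleated}. Since we have fixed $\lambda$ to be a maximal lamination not meeting $\partial S$ orthogonally, the classification of complementary pieces (Proposition \ref{prop:lamination}) specializes to the statement that $X \setminus \lambda$ is a finite union of ideal triangles. First I would pass to the universal cover: lift $\lambda$ to a geodesic lamination $\tilde\lambda$ of $\mathbb{H}^2$ whose complementary regions are ideal triangles $\{T_j\}$, glued pairwise along the leaves of $\tilde\lambda$, with the deck group acting by isometries permuting the triangles. All of the construction is performed equivariantly upstairs and then descends.

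The heart of the construction is an \emph{elementary shear} attached to each leaf. Every ideal triangle carries a canonical distinguished point on each of its three sides, namely the foot of the perpendicular dropped from the triangle's centre (the fixed point of its order-three symmetry), equivalently the tangency point of the inscribed circle. Given two triangles $T, T'$ adjacent along a leaf $g$ of $\tilde\lambda$, each determines such a distinguished point on $g$, and I define $s(T,T')$ to be the signed hyperbolic distance between them along $g$, the sign being fixed by a transverse orientation. For an arc $k$ transverse to $\lambda$, I lift it to $\tilde k$, which meets a chain of triangles $T_0, T_1, \dots, T_n$ crossing leaves $g_1, \dots, g_n$, and set $\rho_X(k) := \sum_{i=1}^n s(T_{i-1}, T_i)$. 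Independence of the chosen lift is immediate, because two lifts differ by a deck isometry, which preserves all the distinguished points and hence all the shears.

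With $\rho_X$ defined, the cocycle axioms should fall out. Additivity, $\rho_X(k) = \rho_X(k_1) + \rho_X(k_2)$ for $k = k_1 \cup k_2$, is a telescoping of the defining sum, and $\lambda$-invariance holds because $s(T_{i-1},T_i)$ depends only on the two triangles and not on where $\tilde k$ crosses $g_i$, so transverse isotopies that slide endpoints inside complementary triangles or along leaves leave the sum unchanged. The remaining point is the cusp condition, placing $\rho_X$ in $H^0(\lambda;\mathbb{R})$: for a simple loop $c$ encircling a puncture, the endpoints of a lift $\tilde c$ are identified by the parabolic deck transformation $\gamma_c$ fixing the cusp; writing $\gamma_c$ as the composition of the elementary shearing isometries across $g_1, \dots, g_n$ and using that a parabolic has vanishing translation length forces the total shear $\rho_X(c)$ to be zero. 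Uniqueness is then automatic, since every step is canonical once $X$ and $\lambda$ are fixed.

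The step I expect to be the main obstacle is making $\rho_X(k)$ genuinely well-defined when $k$ crosses a non-discrete (Cantor) set of leaves, where the naive finite sum must be replaced by a limit of finite approximations across a nested sequence of train-track carriers. This requires the key geometric estimate that the shear contributed across a narrow band of leaves is small, with the quantitative H\"older control established in \cite{Bonahon_holder}; granting that estimate, the finitely additive assignment extends continuously to all transverse arcs and the verifications above go through as stated.
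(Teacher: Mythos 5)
The paper itself does not prove this proposition: it is imported verbatim from Bonahon \cite{Bonahon_pleated}, so the meaningful comparison is with Bonahon's construction, and your outline is indeed that construction. Your distinguished points are exactly the centers of Section \ref{subsec:shear coordinates}, your elementary shear is the shear parameter of Definition \ref{def:shear}, the equivariant construction in the universal cover (which, since $\partial S \neq \emptyset$, is a convex subset of $\mathbb{H}^2$ rather than all of it) descends correctly, and your cusp-condition argument is the standard one: with the cusp at $\infty$, the holonomy acts as $z \mapsto e^{\rho_X(c)}z + b$, and completeness forces it to be parabolic, whence $\rho_X(c)=0$.

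The genuine gap is the step you relegate to a closing caveat, and it is not a peripheral obstacle but the heart of the proof. When $\lambda$ is not a finite lamination, an arc $k$ transverse to $\lambda$ meets a Cantor set of leaves: the components of $k\setminus\lambda$ form a countable \emph{densely} ordered set, so there is no finite chain $T_0,\dots,T_n$, no pair of ``consecutive'' triangles at all, and two triangles met by $k$ are in general not adjacent along any single leaf --- so your elementary shear $s(T_{i-1},T_i)$ is undefined and there is no sum to telescope. Bonahon's actual definition assigns a shear to \emph{any} two plaques crossed by $k$, by comparing their distinguished points through the partial horocyclic foliation of the band of leaves separating them, and then proves that the sum over all triangles crossed converges absolutely, using the estimate that the width of $k\cap T_j$ decays geometrically with the combinatorial depth of $T_j$ (this is where the H\"older-type control you invoke from \cite{Bonahon_holder} actually enters); finite additivity and invariance under transverse isotopy must then be re-established for this limit, not merely for finite sums. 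As written, your proposal proves the statement only for finite laminations (ideal triangulations) and black-boxes the general case (``granting that estimate''), which is precisely the content of the theorem being cited. One smaller point: the sign of the elementary shear should be determined by the orientation of the surface (a left/right convention, as in Definition \ref{def:shear}), not by a choice of transverse orientation of the leaf --- otherwise the value on an unoriented arc depends on an arbitrary choice and the resulting cocycle is not canonical, which would also undercut your uniqueness remark.
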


The previous proposition gives a natural map  $\T(S)  \to H^0(\lambda; \R)$ defined as follows: 
\[\T(S) \ni X \mapsto \rho_X \in H^0(\lambda; \R)\,.\]
To understand this map, Bonahon uses \emph{Thurston's symplectic form} $\omega$ on the vector space $H^0(\lambda, \mathbb R)$. When $\lambda$ is carried by a \emph{generic} train track, that is, a train track where each switch is adjacent to exactly 3 edges, $\omega$ can be expressed by a simple formula. For generic train tracks, the three edges adjacent to the same switch play different roles: there is one edge on one side of the switch, called the \emph{incoming edge}, and two edges on the other side, called \emph{outgoing edges}. The two outgoing edges are called the \emph{left} and the \emph{right} outgoing edge, as seen from the incoming edge according to the orientation of $S$.

\begin{lemma}[{\cite[\S 3.2]{PennerHarer},\cite[Section 3]{Bonahon_pleated}}]  \label{lemma:sympl form}
If  the train track is generic, $\omega$ can be expressed as 
\[ \omega(\alpha, \beta) = \sum_{v } [\alpha(e_v^r) \beta(e_v^l) - \alpha(e_v^l) \beta(e^r_v)]\,, \]
were the sum is taken over all the switches $v$ of $\tau$, $e_v^l, e_v^r$ are the left and right outgoing edges from $v$, and $\alpha(e),\beta(e)$ are the weights associated 
to the edge $e$. 
\end{lemma}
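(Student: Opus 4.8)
The plan is to identify $\omega$ with the algebraic (signed) intersection pairing on $H^0(\lambda;\mathbb{R})$ and then to localize its computation at the switches of the generic train track $\tau$. Recall that, via Theorem \ref{Bonahon3}, an element of $H^0(\lambda;\mathbb{R})$ is recorded by a weight function $\alpha : E_\tau \to \mathbb{R}$ satisfying the switch and cusp relations; following Bonahon \cite{Bonahon_pleated,Bonahon_holder}, Thurston's form $\omega$ is the antisymmetric bilinear form extending the geometric intersection number of measured laminations carried by $\tau$. Since both sides of the claimed identity are bilinear in $(\alpha,\beta)$, I would first reduce to verifying it when $\alpha$ and $\beta$ range over a spanning set of the weight space; the non-negative integral weights, realized by simple multicurves carried by $\tau$, form such a set, so it suffices to compute the intersection number of two carried multicurves.

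Next I would compute that intersection number by a signed count of crossings. Away from the switches the train track is a disjoint union of embedded bands (one per edge of $E_\tau$), inside which all carried strands run parallel and never cross, so every crossing of a realization of $\alpha$ with one of $\beta$ occurs in an arbitrarily small neighborhood of some switch $v$. At a generic trivalent switch, with incoming edge $e_v^i$ and left and right outgoing edges $e_v^l, e_v^r$, the strands merging from the two outgoing branches into $e_v^i$ must cross transversally; the orientation of $S$ fixes the sign of each such crossing, and a direct count shows that the total signed contribution of $v$ is $\alpha(e_v^r)\beta(e_v^l) - \alpha(e_v^l)\beta(e_v^r)$. Summing over all switches yields the stated formula, whose manifest antisymmetry under swapping $\alpha$ and $\beta$ serves as a useful consistency check.

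The main obstacle is the local analysis of the second step: isolating the crossings at a single switch and pinning down their signs consistently with the orientation of $S$ and the left/right convention for $e_v^l, e_v^r$. Two further points require care. First, one must check that the expression $\sum_v [\alpha(e_v^r)\beta(e_v^l) - \alpha(e_v^l)\beta(e_v^r)]$ is independent of the particular train track chosen within the carrying class of $\lambda$, i.e. that it is invariant under the elementary splitting moves relating any two such snug train tracks; since a splitting alters the weights only locally, this reduces to a finite case check. Second, one must verify that the cusp condition on $\alpha$ and $\beta$ forces the contributions of the switches approaching a puncture to cancel, so that the sum is finite and no terms near the cusps are lost. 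Granting these, the resulting formula coincides with the expressions recorded in \cite[\S 3.2]{PennerHarer} and \cite[Section 3]{Bonahon_pleated}.
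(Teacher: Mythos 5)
The paper offers no proof of this lemma: it is quoted as a known result from \cite[\S 3.2]{PennerHarer} and \cite[Section 3]{Bonahon_pleated}, so your attempt has to be measured against the arguments in those references. Against that standard, your first reduction contains a genuine gap. You propose to verify the identity on the span of the non-negative integral weightings (simple multicurves carried by $\tau$) and extend by bilinearity, but carried measures span the weight space only when $\tau$ is recurrent, and the snug train tracks relevant to transverse cocycles of non-measurable laminations are typically not recurrent. This failure occurs in exactly the situation this paper cares about: as observed in the proof of Lemma \ref{lemma_boh}, every transverse measure carried by the track must vanish on the edges $e_{ij}^k$ entering a spike, whereas transverse cocycles --- e.g.\ shearing cocycles of hyperbolic structures --- take arbitrary real values on those edges. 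So the multicurve weightings span a proper linear subspace of $H^0(\lambda;\mathbb{R})$, and bilinearity cannot propagate the formula from that subspace to all of $H^0(\lambda;\mathbb{R})$.

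The second step also rests on a false characterization of $\omega$. You describe $\omega$ as ``the antisymmetric bilinear form extending the geometric intersection number of measured laminations carried by $\tau$,'' but geometric intersection number is symmetric in its arguments while $\omega$ is antisymmetric, so the two can agree only where both vanish (the once-punctured torus, where $|\omega(\alpha,\beta)| = i(\alpha,\beta)$ for carried curves, is misleadingly special). What a signed crossing count computes is an algebraic invariant, and this is how Bonahon actually proceeds: transverse cocycles are identified with classes in the (anti-invariant part of the) first homology of the orientation cover of a neighborhood of $\lambda$, $\omega$ is \emph{defined} as the homological intersection form there, and the train-track formula follows by localizing that pairing at the switches. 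The salvageable part of your sketch is precisely this local computation --- your observation that at a trivalent switch all strands can be coherently directed into the incoming edge, so the orientation of $S$ does assign a well-defined sign $\alpha(e_v^r)\beta(e_v^l) - \alpha(e_v^l)\beta(e_v^r)$, is the correct switch contribution. But it must be fed into the homological model rather than into a geometric intersection count of multicurves. Your two auxiliary worries, by contrast, are inessential for the statement as given: the formula is asserted for a fixed generic track, so invariance under splittings is not needed, and finiteness of the sum is automatic since $\tau$ has finitely many switches.
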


\begin{theorem}[Bonahon \cite{Bonahon_pleated}]\label{Bonahon4} \label{shear_coordinates}
A transverse cocycle $\alpha \in H^0(\lambda; \mathbb R)$ is the shearing cocycle for a hyperbolic structure on $S$ if and only if $\omega(\alpha, \beta) >0$ for every compactly supported transverse measure $\beta$ for $\lambda$. In particular, the map $X \mapsto \rho_X$ defines a real analytic homeomorphism from $\T(\Sigma)$ to an open convex cone bounded by finitely many faces in $H^0(\lambda; \R)$. 
\end{theorem}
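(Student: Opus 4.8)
The plan is to pass to a finite-dimensional model, read off the cone structure from convex geometry, and then identify the image of the shearing map by an open--and--closed argument. First I would fix a train track $\tau$ snugly carrying $\lambda$ and invoke Theorem \ref{Bonahon3} to identify $H^0(\lambda;\R)$ with the finite-dimensional space $W$ of edge-weight functions on $\tau$ obeying the switch and cusp relations; an edge count shows $\dim W = \dim \T(S)$, which is the numerical coincidence that makes a homeomorphism possible. In this model the compactly supported transverse measures for $\lambda$ are exactly the \emph{non-negative} weights, so they form a polyhedral cone $P \subset W$, finitely generated by extreme rays $\beta_1,\dots,\beta_k$. Since $\omega(\alpha,\cdot)$ is linear, the condition $\omega(\alpha,\beta)>0$ for all nonzero $\beta \in P$ is equivalent to the finite system $\omega(\alpha,\beta_i)>0$, $i=1,\dots,k$. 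Hence the target set $\mathcal{K}:=\{\alpha : \omega(\alpha,\beta)>0 \text{ for all } 0\neq\beta\in P\}$ is an open convex cone cut out by finitely many linear inequalities, which already yields the ``finitely many faces'' assertion.

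The analytic core is the \emph{length formula}: for every compactly supported transverse measure $\beta$, regarded as a measured sublamination of $\lambda$, one has $\ell_X(\beta)=\omega(\rho_X,\beta)$. I would prove this by developing the pleated surface associated to $(X,\lambda)$ and comparing the hyperbolic length of $\beta$ with the shears recorded by $\rho_X$ across the leaves it meets, using the explicit expression for $\omega$ in Lemma \ref{lemma:sympl form}. Granting the formula, the ``only if'' direction is immediate: every nonzero measured lamination has strictly positive length, so $\omega(\rho_X,\beta)=\ell_X(\beta)>0$ and therefore $\rho_X\in\mathcal{K}$. Differentiating the same formula shows that the derivative of $X\mapsto\rho_X$ is seen through the pairings $\omega(d\rho_X(\,\cdot\,),\beta)$; combined with the non-degeneracy of $\omega$ and the independence of the differentials of enough length functions, this forces $d\rho_X$ to be injective. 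As $\dim W=\dim\T(S)$, the shearing map is then a real-analytic local diffeomorphism, so its image is open in $\mathcal{K}$, and the real analyticity in the statement follows.

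It remains to upgrade ``open'' to ``onto''. Injectivity of $X \mapsto \rho_X$ comes from reconstruction: the shear data together with the combinatorics of $\lambda$ determine the developing map up to conjugacy, so distinct structures have distinct cocycles. For surjectivity I would argue that the image is closed in the connected set $\mathcal{K}$. Given $\rho_{X_n}\to\alpha\in\mathcal{K}$, the values $\ell_{X_n}(\beta_i)=\omega(\rho_{X_n},\beta_i)$ converge to the finite positive numbers $\omega(\alpha,\beta_i)$; a degeneration of $X_n$ in $\T(S)$ would drive some length to $0$ or $\infty$ and hence some pairing to $0$ or $\infty$, contradicting that $\alpha$ is interior to the cone. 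Thus $X_n$ stays in a compact part of $\T(S)$ and subconverges to some $X$ with $\rho_X=\alpha$, so the image is closed. Being nonempty, open and closed in the connected cone $\mathcal{K}$, the image is all of $\mathcal{K}$, and the map is a real-analytic homeomorphism onto $\mathcal{K}$.

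I expect the length formula $\ell_X(\beta)=\omega(\rho_X,\beta)$, and its infinitesimal counterpart, to be the main obstacle: it is where pleated-surface geometry, the analytic dependence of the shear cocycle on $X$, and Thurston's symplectic form must be reconciled, and it is precisely the ingredient that converts positivity of lengths into the clean convex-cone description. A secondary delicate point is the properness step when $\lambda$ carries few transverse measures, so that the $\beta_i$ alone may not control the entire geometry; there one must supplement the length bounds with the structure of the complementary ideal triangles to rule out degeneration.
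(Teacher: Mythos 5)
The paper itself does not prove this statement: it is quoted verbatim as background from Bonahon \cite{Bonahon_pleated} (together with Theorem \ref{Bonahon3} and Proposition \ref{thurston_cocycle}), so the fair comparison is with Bonahon's original argument. Much of your sketch tracks that argument faithfully: the identification of $H^0(\lambda;\mathbb{R})$ with edge weights on a snugly carrying train track, the fact that transverse measures correspond to the nonnegative weights and hence form a finitely generated polyhedral cone (so positivity against finitely many extreme rays $\beta_i$ suffices, giving the ``finitely many faces'' clause), the length formula $\ell_X(\beta)=\omega(\rho_X,\beta)$ as the analytic core yielding necessity, and injectivity by reconstructing the developing map from the shear data --- all of these are genuine ingredients of Bonahon's proof, and you correctly identify the length formula as the hard pleated-surface input.

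The genuine gap is in your surjectivity step, and it is not the ``secondary delicate point'' you flag but the central difficulty. Your closedness argument claims that a degeneration of $X_n$ would force some pairing $\omega(\rho_{X_n},\beta_i)=\ell_{X_n}(\beta_i)$ to $0$ or $\infty$; this is false as stated, because the $\beta_i$ only pair against the stump of $\lambda$. If, say, $\lambda$ is a maximal lamination on a closed surface whose stump is a single simple closed geodesic $\gamma$, then the finitely many quantities $\ell_{X_n}(\beta_i)$ control only $\ell_{X_n}(\gamma)$, one function among $6g-6$ parameters, and $X_n$ can leave every compact set (pinching or twisting away from $\gamma$) while these stay in a compact window. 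Convergence of the \emph{full} cocycle $\rho_{X_n}\to\alpha$ is of course stronger, but upgrading it to precompactness of $\{X_n\}$ in $\T(S)$ requires uniform two-sided control of lengths of arbitrary closed geodesics in terms of shear data --- and establishing exactly that control is the content of Bonahon's actual surjectivity proof, which is constructive rather than open-and-closed: given $\alpha$ in the cone, he builds the developing map directly by composing shear maps along the leaves of $\lambda$, and the positivity $\omega(\alpha,\beta)>0$ is what makes the relevant infinite compositions converge and the resulting structure complete and nondegenerate. So your shortcut quietly assumes the theorem's main estimate. Two smaller soft spots: nondegeneracy of $\omega$ on $H^0(\lambda;\mathbb{R})$ is not automatic and your derivative-injectivity sketch leans on it (for openness, global injectivity plus invariance of domain already suffices, since $\dim H^0(\lambda;\mathbb{R})=\dim\T(S)$), and the real analyticity of $X\mapsto\rho_X$ is asserted rather than derived.
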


Thurston's stretch lines can be easily described using this theory:

\begin{proposition}[Bonahon \cite{Bonahon_pleated}]\label{thurston_cocycle} 
For an $X \in \T(S)$, denote by $X_{Th}^t \in \T(S)$ Thurston's stretch line starting from $X$ and directed by $\lambda$. Then 
$$ \rho_{X_{Th}^t} = e^t \cdot \rho_X. $$ 
\end{proposition}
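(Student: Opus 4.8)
The plan is to reduce the statement to a local computation across a single leaf of $\lambda$ and to exploit the geometric meaning of the shearing cocycle as a signed length measured \emph{along} the leaves. By the additivity built into the definition of a transverse cocycle (see \cite{Bonahon_holder}), the cocycle $\rho_X$ is determined by its values on short arcs $k$ that cross exactly one leaf $g$ of $\lambda$. For such an arc, $\rho_X(k)$ is Bonahon's shear between the two ideal triangles $T_-,T_+$ of $X\setminus\lambda$ adjacent to $g$: each triangle $T_\pm$ determines, through the geometry of the ideal triangle (equivalently, through Thurston's horocyclic foliation of $T_\pm$), a distinguished point $p_\pm \in g$, and $\rho_X(k)$ is the signed hyperbolic distance along $g$ from $p_-$ to $p_+$. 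The essential feature is that this quantity is a length measured in the direction tangent to $\lambda$.

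First I would pass to the universal cover (or work at the level of the developing map), where $\lambda$ lifts to a geodesic lamination $\tilde\lambda$ of $\mathbb{H}^2$ whose complementary regions are ideal triangles, and where Thurston's stretch deformation lifts to a map $\tilde\phi^t:\mathbb{H}^2\to\mathbb{H}^2$ developing $X_{Th}^t$. I would then invoke the two defining features of Thurston's stretch map \cite{Thurston} (see also \cite{Bonahon_pleated}): it carries each leaf of $\tilde\lambda$ to the corresponding leaf of the stretched structure while multiplying arc length along the leaves by $e^t$, and on each complementary ideal triangle it preserves the horocyclic foliation. Because the distinguished point $p_\pm$ attached to a triangle is defined purely from the horocyclic foliation of that triangle, the stretch map sends $p_\pm$ to the corresponding distinguished point of the stretched triangle on the stretched leaf $g^t$. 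Hence the two reference points bounding the shear segment on $g$ are carried to the two reference points bounding the shear segment on $g^t$, and since $\tilde\phi^t$ dilates length along $g$ by the factor $e^t$, the shear across $g$ is multiplied by $e^t$.

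Combining this single-leaf identity with additivity, for every transverse arc $k$ one obtains $\rho_{X_{Th}^t}(k) = e^t\,\rho_X(k)$, which is exactly $\rho_{X_{Th}^t} = e^t\rho_X$; the cusp condition is preserved automatically, since both sides are shearing cocycles of genuine finite hyperbolic structures and thus already lie in $H^0(\lambda;\mathbb{R})$.

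The hard part will be the verification that $\tilde\phi^t$ really sends the horocyclic reference data of each triangle to that of its image, i.e.\ that the distinguished points transform as claimed rather than being displaced along $g$ by some additional (non-$e^t$) amount. Once one fixes a clean description of $p_\pm$ (for instance via the feet of the perpendiculars dropped from the opposite ideal vertices, or via the contact points of the maximal inscribed horocyclic region) this is immediate from the construction of the stretch map, but it must be pinned down together with a consistent orientation and sign convention so that the signed contributions assemble correctly along $k$. The reduction to finite punctured surfaces, where the cusp condition enters, is handled exactly as in Bonahon \cite[\S 12]{Bonahon_pleated} and poses no further difficulty.
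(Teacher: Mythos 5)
Your geometric core is sound, but the opening reduction contains a genuine gap. You claim that, by finite additivity, $\rho_X$ is determined by its values on short arcs crossing exactly one leaf of $\lambda$. This is true only when $\lambda$ is a finite lamination. A maximal geodesic lamination typically contains minimal components in which no leaf is isolated; an arc transverse to such a component crosses a Cantor set of leaves and cannot be decomposed by \emph{finite} additivity into single-leaf crossings. Worse, the values on single-leaf arcs genuinely fail to determine the cocycle: if $\mu$ is a transverse measure supported on a minimal sublamination of $\lambda$, then $\rho_X$ and $\rho_X+\mu$ agree on every arc meeting only isolated leaves yet differ as elements of $H^0(\lambda;\R)$. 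Note that this is exactly the case of interest here: the paper invokes Proposition \ref{thurston_cocycle} in a setting where $\lambda$ carries a nontrivial measurable sublamination, so the degenerate case your reduction covers is the one that matters least. (For calibration: the paper itself gives no proof, quoting the statement from Bonahon \cite{Bonahon_pleated}, so your argument has to stand entirely on its own.)

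The repair is to run your stretch-map argument at the level of Bonahon's actual definition rather than leaf by leaf. In \cite{Bonahon_pleated} the shear $\rho_X(k)$ is defined for an arc $k$ joining two \emph{arbitrary} plaques $P,Q$ of $X\setminus\lambda$, by comparing their distinguished points through the projection along the leaves of the horocyclic foliation of the region between them (with convergence of the associated finite approximating sums controlled by H\"older estimates when infinitely many leaves separate $P$ from $Q$). Your two invoked properties of the stretch map --- it multiplies arc length along $\lambda$ by $e^t$ and carries the horocyclic foliation of $X$ to that of $X_{Th}^t$, sending the distinguished point of each ideal triangle to that of its image (this is precisely Lemma \ref{lemma:triangle}(2) together with Lemma \ref{lem:horocyclic foliation triangle}) --- then apply verbatim to any pair of plaques, giving $\rho_{X_{Th}^t}(k)=e^t\rho_X(k)$ for every transverse arc with no reduction step needed. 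Relatedly, you flagged the wrong ``hard part'': that centers map to centers is immediate from the construction of the stretch map, whereas the well-definedness of the shear across infinitely many leaves, i.e.\ Bonahon's H\"older-regularity argument, is where the genuine analytic work sits, and your sketch currently bypasses it entirely.
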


\section{Average of Lipschitz maps} \label{sec:Lip} 

In this section we will deal with Lipschitz maps between convex hyperbolic surfaces. We introduce here a new technique that combines any two such maps into a new map whose Lipschitz constant is at most the average of their constants, generalizing a result by Gu\'eritaud-Kassel \cite{Kassel-Gueritaud}. We will employ this construction in Section \ref{sec:stretch_1}. 

\subsection{The Gu\'eritaud-Kassel construction for convex subsets of $\mathbb H^2$}
Given two metric spaces $(\Omega, d)$ and $(\Omega', d')$, a map $\phi : \Omega \to \Omega'$ is called ($K$-)\emph{Lipschitz} if there exists a real number $K\geq 0$ such that
$$d'(\phi(x_{1}),\phi(x_{2}))\leq Kd(x_{1},x_{2})$$
 for all $x_1, x_2 \in \Omega$.
The \emph{Lipschitz constant} $\Lip(\phi)$ is the smallest of such $K$'s. 

The following criterion allows to recover the Lipschitz constant from local information. 
\begin{lemma}[Gu\'eritaud-Kassel {\cite[Lemma 2.9]{Kassel-Gueritaud}}]\label{LOC-2-GLOB}
Let $\Omega$ be a convex subset of $\mathbb H^2$ or a convex hyperbolic surface. Let $(\Omega', d')$ be a metric space. 
If $\phi: \Omega \to (\Omega', d')$ is a continuous function then we have:
$$\mathrm{Lip}(\phi) = \mathrm{sup}_{x \in \Omega} \mathrm{inf}_{r>0} \mathrm{Lip}(\phi_{|B(x;r)}). $$
\end{lemma}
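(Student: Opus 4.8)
The plan is to write $L := \sup_{x\in\Omega}\inf_{r>0}\Lip(\phi|_{B(x;r)})$ for the right-hand side and prove equality by showing the two inequalities separately. One direction is immediate: each restriction $\phi|_{B(x;r)}$ satisfies $\Lip(\phi|_{B(x;r)})\leq \Lip(\phi)$, so $\inf_{r>0}\Lip(\phi|_{B(x;r)})\leq \Lip(\phi)$ for every $x$, and taking the supremum gives $L\leq\Lip(\phi)$. The whole content lies in the reverse inequality $\Lip(\phi)\leq L$, where I may assume $L<\infty$.

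First I would treat the case where $\Omega$ is a convex subset of $\mathbb{H}^2$. Fix $x,y\in\Omega$ and $\varepsilon>0$. By convexity the hyperbolic geodesic segment from $x$ to $y$ is contained in $\Omega$; parametrize it by arc length as $\gamma\colon[0,\ell]\to\Omega$ with $\ell=d(x,y)$, so that $d(\gamma(a),\gamma(b))=b-a$ for all $0\leq a\leq b\leq\ell$. For each $s\in[0,\ell]$ the hypothesis $\inf_{r>0}\Lip(\phi|_{B(\gamma(s);r)})\leq L$ lets me pick $r_s>0$ with $\Lip(\phi|_{B(\gamma(s);r_s)})<L+\varepsilon$. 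The balls $B(\gamma(s);r_s)$ form an open cover of the compact arc $\gamma([0,\ell])$, so the Lebesgue number lemma supplies $\delta>0$ such that every subset of diameter $<\delta$ lies in a single such ball. Choosing a partition $0=t_0<\dots<t_n=\ell$ of mesh $<\delta$, each sub-arc $\gamma([t_{i-1},t_i])$ has diameter $t_i-t_{i-1}<\delta$ and hence lies in one ball with local constant $<L+\varepsilon$, giving $d'(\phi(\gamma(t_{i-1})),\phi(\gamma(t_i)))\leq(L+\varepsilon)(t_i-t_{i-1})$. Summing over $i$ with the triangle inequality and telescoping the additive distances yields $d'(\phi(x),\phi(y))\leq(L+\varepsilon)\,d(x,y)$, and letting $\varepsilon\to0$ gives $\Lip(\phi)\leq L$.

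Next I would reduce the convex hyperbolic surface case to the previous one via the universal cover $p\colon\widetilde\Omega\to\Omega$, where $\widetilde\Omega$ is isometric to a convex subset of $\mathbb{H}^2$. Set $\widetilde\phi=\phi\circ p$. Since $p$ is a local isometry, on balls smaller than the injectivity radius it restricts to an isometry, so the local Lipschitz constant of $\widetilde\phi$ at $\tilde z$ equals that of $\phi$ at $p(\tilde z)$; thus the datum $L$ computed for $\widetilde\phi$ on $\widetilde\Omega$ is bounded by $L$. Applying the convex-subset case to $\widetilde\phi$, for lifts $\tilde x,\tilde y$ of $x,y$ I get $d'(\phi(x),\phi(y))=d'(\widetilde\phi(\tilde x),\widetilde\phi(\tilde y))\leq L\,d_{\widetilde\Omega}(\tilde x,\tilde y)$. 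Fixing $\tilde x$ and taking the infimum over all lifts $\tilde y$ of $y$, together with $d_\Omega(x,y)=\inf_{\tilde y}d_{\widetilde\Omega}(\tilde x,\tilde y)$, produces $d'(\phi(x),\phi(y))\leq L\,d_\Omega(x,y)$, which completes the proof.

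The main obstacle is the role of convexity: it is exactly what guarantees that any two points are joined by a distance-realizing geodesic along which intermediate distances add, and this additivity is what collapses the telescoping sum to $(L+\varepsilon)\,d(x,y)$ rather than an uncontrolled total variation of $\phi$ along a non-geodesic path. The delicate point is the convex hyperbolic surface (non-simply-connected, possibly with non-compact boundary), where honest $\mathbb{H}^2$-geodesics between points of $\Omega$ need not exist intrinsically; I handle this by passing to the convex universal cover and carefully verifying that the local-to-global quantity $L$ is preserved under the local isometry $p$.
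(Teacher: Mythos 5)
Your proof is correct: the paper itself gives no argument for this lemma (it is quoted from Gu\'eritaud--Kassel), and your two-step argument --- Lebesgue-number chaining along the geodesic segment in the convex-subset case, then reduction to the convex universal cover --- is precisely the standard proof behind the citation, with the convexity of $\widetilde\Omega$ correctly identified as what makes intermediate distances additive. One cosmetic remark: in the covering step you do not actually need $p$ to restrict to an isometry on small balls (a claim that is slightly delicate near $\partial\Omega$); the facts that $p$ is $1$-Lipschitz and maps $B(\tilde z;r)$ into $B(p(\tilde z);r)$ already give $\Lip(\widetilde\phi|_{B(\tilde z;r)})\leq \Lip(\phi|_{B(p(\tilde z);r)})$, which is all your bound on the local-to-global quantity requires.
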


We will be interested in a generalization of the following map. 
\begin{definition}[Gu\'eritaud-Kassel {\cite[Section 2.3]{Kassel-Gueritaud}}] \label{def:average}
Let $\Omega, \Omega' \subseteq \mathbb H^2$ be convex subsets. 
Let $\phi,\psi: \Omega \to \Omega'$ be continuous maps.
The \emph{average} of $\phi$ and $\psi$ is the map $\Upsilon: \Omega \to \Omega' $ such that for every $x \in \Omega$, $\Upsilon(x)$ is the midpoint  of the geodesic joining $\phi(x)$ and $\psi(x)$. 
\end{definition}

\begin{lemma}[Gu\'eritaud-Kassel {\cite[Lemma 2.13]{Kassel-Gueritaud}}]\label{lemma: Fanny}
Let $\Omega, \Omega'\subset \mathbb{H}^2$ be convex subsets, and $\phi,\psi: \Omega \to \Omega'$ be Lipschitz maps. Then their average $\Upsilon: \Omega \to \Omega' $ is a Lipschitz map and $$\mathrm{Lip}(\Upsilon) \leq \frac{\mathrm{Lip}(\phi)+\mathrm{Lip}(\psi)}{2}~.$$
\end{lemma}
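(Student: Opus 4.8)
The plan is to reduce the statement to a single convexity property of the hyperbolic metric, namely that $\mathbb{H}^2$ is a nonpositively curved (CAT(0)) space, and then combine this with the two Lipschitz bounds. First I would check that $\Upsilon$ is genuinely a well-defined map $\Omega \to \Omega'$. Since $\mathbb{H}^2$ is uniquely geodesic, the geodesic segment $[\phi(x),\psi(x)]$ and its midpoint are unique, so $\Upsilon(x)$ is unambiguous; and because $\Omega'$ is convex, the entire segment $[\phi(x),\psi(x)]$ lies in $\Omega'$, hence so does its midpoint $\Upsilon(x)$. This is precisely where the convexity hypothesis on $\Omega'$ is used. Likewise, since $\Omega$ is convex its induced metric coincides with the restriction of the hyperbolic distance $d = d_{\mathbb{H}^2}$, so the Lipschitz constants $\mathrm{Lip}(\phi),\mathrm{Lip}(\psi)$ and the target constant $\mathrm{Lip}(\Upsilon)$ may all be computed with respect to $d$.

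The geometric heart of the argument is the following midpoint inequality: for any four points $p_1,q_1,p_2,q_2 \in \mathbb{H}^2$, if $m_i$ denotes the midpoint of $[p_i,q_i]$, then
$$ d(m_1,m_2) \leq \tfrac{1}{2}\, d(p_1,p_2) + \tfrac{1}{2}\, d(q_1,q_2). $$
I would obtain this from the convexity of the function $t \mapsto d(\gamma(t),\sigma(t))$, where $\gamma,\sigma:[0,1]\to\mathbb{H}^2$ are the geodesics from $p_1$ to $q_1$ and from $p_2$ to $q_2$, each parametrized proportionally to arclength; a convex function on $[0,1]$ satisfies $f(\tfrac12)\leq \tfrac12(f(0)+f(1))$, and evaluating at $t=\tfrac12$ gives exactly the displayed inequality, since $\gamma(\tfrac12)=m_1$ and $\sigma(\tfrac12)=m_2$. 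This convexity of the distance between two constant-speed geodesics is the Busemann/CAT(0) property, which holds for $\mathbb{H}^2$ because it has curvature $-1\leq 0$; it can either be cited as standard or verified directly in $\mathbb{H}^2$ by comparison with Euclidean triangles.

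With this in hand the conclusion is immediate. I would fix $x_1,x_2\in\Omega$ and apply the midpoint inequality to the four points $p_i=\phi(x_i)$, $q_i=\psi(x_i)$, for which $m_i=\Upsilon(x_i)$ by definition, obtaining
$$ d(\Upsilon(x_1),\Upsilon(x_2)) \leq \tfrac12\, d(\phi(x_1),\phi(x_2)) + \tfrac12\, d(\psi(x_1),\psi(x_2)) \leq \frac{\mathrm{Lip}(\phi)+\mathrm{Lip}(\psi)}{2}\, d(x_1,x_2), $$
where the second inequality uses the Lipschitz bounds on $\phi$ and $\psi$. This shows $\Upsilon$ is Lipschitz with the claimed constant (continuity of $\Upsilon$ follows a fortiori). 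The only nonroutine ingredient is the midpoint/convexity inequality; since it is a classical feature of the hyperbolic plane, there is no genuine obstacle here, and the main point of the lemma is the clean packaging of this convexity as an averaging operation on Lipschitz maps that will be exploited in Section \ref{sec:stretch_1}.
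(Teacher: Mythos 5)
Your proof is correct, and it supplies an argument where the paper offers none: the lemma is quoted verbatim from Gu\'eritaud--Kassel \cite{Kassel-Gueritaud}, so there is no in-paper proof to compare against, and your route via the CAT(0) midpoint inequality --- i.e.\ convexity of $t \mapsto d(\gamma(t),\sigma(t))$ for constant-speed geodesics, evaluated at $t=\tfrac12$ with $p_i=\phi(x_i)$, $q_i=\psi(x_i)$ --- is the standard one, matching in substance how the cited source argues (Gu\'eritaud--Kassel actually prove a sharper local version with a strict improvement coming from negative curvature, but that refinement is not needed for the statement as quoted). You also correctly isolate the two places where convexity of the domains enters: convexity of $\Omega'$ makes the midpoint land in $\Omega'$, and convexity of $\Omega$ ensures the Lipschitz constants can all be computed with respect to the ambient distance $d_{\mathbb{H}^2}$, so nothing is missing.
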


\subsection{Generalizing Gu\'eritaud-Kassel's construction to convex hyperbolic surfaces}
Assume that $X,Y$ are two convex hyperbolic surfaces (not necessarily homeomorphic) and let $\phi, \psi: X \to Y$ be two continuous maps in the same homotopy class. We will now define a new continuous map $\Upsilon_{x_0,\gamma}(\phi,\psi): X \to Y$  called the \emph{average} of $\phi$ and $\psi$. This map generalizes the previous construction of Gu\'eritaud-Kassel to hyperbolic surfaces. Our construction will depend on the choice of a base point $x_0 \in X$ and a geodesic path $\gamma:[0,1] \to Y $ joining $\phi(x_0)$ and $\psi(x_0)$.

\subsubsection*{Step 1: Construct two suitable lifts of $\phi$ and $\psi$} 
Let $p: \widetilde{X} \to X$ and $q: \widetilde{Y} \to Y$ be the universal coverings of $X$ and $Y$. Recall that $\widetilde{X}, \widetilde{Y} \subset \mathbb{H}^2$ are convex. Choose points $\widetilde{x_0}\in  p^{-1}(x_0)$ and $\widetilde{y_0} \in q^{-1}(\phi(x_0))$. 
There exists a unique lift $\widetilde{\phi}: \widetilde{X} \to \widetilde{Y}$ of $\phi$ such that $\widetilde{\phi}(\widetilde{x_0}) = \widetilde{y_0}$.  
Similarly, there exists a unique lift $\widetilde{\gamma}:[0,1] \to \widetilde{Y}$ such $\widetilde{\gamma}(0) = \widetilde{y_0}$ (see the diagram below). 
\begin{equation*}
\xymatrix{
      \widetilde{X}\ar[d]_p  \ar[r]^{\widetilde{\phi}} & \widetilde{Y}  \ar[d]^{q}  \\ 
      X \ar[r]_{\phi}                            & Y 
}
\qquad
\xymatrix{
                           & \tilde{Y}  \ar[d]^{q}  \\ 
      [0,1] \ar[r]_{\gamma} \ar[ru]^{\widetilde{\gamma}} & Y 
}   
\qquad 
\xymatrix{
      \widetilde{X}\ar[d]_p  \ar[r]^{\widetilde{\psi}} & \widetilde{Y}  \ar[d]^{q}  \\ 
      X \ar[r]_{\psi}                            & Y 
}
\end{equation*}

Denote by $\widetilde{z_0} = \widetilde{\gamma}(1)$. 
By construction $\widetilde{z_0} \in q^{-1}(\psi(x_0))$. There exists a unique lift $\widetilde{\psi}: \widetilde{X} \to \widetilde{Y}$ of $\psi$ such that $\widetilde{\psi}(\widetilde{x_0}) = \widetilde{z_0}$. The construction above give us two maps $\widetilde{\phi}, \widetilde{\psi}: \widetilde{X} \to \widetilde{Y}$, between convex subsets of $\mathbb H^2$. 


\subsubsection*{Step 2: Define $\Upsilon: \Omega \to \Omega'$} 
We can now define the map $$\widetilde{\Upsilon}: \widetilde{X} \to \widetilde{Y}$$ that maps every $x\in \widetilde{X}$ to the midpoint of the geodesic segment joining $\widetilde{\phi}(x)$ and $\widetilde{\psi}(x)$ as in Definition \ref{def:average}. The lemma below follows easily from elementary arguments on coverings and their automorphisms.   
\begin{lemma}
The map $\widetilde{\Upsilon}: \widetilde{X} \to \widetilde{Y}$ commutes with $p$ and $q$. 
$$\xymatrix{ 
\widetilde{X} \ar[r]^{\widetilde{\Upsilon}} \ar[d]_p & \widetilde{Y} \ar[d]^{q} \\ 
X \ar[r]_{\Upsilon} & Y 
}$$
\end{lemma}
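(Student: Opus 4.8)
The plan is to show that the map $\widetilde{\Upsilon}$ descends to a well-defined map $\Upsilon:X\to Y$, and that this is precisely the commutation asserted by the diagram. The key point is that $\widetilde{\Upsilon}$ is equivariant with respect to the deck transformation groups. Let $\Gamma_X=\mathrm{Aut}(p)$ and $\Gamma_Y=\mathrm{Aut}(q)$ be the deck transformation groups of the two universal coverings. First I would record the standard equivariance of the chosen lifts. Because $\widetilde{\phi}$ and $\widetilde{\psi}$ are lifts of the \emph{same} map in the same homotopy class, there is a single group homomorphism $\rho:\Gamma_X\to\Gamma_Y$ (the one induced on fundamental groups by the common homotopy class of $\phi$ and $\psi$) such that for every $g\in\Gamma_X$ and every $x\in\widetilde{X}$ we have
\[
\widetilde{\phi}(g\cdot x)=\rho(g)\cdot\widetilde{\phi}(x),\qquad
\widetilde{\psi}(g\cdot x)=\rho(g)\cdot\widetilde{\psi}(x).
\]
This is the reason Step 1 was set up so carefully: the lift $\widetilde{\psi}$ was normalized using the lifted path $\widetilde{\gamma}$ precisely so that it is equivariant for the \emph{same} $\rho$ as $\widetilde{\phi}$, rather than for a conjugate homomorphism.

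Next I would use that deck transformations of $\widetilde{Y}\subset\mathbb{H}^2$ act by isometries, and isometries of $\mathbb{H}^2$ preserve geodesics and midpoints. Since $\widetilde{\Upsilon}(x)$ is by definition the midpoint of the geodesic segment joining $\widetilde{\phi}(x)$ and $\widetilde{\psi}(x)$, for any $g\in\Gamma_X$ the point $\widetilde{\Upsilon}(g\cdot x)$ is the midpoint of the segment joining $\widetilde{\phi}(g\cdot x)=\rho(g)\cdot\widetilde{\phi}(x)$ and $\widetilde{\psi}(g\cdot x)=\rho(g)\cdot\widetilde{\psi}(x)$. Because $\rho(g)$ is an isometry, it carries the midpoint of the segment $[\widetilde{\phi}(x),\widetilde{\psi}(x)]$ to the midpoint of the segment $[\rho(g)\widetilde{\phi}(x),\rho(g)\widetilde{\psi}(x)]$, so
\[
\widetilde{\Upsilon}(g\cdot x)=\rho(g)\cdot\widetilde{\Upsilon}(x).
\]
Thus $\widetilde{\Upsilon}$ is $\rho$-equivariant. (One small point to verify in passing: the geodesic segment joining the two image points stays inside $\widetilde{Y}$, so that its midpoint is defined; this holds because $\widetilde{Y}$ is convex, which is exactly the hypothesis we assumed on $Y$.)

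From equivariance the descent is routine. If $p(x)=p(x')$ then $x'=g\cdot x$ for some $g\in\Gamma_X$, hence $q(\widetilde{\Upsilon}(x'))=q(\rho(g)\cdot\widetilde{\Upsilon}(x))=q(\widetilde{\Upsilon}(x))$, since $q$ is invariant under the $\Gamma_Y$-action. Therefore the assignment $\Upsilon(p(x)):=q(\widetilde{\Upsilon}(x))$ is well defined, and by construction it satisfies $q\circ\widetilde{\Upsilon}=\Upsilon\circ p$, which is the stated commutativity. Continuity of $\Upsilon$ follows because $p$ is an open quotient map and $q\circ\widetilde{\Upsilon}$ is continuous, or simply because $\Upsilon$ is locally expressible through the continuous $\widetilde{\Upsilon}$ in any evenly covered chart. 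I do not expect a serious obstacle here; the only genuinely substantive ingredient is the equivariance of the two lifts under the \emph{same} homomorphism $\rho$, and the rest is the standard mechanism by which an equivariant map between universal covers descends to the base. The lemma statement itself flags this by calling the argument ``elementary.''
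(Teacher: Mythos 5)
Your skeleton is certainly the intended one: the paper itself offers no proof beyond the remark that the lemma ``follows easily from elementary arguments on coverings and their automorphisms,'' and equivariance of the two lifts, plus the fact that deck transformations act by isometries of $\mathbb{H}^2$ (hence preserve geodesic midpoints, which lie in $\widetilde{Y}$ by convexity, as you correctly note), plus the routine descent-and-continuity step, is exactly that argument. The gap is in the one claim you yourself flag as the only substantive ingredient but then assert rather than prove: that $\widetilde{\phi}$ and $\widetilde{\psi}$ are equivariant for the \emph{same} homomorphism $\rho$. This does not follow from $\phi\simeq\psi$ together with the normalization of Step 1, and for an arbitrary geodesic path $\gamma$ --- which is all the construction and Theorem \ref{prop:average} require --- it can fail. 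What is true is this: lift a homotopy $H$ from $\phi$ to $\psi$ to $\widetilde{H}$ with $\widetilde{H}_0=\widetilde{\phi}$; uniqueness of lifts gives $\widetilde{H}_t(g\cdot x)=\rho(g)\cdot \widetilde{H}_t(x)$ for all $t$, so $\widetilde{H}_1$ is the $\rho$-equivariant lift of $\psi$, and $\widetilde{\psi}=\widetilde{H}_1$ if and only if $\widetilde{\gamma}(1)=\widetilde{H}_1(\widetilde{x_0})$, i.e.\ if and only if $\gamma$ is homotopic rel endpoints to the track $t\mapsto H(x_0,t)$. Otherwise $\widetilde{\psi}=h\circ\widetilde{H}_1$ for a nontrivial deck transformation $h$, and $\widetilde{\psi}$ is equivariant only for the conjugate $h\rho h^{-1}$, which is not enough for the midpoint map to be equivariant.

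This is not a pedantic point: the lemma genuinely fails for a bad $\gamma$. Take $X=Y$ a hyperbolic pair of pants (a convex hyperbolic surface), $\phi=\psi=\mathrm{id}$, and $\gamma$ an essential geodesic loop at $x_0$ representing a primitive class. Then $\widetilde{\phi}=\mathrm{id}$, $\widetilde{\psi}=h$ is the corresponding deck transformation, and $\widetilde{\Upsilon}(x)$ is the midpoint of the segment $[x,hx]$. If $\widetilde{\Upsilon}$ descended, then by freeness of the deck action, discreteness, and connectedness of $\widetilde{X}$, for each deck transformation $g$ there would be a single $k$ with $\widetilde{\Upsilon}\circ g=k\circ\widetilde{\Upsilon}$. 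Let $A$ be the axis of $h$ and take $g\notin\langle h\rangle$: on $A$ the map $\widetilde{\Upsilon}$ is translation by half the translation length, so $(\widetilde{\Upsilon}\circ g)(g^{-1}A)=\widetilde{\Upsilon}(A)=A$ is unbounded; but $\widetilde{\Upsilon}(g^{-1}A)$ is a bounded set, since as $y\to\infty$ along $g^{-1}A$ the feet of the perpendiculars from $y$ and $hy$ to $A$ converge while the midpoint of $[y,hy]$ stays at bounded distance from $A$, so $k\cdot\widetilde{\Upsilon}(g^{-1}A)$ is bounded and cannot equal $A$. Hence your proof needs the additional hypothesis that $\gamma$ lies in the rel-endpoints homotopy class of a homotopy track from $\phi$ to $\psi$ (equivalently, that the element $h$ above centralizes $\rho(\mathrm{Aut}(p))$), verified via the homotopy-lifting argument sketched above; this is automatic when $\widetilde{Y}$ is simply connected or $\pi_1(Y)$ is abelian, which covers the doubled quadrilateral and pentagon in Lemma \ref{lemma:quad}, while for the doubled hexagon (a pair of pants) it is precisely what the paper's requirement that $\gamma$ cross $a_1$ once is meant to arrange --- a verification your write-up should make explicit instead of attributing equal-$\rho$ equivariance to the normalization alone.
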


\begin{definition}
The $(x_0, \gamma)$-\emph{average} between $\phi$ and $\psi$  is the map $\Upsilon: X \to Y$ induced by $\widetilde{\Upsilon}: \tilde X \to \tilde Y$ and defined as follows:
$$\Upsilon(x) := q(\widetilde{\Upsilon}(\widetilde{x})),$$ 
where  $\widetilde{x}$ is any element in $p^{-1}(x)$. We will also use the notation $$ \Upsilon_{x_0,\gamma}(\phi,\psi):= \Upsilon. $$ Using the basic properties of coverings, it is easy to verify that the map $\Upsilon$ does not depend on $\widetilde{y_0} \in q^{-1}(\phi(x_0))$ and $\widetilde{x_0} \in q^{-1}(x_0)$.  
\end{definition}

\begin{theorem}\label{prop:average}
Let $X$, $Y$ be two (possibly non-homeomorphic) convex hyperbolic surfaces. Let $\phi, \psi: X \to Y$ be two homotopic Lipschitz maps. Then for every $x_0 \in X$ and for every $\gamma:[0,1] \to Y$ with $\gamma(0) = \phi(x_0)$ and $\gamma(1) = \psi(x_0)$, the map $\Upsilon_{x_0,\gamma}(\phi, \psi): X\to Y$  is Lipschitz with 
$$\mathrm{Lip}(\Upsilon_{x_0,\gamma}(\phi, \psi)) \leq \frac{\mathrm{Lip}(\phi)+\mathrm{Lip}(\psi)}{2}~.$$
\end{theorem}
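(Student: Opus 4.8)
The plan is to reduce the statement about maps between convex hyperbolic surfaces to the already-established planar case (Lemma \ref{lemma: Fanny}) by passing to universal covers. The whole point of the two-step construction preceding the theorem is that it produces lifts $\widetilde\phi, \widetilde\psi : \widetilde X \to \widetilde Y$ between \emph{convex subsets of $\mathbb H^2$}, and defines $\widetilde\Upsilon$ as their midpoint map in exactly the sense of Definition \ref{def:average}. So the first thing I would do is invoke Lemma \ref{lemma: Fanny} directly on $\widetilde\phi$ and $\widetilde\psi$ to conclude that $\widetilde\Upsilon$ is Lipschitz with
$$\mathrm{Lip}(\widetilde\Upsilon) \leq \frac{\mathrm{Lip}(\widetilde\phi) + \mathrm{Lip}(\widetilde\psi)}{2}.$$

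The remaining content is bookkeeping about Lipschitz constants under covering projections. First I would observe that covering maps $p:\widetilde X \to X$ and $q:\widetilde Y \to Y$ are local isometries, so they preserve the \emph{local} Lipschitz constant; combined with the local-to-global criterion of Lemma \ref{LOC-2-GLOB} (valid precisely because $\widetilde X, \widetilde Y$ are convex), this gives $\mathrm{Lip}(\widetilde\phi) = \mathrm{Lip}(\phi)$ and likewise for $\psi$. The subtle direction is the reverse inequality for lifts: since $p$ is a local isometry and $\widetilde\phi$ is a lift of $\phi$, one has $\mathrm{Lip}(\phi) \leq \mathrm{Lip}(\widetilde\phi)$ because distances downstairs are infima over lifts of paths, and $\mathrm{Lip}(\widetilde\phi)\le\mathrm{Lip}(\phi)$ because $\phi\circ p$ and $q\circ\widetilde\phi$ agree and $q$ is $1$-Lipschitz locally. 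Then I would pass the bound on $\mathrm{Lip}(\widetilde\Upsilon)$ down to $\Upsilon$: by the commuting square $q\circ\widetilde\Upsilon = \Upsilon\circ p$ from the Lemma just before the definition, together with the local-isometry property of $p$ and $q$ and Lemma \ref{LOC-2-GLOB} applied on $X$ (using that $X$, being a convex hyperbolic surface, satisfies its hypotheses), one gets $\mathrm{Lip}(\Upsilon)\le\mathrm{Lip}(\widetilde\Upsilon)$, which chains together to yield the claimed inequality.

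The \textbf{main obstacle} I anticipate is the comparison of Lipschitz constants across the covering, and specifically verifying that $\mathrm{Lip}(\widetilde\Upsilon)$ faithfully controls $\mathrm{Lip}(\Upsilon)$ rather than merely bounding it in one direction. The issue is that two points $x_1, x_2 \in X$ may be closer in $X$ than any of their lifts are in $\widetilde X$ (distance downstairs is the infimum over representatives), so an a priori bound $\mathrm{Lip}(\widetilde\Upsilon)\le\kappa$ does not immediately transfer to $\Upsilon$ globally. This is exactly why the problem must be localized: for points $x_1,x_2$ that are close in $X$ there is a pair of lifts realizing the same small distance (take a short geodesic and lift it), and the local Lipschitz constant of $\Upsilon$ at any point equals that of $\widetilde\Upsilon$ at a lift. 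The local-to-global lemma then upgrades this local equality to the global bound; I would make sure to state the local computation carefully, since it is the only place where convexity of $X$ is genuinely used.

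One point I would double-check is the hypothesis that $\phi$ and $\psi$ are \emph{homotopic}, which is what guarantees that the two lifts $\widetilde\phi$ and $\widetilde\psi$ have the \emph{same} equivariance (transform by the same deck transformation under a given element of $\pi_1$), so that $\widetilde\Upsilon$ is genuinely equivariant and descends to a well-defined $\Upsilon$. This is implicitly handled by the construction of Step 1 via the connecting path $\gamma$ and its lift $\widetilde\gamma$, but I would confirm that the choice of $\widetilde z_0 = \widetilde\gamma(1)$ aligns the deck-transformation bookkeeping correctly; once equivariance is in hand the descent and the final inequality follow formally from the material already set up.
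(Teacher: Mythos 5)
Your proposal is correct and follows essentially the same route as the paper's own proof: lift $\phi$ and $\psi$ to the universal covers, apply the Gu\'eritaud--Kassel average bound (Lemma \ref{lemma: Fanny}) to $\widetilde{\Upsilon}$ between convex subsets of $\mathbb{H}^2$, and descend the bound via the local-to-global criterion of Lemma \ref{LOC-2-GLOB}. The extra care you take with the equivariance of the lifts and the comparison of Lipschitz constants across the covering is exactly the bookkeeping the paper leaves implicit, so there is no gap.
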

\begin{proof}
We lift the maps $\phi$ and $\psi$ to the universal coverings as in Step 1 above. The Lipschitz constant of $\widetilde{\Upsilon}$ is bounded by Lemma \ref{lemma: Fanny}. This gives a bound on the Lipschitz constant of $\Upsilon$ by Lemma \ref{LOC-2-GLOB}.
\end{proof}

\section{Generalized stretch maps between geometric pieces}\label{sec:stretch_1} 
 
We will construct optimal Lipschitz maps between geometric pieces of the same type. 

\begin{definition}
Let $\mathcal{G}$ and $\mathcal{G'}$ be two geometric pieces of the same type. A continous map $\phi: \mathcal{G} \to \mathcal{G'}$ is \emph{label-preserving} if it  maps every edge of $\mathcal{G}$ to an edge of $\mathcal{G'}$ with the same label. Recall that the labels are assigned as in Figure \ref{geom_pieces}.  
\end{definition}

\subsection{Centers and shears} \label{subsec:shear coordinates}
Only triangular and quadrilateral pieces have bi-infinite edges. There is a one-parameter family of ways to glue two of them together along a bi-infinite edge. We will parametrize the glueing using the \emph{shear parameter}, that is, the (signed) distance between their \emph{centers}. We recall these key-definitions below.  
 
\begin{definition}[Center of $l_i$ with respect to $\TT$]
Let $l_i$ be a bi-infinite edge in a triangular piece $\TT$. The \emph{center} of $l_i$ with respect to $\TT$ is the intersection point $O_{\TT}^i$ between $l_i$ and the geodesic perpendicular to $l_i$ through the opposite vertex.
\end{definition}

Note that each triangular piece has three centers $O_\TT^1$, $O_{\TT}^2$, $O_{\TT}^3$. 

\begin{definition}[Center of $l$ with respect to $\Q$]
Let $l$ be the (unique) bi-infinite edge in a quadrilateral piece $\Q$. The \emph{center} of $l$ with respect to $\Q$ is the intersection point $O_\Q$ between $l$ and the unique (geodesic) perpendicular to $l$ and to the opposite edge.  
\end{definition}

We are now in the position to define the following.

\begin{definition}[Shear between $\G$  and $\G'$]   \label{def:shear}
Let $\G$ and $\G'$ be two geometric pieces glued along the bi-infinite edge $e$. We define the \emph{shear parameter} between $\G$ and $\G'$  as the signed distance $\mathrm{shear}_e(\G, \G') \in \mathbb R$ between the centers $O_\G$ of $e$ with respect to $\G$ and the center $O_{\G'}$ of $e$ with respect to $\G'$. The sign is given by the orientation of the surface, which we always assume to be counter-clockwise: the sign is positive if $O_{\G}$ comes before $O_{\G'}$ with respect to the orientation that $\partial \G$ induces on $e$. (Notice that flipping the roles of $\G$ and $\G'$ in this construction does not change the sign of $\mathrm{shear}_e(\G, \G')$.)
\end{definition}

\begin{figure}[htb]
\begin{center}
\psfrag{Q}{$\mathcal Q$}
\psfrag{T}{$\mathcal T$}
\psfrag{O'}{\tiny $O_{\mathcal Q}$}
\psfrag{O}[tl]{\tiny $O_{\mathcal T~~}~~$}
\includegraphics[width=4cm]{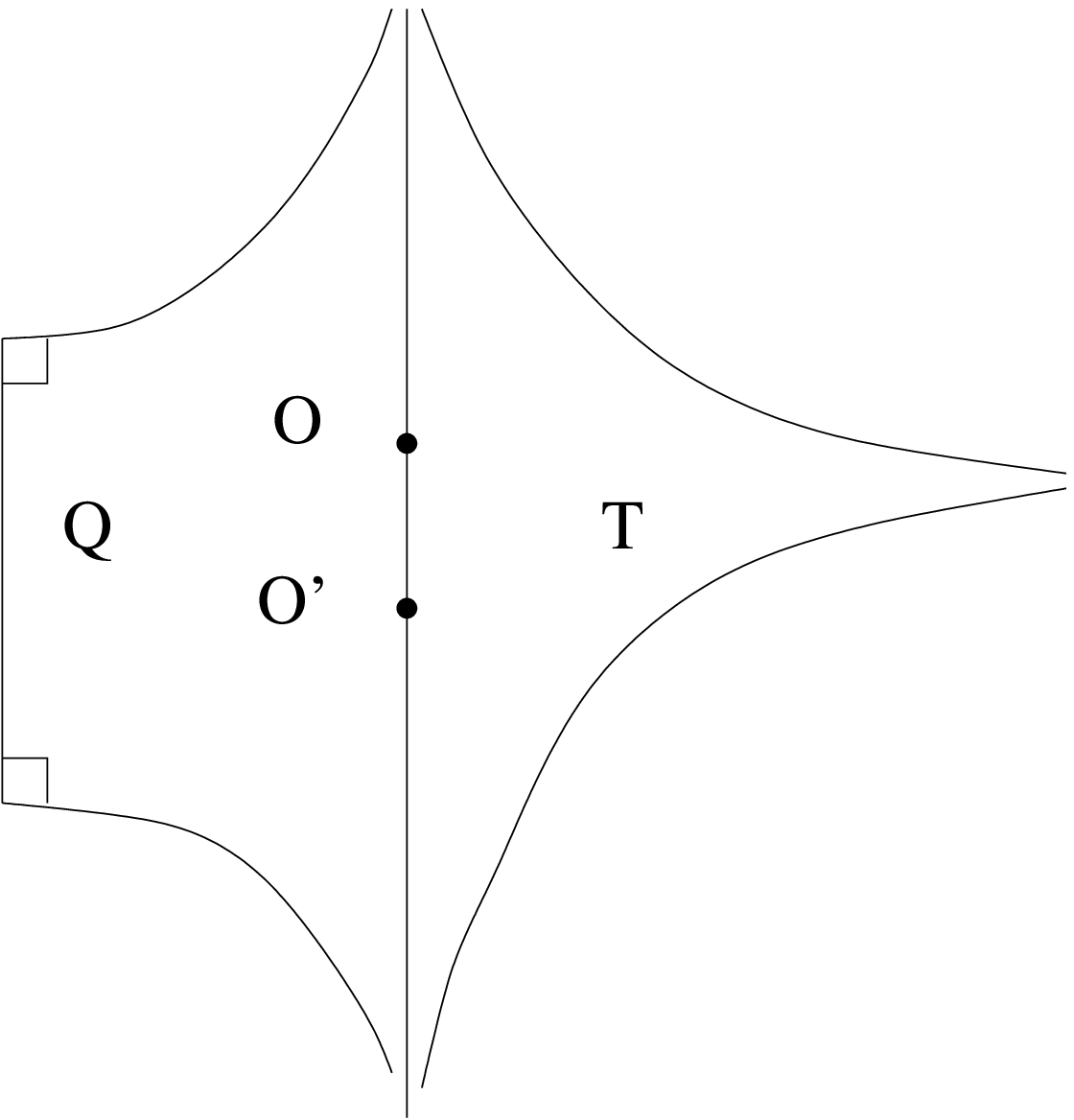}
\caption{$\mathrm{shear}(\mathcal Q, \mathcal T) > 0 $ }      \label{fig:shear}
\end{center}
\end{figure}

\subsection{Thurston's stretch homeomorphism between triangular pieces} 
Thurston \cite{Thurston} explicitly describes a family of Lipschitz homeomorphisms between ideal triangles. 

\begin{lemma}[Thurston {\cite[Proposition 2.2]{Thurston}}]\label{lemma:triangle}
Let $t\geq 0$ and $\TT, \TT^t$ be two triangular pieces. There exists a label-preserving map $\phi^t: \TT \to \TT^t$ with the following properties: 
\begin{enumerate}
\item $\phi$ is a homeomorphism;
\item $\phi^t(O_\TT) = O_{\TT^t}$ and its restriction $\phi^t_|: l_i \to l_i^t$ multiplies the arc length by $e^t$ for each $i=1, 2, 3$; 
\item  $\mathrm{Lip}(\phi^t) = e^t$.
\end{enumerate}
\end{lemma}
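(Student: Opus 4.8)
The plan is to reproduce Thurston's horocyclic construction explicitly in the upper half-plane. Since any two ideal triangles are isometric, I would first fix the standard ideal triangle $T_0\subset\mathbb{H}^2$ with ideal vertices $0,1,\infty$ and, using the unique label-respecting isometries $\TT\cong T_0\cong\TT^t$, reduce the statement to constructing a label- and center-preserving self-map $\phi^t:T_0\to T_0$ with $\Lip(\phi^t)=e^t$ that multiplies arc length along each edge by $e^t$. I would then recall the horocyclic picture of $T_0$: the three horocycles centred at $0,1,\infty$ that are pairwise tangent meet at the three centers $O_{T_0}^1=i$, $O_{T_0}^2=1+i$, $O_{T_0}^3=\tfrac12+\tfrac{i}{2}$ (the tangency points of the inscribed circle), and they cut $T_0$ into three \emph{cusp regions}, one towards each vertex, together with a central curvilinear triangle $\Delta$ bounded by the three horocyclic arcs joining consecutive centers.

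Next I would define $\phi^t$ piece by piece. On the cusp region towards $\infty$, namely $\{0\le x\le 1,\ y\ge 1\}$, I set $\phi^t(x,y)=(x,y^{e^t})$, and I define $\phi^t$ on the other two cusp regions by conjugating this formula with the order-three isometry $z\mapsto 1/(1-z)$ that cyclically permutes the vertices. On the central triangle $\Delta$ I set $\phi^t=\mathrm{id}$. The point that makes this consistent is that each cusp-region map fixes its bounding horocyclic arc pointwise (for the $\infty$-region, $\phi^t(x,1)=(x,1)$), so the three definitions agree on the one-dimensional interfaces and glue to a continuous bijection of $T_0$; its inverse is given by the analogous formulas with $t$ replaced by $-t$, so $\phi^t$ is a homeomorphism, proving (1).

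Properties (2) and (3) I would verify by a direct computation in these coordinates. The three centers lie on the interfaces and are fixed (e.g. $\phi^t(i)=i$, while $O_{T_0}^2,O_{T_0}^3\in\Delta$ are fixed by the identity), giving $\phi^t(O_{T_0}^i)=O_{T_0}^i$. Along the edge $\{x=0\}$, parametrized by signed hyperbolic arc length $s$ from $O_{T_0}^1=i$ via $y=e^{s}$, one has $\phi^t(0,e^{s})=(0,e^{e^t s})$, so arc length from the center is multiplied exactly by $e^t$; the other edges follow by symmetry, proving (2). For the Lipschitz constant, the hyperbolic frame $y\partial_x,\,y\partial_y$ is orthonormal and is sent by $d\phi^t$ to an orthogonal pair of vectors of norms $y^{1-e^t}\le 1$ and $e^t$, so the operator norm of $d\phi^t$ equals $e^t$ everywhere on each cusp region (and is $1$ on $\Delta$); in particular each piece is $e^t$-Lipschitz.

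The step I expect to be the main obstacle is upgrading these pointwise/piecewise bounds to the global equality $\Lip(\phi^t)=e^t$: the map is only piecewise smooth and the pieces are glued along \emph{horocyclic} (hence non-geodesic) arcs, so one cannot simply add Lipschitz estimates along a geodesic joining two points, which may meet an interface several times. This is exactly what Lemma \ref{LOC-2-GLOB} is designed to handle: since $T_0$ is convex, it suffices to control the local constants $\inf_{r>0}\Lip(\phi^t|_{B(x;r)})$, and at an interface point a small convex ball is split by finitely many arcs into closed pieces on each of which $\phi^t$ is $e^t$-Lipschitz, forcing the local constant to be $\le e^t$. Taking the supremum over $x$ yields $\Lip(\phi^t)\le e^t$, and the edge computation in (2) gives the reverse inequality, so $\Lip(\phi^t)=e^t$, establishing (3).
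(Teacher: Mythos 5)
Your proposal is correct and is essentially the approach the paper relies on: the paper gives no proof of this lemma, citing Thurston's Proposition 2.2, and your explicit coordinate model (horocycles $h^d\mapsto h^{e^td}$ mapped affinely, identity on the central unfoliated triangle) is exactly the map described in Definition \ref{def:horfol3} and Lemma \ref{lem:horocyclic foliation triangle}. Your use of Lemma \ref{LOC-2-GLOB} together with the finitely-many-crossings subdivision along geodesics correctly handles the piecewise gluing along horocyclic interfaces.
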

We denote the target of the map $\phi^t$ by $\TT^t$ for consistency with the next subsections ($\TT^t$ is actually isometric to $\TT$). In order to define his map $\phi^t$, Thurston defined the \emph{horocyclic foliation} a partial foliation $\mathcal{K}$ of $\TT$ defined as follows (see Figure \ref{fig:horocyclic foliation triangle}). Consider the vertex of $\TT$ adjacent to the edges $l_1$ and $l_2$. A horocycle $h$ centered at this vertex intersects the edges $l_1, l_2$ at the points $h_1, h_2$, so that
$d(h_1,O_\TT^1) = d(h_2, O_\TT^2).$
We consider a partial foliation $\mathcal{K}_{12}$ whose leaves are all the horocycles whose points $h_1,h_2$ are closer to the vertex than the corresponding center $O_\TT^1$ or $O_\TT^2$. 
We denote by $h_{12}^d$ the only horocycle $h$ in $\mathcal{K}_{12}$ such that $d(h_1,O_\TT^1)=d$. Similarly, we define partial foliations $\mathcal{K}_{23}, \mathcal{K}_{31}$ starting with the other two vertices. 
\begin{figure}[htbp]
\begin{center}
\psfrag{A}{$O_\TT^1$}
\psfrag{B}{$O_\TT^2$}
\psfrag{C}{$O_\TT^3$}
\includegraphics[width=4cm]{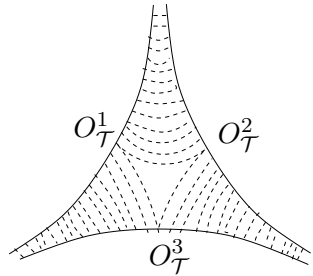}
\caption{Horocyclic foliation}  \label{fig:horocyclic foliation triangle}
\end{center}
\end{figure}
\begin{definition}[Horocyclic foliation]  \label{def:horfol3}
The \emph{horocyclic foliation} $\mathcal{K}$ is the union of the three partial foliations $\mathcal{K}_{12}$, $\mathcal{K}_{23}$ and $\mathcal{K}_{31}$. 
(The triangle bounded by the three horocycles $h_{12}^0$, $h_{23}^0$ and $h_{31}^0$ is unfoliated.) 
\end{definition}

\begin{lemma}[Thurston {\cite[Proposition 2.2]{Thurston}}]   \label{lem:horocyclic foliation triangle}
The map $\phi^t$ maps the leaf $h_{ij}^d$ of $\mathcal{K}$ in $\TT$ to the leaf $h_{ij}^{e^t d}$ of $\mathcal{K}$ in $\TT^t$ affinely. On the unfoliated region, $\phi^t$ is the identity.    
\end{lemma}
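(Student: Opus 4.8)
The plan is to establish this lemma together with the existence statement of Lemma \ref{lemma:triangle} by writing down Thurston's map $\phi^t$ explicitly, one horn of the foliation at a time, and checking its properties directly; the description of the action on leaves is then immediate from the formula, and the only genuine work is the Lipschitz bound.

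First I would normalize each of the three vertices of $\TT$ separately. Fixing a vertex $v$ adjacent to the edges $l_i,l_j$, apply an isometry taking $v$ to $\infty$ in the upper half-plane model and the two edges to the vertical lines $\{x=0\}$ and $\{x=1\}$, so that $\TT$ is the triangle $0,1,\infty$. By the symmetry of the ideal triangle the two centers $O^i_\TT,O^j_\TT$ then sit at height $y=1$, so the leaf $h_{ij}^d$ of $\mathcal K$ is exactly the horizontal segment $\{y=e^{d}\}$, of hyperbolic length $e^{-d}$, and the horn at $v$ is the region $\{0<x<1,\ y\ge 1\}$. In this chart I would define $\phi^t$ by the formula $(x,y)\mapsto (x,\,y^{e^t})$. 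Then $\phi^t$ carries the leaf $\{y=e^{d}\}$ onto $\{y=e^{e^t d}\}=h_{ij}^{e^t d}$, and since arc length along a leaf $\{y=y_0\}$ is $x/y_0$, the induced map between leaves sends $x/y_0\mapsto x/y_0^{e^t}$, i.e.\ it multiplies arc length by the constant $y_0^{1-e^t}$; this is the asserted affine action. Moreover $\phi^t$ moves the point of $l_i$ at distance $d$ from $O^i_\TT$ to the point at distance $e^t d$, which is the edge-stretching property of Lemma \ref{lemma:triangle}(2).

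To control the Lipschitz constant I would compute the pulled-back metric. With source metric $y^{-2}(dx^2+dy^2)$ and target metric $y^{-2e^t}(dx^2+dy^2)$, the differential of $(x,y)\mapsto(x,y^{e^t})$ produces pullback metric $\mathrm{diag}\!\left(y^{-2e^t},\,e^{2t}y^{-2}\right)$ against the source $\mathrm{diag}\!\left(y^{-2},y^{-2}\right)$, so the operator norm of $d\phi^t$ equals $\max\{y^{-(e^t-1)},\,e^{t}\}=e^{t}$ for every $y\ge 1$ and $t\ge 0$. Thus $\phi^t$ is smooth with local Lipschitz constant exactly $e^t$ in the interior of each horn. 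On the central unfoliated region I set $\phi^t$ to be the identity, which proves the last sentence of the lemma; this is compatible with the horns because along each innermost leaf $h_{ij}^0=\{y=1\}$ the formula reduces to $(x,1)\mapsto(x,1)$, and at the three centers the horns and the central region meet consistently. Gluing the pieces then yields a well-defined continuous map $\phi^t:\TT\to\TT^t$ with the claimed foliation action.

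The last step, and the point I expect to require the most care, is to pass from the pointwise bound to $\Lip(\phi^t)=e^{t}$ across the curves where the pieces are glued. Here I would invoke Lemma \ref{LOC-2-GLOB}: $\TT$ is convex and $\phi^t$ is continuous, so it suffices to bound $\inf_{r>0}\Lip(\phi^t|_{B(x;r)})$ by $e^t$ at each $x$. For $x$ in the interior of a region this is the operator-norm bound; for $x$ on a gluing arc one splits a short geodesic between two nearby points into the finitely many subsegments lying in single regions, on each of which $\phi^t$ is $e^t$-Lipschitz, and adds the estimates. The essential geometric input that keeps the constant equal to $e^t$ rather than something larger is the orthogonality, recorded above, of the leaf direction (along which $\phi^t$ contracts) and the transverse edge direction (along which it stretches by $e^t$), which makes the operator norm the maximum of the two diagonal factors.
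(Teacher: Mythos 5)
Your proposal is correct and takes essentially the same route as the source: the paper states this lemma without proof, citing Thurston's Proposition 2.2, and your explicit upper half-plane verification --- normalizing each spike to $\{0\le x\le 1,\ y\ge 1\}$ so that the centers sit at height $y=1$ and $h_{ij}^d=\{y=e^d\}$, taking $(x,y)\mapsto(x,y^{e^t})$ on each horn and the identity on the central region, reading off the affine leaf action, bounding the operator norm by $\max\{y^{1-e^t},e^t\}=e^t$, and globalizing with Lemma \ref{LOC-2-GLOB} --- is precisely Thurston's original construction of the stretch map written out in coordinates. The only cosmetic slip is the phrase ``target metric $y^{-2e^t}(dx^2+dy^2)$,'' which should be read as the hyperbolic metric at the image point expressed through the source coordinate $y$; your pullback matrix $\mathrm{diag}\left(y^{-2e^t},\,e^{2t}y^{-2}\right)$ is nonetheless the correct one, so the argument stands.
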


In the rest of this section, we will prove results analog to Lemmas \ref{lemma:triangle} and \ref{lem:horocyclic foliation triangle} for the geometric pieces of the other kinds.

\subsection{Parameters for the geometric pieces}    \label{subsec:parameters}

While there is just one ideal triangle up to isometry, the geometric pieces of the other kinds have parameters. A quadrilateral piece $\Q$ is uniquely determined by the length of the edge $a_1$, a pentagonal piece $\PP$ by the lengths of the edges $a_1$, $a_2$, an hexagonal piece $\HH$ by the lengths of three alternating edges, see Figure \ref{geom_pieces}. But these parametrizations are not very convenient for our needs. We will now introduce other parameters for the geometric pieces.

Doubling a quadrilateral piece $\Q$ along $a_1$, we get an ideal quadrilateral $\Q^d$, as in Figure \ref{Q1}. Consider on $a_1$ the orientation induced by $\mathbb H^2$, and triangulate $\Q^d$ adding a diagonal $e$ accordingly. Let $\TT, \TT'$ be the triangles obtained. Their shear $s$ parametrizes $\Q$ completely. Notice that $a_1$ takes values in $(0,\infty)$, while $s$ takes values in $(-\infty,\infty)$. As $s \to + \infty$ we have $a_1 \to + \infty$, but as $s \to -\infty$ we have $a_1 \to 0$. 
 
\begin{figure}[htb]
\begin{center}
\psfrag{C'}{\small $O_{\mathcal T'}$}
\psfrag{C}{\small $O_{\mathcal Q}$}
\psfrag{Q}[l]{ $\mathcal Q^d$}
\psfrag{s}{$s$}
\psfrag{T'}{$\mathcal T'$}
\psfrag{T}{$\mathcal T$}
\includegraphics[width=3cm]{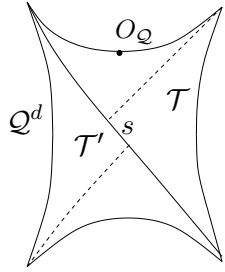}
\caption{The quadrilateral $\mathcal Q_s^d$} \label{Q1}
\end{center}
\end{figure}

Similarly, doubling a pentagonal piece $\PP$ along the edges $a_1$ and $a_2$, we get a hyperbolic cylinder $\PP^d$ with two spikes and a totally geodesic boundary. We will denote it by $l_1^d$. After choosing an orientation on $l_1^d$, we consider two geodesics $e_1, e_2$ coming from each of the two spikes and spiraling around the geodesic $l_1^d$ according to the chosen orientation. The geodesics $e_1, e_2$ decompose $\PP^d$ in two ideal triangles $\TT, \TT'$, as in Figure \ref{thurston:penta}. Their shears coordinates $s_1$ and $s_2$ parametrize $\PP^d$ and hence $\PP$: we will use them as parameters. The shear coordinates depend on the choice of orientation of the geodesic $l_1^d$. Indeed, by choosing the other orientation, we would have the mirror image of the same picture: the two shear coordinates would have the same absolute values but opposite signs. To fix the signs of the coordinates, we will always choose the orientation of $l_1^d$ so that $s_1 + s_2 > 0$. Note that this sum can never be zero as $|s_1+s_2|=\ell(l_1^d)$ by \cite[Proposition 3.4.21]{ThurstonBook}.

\begin{figure}[htb]
\begin{center}
\psfrag{a}{\small $a_1$}
\psfrag{b}[B]{\small$a_2$}
\psfrag{T}{\small $\TT$}
\psfrag{T'}{\small $\TT'$}
\includegraphics[width=11cm]{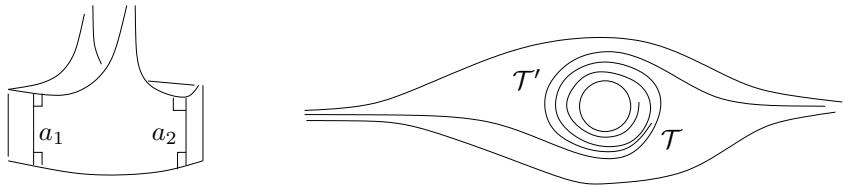}
\caption{Doubling the pentagon to get $\PP^d$}\label{thurston:penta}
\end{center}
\end{figure}

Again, doubling a hexagonal piece $\HH$ along the edges $a_1, a_2, a_3$, we get a pair of pants $\HH^d$ with $3$ geodesic boundary components that we will denote by $l_1^d, l_2^d, l_3^d$. We can find three disjoint infinite simple geodesics $e_1, e_2, e_3$ in $\HH^d$ such that $e_i$ spirals around $l_{i+1}^d$ and $l_{i+2}^d$, for $i\in \{1,2,3\}$ and sums of indices taken modulo $3$. The geodesics $e_1, e_2, e_3$ cut the pair of pants in two triangles $\TT, \TT'$, glued with shear coordinates $s_1, s_2, s_3$. Up to changing the directions of the spirals, we can always choose the geodesics $e_i$ such that $s_i + s_{i+1} = \ell(l_{i+2}^d)$ (see \cite[Proposition 3.4.21]{ThurstonBook}). Thus at least two of  the $s_i$'s are positive: assume $s_2, s_3 > 0$. We will parametrize the hexagons using the three shear coordinates $s = (s_1,s_2,s_3)$.

In the three cases, the lengths of the edges $a_i$ can be computed explicitly from the shears, but we will not need these formulae here.

\subsection{Stretching the geometric pieces}

In the rest of the paper, we will denote by $\Q:= \Q_s$,  $\PP := \PP_{s}$ with $s = (s_1,s_2)$, $\HH := \HH_{s}$ with $s = (s_1,s_2,s_3)$ a geometric piece having certain prescribed shear coordinates.
Moreover, we will use the notation $\Q^t := \Q_{e^t s}$, $\PP^t := \PP_{e^t s}$, $\HH^t := \HH_{e^t s}$.  

\begin{lemma}[Stretch of geometric pieces]\label{lemma:quad} 
Fix $t\geq 0$. Let $\FF_s = \Q_s, \PP_s$ or $\HH_s$ be a marked geometric piece. Then there exists a label-preserving map $\phi^t:\FF_s \to \FF_{e^t s}$ such that: 
\begin{enumerate}
\item $\phi^t$ is onto;
\item the map $\phi^t_|: l_i \to l_i^t$ is affine and multiplies the arc length by $e^t$ for every $i=1, 2, 3$;
\item $\mathrm{Lip}(\phi^t) = e^t$. 
\item If the piece is a quadrilateral, $\phi^t(O_\Q) = O_{\Q^t}$;
\end{enumerate}
\end{lemma}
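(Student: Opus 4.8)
The plan is to prove Lemma \ref{lemma:quad} for each of the three remaining piece types ($\Q_s$, $\PP_s$, $\HH_s$) by reducing everything to the already-understood triangular case via doubling, and then using the averaging technique of Theorem \ref{prop:average} to pass from the doubled (symmetric) surfaces back down to the pieces themselves. The key observation is that each geometric piece doubles along its boundary edges $a_i$ into a surface $\FF_s^d$ that is cut by the spiraling geodesics $e_j$ into two ideal triangles $\TT, \TT'$ glued with the prescribed shear coordinates $s = (s_1, \dots)$. By Proposition \ref{thurston_cocycle}, Thurston's stretch line scales the shearing cocycle by $e^t$, so stretching each triangle by Thurston's map (Lemma \ref{lemma:triangle}) and regluing with shears $e^t s$ produces exactly $\FF_{e^t s}^d$. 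Since the shear coordinates were normalized (e.g. $s_1 + s_2 > 0$ for the pentagon, $s_i + s_{i+1} = \ell(l_{i+2}^d)$ for the hexagon) precisely so that the doubled picture corresponds to an honest hyperbolic structure, the doubled stretch map $\Psi^t_d : \FF_s^d \to \FF_{e^t s}^d$ is Thurston's stretch map, with $\mathrm{Lip}(\Psi^t_d) = e^t$.

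The subtlety is that $\Psi^t_d$ need not respect the doubling involution $\sigma$ in a way that descends cleanly to the quotient piece $\FF_s$, nor is it obviously label-preserving with the correct affine behaviour on the $l_i$. This is where I would invoke averaging. First I would form the composition $\sigma' \circ \Psi^t_d \circ \sigma$, which is a second Lipschitz map $\FF_s^d \to \FF_{e^t s}^d$ homotopic to $\Psi^t_d$ and with the same Lipschitz constant $e^t$, where $\sigma, \sigma'$ are the involutions on source and target. Applying Theorem \ref{prop:average} to this pair produces a symmetric average map $\Upsilon^t$ with $\mathrm{Lip}(\Upsilon^t) \leq e^t$, which is now genuinely $\sigma$-equivariant (one checks that the midpoint construction commutes with the isometric involution, using that $\sigma$ acts as an isometry so it sends geodesics to geodesics and midpoints to midpoints). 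An equivariant map on the double descends to a map $\phi^t : \FF_s \to \FF_{e^t s}$ on the quotient, which is the desired stretch map; the descent preserves the labels because $\sigma$ fixes the fold-locus $\bigcup a_i$ setwise.

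It remains to verify the four enumerated properties of the descended map. For surjectivity (item 1), I would note that the averaged map on the double is onto because both $\Psi^t_d$ and its conjugate are, and the geodesic joining a point to its image in the convex double stays in the double, so the quotient map hits $\FF_{e^t s}$. For the affine-stretch property on the leaves $l_i$ (item 2), the leaves of the lamination $l_i$ lie along the spiraling geodesics $e_j$ and their images, where Thurston's map already acts affinely with factor $e^t$; since the involution preserves these leaves and the averaging of two affine maps along a common geodesic is again affine, the restriction $\phi^t_| : l_i \to l_i^t$ multiplies arc length by $e^t$. The Lipschitz bound $\mathrm{Lip}(\phi^t) \leq e^t$ (half of item 3) follows from $\mathrm{Lip}(\Upsilon^t) \leq e^t$ together with Lemma \ref{LOC-2-GLOB} applied to the convex piece, and the reverse inequality $\mathrm{Lip}(\phi^t) \geq e^t$ is forced by item 2, since $\phi^t$ already stretches the leaves by exactly $e^t$. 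Finally, for the quadrilateral case (item 4), the center $O_\Q$ lies on the bi-infinite leaf $l$ at the foot of the perpendicular to the opposite edge, which is a point fixed (or canonically determined) by the symmetry, and its image under the affine leaf-map is exactly $O_{\Q^t}$.

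The step I expect to be the main obstacle is verifying $\sigma$-equivariance of the average and hence the well-definedness of the descent $\phi^t$. The averaging construction of Theorem \ref{prop:average} depends on a choice of basepoint $x_0$ and a geodesic path $\gamma$ connecting the two images, and it is not automatic that a generic choice yields an equivariant map; I would need to choose $x_0$ on the fixed locus of $\sigma$ (i.e. on $\bigcup a_i$) and $\gamma$ to be the $\sigma'$-invariant geodesic between $\Psi^t_d(x_0)$ and $(\sigma' \circ \Psi^t_d \circ \sigma)(x_0)$, and then check that with these symmetric choices the lifted midpoint map commutes with the lifted involution. This compatibility of the averaging with the involution is exactly the kind of delicate point where the ``averaging trick'' of Gu\'eritaud-Kassel earns its keep, and it is the heart of why the construction is only implicit rather than an explicit formula as in Thurston's original triangle map.
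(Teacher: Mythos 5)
Your proposal follows essentially the same route as the paper's proof of Lemma \ref{lemma:quad}: double the piece, glue Thurston's triangle maps along the spiraling diagonals (the shears scale by $e^t$, so the glued map lands in $\FF_{e^t s}^d$), conjugate by the involutions, average via Theorem \ref{prop:average} with the symmetric choices $x_0$ on the fixed locus and a $\sigma_t$-invariant path $\gamma$, verify $\sigma_t \circ \Upsilon^t \circ \sigma = \Upsilon^t$, and restrict to the piece --- this is exactly the paper's argument, including the equivariance computation you correctly identified as the delicate point. One small caution: surjectivity of the average does not follow merely from surjectivity of the two input maps (the average of the identity of $\mathbb{H}^2$ and a point-reflection is a constant map), so your one-line justification of item (1) should instead rest on properness and degree, though the paper likewise asserts this property without further detail.
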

\begin{proof}
We double $\FF_s$ and $\FF_{s e^t}$ obtaining $\FF_s^d$ and $\FF_{e^t s}^d$, as we did in Section \ref{subsec:parameters}. Let $\sigma : \FF_{s}^d \to \FF_{s}^d$ be the isometric involution that maps one copy of $\FF_s$ in $\FF_s^d$ to the other copy. Similarly, let $\sigma^t: \FF_{s e^t}^d  \to \FF_{s e^t}^d$ be the corresponding isometric involution on $\FF_{s e^t}^d$. Let $\TT$ and $\TT'$ be the two ideal triangles in $\FF_s^d$ constructed in Section \ref{subsec:parameters}, separated by edges $e_i$ (where $i=1$ for $\FF_s = \Q_s$, $i \in \{1,2\}$ for $\FF_s = \PP_s$, $i \in \{1,2,3\}$ for $\FF_s = \PP_s$.) Let $\TT_t$ and $\TT_t'$ be the corresponding triangles in $\FF_{s e^t}^d$. 

Let $\psi_t: \TT \to \TT_t$ and $\psi_t': \TT' \to \TT_t'$ be the two homeomorphisms as in Lemma \ref{lemma:triangle}. 
The maps $\psi_t$ and $\psi_t'$ agree on the edges $e_i$, since $ \mathrm{shear}_{e_i}(\TT_t, \TT_t') = e^t \cdot \mathrm{shear}_{e_i}(\TT, \TT')$. 
Hence, the maps $\psi_t$ and $\psi_t'$ glue to a homeomorphism $\Psi^t: \FF_{s}^d \to \FF_{s e^t}^d$. 
By construction, $\Psi^t$ maps every edge $l_i$ of $\FF_{s}^d$ to the corresponding edge of $\FF_{s e^t}^d$ multiplying its arc length by $e^t$. 

By Lemma \ref{LOC-2-GLOB} we have:
$$\mathrm{Lip}(\Psi^t) = \mathrm{max}\{ \mathrm{Lip}(\psi_t), \mathrm{Lip}(\psi_t') \} = e^t.$$ 

Similarly, $\sigma_t \circ \Psi^t \circ \sigma: \FF_{s}^d \to \FF_{s e^t}^d$ is a $e^t$-Lipschitz homeomorphism that maps every edge $l_i$  of $\FF_{s}^d$ to the corresponding edge of $\FF_{s e^t}^d$ multiplying its arc length by $e^t$. 

Choose $x_0 \in a_1 \subset \FF_{s}$. By construction we have: $\sigma(x_0) =x_0$ and $\Psi_t^\sigma(x_0) = \sigma_t \circ \Psi^t(x_0)$. Let $\gamma$ be a geodesic segment crossing $a_1$ once and joining $\Psi^t(x_0)$ and $\Psi^\sigma_t(x_0)$. Thus $\gamma$ is orthogonal to $a_1$, and $\sigma_t(\gamma) = \gamma$. We consider the map 
$\Upsilon^t:=\Upsilon_{x_0,\gamma}(\Psi^t, \Psi_t^\sigma): \FF_{s}^d \to \FF_{e^t s}^d$, that is, the average of $\Psi^t$ and $\Psi_t^\sigma$ with respect to $x_0$ and $\gamma$, in the sense of Theorem \ref{prop:average}. By construction and Theorem \ref{prop:average}, $\Upsilon^t$ enjoys the following properties: 
\begin{itemize}
\item $\Upsilon^t$ is onto; 
\item $\Upsilon^t$ maps every edge $l_i$ of $\FF_{s}^d$ to the corresponding one of $\FF_{s e^t}$ by multiplying its arc-length by $e^t$; 
\item $\mathrm{Lip}(\Upsilon^t) = e^t$.
\item If $\mathcal F_s = \Q_s$, $\Upsilon^t$ maps the center of $l_2$ in $\Q_{s}^d$ to the center of $l_2$ in $\Q_{s e^t}^d$;  
\end{itemize}
Moreover, by construction we have: 
$$\sigma_t \circ \Upsilon^t \circ \sigma = \Upsilon_{\sigma(x_0), \sigma(\gamma)}(\sigma_t \circ \Psi \circ \sigma, \sigma_t \circ \Psi^\sigma \circ \sigma) = \Upsilon_{x_0,\gamma}(\Psi^\sigma, \Psi) = \Upsilon_{x_0, \gamma}(\Psi, \Psi^\sigma)= \Upsilon^t.$$ 
Hence, the image by $\Upsilon^t$ of the edges $a_i$ of $\FF_{s}$ are the corresponding edges of $\FF_{e^t s}$, and $\Upsilon^t$ restricts to 
$\phi^t = \Upsilon_{|\FF_{s}} : \FF_{s} \to \FF_{e^t s}$ as in the statement. 
\end{proof}

It is interesting to compare our construction of the stretch map for the case of the hexagon with the one given by Papadopoulos-Yamada \cite{PapYam}, who construct optimal Lipschitz maps between special types of hexagons. Their work generalize an explicit example by Papadopoulos-Th\'eret \cite{PapTher3} for hexagons with $l_1 = l_2 = l_3$. The Lipschitz constant of the Papadopoulos-Yamada map is usually achieved only on one of the three alternating edges, but not on all of them. Because of this, their map is not suitable for our purposes.

\subsection{Understanding the shear parameters}

\begin{figure}[t!]  
\begin{center}
\begin{subfigure}[t]{0.3\textwidth}
\psfrag{l1}{$A$}
\psfrag{l2}{$B$}
\psfrag{a}{$C$}
\psfrag{l3}{$D$}
\psfrag{P1}{\tiny $P_{AD}$ ~~~ }
\psfrag{P2}{\tiny $P_{BC}$}
\psfrag{O}[b]{\small $O_{\Q}$}
\includegraphics[width=4cm]{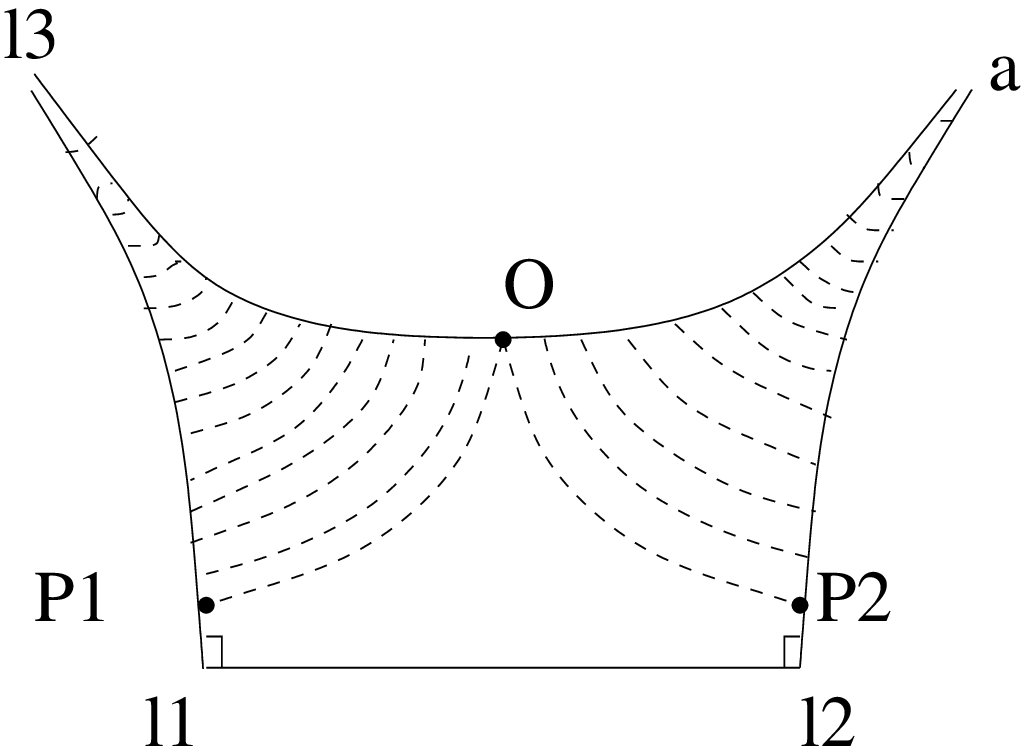}
\caption{$s>0$}
\end{subfigure}
\begin{subfigure}[t]{0.3\textwidth}
\psfrag{O}[b]{\small $O_{\Q}$}
\psfrag{l1}{$ $}
\psfrag{l2}{$ $}
\psfrag{l3}{$D$}
\psfrag{a}{$C$}
\psfrag{P1}{\tiny $P_{AD}=A$}
\psfrag{P2}{\tiny $P_{BC}=B$}
\includegraphics[width=4cm]{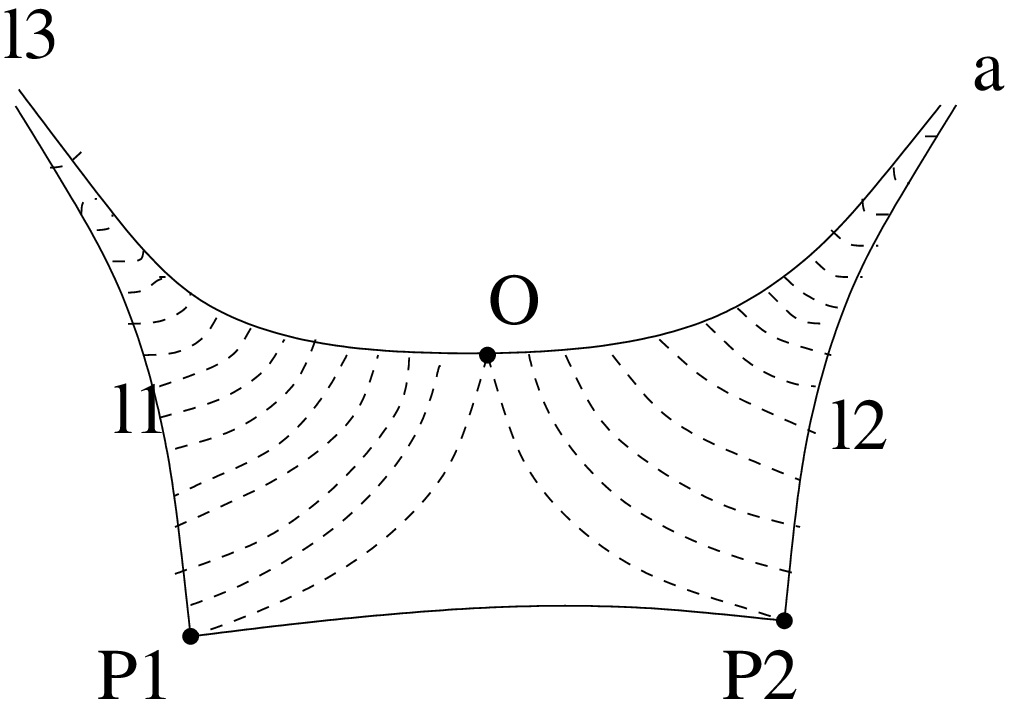}
\caption{$s=0$}
\end{subfigure}
\begin{subfigure}[t]{0.3\textwidth}
\psfrag{O}[b]{\small $O_{\Q}$}
\psfrag{l1}{$ $}
\psfrag{l2}{$ $}
\psfrag{l3}{$D$}
\psfrag{a}{$C$}
\psfrag{P1}{\tiny $P_{AD}$}
\psfrag{P2}{\tiny $P_{BC}$}
\includegraphics[width=4cm]{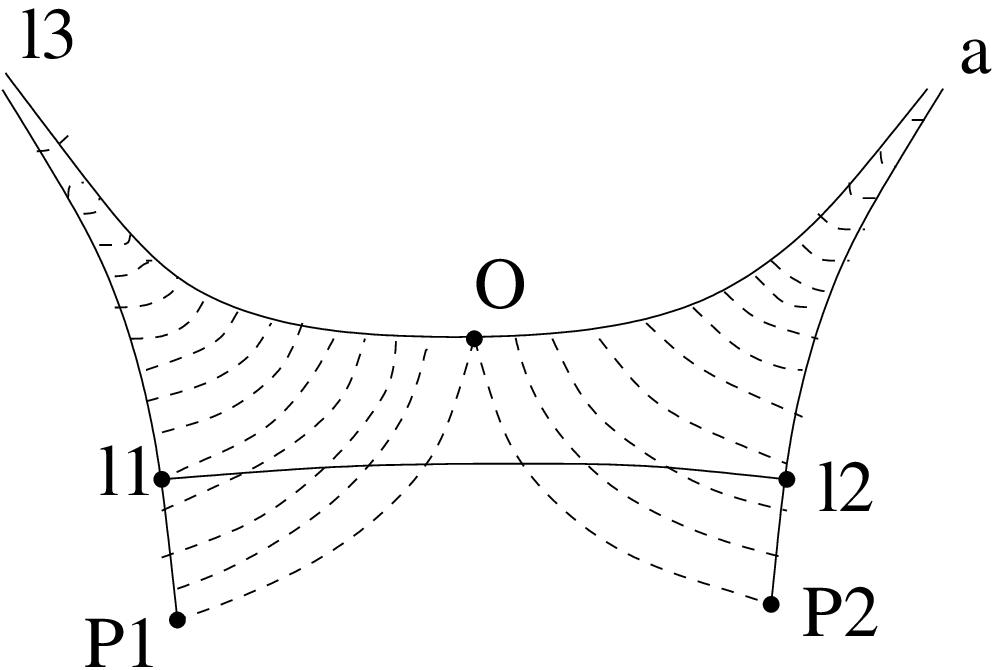}
\caption{$s<0$}
\end{subfigure}
\caption{Lemma \ref{shear_quads}} \label{foliation quad}
\end{center}
\end{figure}

We will now give a geometric interpretation to the shear parameters $s$ of the geometric pieces. Our aim is to prove Propositions \ref{shear_quads2}, \ref{shear_penta2}, \ref{shear_hexa2}.

Consider $\Q_s:=\overline{ABCD} \in \mathbb H^2$ where $D, C$ are the two ideal vertices, as in Figure \ref{foliation quad}. Let $\mathscr{F}$ be the horocyclic foliation based in $C$, with $f \in \mathscr F$ its (unique) leaf through $O_\Q$. Denote by $P_{AD}$ the intersection point between $f$ and the bi-infinite geodesic of $\mathbb H^2$ obtained extending $\overline{BC}$. Similarly, let $\mathscr{F}'$ be the horocyclic foliation based in $D$, with $f' \in \mathscr F'$ its (unique) leaf passing through $O_\Q$. Denote by $P_{BC}$ the intersection point between $f'$ and the bi-infinite geodesic of $\mathbb H^2$ obtained extending $\overline{AD}$. We will compute the ``signed'' distances between $P_{BC}, P_{AB}$ and $B, A$ respectively. We define: 
\begin{enumerate}
\item $d_{\pm}(P_{BC},B):= \epsilon \cdot d(P_{BC}, B)$, where $\epsilon = 1$ when $P_{BC} \in \overline{BC}$ and $\epsilon = -1$ when $P_{BC} \not \in \overline{BC}$;
\item $d_{\pm}(P_{AD},A):= \epsilon \cdot d(P_{AD}, A)$, where $\epsilon = 1$ when $P_{AD} \in \overline{AD}$ and $\epsilon = -1$ when $P_{AD} \not \in \overline{AD}$.
\end{enumerate}
By construction, it is clear that $d_{\pm}(P_{BC},B) = d_{\pm}(P_{AD},A)$.
 
\begin{lemma}\label{shear_quads}
In the notation above, we have: $$d_{\pm}(P_{BC},B) = d_{\pm}(P_{AD},A) = \frac{s}{2}.$$
\end{lemma}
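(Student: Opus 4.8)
The first equality $d_{\pm}(P_{BC},B)=d_{\pm}(P_{AD},A)$ requires no computation. The quadrilateral $\Q_s=\overline{ABCD}$ carries the reflection $\tau$ across the common perpendicular of $\overline{CD}$ and $\overline{AB}$, i.e. the geodesic through $O_\Q$ orthogonal to $l$. This $\tau$ fixes $O_\Q$, swaps the ideal vertices $C\leftrightarrow D$ and the finite vertices $A\leftrightarrow B$, hence swaps the horocyclic foliations $\mathscr F\leftrightarrow\mathscr F'$ and their leaves $f\leftrightarrow f'$; it maps $\overline{BC}$ to $\overline{AD}$ and therefore $P_{BC}\mapsto P_{AD}$, $B\mapsto A$. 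As an isometry it preserves both the distances and the $\epsilon=\pm1$ sign conventions, so the two signed quantities coincide. Thus the content is the identity with $s/2$, which I would establish by an explicit computation in the upper half-plane.

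\emph{Setup.} Normalize so that $l=\overline{CD}$ is the positive imaginary axis with $C=0$, $D=\infty$, and $O_\Q=i$. Then the common perpendicular of $\overline{CD}$ and $\overline{AB}$ is the unit semicircle, so $\overline{AB}$ is the geodesic orthogonal to it, namely a semicircle $|z-c|=\sqrt{c^2-1}$ for a real parameter $c>1$ (not the vertex $C$). Imposing the two right angles pins down the rest: $\overline{DA}$ is the vertical line through the apex of $\overline{AB}$, giving $A=c+i\sqrt{c^2-1}$, while $\overline{BC}$ is the geodesic issuing from $C=0$ that meets $\overline{AB}$ orthogonally, which one computes to be the semicircle from $0$ to $1/c$, and $B$ is its intersection with $\overline{AB}$. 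A pleasant check is that the inversion $\tau\colon z\mapsto 1/\bar z$ indeed sends $A\mapsto B$ and fixes $O_\Q$, confirming the symmetry used above.

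\emph{Computing both sides.} The horocycle $f$ centered at $C=0$ through $O_\Q=i$ is $\{\,|z|^2=\operatorname{Im}z\,\}$, and intersecting it with $\overline{BC}$ yields $P_{BC}=\tfrac{c+i}{c^2+1}$. To read off the signed distance I would push $\overline{BC}$ onto the imaginary axis by the isometry sending $C\mapsto 0$ and $C'\mapsto\infty$, where $C'=\sigma(C)=1/c$ is the reflection of $C$ across $\overline{AB}$ (the second endpoint of the full geodesic carrying $\overline{BC}$); a short calculation gives heights $c$ for $P_{BC}$ and $c/\sqrt{c^2-1}$ for $B$, so $d(P_{BC},B)=\tfrac12\lvert\log(c^2-1)\rvert$. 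For the shear I would realize the double $\Q^d$ by reflecting across $\overline{AB}$, obtaining the ideal quadrilateral with vertices $0,\,1/c,\,c,\,\infty$; cutting along the diagonal $e$ and mapping $e$ to the imaginary axis shows that the two triangle-centers $O_{\TT}^{e}$, $O_{\TT'}^{e}$ sit at heights differing by $\log(c^2-1)$, that is $\lvert s\rvert=\lvert\log(c^2-1)\rvert$. Comparing, $d(P_{BC},B)=\tfrac12\lvert s\rvert$, and examining on which side of $B$ the point $P_{BC}$ lies (inside $\overline{BC}$ precisely when $c<\sqrt2$) fixes the sign so that $d_{\pm}(P_{BC},B)=s/2$.

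\emph{Main obstacle.} The delicate point is the shear computation: correctly forming the double, selecting the diagonal $e$ ``accordingly'' with the orientation of $a_1$, and tracking the sign of $s$ against the $\epsilon=\pm1$ convention in the definition of $d_{\pm}$. The factor $\tfrac12$ is the genuine phenomenon to explain, and conceptually it is forced by the doubling: the whole shear $s$ lives in $\Q^d$ and is split evenly between $\Q$ and its mirror image by the involution $\sigma$ across $\overline{AB}$, so each half contributes $s/2$. In the coordinates this is exactly the square root appearing in the height $c/\sqrt{c^2-1}$ of $B$.
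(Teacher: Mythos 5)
Your proposal is correct — I checked the coordinates ($P_{BC}=\frac{c+i}{c^2+1}$, $B=\frac{c+i\sqrt{c^2-1}}{2c^2-1}$, the orthogonality conditions giving $\overline{AB}\subset\{|z-c|=\sqrt{c^2-1}\}$ and $\overline{BC}$ on the semicircle from $0$ to $1/c$, the height ratio $\sqrt{c^2-1}$, and the shear magnitude $|\log(c^2-1)|$ for the double with vertices $0,1/c,c,\infty$) and all of them hold. The strategy is the same as the paper's (an explicit computation in the upper half-plane using the doubled quadrilateral), but the parametrization is inverted, and this is where the two arguments differ in cost. The paper normalizes the ideal vertices of $\Q^d$ to $-1,0,e^s,\infty$, so the shear is $s$ \emph{by construction}; the center height $\sqrt{e^s(1+e^s)}$ and the vertex height $\sqrt{1+e^s}$ then come out directly as functions of $s$, and the signed identity $d_{\pm}=s/2$ drops out in one line, uniformly in the sign of $s$, with no case analysis. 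You instead normalize the metric data ($O_\Q=i$, auxiliary parameter $c$), which makes the distance computation pleasant but forces you to compute the shear separately and then match signs at the end. That sign-matching is the one soft spot in your write-up: knowing that $P_{BC}\in\overline{BC}$ exactly when $c<\sqrt{2}$ fixes the sign of $d_{\pm}$, but to conclude $d_{\pm}=s/2$ rather than $-s/2$ you still need the sign of $s$ as a function of $c$ — with the paper's orientation conventions one finds $s=-\log(c^2-1)$, i.e. $s>0$ iff $c<\sqrt{2}$, consistent with your anchor $s=0\leftrightarrow c=\sqrt{2}$ — and that orientation check (choice of diagonal in the double, counter-clockwise convention of Definition \ref{def:shear}) is asserted rather than carried out. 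It is routine, so this is a sketch rather than a gap, but it is precisely the bookkeeping the paper's choice of coordinates avoids. On the other side of the ledger, your explicit reflection $z\mapsto 1/\bar z$ across the common perpendicular gives a cleaner justification of the first equality $d_{\pm}(P_{BC},B)=d_{\pm}(P_{AD},A)$ than the paper's bare ``by construction, it is clear.''
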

\begin{proof}
We will compute these lengths explicitly. We will denote by $C^u,D^u$ the vertices of $\Q_s^d$ which are the reflection of $C,D$. The ideal quadrilateral $\Q_s^d$ can be drawn in the upper half plane model of $\mathbb{H}^2$, with vertices $D^u=-1, C^u =0, C= e^s, D= \infty$, see Figure \ref{quadrilateral in the plane}. With this choice, the two ideal triangles $\TT=\overline{C^uCD}$ and $\TT'=\overline{D^uC^uD}$ are glued with shear coordinate equal to $s$. 

We will first compute the coordinates of the center $O_\Q\in \overline{CD}$. We denote by $O_\Q^u \in \overline{C^uD^u}$ the reflection of $O_\Q$. The geodesic segment $\overline{O_\Q O_\Q^u}$ is the common perpendicular of the geodesics $\overline{CD}$ and $\overline{C^uD^u}$. In the language of Euclidean geometry, $\overline{O_\Q O_\Q^u}$ is an arc of a Euclidean circle centered at $C$ and perpendicular to $\overline{C^uD^u}$. By a computation, the  Euclidean radius of this circle is  $\sqrt{e^s(1+e^s)}$. This number is also the $y$-coordinate of the points $O_\Q$ and $P_{AD}$ (see Figure \ref{quadrilateral in the plane}).

We will now compute the coordinates of the point $A$ in a similar way. The geodesic segment $\overline{AB}$ is the common perpendicular of the geodesics $\overline{CC^u}$ and $\overline{DD^u}$. In the language of Euclidean geometry, $\overline{AB}$ is an arc of a Euclidean circle centered at $D^u$ and perpendicular to $\overline{CC^u}$. By a computation, the  Euclidean radius of this circle is  $\sqrt{1+e^s}$. This is also the $y$-coordinate of $A$.

The number  $d_{\pm}(P_{AD},A)$ is the log of the ratio of the $y$-coordinates of $P_{AD}$ and $A$: 
$$ d_{\pm}(P_{AD}, A) = \log \frac{\sqrt{e^s (1+e^s)}}{\sqrt{1+e^s}} = \frac{s}{2} \,.$$
\end{proof}

\begin{figure}[ht]
\begin{center}
\psfrag{A}{$A$}
\psfrag{B}{$B$}
\psfrag{f}{$f$}
\psfrag{D'}{$D=\infty$}
\psfrag{D}{$D^u=-1$}
\psfrag{C'}{$C^u=0$}
\psfrag{C}{$C=e^s$}
\psfrag{P}[B]{$P_{AD}~$}
\psfrag{O}{$O_Q$}
\psfrag{O'}{$O_Q^u$}
\psfrag{t'}[B]{$\mathcal T$}
\psfrag{t}{$\mathcal T'$}
\includegraphics[width=10cm]{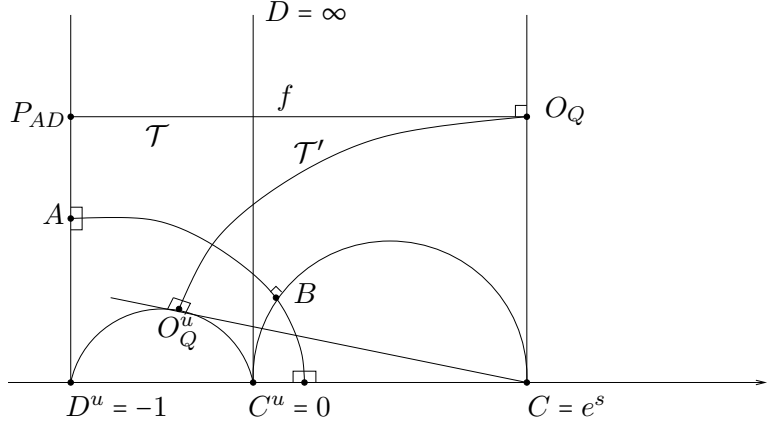}
\caption{The quadrilateral $\Q^d$ in the upper half plane model} \label{quadrilateral in the plane}
\end{center}
\end{figure}

The following result is an immediate consequence of Lemmas \ref{lemma:quad} and \ref{shear_quads}

\begin{proposition}\label{shear_quads2}
Let $t\geq 0$, and let  $\phi^t: \Q \to \Q^t$ be the generalized stretch map as in Lemma \ref{lemma:quad}. Then, if $s\geq 0$, the map $\phi^t$ sends the points $P_{BC}$ and $P_{AD}$ of  $\Q$ to the points $P_{BC}$ and $P_{AD}$, respectively, of $\Q^t$.
\end{proposition}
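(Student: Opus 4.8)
The plan is to read the claim off directly from the two cited lemmas by tracking a single point along one leaf edge. Since the statement is symmetric in $P_{AD}$ and $P_{BC}$, I would treat $P_{AD}$ in detail and observe that $P_{BC}$ is handled identically.

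First I would pin down where $P_{AD}$ sits inside $\Q = \Q_s$. By Lemma \ref{shear_quads}, the signed distance $d_{\pm}(P_{AD},A)$ equals $s/2$. Because we assume $s \geq 0$, the sign $\epsilon$ in the definition of $d_{\pm}$ is $+1$, so $P_{AD}$ genuinely lies on the edge $\overline{AD}$ of $\Q$ --- which is one of the leaf edges $l_i$ --- at hyperbolic arc-length $s/2$ from the finite (right-angled) vertex $A$. Applying the same lemma to the stretched piece $\Q^t = \Q_{e^t s}$, the point $P_{AD}$ of $\Q^t$ lies on the leaf $\overline{AD}$ of $\Q^t$ at arc-length $e^t s/2$ from the corresponding vertex $A$ of $\Q^t$.

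Next I would invoke the properties of $\phi^t$ from Lemma \ref{lemma:quad}. Since $\phi^t$ is label-preserving and continuous, it carries the leaf $\overline{AD}$ of $\Q$ onto the corresponding leaf of $\Q^t$; in particular the finite vertex $A$, being the common point of the leaf $\overline{AD}$ and the boundary edge $a_1$, must map to the common point of their images, i.e. the corresponding vertex $A$ of $\Q^t$. Moreover, by item (2) of that lemma the restriction $\phi^t|_{\overline{AD}}$ is affine and multiplies arc-length by $e^t$, measured from $A$. Hence $\phi^t$ sends the point at arc-length $s/2$ from $A$ to the point at arc-length $e^t\cdot(s/2)=e^t s/2$ from the vertex $A$ of $\Q^t$, which is exactly the point $P_{AD}$ of $\Q^t$ located in the previous step. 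Thus $\phi^t(P_{AD}) = P_{AD}$.

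Repeating the argument with the leaf $\overline{BC}$ and the vertex $B$ yields $\phi^t(P_{BC}) = P_{BC}$, completing the proof. I do not expect a genuine obstacle here; the only point requiring care is the hypothesis $s \geq 0$, which is precisely what guarantees that $P_{AD}$ and $P_{BC}$ land on the leaf edges themselves --- where $\phi^t$ is controlled by the affine arc-length scaling of Lemma \ref{lemma:quad} --- rather than on their extensions outside $\Q$, where the behaviour of $\phi^t$ is not directly described.
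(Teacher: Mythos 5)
Your proof is correct and is essentially the paper's own argument: the paper states Proposition \ref{shear_quads2} as an immediate consequence of Lemmas \ref{lemma:quad} and \ref{shear_quads}, and your write-up just makes that deduction explicit (Lemma \ref{shear_quads} places $P_{AD}$ and $P_{BC}$ at distance $s/2$, resp.\ $e^ts/2$, from the right-angled vertices along the leaf edges, and the label-preserving, affine $e^t$-stretching of $\phi^t$ on each leaf, with the vertices pinned down as the intersections of the leaf edges with $a_1$, does the rest). Your closing remark correctly identifies the role of the hypothesis $s\geq 0$: it is exactly what ensures the points lie on the edges themselves rather than on their extensions, where Lemma \ref{lemma:quad} gives no control.
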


Let $s=(s_1, s_2)$ with $s_1+s_2>0$. Up to changing the order of $s_1$ and $s_2$, we can assume that $s_2 > s_1$, which in particular gives $s_2 > 0$. Consider the pentagon $\PP_s:=\overline{ABCDE} \in \mathbb H^2$ where $D$ is the ideal vertex, as in Figure \ref{figure_5}. The axes of the segments $\overline{AE}$ and $\overline{BC}$ intersect in a point $H$, which can be inside $\PP$, outside $\PP$ or on the side $\overline{EA}$, see Figure \ref{figure_5}. We will see in Lemma \ref{shear_penta} that this depends on the sign of $s_1$. Notice that the point $H$ lies on the bisector of the ideal angle at the vertex $D$.  
Let $M_{AE}, M_{BC}$ be the midpoints
of $\overline{AE}, \overline{BC}$ respectively, and $H_{AB}, H_{DC}, H_{DE}$ the projections of $H$ on the geodesics containing $\overline{AB}, \overline{DC}, \overline{DE}$ respectively. Denote $\mathscr{F}$ be the horocyclic foliation based in $D$. By construction there is one unique leaf $f \in \mathscr F$ passing through $H_{DE}$ and $H_{DC}$.

\begin{figure}[t!]  
\begin{center}
\psfrag{A}{$A$}
\psfrag{B}{$B$}
\psfrag{C}{$C$}
\psfrag{D}{$D$}
\psfrag{E}{$E$}
\psfrag{H3}{\tiny $H_{DE}$}
\psfrag{H2}{\tiny $H_{DC}$}
\psfrag{H1}{\tiny $H_{AB}$}
\psfrag{H}{\tiny $H$}
\begin{subfigure}[t]{0.3\textwidth}
\psfrag{M1}{\tiny $M_{AE}$}
\psfrag{M2}{\tiny $~M_{BC}$}
\includegraphics[width=4cm]{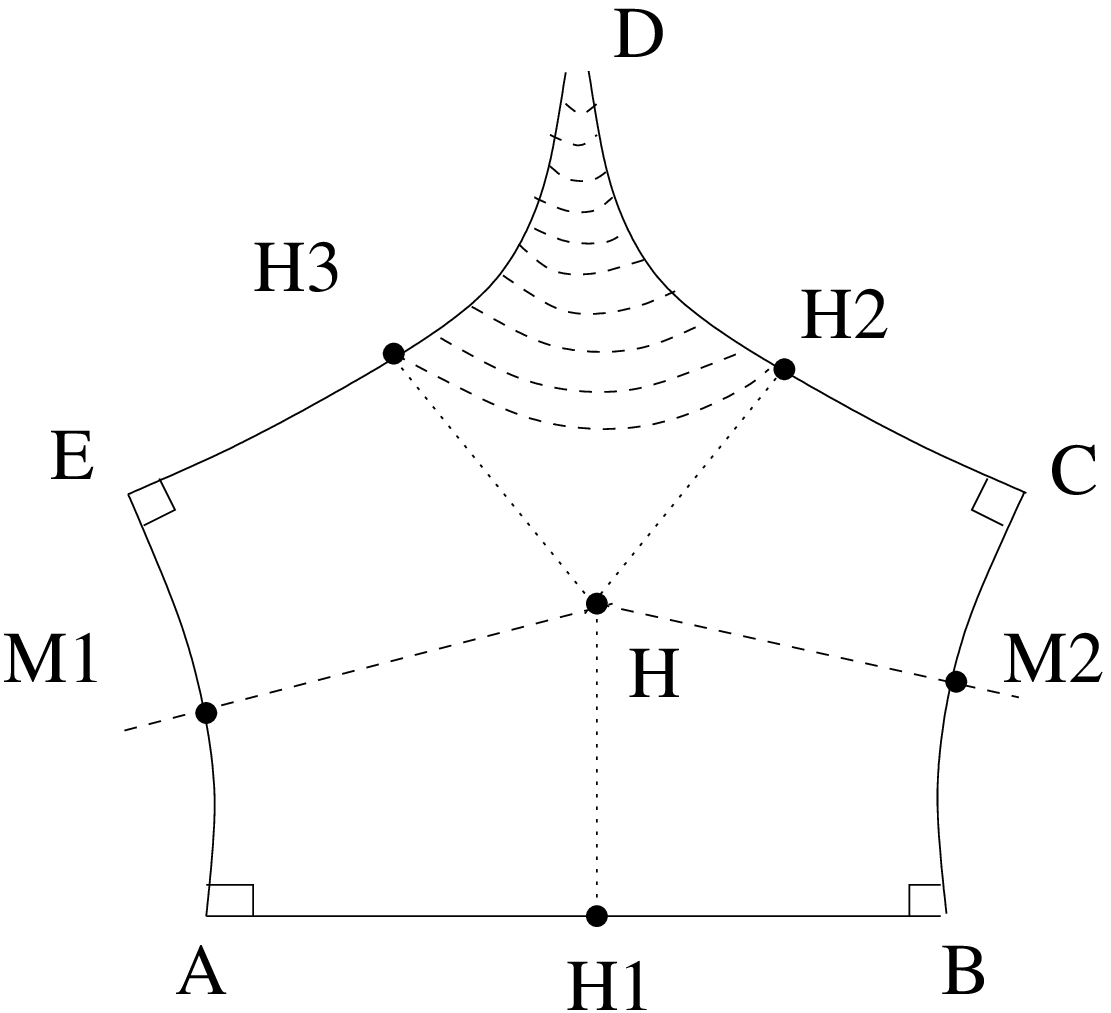}
\caption{$s_1, s_2 >0$}
\end{subfigure}
\hspace{0.6cm}
\begin{subfigure}[t]{0.28\textwidth}
\psfrag{M1}{\tiny $M_{BC}$}
\includegraphics[width=3.2cm]{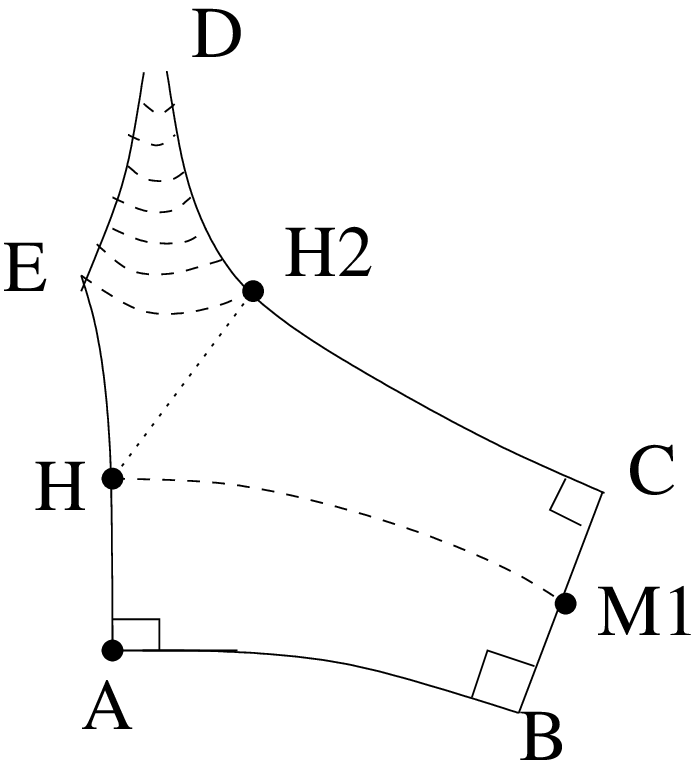}
\caption{$s_1=0, s_2>0$}
\end{subfigure}
\begin{subfigure}[t]{0.35\textwidth}
\psfrag{M1}{\tiny $M_{BC}$}
\psfrag{M2}[l]{\tiny ~$~M_{AE}$}
\includegraphics[width=4cm]{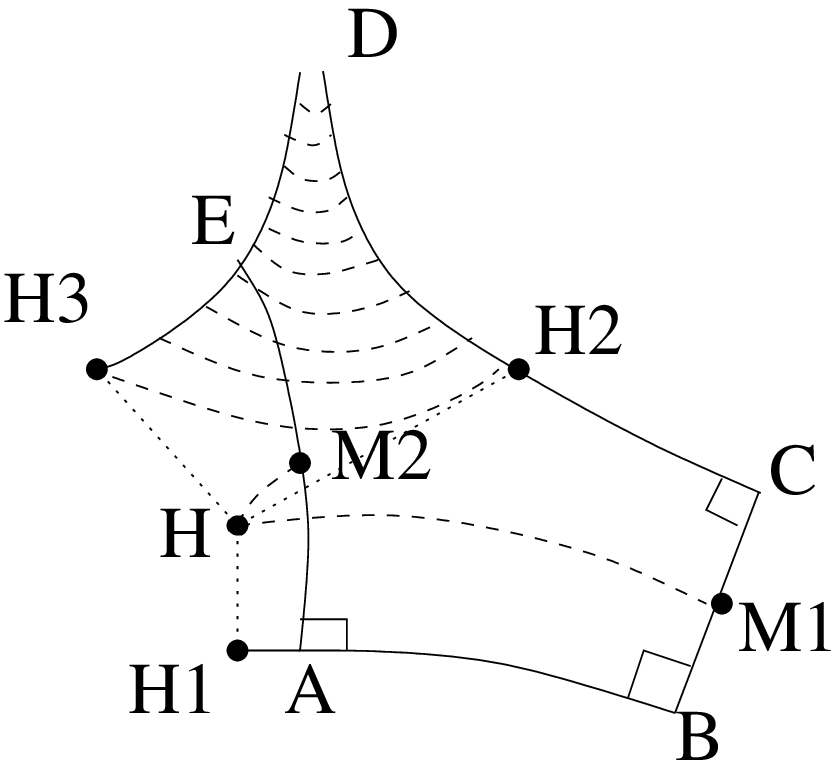}
\caption{$s_1<0, s_2>0$}
\end{subfigure}
\end{center}
\caption{Lemma \ref{shear_penta}}\label{figure_5}
\end{figure}

We compute the signed distances between $E$ and $H_{DE}$, and $B$ and $H_{AB}$. We define: 
\begin{enumerate}
\item $d_{\pm}(H_{DE},E):= \epsilon \cdot d(H_{DE}, E)$, where $\epsilon = 1$ when $H_{DE} \in \overline{ED}$ and $\epsilon = -1$ when $H_{DE} \not \in \overline{ED}$;
\item $d_{\pm}(H_{DC},C):= \epsilon \cdot d(H_{DC}, C)$, where $\epsilon = 1$ when $H_{DC} \in \overline{DC}$ and $\epsilon = -1$ when $H_{DC} \not \in \overline{DC}$; 
\item $d_{\pm}(H_{AB},A):= \epsilon \cdot d(H_{AB}, A)$, where $\epsilon = 1$ when $H_{AB} \in \overline{AB}$ and $\epsilon = -1$ when $H_{AB} \not \in \overline{AB}$;
\item $d_{\pm}(H_{AB},B):= \epsilon \cdot d(H_{AB}, B)$, where $\epsilon = 1$ when $H_{AB} \in \overline{AB}$ and $\epsilon = -1$ when $H_{AB} \not \in \overline{AB}$.
\end{enumerate}
By construction, it follows $d_{\pm}(H_{DE},E) = d_{\pm}(H_{AB}, A)$ and $d_{\pm}(H_{DC}, C) = d_{\pm}(H_{AB}, B)$. 
\begin{lemma}\label{shear_penta}
In the notation above, we have: 
\begin{align*}
d_{\pm}(H_{DE},E) &= d_{\pm}(H_{AB}, A) = \frac{s_1}{2} \\ 
d_{\pm}(H_{DC},C) &=  d_{\pm}(H_{AB}, B) = \frac{s_2}{2}.
\end{align*}
\end{lemma}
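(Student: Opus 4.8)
The plan is to mimic the explicit computation of Lemma~\ref{shear_quads}, exploiting the fact that the ideal vertex $D$ lets us linearize the horocyclic geometry. First I would normalize the picture by developing the double $\PP^d$ (constructed in Section~\ref{subsec:parameters}) in the upper half-plane model of $\mathbb{H}^2$ with $D = \infty$. With this choice the two edges $\overline{DE}$ and $\overline{DC}$ issuing from the spike become vertical half-lines, say $\{x = x_E\}$ and $\{x = x_C\}$, and the horocyclic foliation $\mathscr{F}$ based at $D$ is the family of horizontal lines $\{y = \mathrm{const}\}$. In particular the leaf $f$ through $H_{DE}$ and $H_{DC}$ is a single horizontal line $\{y = y_0\}$, and the bisector of the ideal angle at $D$ — on which $H$ lies — is the vertical line $x = (x_E + x_C)/2$, the fixed line of the reflection swapping the two vertical edges. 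Because $E$ and $H_{DE}$ both lie on the vertical geodesic $\{x = x_E\}$, and $H_{DE}$ lies on $f$, the signed distance $d_{\pm}(H_{DE},E)$ equals $\log(y_0/y_E)$, with the sign convention of the statement matching the position of $H_{DE}$ on $\overline{ED}$; likewise $d_{\pm}(H_{DC},C) = \log(y_0/y_C)$. Thus the whole lemma reduces to computing the three heights $y_0$, $y_E$, $y_C$.

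Second, I would pin down these heights in terms of $s_1, s_2$ using the triangulation of $\PP^d$ by the two spiralling geodesics $e_1, e_2$. Exactly as in the quadrilateral case, the shear along each $e_i$ is an additive displacement along $l_1^d$, so developing the ideal triangles $\TT$ and $\TT'$ determines the real endpoints of the geodesic carrying $\overline{AB}$ and of its mirror images, and hence the Euclidean circles supporting the finite edges $\overline{EA}$ and $\overline{BC}$. The vertex $E$ is the foot of the common perpendicular $a_1 = \overline{AE}$ between $l_1^d$ and the bi-infinite geodesic $\gamma_E$ through $E$ obtained from $\overline{DE}$ and its mirror image, and $y_E$ is the top height of the corresponding Euclidean semicircle; similarly for $y_C$ and $a_2 = \overline{BC}$, while $y_0$ is read off from the projection of $H$. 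The crucial structural point is that the $E$-side heights depend only on the shear $s_1$ and the $C$-side heights only on $s_2$: the two spikes contribute independently, so each ratio $y_0/y_E$ and $y_0/y_C$ is computed by the very same manipulation as in Lemma~\ref{shear_quads}, producing $e^{s_1/2}$ and $e^{s_2/2}$ respectively, the common normalizing factor cancelling in the ratio. Taking logarithms gives $d_{\pm}(H_{DE},E) = s_1/2$ and $d_{\pm}(H_{DC},C) = s_2/2$, and the equalities $d_{\pm}(H_{DE},E) = d_{\pm}(H_{AB},A)$ and $d_{\pm}(H_{DC},C) = d_{\pm}(H_{AB},B)$ are immediate from the reflections across the axes of $\overline{AE}$ and $\overline{BC}$ through $H$, as already noted before the statement.

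The main obstacle I expect is the second step: writing the heights $y_E, y_C, y_0$ explicitly as functions of the shears, since the doubled pentagon is a cylinder with two spikes rather than a single ideal polygon, so one must keep track of a genuine development (and of the spiralling of $e_1, e_2$ onto $l_1^d$) rather than a finite list of ideal vertices. A second, more bookkeeping-type difficulty is the sign analysis: one must verify that the three configurations of Figure~\ref{figure_5} — $H$ inside $\PP$, on $\overline{EA}$, or outside — correspond precisely to $s_1 > 0$, $s_1 = 0$, $s_1 < 0$, i.e. to $y_0 > y_E$, $y_0 = y_E$, $y_0 < y_E$, so that the convention $\epsilon = \pm 1$ always reproduces the sign of $s_1$ (and symmetrically for $s_2$). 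Once the heights are in hand, both steps are routine Euclidean-circle computations of the same flavour as those in the proof of Lemma~\ref{shear_quads}.
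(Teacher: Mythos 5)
Your first step is exactly the paper's setup: develop $\PP^d$ in the upper half-plane with $\widetilde{D}=\infty$ (the paper normalizes $Z=0$, $\widetilde{D^u_+}=-1$, gets $\widetilde{D^u_-}=e^{s_1}$ from the triangulation by $\TT,\TT'$, and $W=\frac{e^{s_1+s_2}-1}{e^{s_2}+1}$ from the oppositely spiralling triangulation), so that the two edges at the spike are vertical, the bisector is $\mathrm{Re}(z)=\frac{e^{s_1}-1}{2}$, and $d_{\pm}(H_{DE},E)=\log(y_0/y_E)$. The genuine gap is in your second step. Your ``crucial structural point'' --- that the $E$-side heights depend only on $s_1$, the $C$-side heights only on $s_2$, and that each ratio therefore falls out of ``the very same manipulation as in Lemma~\ref{shear_quads}'' --- is false at the level of heights and question-begging at the level of ratios. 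In fact $y_E=\sqrt{\tfrac{e^{s_2}(e^{s_1}+1)}{e^{s_2}+1}}$ and $y_0=\sqrt{\tfrac{e^{s_1+s_2}(e^{s_1}+1)}{e^{s_2}+1}}$ each depend on \emph{both} shears; that their ratio is $e^{s_1/2}$ is precisely the content of the lemma, not something inherited from the quadrilateral case. The transfer fails for a structural reason: in Lemma~\ref{shear_quads} both relevant heights are radii of circles defined by common perpendiculars, a one-step computation, whereas here $y_0$ requires locating $H$, the intersection of the perpendicular bisector (axis) of $\overline{AE}$ with the bisector at $D$ --- a point with no quadrilateral counterpart. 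This is where the paper's proof does its actual work: it finds $\widetilde{A}$ as an intersection of two circles, applies the M\"obius map $M(z)=\frac{-z-1-r}{z+1-r}$ to write the axis of $\overline{\widetilde{A}\widetilde{E}}$ as a locus $\{z\bar{z}=\mathrm{const}\}$, solves for the height $t=\frac{1}{2}\sqrt{\frac{(e^{s_1}+1)(3e^{s_1+s_2}-e^{s_1}-e^{s_2}-1)}{e^{s_2}+1}}$ of $\widetilde{H}$, and only then reads off $y_0=|\widetilde{H}+1|$. Without some substitute for this determination of $H$, your values $s_1/2$ and $s_2/2$ are asserted, not proven.

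Note also that, unlike the hexagonal case, the symmetry relations cannot close the argument by themselves: the reflections give $d_{\pm}(H_{DE},E)=d_{\pm}(H_{AB},A)=:u$ and $d_{\pm}(H_{DC},C)=d_{\pm}(H_{AB},B)=:v$, together with the single relation $u+v=\ell(l_1)=\frac{1}{2}(s_1+s_2)$ --- one equation in two unknowns. (Compare Lemma~\ref{lem:shear_hexa}, where the three analogous relations form a determined linear system; that synthetic route is exactly what is unavailable for the pentagon.) So at least one of $u,v$ must be computed explicitly; the paper computes $u=s_1/2$ by the half-plane calculation above and then obtains $v=s_2/2$ from the sum relation, which would also have spared you the second ratio computation. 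Your remaining concerns --- the spiralling developments needed for the endpoints of the lift of $l_1^d$, and the sign bookkeeping --- are handled in the paper just as you anticipate, with the signs absorbed automatically by the log-ratio convention.
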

\begin{proof}
We will compute these lengths explicitly. Denote by $D^u$ the spike of $\PP_{s}^d$ which is the reflection of $D$. The universal covering of $\PP_{s}^d$ can be drawn in the upper half plane model of $\mathbb{H}^2$. We will denote by $\widetilde{D}$ a lift of $D$, by $\widetilde{D^u_+}$ the lift of $D^u$ at its left and by $\widetilde{D^u_-}$ the lift of $D^u$ at its right, see Figure \ref{pentagon in the plane}. We remark that the left part of Figure \ref{pentagon in the plane} is drawn in the disc model for an easier visualization, but all the computations are performed in the upper half plane model. Denote by $\widetilde{A}, \widetilde{B}, \widetilde{C}, \widetilde{E}$ the lifts of $A,B,C,E$, which form a copy of the pentagon with the vertex $\widetilde{D}$. Denote by $Z$ and $W$ the endpoints of the lift of the geodesic $l_1^d$.

We can assume $Z=0$, $\widetilde{D^u_+}= -1, \widetilde{D}=\infty$. Using the two triangles $\TT$ and $\TT'$, glued with shears $s_1, s_2$, we find $\widetilde{D^u_-} = e^{s_1}$. Similarly, using the ideal triangulation whose triangles spiral around $l_1^d$ in the opposite direction, glued with shears $-s_1, -s_2$, we find: $$W = \frac{e^{s_1+s_2}-1}{e^{s_2}+1}~.$$

We will now compute the coordinates of the points $\widetilde{A}$ and $\widetilde{E}$. The geodesic containing them is perpendicular to the lift of $l_1^d$, hence it lies on an Euclidean circle centered at $\widetilde{D^u_+}$. By an elementary computation its Euclidean radius is $r = \sqrt{\frac{e^{s_2}(e^{s_1}+1)}{e^{s_2}+1}}$. Hence the point $\widetilde{E}$ is the complex number $-1 + ri$. The point $\widetilde{A}$ can be found as the intersection of two circles:
$$\widetilde{A} = \frac{e^{s_1+s_2} - 1}{e^{s_1+s_2} + 2 e^{s_2} + 1} (1+ir).$$  

We now compute the intersection between the axis of the geodesic containing  $\widetilde{A}$ and $\widetilde{E}$ and the bisector of the ideal angle at $\widetilde{D}$. The bisector is the vertical line with real part equal to $\frac{e^{s_1}-1}{2}$:
$$\left\{\frac{e^{s_1}-1}{2} + it \mid t > 0 \right\}. $$
To compute its intersection with the axis, we will apply the  M\"obius transformation:
$$M: z \rightarrow \frac{-z-1-r}{z+1-r} ~.$$
The transformation acts in the following way:
$$ M\left(\widetilde{E}\right) = i, \ \ \ M\left(\widetilde{A}\right) = i \frac{1+2r +e^{s_2}(2 + e^{s_1} + 2r)}{e^{s_1+s_2} - 1} ~. $$ 

The axis of the segment between $M\left(\widetilde{E}\right)$ and $M\left(\widetilde{A}\right)$ is given by the equation
$$ \left\{ z \in \mathbb{C} \mid \mathrm{Im}(z)>0 \text{ and } z\bar{z} =  \frac{1+2r +e^{s_2}(2 + e^{s_1} + 2r)}{e^{s_1+s_2} - 1} \right\}~.$$

The imaginary part of the intersection between the bisector and the axis is given by the equation:
$$M\left(\frac{e^{s_1}-1}{2} + it\right) \overline{M\left(\frac{e^{s_1}-1}{2} + it\right)} = \frac{1+2r +e^{s_2}(2 + e^{s_1} + 2r)}{e^{s_1+s_2} - 1}~, $$
with solution 
$$t = \frac{1}{2} \sqrt{\frac{(e^{s_1}+1)(3 e^{s_1+s_2}-e^{s_1}-e^{s_2}-1)}{(e^{s_2}+1)}} ~. $$
This number is the imaginary part of the point $\widetilde{H}$, whose real part is $\frac{e^{s_1}-1}{2}$. 

The point $H_{DE}$ lies on the perpendicular line from $\widetilde{H}$ to the segment from $\widetilde{A}$ to $\widetilde{E}$. This line lies on a  Euclidean circle centered at $\widetilde{D^u_+}$, hence the imaginary part of $H_{DE}$ is equal to the radius of this circle, which is the absolute value of $\widetilde{H}+1$, namely
$$\sqrt{\frac{e^{s_1+s_2}(e^{s_1}+1)}{e^{s_2}+1}}.$$ 
The number $d_{\pm}(H_{DE},E)$ is the log of the ratio of the imaginary parts of  $\widetilde{H_{DE}}$ and $\widetilde{E}$: 

$$d_{\pm}(H_{DE}, E) = \log \frac{\sqrt{\frac{e^{s_1+s_2}(e^{s_1}+1)}{e^{s_2}+1}} }{\sqrt{\frac{e^{s_2}(e^{s_1}+1)}{e^{s_2}+1}}} = \frac{s_1}{2}~.$$  
For $d_{\pm}(H_{DC},C)$, notice that $d_{\pm}(H_{DE},E) + d_{\pm}(H_{DC},C) = l_1 = \frac{1}{2}(s_1 + s_2)$.
\end{proof}

\begin{figure}[ht]
\begin{center}
\begin{subfigure}[t]{0.40\textwidth}
\psfrag{A'}{$\widetilde A$}
\psfrag{B'}{$\widetilde B$}
\psfrag{Z}{$Z $}
\psfrag{W}{$W $}
\psfrag{B}{$\widetilde D $}
\psfrag{E}[t][l]{\tiny $\widetilde E~$}
\psfrag{C'}{\tiny $\widetilde C$}
\psfrag{C}{$\widetilde{D_{-}^u} $}
\psfrag{A}[B]{$\widetilde{ D_{+}^u} $}
\psfrag{s}[Br]{\tiny $s_1$}
\psfrag{s'}[Br][r]{\tiny$s_2$}
\psfrag{s''}{\tiny $-s_1$}
\psfrag{s'''}{\tiny $-s_2$}
\includegraphics[width=6cm]{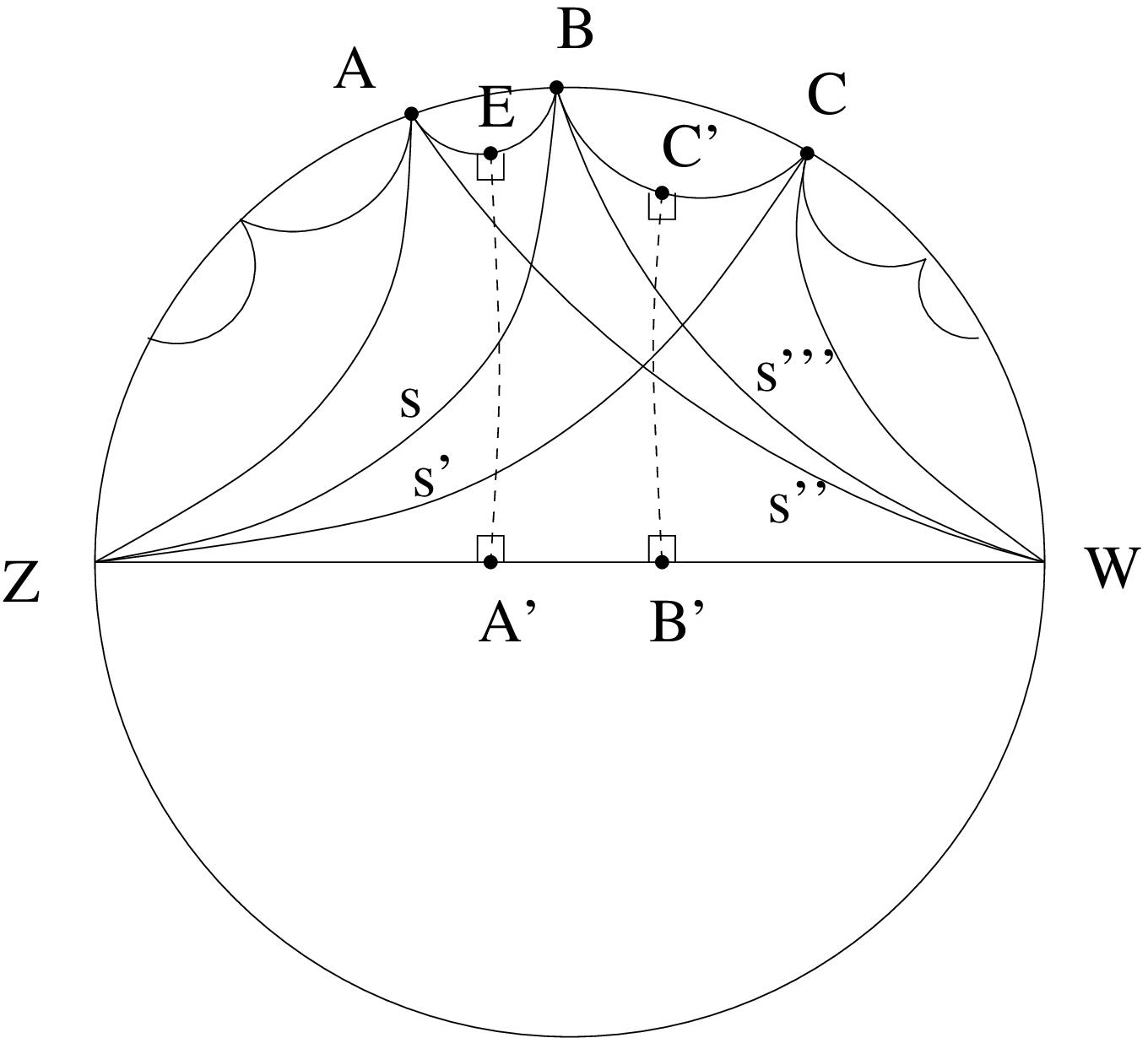} 
\end{subfigure}
\begin{subfigure}[t]{0.45\textwidth}
\psfrag{D}{$\widetilde D = \infty$}
\psfrag{E}{\tiny $\widetilde E$ }
\psfrag{C}{\tiny $\widetilde C$}
\psfrag{D'}[c][b]{$\widetilde{D_{+}^u}$}
\psfrag{Z}[c][b]{$Z$}
\psfrag{W}[c][b]{$W$}
\psfrag{D''}[c][b]{$\widetilde{D_{-}^u}$}
\includegraphics[width=5.5cm]{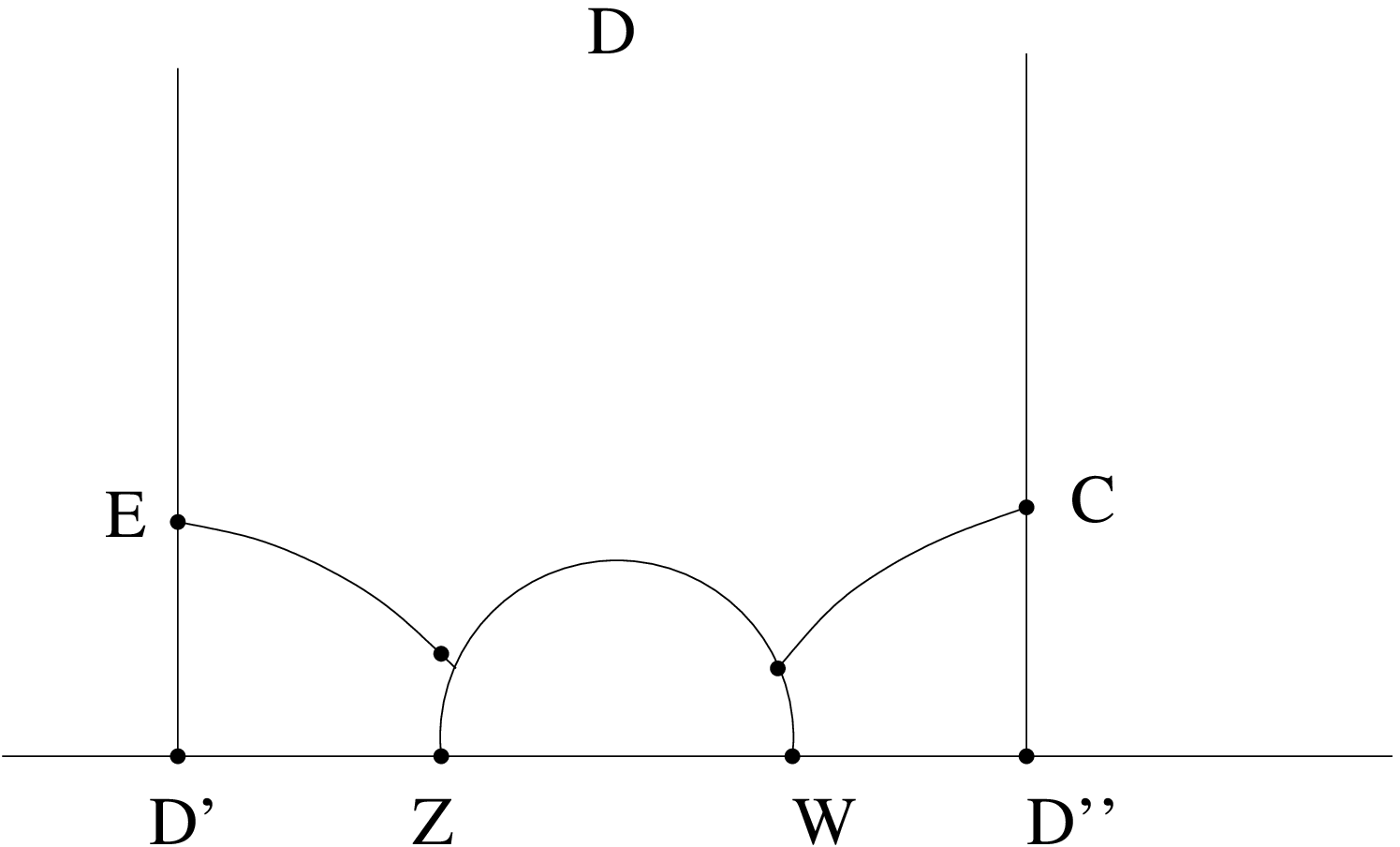}
\end{subfigure}
\caption{The universal covering of $\PP^d$} \label{pentagon in the plane}
\end{center}
\end{figure}

The following result is an immediate consequence of Lemmas \ref{lemma:quad} and \ref{shear_penta}.

\begin{proposition}\label{shear_penta2}
Let $t\geq 0$, and let  $\phi^t: \PP \to \PP^t$ be the generalized stretch map as in Lemma \ref{lemma:quad}. Then, the map $\phi^t$ sends the point $H_{DC}$ of  $\PP$ to the point $H_{DC}$ of $\PP^t$. Moreover, if $s_1 \geq 0$, $\phi^t$ sends the points $H_{DE}$ and $H_{AB}$ of  $\PP$ to the points $H_{DE}$ and $H_{AB}$, respectively, of $\PP^t$.
\end{proposition}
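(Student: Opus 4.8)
The plan is to combine the affine behavior of $\phi^t$ on the leaves, recorded in Lemma \ref{lemma:quad}, with the explicit location of the points $H_{DC}, H_{DE}, H_{AB}$ furnished by Lemma \ref{shear_penta}. The whole argument reduces to a one-dimensional statement along each leaf, which is why the result is immediate.

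First I would observe that each of the three points lies on a leaf of the pentagon: $H_{DC}$ on the edge $\overline{DC}$, $H_{DE}$ on $\overline{DE}$, and $H_{AB}$ on the compact leaf $\overline{AB}$ (the boundary segments being $\overline{EA}$ and $\overline{BC}$). By Lemma \ref{lemma:quad}(2) the restriction $\phi^t_|: l_i \to l_i^t$ is affine and multiplies arc length by $e^t$; since $\phi^t$ is label-preserving it carries each vertex of $\PP$ to the vertex of $\PP^t$ with the same label, in particular $C \mapsto C$, $E \mapsto E$, $A \mapsto A$, $B \mapsto B$, and the ideal vertex $D \mapsto D$. Because $e^t>0$ the affine restriction is orientation-preserving on each leaf, so fixing the finite endpoint as origin and the direction toward the opposite vertex as positive, $\phi^t$ sends the point at signed distance $d$ from that endpoint to the point at signed distance $e^t d$ from the corresponding endpoint of $\PP^t$.

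Next I would apply Lemma \ref{shear_penta} twice, once to $\PP=\PP_s$ and once to $\PP^t=\PP_{e^t s}$. In $\PP_s$ the point $H_{DC}$ sits at signed distance $s_2/2$ from $C$ along $\overline{DC}$, while in $\PP_{e^t s}$ the point $H_{DC}$ sits at signed distance $e^t s_2/2$ from $C$. By the previous paragraph $\phi^t$ sends the former to the point at signed distance $e^t\cdot(s_2/2)=e^t s_2/2$ from $C$, which is exactly $H_{DC}$ in $\PP^t$. The identical computation applies to $H_{DE}$ (signed distance $s_1/2$ from $E$) and to $H_{AB}$ (signed distance $s_1/2$ from $A$, equivalently $s_2/2$ from $B$, both endpoints being fixed by $\phi^t$).

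Finally I would address the sign hypotheses, which is the only place care is needed. Since we have normalized $s_2>0$, the signed distance $s_2/2$ is positive, so $H_{DC}$ always lies on the genuine edge $\overline{DC}$, inside the domain of $\phi^t$; this is why the conclusion for $H_{DC}$ requires no restriction. For $H_{DE}$ and $H_{AB}$ the relevant signed distance is $s_1/2$, and only when $s_1\geq 0$ do these points fall on the actual edges $\overline{DE}$ and $\overline{AB}$ rather than on their geodesic extensions outside $\PP$; as $\phi^t$ is defined only on the pentagon itself, the statement is restricted to $s_1\geq 0$. The main obstacle is thus purely the bookkeeping of orientation and of the position of the feet $H_{DE},H_{AB}$ relative to their edges; once Lemma \ref{shear_penta} is available this collapses to the affine scaling described above, and the proof is complete.
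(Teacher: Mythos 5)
Your proof is correct and matches the paper's intended argument: the paper proves Proposition \ref{shear_penta2} simply by declaring it an immediate consequence of Lemmas \ref{lemma:quad} and \ref{shear_penta}, and your write-up is exactly that deduction made explicit (affine scaling by $e^t$ along each leaf with label-preserved endpoints, applied to the signed distances $s_1/2$, $s_2/2$ in $\PP_s$ and $e^ts_1/2$, $e^ts_2/2$ in $\PP_{e^ts}$). Your closing observation that $s_2>0$ by the normalization (so $H_{DC}$ needs no hypothesis) while $s_1\geq 0$ is exactly what keeps $H_{DE}$ and $H_{AB}$ on the actual edges is also the right accounting for the asymmetry in the statement.
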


Let $s=(s_1, s_2, s_3)$. Consider the hexagon $\HH_s:=\overline{ABCDEF} \subset \mathbb H^2$ as in Figure \ref{fig:shear_hexa}: $l_1$ is the edge $\overline{CD}$, $l_2$ is the edge $\overline{AB}$, and $l_3$ is the edge $\overline{EF}$.
Consider the axes of the segments $\overline{BC}$, $\overline{DE}$, $\overline{FA}$. The three axes all meet in a common point $H$. Let $H_{AB}, H_{DC}, H_{EF}$ be the orthogonal projections of $H$ on the geodesics containing the segments $\overline{AB}$, $\overline{DC}$ and $\overline{EF}$ (see also \cite{PapYam}).

Consider two consecutive vertices $V, W$ of the hexagon, the orthogonal projection $H_{VW}$ of $H$ on the geodesic $\overline{VW}$. We define the signed distance of $H_{VW}$ from $V$:
$$d_{\pm}(H_{VW},V):= \epsilon \cdot d(H_{VW}, V),$$
where $\epsilon = 1$ if $H_{VW}$ lies on the geodesic ray starting from $V$ that contains $\overline{VW}$, and $\epsilon = -1$ if  $H_{VW}$ lies on the geodesic ray starting from $V$ that does not contain $\overline{VW}$.

\begin{lemma}\label{lem:shear_hexa}
We have: 
\begin{align*}
d_{\pm}(H_{EF}, F) &= d_{\pm}(H_{AB}, A) = \frac{s_1}{2}\,, \\ 
d_{\pm}(H_{DC}, C) &= d_{\pm}(H_{AB}, B) = \frac{s_2}{2}\,, \\ 
d_{\pm}(H_{EF}, E) &= d_{\pm}(H_{DC}, D) = \frac{s_3}{2}\,.
\end{align*}
\end{lemma}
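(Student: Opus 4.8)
The plan is to avoid the explicit upper–half–plane computation used for the pentagon in Lemma~\ref{shear_penta}, and instead pin down all six signed distances by a short linear–algebra argument. Write $x_1,x_2,x_3$ for the three quantities $\tfrac{s_1}{2},\tfrac{s_2}{2},\tfrac{s_3}{2}$ that we want to identify. The six signed distances in the statement come in three pairs, and the \emph{first} equality in each line (e.g. $d_\pm(H_{EF},F)=d_\pm(H_{AB},A)$) is not a computation but a symmetry. I would establish these three pairings first, so that only the three independent values $x_1,x_2,x_3$ remain.

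For the pairings, the key point is that by definition $H$ lies on the perpendicular bisector of each boundary segment $a_i=\overline{BC},\overline{DE},\overline{FA}$. Fix, say, $\overline{FA}$ and let $R$ be the reflection of $\mathbb{H}^2$ across its perpendicular bisector. Then $R$ fixes $H$, swaps $F\leftrightarrow A$, and preserves the line through $\overline{FA}$. Since the hexagon is right–angled, the leaves $\overline{EF}$ and $\overline{AB}$ are exactly the geodesics perpendicular to $\overline{FA}$ at $F$ and at $A$; hence $R$ carries the line through $\overline{EF}$ to the line through $\overline{AB}$ and sends the foot $H_{EF}$ to $H_{AB}$. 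As $R$ is an isometry carrying the triple $(\text{line }EF,\,F,\,H_{EF})$ to $(\text{line }AB,\,A,\,H_{AB})$ and the ray $F\!\to\!E$ to the ray $A\!\to\!B$, it preserves both the magnitude and the sign of the signed distance, giving $d_\pm(H_{EF},F)=d_\pm(H_{AB},A)=:x_1$. The same argument with the bisectors of $\overline{BC}$ and of $\overline{DE}$ yields $d_\pm(H_{DC},C)=d_\pm(H_{AB},B)=:x_2$ and $d_\pm(H_{EF},E)=d_\pm(H_{DC},D)=:x_3$.

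Next I would record the three \emph{splitting relations}. For each leaf, the foot of the perpendicular from $H$ divides it so that the two signed distances to its endpoints sum to the length of the leaf, and a short check of the sign convention shows this holds whether or not the foot lies in the interior. Applied to $l_1=\overline{CD}$, $l_2=\overline{AB}$, $l_3=\overline{EF}$ these read $x_2+x_3=\ell(l_1)$, $x_1+x_2=\ell(l_2)$, $x_1+x_3=\ell(l_3)$. This $3\times 3$ system is nondegenerate, so $x_i=\tfrac12\big(\ell(l_j)+\ell(l_k)-\ell(l_i)\big)$ for $\{i,j,k\}=\{1,2,3\}$. Finally I would substitute the shear–length relations of Section~\ref{subsec:parameters}: each $l_i^d$ is the double of $l_i$, so $\ell(l_i)=\tfrac12\ell(l_i^d)$, and $\ell(l_i^d)$ is a sum of two shears by \cite[Proposition 3.4.21]{ThurstonBook}. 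A direct substitution collapses the symmetric combination to $x_i=\tfrac{s_i}{2}$, which is the assertion.

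The hard part will be bookkeeping of conventions rather than geometry. One must match the labelling of the three boundary geodesics $l_i^d$ and the signs of the shears $s_i$ fixed in Section~\ref{subsec:parameters} (where the spiralling directions are chosen so that $s_2,s_3>0$) with the labelling of Figure~\ref{fig:shear_hexa}; a permutation between $s_2$ and $s_3$ across the two conventions has to be reconciled before the substitution gives $x_i=s_i/2$ on the nose. I would also verify that the signs produced in the pairing step and in the splitting relations are mutually consistent in every position of $H$ (inside the hexagon, outside it, or on a boundary edge), since the whole point of the signed–distance formalism is to make the single formula valid uniformly without a case analysis. As a safeguard, the fully explicit computation in the universal cover of $\HH^d$, entirely parallel to the proof of Lemma~\ref{shear_penta}, remains available as an alternative should the symmetry argument for the pairings need to be underpinned by coordinates.
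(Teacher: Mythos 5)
Your proof is correct and follows essentially the same route as the paper's: the three pairings of signed distances, the $3\times 3$ linear system obtained by splitting each leaf $l_i$ at the corresponding foot of the perpendicular from $H$, and the shear--length relations $\tfrac{s_i}{2}+\tfrac{s_{i+1}}{2}=\ell(l_{i+2})$ coming from the parametrization in Section \ref{subsec:parameters}. The only real difference is that where the paper justifies the pairings by appealing to the construction of $H$ with a citation to Papadopoulos--Yamada \cite{PapYam}, you supply the reflection argument across the perpendicular bisectors of the edges $a_i$ explicitly (and check the uniform validity of the splitting relations under the sign convention), which makes the argument self-contained but does not change the method.
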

\begin{proof}
The equalities between signed distances come from the construction of $H$ (see Pa\-pa\-do\-pou\-los-Yamada \cite{PapYam}). Using these equalities, we find a linear system, which admits a unique solution, see Figure \ref{fig:shear_hexa}:
\begin{align*}
d_{\pm}(H_{DC}, D) + d_{\pm}(H_{AB}, B) &= \ell(l_1)\,,\\
d_{\pm}(H_{AB}, B) + d_{\pm}(H_{EF}, F) &= \ell(l_2)\,,\\
d_{\pm}(H_{EF}, F) + d_{\pm}(H_{DC}, D) &= \ell(l_3)\,.
\end{align*}
Since by our initial assumptions we have $\frac{s_i}{2} + \frac{s_{i+1}}{2} = \ell(l_{i+2})$ for $i = 1, 2, 3$, we conclude.
\end{proof}

\begin{figure}[t!]   
\begin{center}
\psfrag{A}[b]{$A$}
\psfrag{B}[l]{$B$}
\psfrag{C}[l]{$C$~}
\psfrag{D}{$D$}
\psfrag{E}[Bl][T]{$E$}
\psfrag{H3'}[l]{ $F~$}
\psfrag{H2}{\tiny $H_{DC}$}
\psfrag{H1}{\tiny $H_{AB}$}
\psfrag{H1'}[b]{$A$}
\begin{subfigure}[t]{0.3\textwidth}
\psfrag{F}[b]{$F$}
\psfrag{H3}[tl][b]{\tiny $H_{EF}$}
\includegraphics[width=3.6cm]{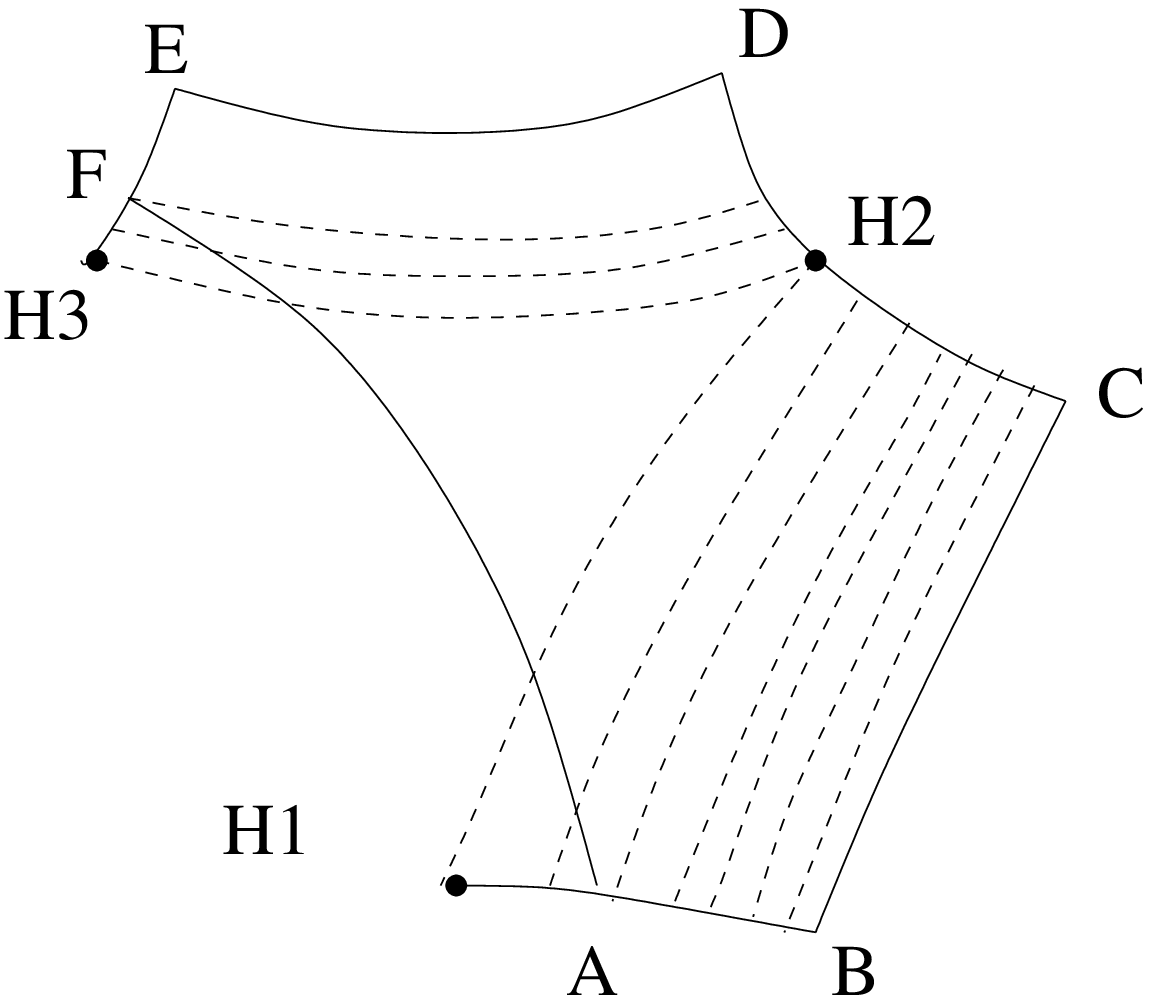}
\caption{$s_1 < 0, s_2, s_3 > 0$}
\end{subfigure}
\begin{subfigure}[t]{0.3\textwidth}
\psfrag{F}[l]{$F$}
\psfrag{H3}{\tiny $H_{EF}$}
\includegraphics[width=3.8cm]{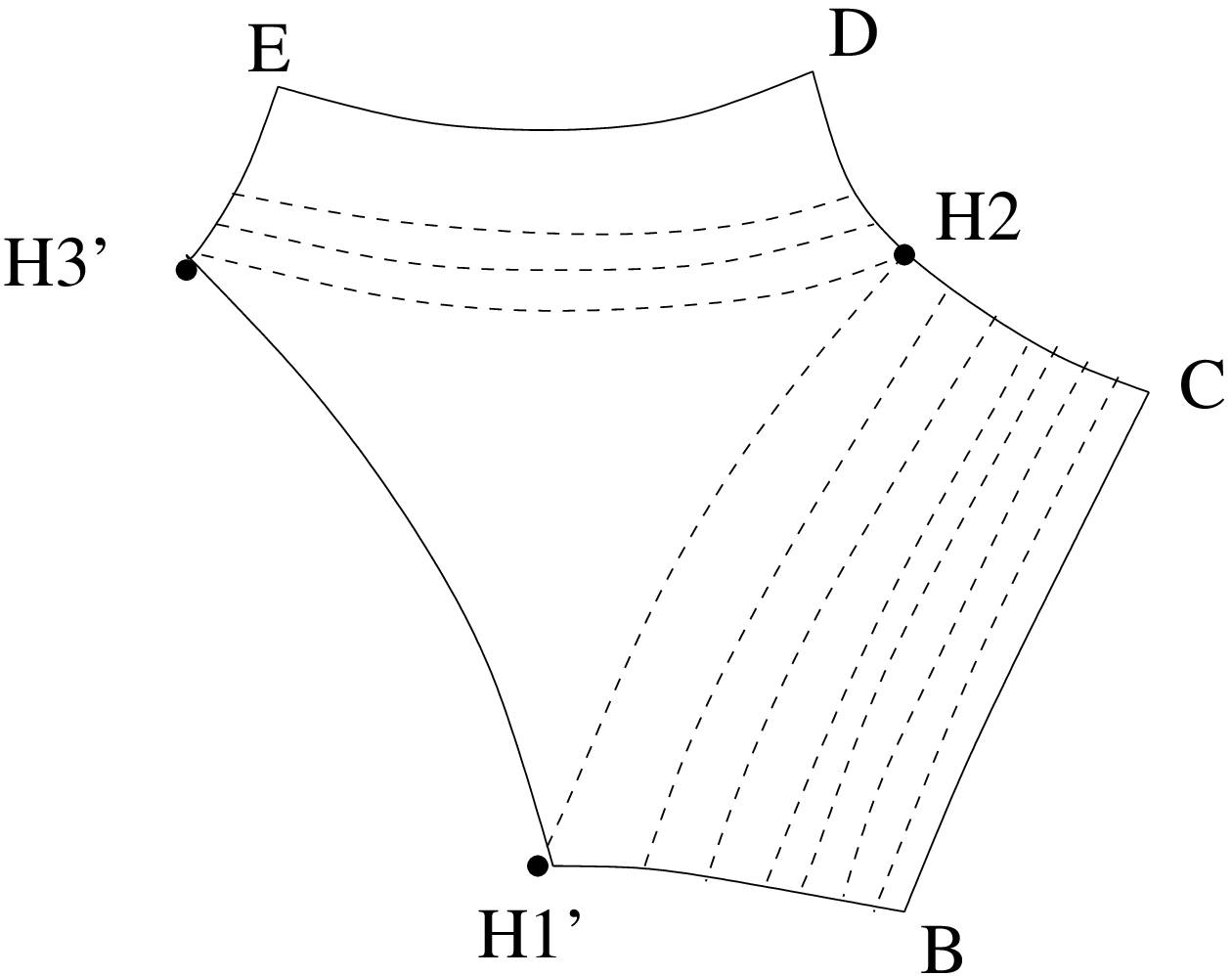}
\caption{$s_1 = 0, s_2, s_3 > 0$ }
\end{subfigure}
\begin{subfigure}[t]{0.3\textwidth}
\psfrag{F}[b]{$F$}
\psfrag{H3}{\tiny $H_{EF}$}
\includegraphics[width=3.7cm]{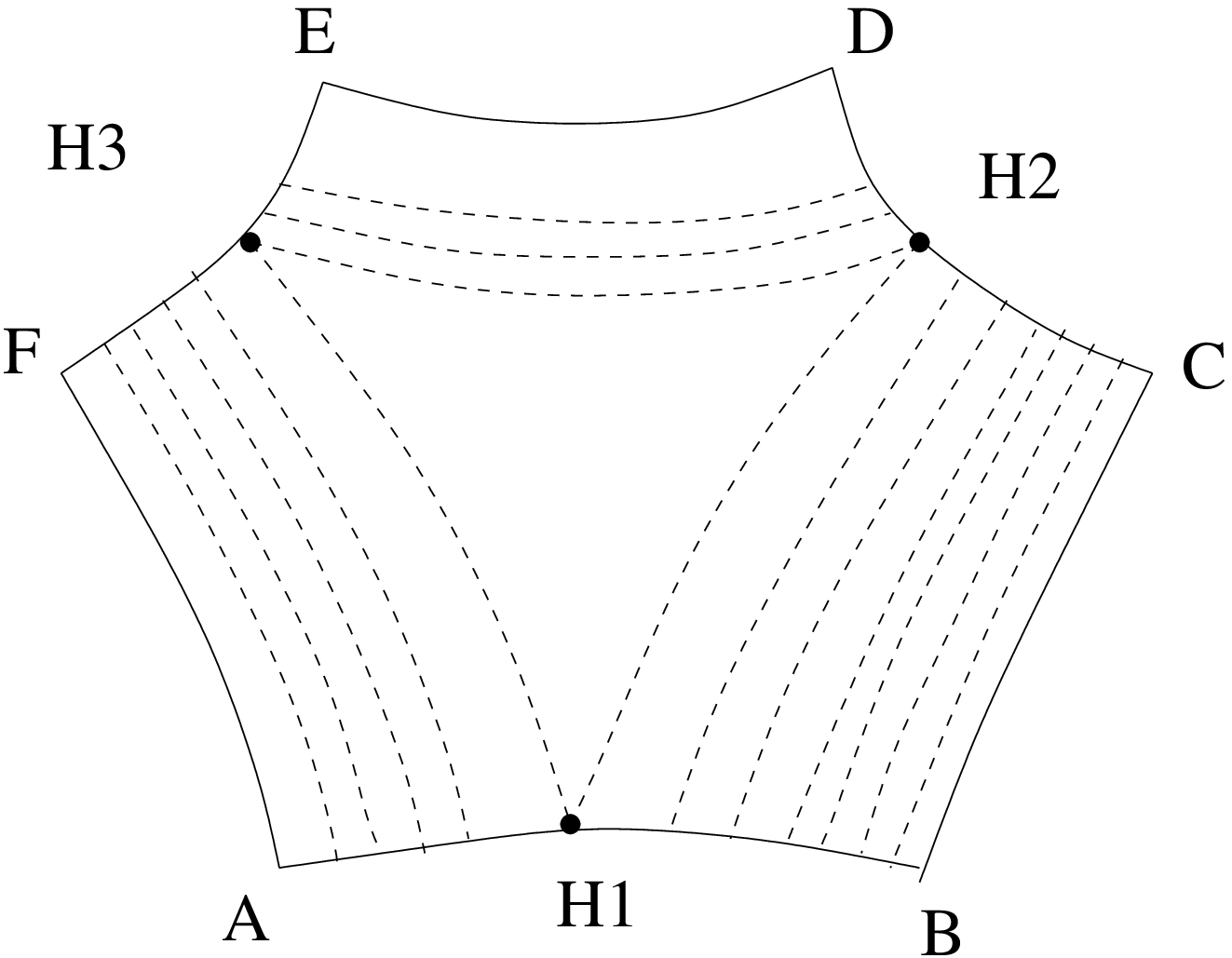}
\caption{$s_1, s_2, s_3 > 0$}
\end{subfigure}
\end{center}
\caption{Lemma \ref{lem:shear_hexa}}\label{fig:shear_hexa}
\end{figure}

The following result is an immediate consequence of Lemmas \ref{lemma:quad} and \ref{lem:shear_hexa}.

\begin{proposition}\label{shear_hexa2}
Let $t\geq 0$, and let  $\phi^t: \HH \to \HH^t$ be the generalized stretch map as in Lemma \ref{lemma:quad}. Then, the map $\phi^t$ sends the point $H_{DC}$ of  $\HH$ to the point $H_{DC}$ of $\HH^t$. Moreover, if $s_1 \geq 0$, $\phi^t$ sends the points $H_{DE}$ and $H_{AB}$ of  $\HH$ to the points $H_{DE}$ and $H_{AB}$, respectively, of $\HH^t$.
\end{proposition}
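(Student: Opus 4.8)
The plan is to reduce everything to the single fact that $\phi^t$ acts on each leaf edge as an affine dilation by $e^t$. By Lemma \ref{lemma:quad}(2) the restriction $\phi^t_|: l_i \to l_i^t$ is affine and multiplies arc length by $e^t$, and since $\phi^t$ is label-preserving it carries each endpoint of $l_i$ to the corresponding endpoint of $l_i^t$ (preserving, not swapping, their order). Consequently, if a point $P$ lies on the leaf $l_i \subseteq \HH$ at signed distance $d$ from one of its endpoints $V$, then $\phi^t(P)$ lies on $l_i^t$ at signed distance $e^t d$ from the corresponding endpoint $V$ of $\HH^t$. Recall also that $\HH^t = \HH_{e^t s}$ has shear coordinates $e^t s$, so Lemma \ref{lem:shear_hexa} may be applied verbatim to $\HH^t$ after replacing each $s_i$ by $e^t s_i$.

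First I would treat $H_{DC}$, which lies on the geodesic containing $l_1 = \overline{CD}$. By Lemma \ref{lem:shear_hexa}, in $\HH$ this point has signed distances $d_{\pm}(H_{DC},C) = s_2/2$ and $d_{\pm}(H_{DC},D) = s_3/2$. Since the sign conventions for the hexagon force $s_2, s_3 > 0$, both distances are positive, so $H_{DC}$ genuinely lies on the segment $\overline{CD} \subseteq \HH$ and hence in the domain of $\phi^t$. Applying the scaling fact above, $\phi^t(H_{DC})$ is the point of $l_1^t$ at signed distance $e^t s_2/2$ from the vertex $C$ of $\HH^t$. But Lemma \ref{lem:shear_hexa} applied to $\HH^t$ says exactly that its own projection point $H_{DC}$ sits at signed distance $e^t s_2/2$ from $C$; therefore $\phi^t(H_{DC})$ equals the point $H_{DC}$ of $\HH^t$, proving the first claim unconditionally.

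Next I would handle the two remaining projections $H_{AB}$ and $H_{EF}$ under the hypothesis $s_1 \geq 0$. By Lemma \ref{lem:shear_hexa} the point $H_{AB}$ has signed distances $s_1/2$ and $s_2/2$ from $A$ and $B$; as $s_2 > 0$ always and now $s_1 \geq 0$, both are nonnegative, so $H_{AB}$ lies on the segment $\overline{AB} = l_2 \subseteq \HH$. The identical scaling argument then gives $\phi^t(H_{AB})$ equal to the point $H_{AB}$ of $\HH^t$, and symmetrically $\phi^t(H_{EF})$ equals $H_{EF}$ of $\HH^t$, using $d_{\pm}(H_{EF},F) = s_1/2 \geq 0$ together with $d_{\pm}(H_{EF},E) = s_3/2 > 0$ to place $H_{EF}$ on $l_3$.

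The only delicate point, and the reason for the hypothesis $s_1 \geq 0$, is verifying that each projection point actually lies inside the hexagon, hence in the domain of $\phi^t$, before invoking the affine-scaling identity. When $s_1 < 0$, Lemma \ref{lem:shear_hexa} shows that $H_{AB}$ and $H_{EF}$ fall outside their leaf segments (beyond the vertices $A$ and $F$, respectively), where $\phi^t$ is simply not defined, so no conclusion can be drawn for them; by contrast $H_{DC}$ is always interior since $s_2, s_3 > 0$. Apart from this containment bookkeeping and tracking the signs in the definition of $d_{\pm}$, the argument is a direct transcription of the quadrilateral and pentagonal cases (Propositions \ref{shear_quads2} and \ref{shear_penta2}).
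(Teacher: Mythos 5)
Your proof is correct and is essentially the paper's own argument: the paper states this proposition as an immediate consequence of Lemmas \ref{lemma:quad} and \ref{lem:shear_hexa}, and your write-up is precisely that deduction spelled out --- the affine $e^t$-scaling on the leaves $l_i$ from Lemma \ref{lemma:quad}(2), the signed distances $s_i/2$ from Lemma \ref{lem:shear_hexa} applied to both $\HH_s$ and $\HH_{e^ts}$, and the containment check (using $s_2,s_3>0$, and $s_1\geq 0$ where needed) that the projection points lie on the edge segments, which is exactly what the hypothesis $s_1\geq 0$ is for. You also correctly read the statement's point ``$H_{DE}$'' as the projection $H_{EF}$, a typo in the paper carried over from the pentagonal case (Proposition \ref{shear_penta2}).
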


\subsection{The horocyclic foliation}

We will now construct a partial foliation $\mathcal{K}$, called the \emph{horocyclic foliation}, in every quadrilateral and pentagonal piece, see Figure \ref{quadrilateral_1}. Our aim is to prove Propositions \ref{lem:horocyclic foliation square} and \ref{lem:horocyclic foliation penta}. 

For a quadrilateral piece $\Q$, see Figure \ref{quadrilateral_1}. Recall that $C^u,D^u$ denote the vertices of $\Q_s^d$ which are the reflection of $C,D$. Denote by $O_C, O_D$ the points of $\overline{CD}$ that are the nearest point projections of $C^u, D^u$ respectively. We consider a partial foliation $\mathcal{K}_C$ whose leaves are all the horocycles centered at $C$ which intersect the side $\overline{CD}$ between $C$ and $O_C$. Similar definition for a partial foliation $\mathcal{K}_D$.
\begin{figure}[htbp]
\begin{center}
\psfrag{D}{$D$}
\psfrag{C}{$C$}
\psfrag{D'}{$D^u$}
\psfrag{C'}{$C^u$}
\psfrag{X}{\tiny $O_D$}
\psfrag{Y}{\tiny $O_C$}
\psfrag{Z}{\tiny $O_Q$}
\includegraphics[width=5cm]{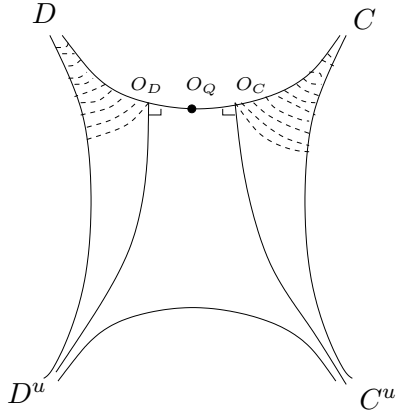}
\caption{Horocyclic foliation}\label{quadrilateral_1}
\end{center}
\end{figure}
\begin{definition}[Horocyclic foliation] \label{def:horfol4}
The \emph{horocyclic foliation} $\mathcal{K}$ is the union of the two partial foliations $\mathcal{K}_C$ and $\mathcal{K}_D$.
\end{definition}

From the computations in the proof of Lemma \ref{shear_quads} we have:
$$d(O_\Q, O_C) = d(O_\Q, O_D) = \tfrac{1}{2}\log(1+e^{-s}). $$ For $d \geq \tfrac{1}{2}\log(1+e^{-s})$, we denote by $h_C^d$ the only horocycle in $\mathcal{K}_C$ where the distance between $O_\Q$ and its intersection with the edge $\overline{CD}$ equals $d$.

\begin{lemma}   \label{lem:horocyclic foliation square}
The map $\phi^t$ in Lemma \ref{lemma:quad} maps the leaf $h_C^d$ of $\mathcal{K}$ in $\Q$ to the leaf $h_{C}^{e^t d}$ of $\mathcal{K}$ in $\Q^t$ affinely. Similarly, $\phi^t$ maps the leaf $h_D^d$ of $\mathcal{K}$ in $\Q$ to the leaf $h_D^{e^t d}$ of $\mathcal{K}$ in $\Q^t$ affinely.    
\end{lemma}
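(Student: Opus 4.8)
The plan is to reduce everything to the behaviour of $\phi^t$ on the two edges of $\Q$ adjacent to the cusp $C$, namely $\overline{CD}$ and $\overline{BC}$, and then to exploit the symmetric structure of the averaged map. Each leaf $h_C^d$ is a horocyclic arc at $C$ with endpoints $p_d\in\overline{CD}$, at distance $d$ from $O_\Q$, and $q_d\in\overline{BC}$. Since the reference horocycle $f$ through $O_\Q$ meets the geodesic extending $\overline{BC}$ at $P_{AD}$, and two concentric horocycles at $C$ stay at constant distance along every geodesic issuing from $C$, the point $q_d$ lies at distance $d$ from $P_{AD}$ along $\overline{BC}$.

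First I would pin down the images of these two endpoints. By Lemma \ref{lemma:quad}(2),(4) the restriction $\phi^t|_{\overline{CD}}$ is affine with factor $e^t$ and sends $O_\Q$ to $O_{\Q^t}$, so it carries $p_d$ to the point of $\overline{CD}^t$ at distance $e^t d$ from $O_{\Q^t}$, i.e.\ to the endpoint of $h_C^{e^t d}$ on $\overline{CD}^t$. Likewise, by Proposition \ref{shear_quads2} we have $\phi^t(P_{AD})=P_{AD}$ and $\phi^t|_{\overline{BC}}$ is affine with factor $e^t$, so $q_d$ is sent to the point of $\overline{BC}^t$ at distance $e^t d$ from $P_{AD}$, which is the endpoint of $h_C^{e^t d}$ on $\overline{BC}^t$. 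Thus $\phi^t$ carries the two endpoints of $h_C^d$ exactly onto the endpoints of $h_C^{e^t d}$.

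The heart of the argument is to upgrade this to the statement that the whole arc is sent into the single horocycle $h_C^{e^t d}$, affinely. Here I would use that $\phi^t=\Upsilon^t|_{\Q}$ is the restriction of the average $\Upsilon^t=\Upsilon_{x_0,\gamma}(\Psi^t,\Psi^\sigma_t)$, where $\Psi^\sigma_t=\sigma_t\circ\Psi^t\circ\sigma$. Both of these maps send $C$-horocycles to $C$-horocycles set-wise: for $\Psi^t$ this is Lemma \ref{lem:horocyclic foliation triangle}, since $h_C^d$ lies in the cusp $C$ of the triangle $\TT$ on which $\Psi^t$ coincides with Thurston's map $\psi_t$; for $\Psi^\sigma_t$ it follows because $\sigma$ (resp.\ $\sigma_t$) interchanges the $C$- and $C^u$-horocyclic foliations while $\Psi^t$ preserves the $C^u$-foliation. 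Hence $\Psi^t(h_C^d)$ and $\Psi^\sigma_t(h_C^d)$ are two concentric horocycles $H_1,H_2$ centred at $C^t$. Passing to the upper half-plane model with $C^t=\infty$, so that $C^t$-horocycles become horizontal lines, I would verify that $\Psi^t(z)$ and $\Psi^\sigma_t(z)$ lie on a common geodesic issuing from $C^t$ for every $z\in h_C^d$: the two endpoints already satisfy this, both landing on the vertical lines carrying $\overline{CD}^t$ and $\overline{BC}^t$, and on $\overline{CD}$ the average of the two maps is the affine-by-$e^t$ map fixing the midpoint $O_\Q$ of $O_C$ and $O_D$ (using $d(O_\Q,O_C)=d(O_\Q,O_D)$ from Lemma \ref{shear_quads}). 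Granting this radial alignment, the geodesic midpoint of two such points sits on a fixed horizontal line, hence on one $C^t$-horocycle, which by the endpoint computation must be $h_C^{e^t d}$; affinity along the leaf then follows since the common radial coordinate is transported affinely by $\Psi^t=\psi_t$.

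I expect the main obstacle to be precisely this radial-alignment step. Although both $\Psi^t$ and $\Psi^\sigma_t$ preserve the $C$-foliation as sets, $\Psi^\sigma_t$ is only \emph{piecewise} affine along $h_C^d$, because its $\sigma$-image traverses the cusp $C^u$ of $\Q^d$ and so crosses the triangulation diagonal; one must therefore show that the bend produced by $\Psi^t$ across the diagonal and the nonlinearity of the circle-reflection $\sigma_t$ cancel, so that the two images remain aligned along geodesics to $C^t$. This is exactly where the $\sigma$-equivariance of $\Upsilon^t$ and the explicit distances of Lemma \ref{shear_quads} do the real work. Finally, the statement for $\mathcal{K}_D$ is obtained by applying the same argument after the isometric reflection of $\Q$ that exchanges $C$ with $D$ (and $A$ with $B$), which carries $\mathcal{K}_C$ to $\mathcal{K}_D$ and $\phi^t$ to itself.
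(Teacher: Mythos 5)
Your proposal follows the same skeleton as the paper's proof (write $\phi^t$ as the average of $\Psi^t$ and $\Psi^\sigma_t=\sigma_t\circ\Psi^t\circ\sigma$, note that each sends $h_C^d$ to a horocycle centred at the same ideal point, and pin the image down via Lemma \ref{lemma:quad}), and your endpoint computation is essentially the paper's calibration step (one caveat: Proposition \ref{shear_quads2} is stated only for $s\geq 0$, and for $s<0$ the points $P_{BC},P_{AD}$ lie outside $\Q$; for general $s$ the endpoints should instead be located using the signed distances of Lemma \ref{shear_quads} together with parts (2) and (4) of Lemma \ref{lemma:quad}, which pin the affine maps on $\overline{BC}$ and $\overline{CD}$). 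The genuine gap is exactly the step you flagged: the radial-alignment claim is \emph{false}, and the bends do not cancel. To see this concretely, normalize $\Q^d$ so that $C=\infty$, $D=0$, $C^u=1$; a cross-ratio computation from the paper's model $(D^u,C^u,C,D)=(-1,0,e^s,\infty)$ gives $D^u=u:=\tfrac{e^s}{1+e^s}$, and in the target $D^{u,t}=u_t:=\tfrac{e^{e^ts}}{1+e^{e^ts}}\neq u$ whenever $s\neq 0$, $t>0$. Let $z^*$ be the point where $h_C^d$ crosses the \emph{second} diagonal $\overline{CD^u}$ (the vertical $x=u$). Since $\Psi^\sigma_t$ is precisely the glued Thurston map adapted to the second-diagonal triangulation, $\Psi^\sigma_t(z^*)=\sigma_t\Psi^t\sigma(z^*)\in\sigma_t\bigl(\overline{C^{u,t}D^t}\bigr)=\overline{C^tD^{u,t}}$, the vertical $x=u_t$; whereas $\Psi^t=\psi_t$ acts in the cusp chart at $C$ by $(x,y)\mapsto(x,y^{e^t})$ (Lemma \ref{lem:horocyclic foliation triangle}), so $\Psi^t(z^*)$ lies on the vertical $x=u$. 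These are distinct geodesics ending at $C^t$, so the two images are not radially aligned at $z^*$; worse, since alignment \emph{does} hold at both endpoints of the leaf, the horizontal offset between the two images is not even constant along the leaf, so no parabolic-translation variant of your argument rescues the step, and the affinity claim built on it collapses as well.

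By contrast, the paper's proof never attempts pointwise alignment: it deduces directly from the set-wise statement that both maps carry $h_C^d$ (for $d\geq\tfrac12\log(1+e^{-s})$) to horocycles that the average carries the leaf to a horocycle, identifies it as $h_C^{e^td}$ via parts (2) and (3) of Lemma \ref{lemma:quad}, and then verifies the inequality $e^td\geq\tfrac12\log(1+e^{-e^ts})$, which is needed to know the image is again a leaf of the \emph{partial} foliation $\mathcal{K}$ of $\Q^t$ — a range check absent from your write-up. Be aware, though, that the difficulty you isolated does not disappear in the paper's formulation: the midpoint of the geodesic joining points of two fixed horocycles centred at the same ideal point has constant depth only when the relative horizontal displacement of the two moving image points is constant along the leaf, which is precisely the constancy your radial-alignment step was meant to (but cannot) deliver. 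So your instinct that this is where the real work lies is correct; a complete proof must substantiate that averaging step by a different mechanism rather than by the alignment you propose.
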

\begin{proof}
The map $\phi^t$ is the average of the two maps $\Psi^t$ and $\sigma^t \circ \Psi^t \circ \sigma$ (see the proof of Lemma \ref{lemma:quad}). The map $\Psi^t$ is represented on the left hand side of Figure \ref{fig:horocyclic foliation square}, the map $\sigma^t \circ \Psi^t \circ \sigma$ on the right hand side. Each map sends the horocycle $h_C^{d}$, with $d \geq \tfrac{1}{2}\log(1+e^{-s})$, to a horocycle, hence their average will also send this horocycle to a horocycle, which must be $h_{C}^{e^t d}$ by part (2) and (3) of Lemma \ref{lemma:quad}. This horocycle is still in $\mathcal{K}$ for $\Q^t$,  because a simple computation shows that  if $d \geq \tfrac{1}{2}\log(1+e^{-s})$ then  $e^t d \geq \tfrac{1}{2}\log(1+e^{-e^t s})$. 
\end{proof}

\begin{figure}[htb]
\begin{center}
\psfrag{D}{$D$}
\psfrag{C}{$C$}
\psfrag{O}{\small $O_C$}
\psfrag{O'}{\small $~O_D$}
\includegraphics[width=10cm]{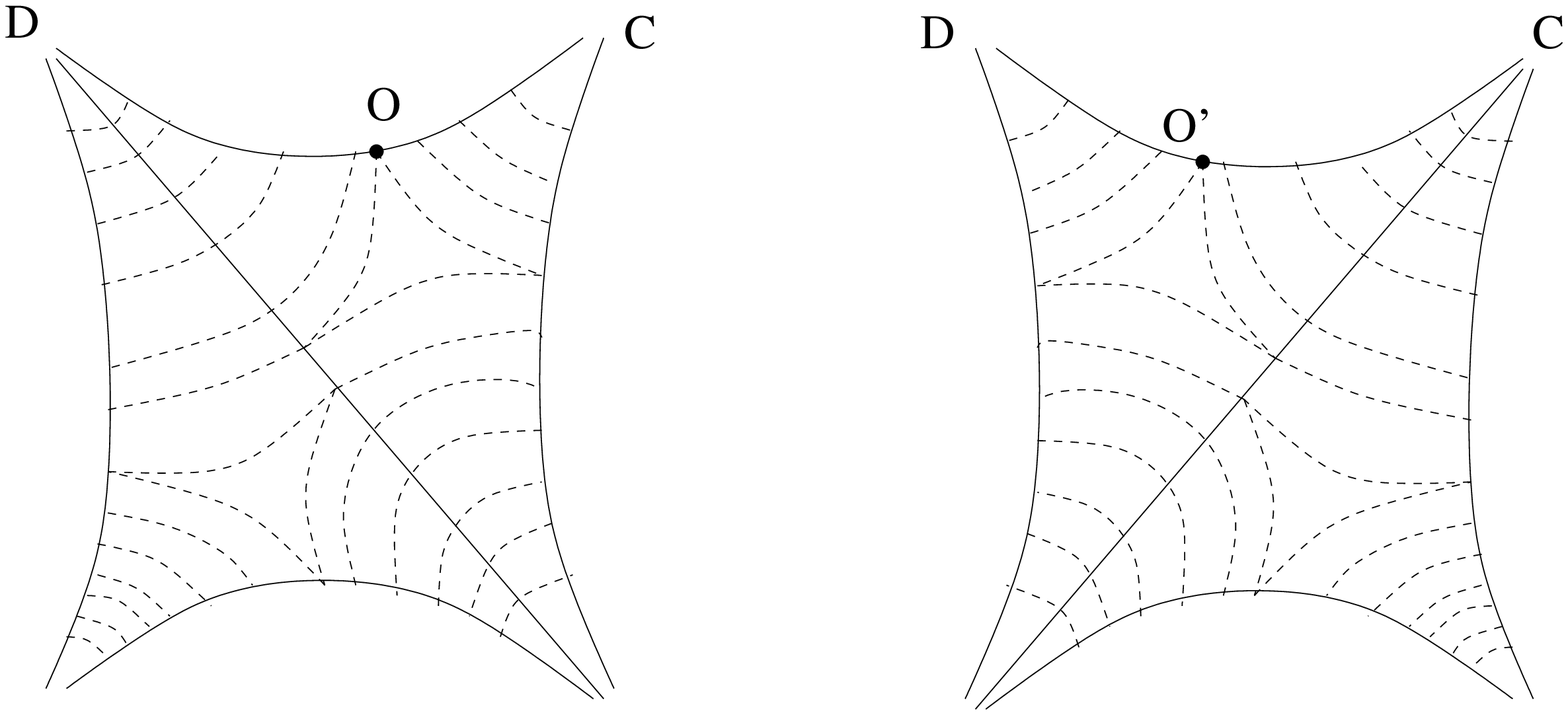}
\caption{Lemma \ref{lem:horocyclic foliation square}}   \label{fig:horocyclic foliation square}
\end{center}
\end{figure}

Now, let $\PP$ be a pentagonal piece. Using the notation as in the proof of Lemma \ref{shear_penta}, denote by $O_W$ the point of $\overline{\widetilde{D}\widetilde{E}}$ that is the nearest point projection of $W$. 

\begin{definition}[Horocyclic foliation] \label{def:horfol5}
We define the \emph{horocyclic foliation} $\mathcal{K}$ as the partial foliation whose leaves are all the horocycles centered at $\widetilde{D}$ which intersect $\overline{\widetilde{D}\widetilde{E}}$ between $\widetilde{D}$ and $O_W$. 
\end{definition}

\begin{lemma} The point $O_W$ lies on the half-line $\overline{\widetilde{D}\widetilde{E}}$, and we have:
$$d(O_W, \widetilde{E}) = \frac{s_2}{2}+\tfrac{1}{2}\log\left(\dfrac{e^{s_1} + 1}{e^{s_2} + 1}\right)\,. $$
\end{lemma}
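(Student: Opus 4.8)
The plan is to compute the coordinate of $O_W$ explicitly using the upper half plane model set up in the proof of Lemma \ref{shear_penta}, and then read off the distance $d(O_W, \widetilde{E})$ as a difference of logarithms of imaginary parts, exactly as was done for $d_{\pm}(H_{DE}, E)$ in that earlier proof. First I would recall the key data already established there: in coordinates we have $\widetilde{D} = \infty$, $\widetilde{D^u_+} = -1$, and the endpoints $Z = 0$ and $W = \frac{e^{s_1+s_2}-1}{e^{s_2}+1}$ of the lift of $l_1^d$. The geodesic $\overline{\widetilde{D}\widetilde{E}}$ lies on a Euclidean circle centered at $\widetilde{D^u_+} = -1$ of radius $r = \sqrt{\frac{e^{s_2}(e^{s_1}+1)}{e^{s_2}+1}}$, so $\widetilde{E} = -1 + r i$, whose imaginary part is $r$.

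Next I would determine $O_W$, the nearest point projection of $W$ onto the geodesic line through $\widetilde{D}$ and $\widetilde{E}$. Since that line is the vertical ray through $\widetilde{D} = \infty$ — more precisely, a Euclidean semicircle centered at $-1$ — the nearest point projection of the boundary point $W \in \mathbb{R}$ onto it is a standard computation: the projection of a point $W$ on the real axis to a geodesic semicircle centered at $c$ lands at the point of that semicircle whose endpoints, together with $W$, are in the appropriate position, and its height (imaginary part) can be computed from the radius $r$, the center $-1$, and the real coordinate $W$. Concretely, the perpendicular from $W$ to the semicircle meets it at the point whose imaginary part I would compute, using either the explicit formula for nearest point projection in $\mathbb{H}^2$ or by intersecting the semicircle centered at $-1$ with the unique geodesic from $W$ orthogonal to it. The signed distance $d(O_W, \widetilde{E})$ is then the logarithm of the ratio of imaginary parts of $O_W$ and $\widetilde{E}$, since both lie on the same geodesic asymptotic to the vertical direction (and the claim that $O_W$ lies on the half-line $\overline{\widetilde{D}\widetilde{E}}$ on the correct side should follow once the imaginary part of $O_W$ exceeds that of $\widetilde{E}$).

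The hard part will be the bookkeeping of the algebra: showing that the imaginary part of $O_W$ equals $\sqrt{\frac{e^{s_1+s_2}(e^{s_1}+1)}{e^{s_2}+1}} \cdot (\text{a factor})$ so that taking the log of its ratio with $r = \sqrt{\frac{e^{s_2}(e^{s_1}+1)}{e^{s_2}+1}}$ collapses to $\frac{s_2}{2} + \tfrac12 \log\!\left(\frac{e^{s_1}+1}{e^{s_2}+1}\right)$. I expect that after substituting the explicit value of $W$ and simplifying, the factor $\frac{e^{s_1}+1}{e^{s_2}+1}$ appears naturally, and the $\frac{s_2}{2}$ term arises from comparing $e^{s_2}$ in the numerator against $1$. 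A useful sanity check is the symmetry with the earlier computation: we already know $d_{\pm}(H_{DE}, E) = \frac{s_1}{2}$, and since $H_{DE}$ and $O_W$ both project from auxiliary points onto $\overline{\widetilde{D}\widetilde{E}}$, comparing the two results should confirm the algebra is correct. Finally, to verify that $O_W$ lies on the half-line (rather than its geodesic extension past $\widetilde{E}$), I would check the sign of the computed distance, i.e. that $\frac{s_2}{2} + \tfrac12 \log\!\left(\frac{e^{s_1}+1}{e^{s_2}+1}\right) \geq 0$ under the standing assumption $s_1 + s_2 > 0$, $s_2 > s_1$, completing the claim.
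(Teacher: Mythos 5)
Your plan is essentially the paper's proof: work in the upper half-plane coordinates set up in the proof of Lemma \ref{shear_penta} and read off $d(O_W,\widetilde{E})$ as the logarithm of a ratio of imaginary parts along a vertical geodesic. One correction to your geometric bookkeeping, though: the geodesic $\overline{\widetilde{D}\widetilde{E}}$ is \emph{not} the Euclidean semicircle of radius $r$ centered at $\widetilde{D^u_+}=-1$ (that semicircle carries $\overline{\widetilde{A}\widetilde{E}}$); since $\widetilde{D}=\infty$, it is the vertical line $\mathrm{Re}(z)=-1$, and $\widetilde{E}=-1+ri$ is the point where this vertical line meets that semicircle orthogonally. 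With this fixed, the ``hard part of the algebra'' you anticipate collapses to one line, which is what the paper does: the perpendicular from the ideal point $W$ to the vertical line $\mathrm{Re}(z)=-1$ is the semicircle centered at $-1$ through $W$, so $\mathrm{Im}(O_W)=W+1=\frac{e^{s_1+s_2}-1}{e^{s_2}+1}+1=\frac{e^{s_2}(e^{s_1}+1)}{e^{s_2}+1}=r^2$ (no mysterious extra factor), whence $d(O_W,\widetilde{E})=\log\bigl(r^2/r\bigr)=\log r=\frac{s_2}{2}+\tfrac12\log\bigl(\frac{e^{s_1}+1}{e^{s_2}+1}\bigr)$. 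Your final sign check is also exactly the paper's: $\log r\geq 0$ is equivalent to $e^{s_1+s_2}\geq 1$, so the standing assumption $s_1+s_2>0$ gives $r>1$ and hence $O_W$ lies above $\widetilde{E}$, i.e.\ on the half-line toward $\widetilde{D}$.
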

\begin{proof}
From the computation in the proof of Lemma \ref{shear_penta}, we have: 
$$\mathrm{Im}(O_W) = W+1 = r^2\,.$$
Since $r>1$, we see that $O_W$ is above $\widetilde{E}$ and their distance is
\begin{equation*}
d(O_W, \widetilde{E}) = \log(r) = \tfrac{1}{2}\log\left(\dfrac{e^{s_2} (e^{s_1} + 1)}{e^{s_2} + 1}\right)\,.\qedhere 
\end{equation*}  
\end{proof}

For $d \geq \frac{s_2}{2}+\tfrac{1}{2}\log\left(\dfrac{e^{s_1} + 1}{e^{s_2} + 1}\right)$, we denote by $h^d$ the only horocycle in $\mathcal{K}$ where the distance between $O_W$ and its intersection with the edge $\overline{\widetilde{D}\widetilde{E}}$ equals $d$. 
 
\begin{lemma}  \label{lem:horocyclic foliation penta}
The map $\phi^t$ from Lemma \ref{lemma:quad} maps the leaf $h^d$ of $\mathcal{K}$ in $\PP$ to the leaf $h^{e^t d}$ of $\mathcal{K}$ in $\PP^t$ affinely.     
\end{lemma}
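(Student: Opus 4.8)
The plan is to mirror the argument used for the quadrilateral case in Lemma \ref{lem:horocyclic foliation square}, exploiting the fact that the generalized stretch map $\phi^t$ on the pentagon is, by construction in Lemma \ref{lemma:quad}, the average (in the sense of Theorem \ref{prop:average}) of the two triangle-gluing maps $\Psi^t$ and $\sigma^t \circ \Psi^t \circ \sigma$ built from Thurston's triangular stretch map. The key structural input is Lemma \ref{lem:horocyclic foliation triangle}: on each ideal triangle $\TT, \TT'$ of the double $\PP^d$, Thurston's map sends a horocyclic leaf $h^d_{ij}$ to the leaf $h^{e^t d}_{ij}$ affinely. Since the horocyclic foliation $\mathcal K$ of Definition \ref{def:horfol5} consists of horocycles centered at $\widetilde D$, each such leaf sits inside the triangle $\TT$ adjacent to the spike $\widetilde D$, so its image under $\Psi^t$ is again a horocycle centered at the image of $\widetilde D$, with the centering vertex fixed. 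First I would verify that both $\Psi^t$ and $\sigma^t\circ \Psi^t\circ\sigma$ individually send the horocycle $h^d$ to a horocycle of $\PP^t$ centered at $\widetilde D$; this is immediate from Lemma \ref{lem:horocyclic foliation triangle} for $\Psi^t$, and follows for the conjugate map because $\sigma$ preserves the spike $\widetilde D$ up to the involution symmetry of the double.

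Next I would argue that the average of two horocycles with the \emph{same} center is again a horocycle with that center. This is the crucial geometric point specific to the pentagon: unlike the quadrilateral, where the symmetry between the two ideal vertices $C,D$ is used, here the single spike $\widetilde D$ is the common center of both the domain horocycle's two images. Concretely, in the upper half-plane model with $\widetilde D = \infty$, a horocycle centered at $\infty$ is a horizontal line $\{\mathrm{Im}(z) = c\}$; the geodesic joining two points at the same height $c$ on two such lines has its midpoint determined pointwise, and because both $\Psi^t$ and the conjugate map send the horizontal leaf to horizontal leaves, the pointwise midpoint construction of $\widetilde\Upsilon$ in Definition \ref{def:average} yields again a horizontal line, i.e. a horocycle centered at $\widetilde D$. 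I would then identify which horocycle of $\mathcal K$ this is by tracking arc length: by parts (2) and (3) of Lemma \ref{lemma:quad} the map $\phi^t$ multiplies arc length along the edge $\overline{\widetilde D \widetilde E}$ by $e^t$, so the leaf parametrized by the distance $d$ from $O_W$ is sent to the leaf parametrized by $e^t d$, namely $h^{e^t d}$. Affineness of the restriction follows because each of $\Psi^t, \sigma^t\circ\Psi^t\circ\sigma$ acts affinely on leaves by Lemma \ref{lem:horocyclic foliation triangle}, and averaging two affine maps along a horocycle preserves affineness.

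Finally I would check the range condition, namely that if $d$ exceeds the threshold $\tfrac{s_2}{2}+\tfrac12\log\!\big(\tfrac{e^{s_1}+1}{e^{s_2}+1}\big)$ for $\PP$, then $e^t d$ exceeds the corresponding threshold for $\PP^t = \PP_{e^t s}$, so that the image genuinely lies in the foliated region of $\mathcal K$ for the stretched pentagon; this is a monotonicity computation entirely analogous to the inequality $e^t d \geq \tfrac12\log(1+e^{-e^t s})$ used at the end of the proof of Lemma \ref{lem:horocyclic foliation square}, and I would defer the elementary estimate. The main obstacle I anticipate is not any single calculation but the verification that the averaging operation of Theorem \ref{prop:average} interacts correctly with the lift-and-project construction so that leaves centered at the spike are preserved as horocycles; once that is established, the affineness and the arc-length bookkeeping follow exactly as in the quadrilateral case, and one concludes by invoking Lemma \ref{lem:horocyclic foliation triangle} leaf by leaf.
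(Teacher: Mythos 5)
Your proposal is correct and follows the paper's own proof essentially step for step: the paper likewise observes that $\phi^t$ is the average of $\Psi^t$ and $\sigma^t\circ\Psi^t\circ\sigma$, that each of these sends the leaf $h^d$ to a horocycle so their average does too, identifies the image horocycle as $h^{e^t d}$ via parts (2) and (3) of Lemma \ref{lemma:quad}, and concludes with exactly the monotonicity check $e^t d \geq \tfrac{e^t s_2}{2}+\tfrac{1}{2}\log\bigl(\tfrac{e^{e^t s_1}+1}{e^{e^t s_2}+1}\bigr)$ that you defer. Your extra upper half-plane argument (horizontal leaves at $\widetilde{D}=\infty$, constant offset from affineness) only makes explicit a point the paper leaves implicit; the one imprecision worth noting is that $\sigma$ actually swaps the spike $D$ with its mirror $D^u$, so it is the full composition $\sigma^t\circ\Psi^t\circ\sigma$, not $\sigma$ alone, that returns horocycles centered at $\widetilde{D}$ -- which your hedge ``up to the involution symmetry'' effectively captures.
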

\begin{proof}
The map $\phi^t$ is the average of the two maps $\Psi^t$ and $\sigma^t \circ \Psi^t \circ \sigma$ (see the proof of Lemma \ref{lemma:quad}). Each of the two maps sends the horocycle $h^{d}$, with $d \geq \frac{s_2}{2}+\tfrac{1}{2}\log\left(\dfrac{e^{s_1} + 1}{e^{s_2} + 1}\right)$, to a horocycle, hence their average will also send this horocycle to a horocycle, which must be $h^{e^t d}$ by part (2) and (3) of Lemma \ref{lemma:quad}. This horocycle is still in $\mathcal{K}$ for $\PP^t$,  because a computation shows that $d \geq \frac{s_2}{2}+\tfrac{1}{2}\log\left(\dfrac{e^{s_1} + 1}{e^{s_2} + 1}\right)$ implies 
\begin{equation*}
e^t d \geq \frac{e^t s_2}{2}+\tfrac{1}{2}\log\left(\dfrac{e^{e^t s_1} + 1}{e^{e^t s_2} + 1}\right)\,. \qedhere
\end{equation*}
\end{proof}

\begin{remark}  
It might seem more natural to extend the horocyclic foliation $\mathcal{K}$ on a quadrilateral piece $\Q$ until the point $O_\Q$ as in Figure \ref{foliation quad}. Similarly, for a pentagonal piece $\PP$, it might seem more natural to extend it until the point $H_{DE}$ as in Figure \ref{figure_5}. Unfortunately $\phi^t$ does not map all the horocycles of these extended foliations to horocycles. This property only holds for the leaves of $\mathcal{K}$. 
\end{remark}

\section{The boundary block}  \label{sec:bdry_block}

Let $X\in \T(S)$ and $\lambda$ a maximal lamination on $X$. In this section and the following, we construct some auxiliary surfaces that we will use to define our generalized stretch lines. 
Here we define the boundary block of $\lambda$ in $X$, that is the subset of $X$ obtained as the union of all the geometric pieces that are not ideal triangles. It comes equipped with a finite maximal lamination $\lambda_B$, consisting of the boundary leaves of these pieces. The boundary block is non-empty if and only if at least one of the leaves of $\lambda$ is orthogonal to the boundary of $X$. 
After the definition of the boundary block, we will describe how to ``stretch'' it using the results of Section \ref{sec:stretch_1}.

\subsection{Definition of the boundary block} \label{subsec:def boundary block}

We define the \emph{boundary block} of $\lambda$ as the subset $B \subset X$ obtained as a union of all the geometric pieces of $X \setminus \lambda$ that have at least one edge on $\partial X$, that is, quadrilaterals, pentagons and hexagons:  
 $$ B := \bigcup \{ \mathcal G_i \mid \mathcal G_i \mbox{ is a geometric piece of } X \setminus \lambda \mbox{ of type (2), (3) or (4) } \} \subset X\,.$$  

By construction $B$ is a (possibly disconnected) 2-manifold with boundary. Notice that its boundary in general might not be compact. The inclusion map $B \hookrightarrow X$ induces via pull-back a Riemannian metric on $B$, which turns $B$ into a (possibly disconnected) complete hyperbolic surface of finite volume. Hence the connected components of $B$ are convex hyperbolic surfaces.
Notice that the inclusion map $B \hookrightarrow X$ is a 1-1 local isometry, but not necessarily an isometric embedding in the sense of metric spaces. Indeed, the infinite geodesics in the boundary of the quadrilateral and pentagonal pieces might spiral in a bounded region of $X$, but they are not contained in a bounded region for the hyperbolic metric on $B$. 

The boundary  of $B$ contains compact and non-compact components. We will denote by $\partial^c B$ the union of the compact components of $\partial B$, and by $\partial^{nc} B$ the union of the non-compact components of $\partial B$:
$$\partial B = \partial^c B \cup \partial^{nc} B.$$ 
The compact boundary components are also boundary components of $X$. The non-compact boundary components are bi-infinite geodesics that are boundary of quadrilaterals. Every non-compact boundary component of $B$ has two ideal vertices corresponding to two spikes.

\begin{definition}[Cycle in $\partial^{nc} B$]   \label{def:cycle}
A \emph{cycle} in $\partial^{nc} B$ is a cyclically ordered set $c:=\{b_1, \dots, b_s\}$ of components of $\partial^{nc} B$ such that for every $i$ the geodesic $b_i$ shares a spike with $b_{i-1}$ and with $b_{i+1}$, and $b_s$ shares a spike with $b_1$. We will denote by $Q_i$ the quadrilateral piece containing $b_i$ in its boundary. We will also denote by $a_i$ the spike shared by $b_i$ and $b_{i+1}$ and by $a_s$ the spike shared by $b_s$ and $b_1$ (see Figure \ref{Thurston:cycles}). 
\end{definition}
The boundary block $B$ has finitely many cycles $c_1, \dots, c_{m}$ in $\partial^{nc} B$. Every cycle $c_i$ in $\partial^{nc} B$ determines a (unique) simple closed geodesic $\gamma_i$ in its homotopy class. 
\begin{definition}[Crown spanned by a cycle] \label{def:crown}
The \emph{crown} spanned by $c_i$ is the subsurface $C_i :=\mathrm{ConvHull}(c_i, \gamma_i) \subset B$ which is the convex hull of $c_i$ and $\gamma_i$. By construction $C_i$ is a complete hyperbolic surface whose interior is topologically a cylinder, and $\partial C_i = \gamma_i \cup c_i$ (see Figure \ref{crowns}). 
\end{definition}

\begin{figure}[htb]
\begin{center}
\begin{subfigure}[t]{0.4\textwidth}
\psfrag{b_i}[B][T]{$b_i$}
\psfrag{B}[T]{$\partial X$}
\includegraphics[width=3.6cm]{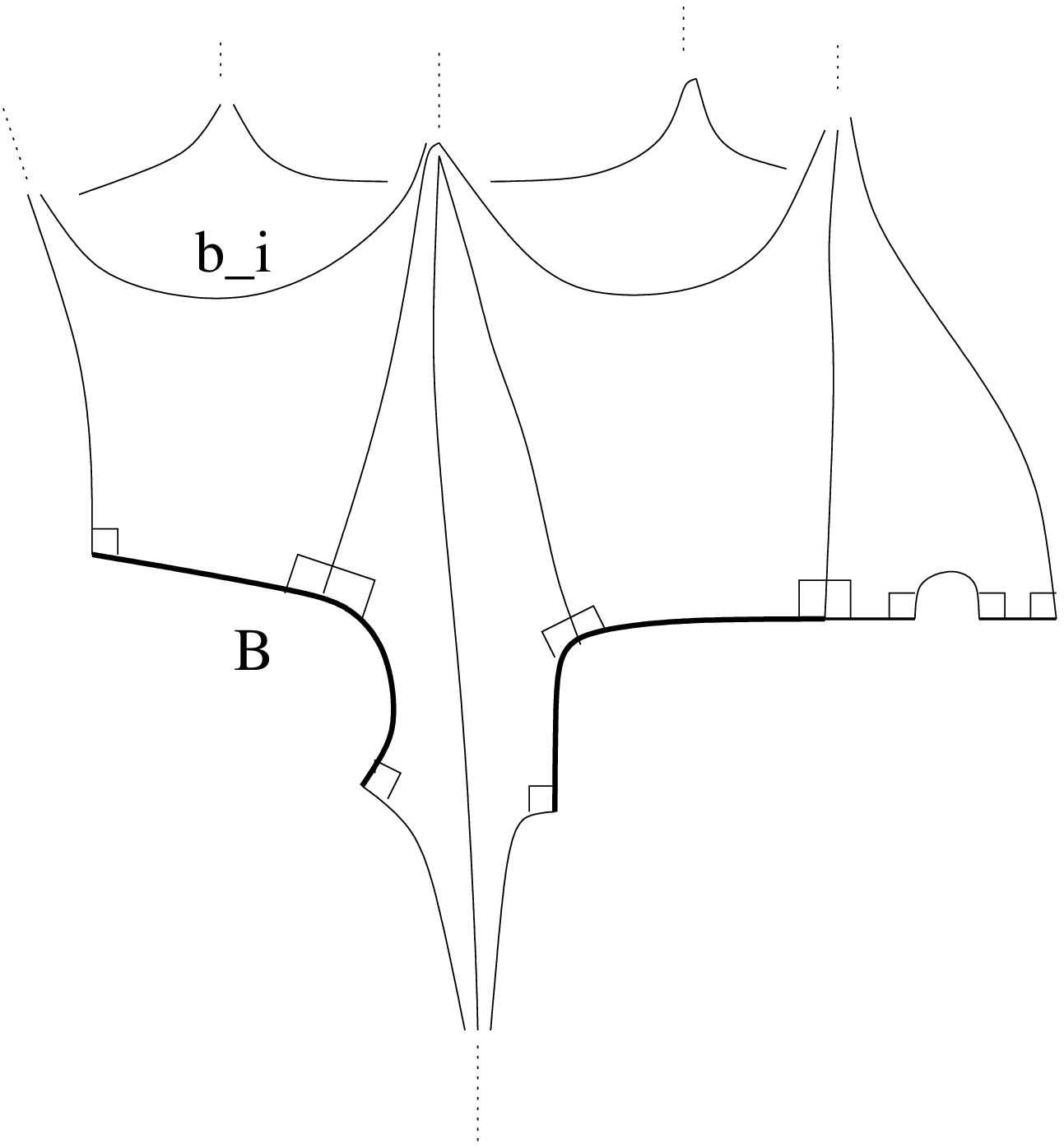}
\caption{A cycle $c \subset \partial^{nc} B$}\label{Thurston:cycles}
\end{subfigure}
\begin{subfigure}[t]{0.4\textwidth}
\psfrag{Gi}{$\zeta_i$}
\psfrag{Gk}[Tl][T]{$\zeta_{j}$}
\psfrag{Gj}[Tr]{\small ~$~\zeta_{j}$}
\includegraphics[width=3.8cm]{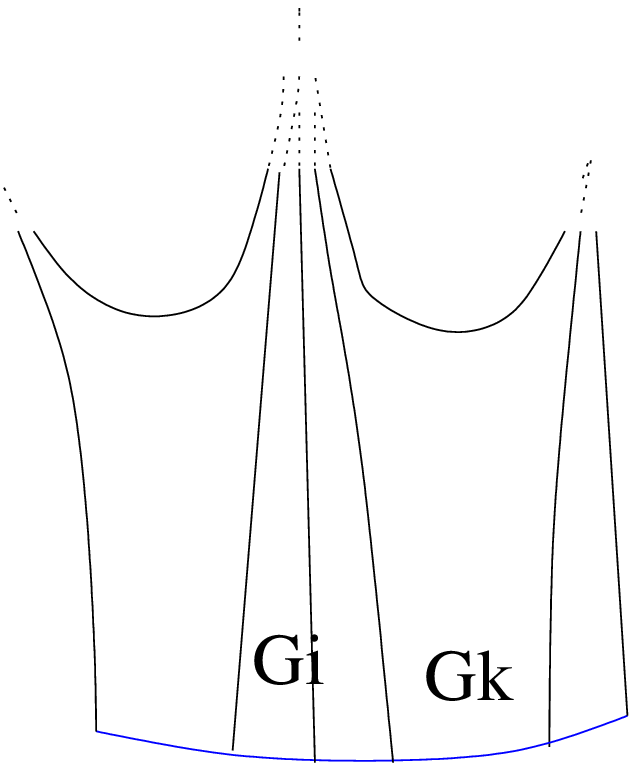}
\caption{The crown $C \subset B$ of $c$}\label{crowns}
\end{subfigure}
\end{center}
\caption{A cycle $c \subset \partial^{nc} B$ and its associated crown $C \subset B$}
\end{figure}
\begin{lemma}
If $c_j, c_k \in \partial^{nc} B$ are distinct cycles, then $C_j$ and $C_k$ are disjoint.
\end{lemma}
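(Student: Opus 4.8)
The plan is to deduce the disjointness of the two crowns from the disjointness of the simple closed geodesics $\gamma_j$ and $\gamma_k$ that span them, together with a statement locating each crown on one definite side of its spanning geodesic. First I would check that the two cycles are supported on disjoint parts of $\partial^{nc} B$: every non-compact component of $\partial B$ is a bi-infinite geodesic with exactly two spikes, and at each spike exactly two such components come together, so the relation ``shares a spike'' partitions the components of $\partial^{nc} B$ into disjoint cycles and each component lies in a unique cycle. In particular the curves underlying $c_j$ and $c_k$ are disjoint, and hence their geodesic representatives $\gamma_j$ and $\gamma_k$ are disjoint, since two disjoint simple closed curves always admit disjoint (possibly coinciding) geodesic representatives in a hyperbolic surface.

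Next I would argue that in fact $\gamma_j \neq \gamma_k$. If they agreed, the cycles $c_j$ and $c_k$ would be freely homotopic and would cobound an annulus whose interior is a component of $B$ with two crown ends and no closed geodesic boundary. But each quadrilateral piece $Q_i$ occurring in a cycle carries a boundary edge $a_1 \subset \partial X$, which must appear on $\partial B$, and a two-ended spiky annulus leaves no room on its boundary for these arcs. This contradiction rules out $\gamma_j=\gamma_k$ and, in passing, excludes the only configuration in which the two crowns could share their geodesic boundary.

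The core of the proof is then the following location statement: the crown $C_i=\mathrm{ConvHull}(c_i,\gamma_i)$ is contained in the closed region of $B$ cut off by $\gamma_i$ on the side of $c_i$, and $\gamma_k$ does not enter the interior of $C_j$ for $k\neq j$. For the latter, recall from Definition \ref{def:crown} that $\mathrm{int}(C_j)$ is a cylinder, so its only simple closed geodesic is $\gamma_j$; since $\gamma_k$ is disjoint from $\gamma_j$ and cannot cross $c_j\subset\partial B$, if it met $\mathrm{int}(C_j)$ it would lie entirely inside it and thus be freely homotopic, hence equal, to $\gamma_j$, contradicting the previous step. Therefore both generators $c_k$ and $\gamma_k$ of $C_k$ lie off $C_j$, and I would finish by a convexity argument in the universal cover: lifting the relevant component of $B$ to a convex subset of $\mathbb{H}^2$, the geodesic $\gamma_j$ lifts to a disjoint family of complete geodesics bounding the lift of $C_j$, and the lift of the convex set $C_k$ adjacent to a fixed lift of $c_k$ remains inside a single complementary half-plane; projecting back yields $C_j\cap C_k=\emptyset$.

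The step I expect to be the main obstacle is exactly this last non-crossing argument, because a priori the convex hull $C_k$ could bulge across $\gamma_j$ even though its defining data $c_k\cup\gamma_k$ sits off $C_j$; the convex hull of a set disjoint from a convex region need not be disjoint from that region. Passing to the universal cover, where each component of $B$ is a convex subset of $\mathbb{H}^2$ and $\gamma_j$ lifts to disjoint complete geodesics that separate the two sides cleanly, is what makes the side of $\gamma_j$ carrying $C_j$ geodesically convex and therefore forces $C_k$ to stay on the opposite side.
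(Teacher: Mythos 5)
The paper states this lemma without any proof at all, so there is nothing of the authors' to compare against; your job was to supply the missing argument, and your proposal does so correctly. Your three steps are the right ones: distinct cycles consist of disjoint boundary geodesics, so $\gamma_j$ and $\gamma_k$ are disjoint or equal; equality is impossible; and each crown sits on its own side of its spanning geodesic (the case where $c_j$ and $c_k$ lie in different components of $B$ being trivial). For the second step, your ``no room'' remark can be made crisper: every piece of $B$ has an edge $a_i\subset\partial X$, and since half-infinite and finite leaves of $\lambda$ cannot bound ideal triangles (whose sides are bi-infinite), the entire boundary circle of $X$ through $a_i$ lies in the same component of $B$; hence every component of $B$ carries a compact boundary circle, whereas an annulus cobounded by two crown ends has only its two ends. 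For the last step, your worry about the hull bulging also resolves downstairs: $C_k$ is connected with $\partial C_k=\gamma_k\cup c_k$ disjoint from $C_j$, so if the crowns met, one would contain the other, impossible as each contains its own end. Your universal-cover version is equally valid once one notes that the component of $\widetilde{W}\setminus\widetilde{\gamma_j}$ on the $c_j$-side is exactly a lift of $C_j$, since beyond the lifts of $c_j\subset\partial B$ there is nothing; a lift of $C_k$ on that side would then force $\gamma_k\subset C_j$, contradicting your location statement.
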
 
\begin{notation}
We will use the following notation: 
\begin{itemize}
\item $C:= \bigcup_i C_i$, where $C_i$ is the crown associated to the cycle $c_i \subset \partial^{nc} B$; 
\item $B_C:= \overline{B \setminus C} \subset B \subset X$ is the closure of the complement of $C$ in $B$; 
\item $\Gamma = \bigcup \gamma_i$ is the finite union of all the pairwise disjoint simple closed geodesics $\gamma_i$; 
\item $X_C := \overline{X \setminus B_C} \subset X$ is the complement of $B_C$ in $X$. 
\end{itemize}
 \end{notation}
The following proposition is an immediate consequence of our constructions:
\begin{proposition}
The maximal lamination $\lambda$ induces a decomposition of $X$ as follows:
$$X = X_C \cup B \mbox{ with } X_C \cap B = C\,,$$ 
where $X_C$ is a (possibly disconnected) finite hyperbolic surface with the metric induced by $X$ and $$\partial X_C = \Gamma \cup (\partial X \setminus \partial B)~.$$ Furthermore, all of the following holds: 
\begin{itemize}
\item $\lambda_B := \{ l \in \lambda ~|~ l \mbox{ is a leaf entirely contained in } B \}$ is a maximal lamination for $B$; 
\item $\lambda_{X_C} := \{ l \in \lambda ~|~ l \mbox{ is a leaf entirely contained in } X_C \}$ is a lamination for $X_C$;  
\item $\lambda_B \cap \lambda_{X_C}$ is the union of the non-compact boundary components of $B$; 
\item $\lambda = \lambda_B \cup \lambda_{X_C}$.
\end{itemize}
\end{proposition}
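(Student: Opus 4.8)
The plan is to reduce the entire statement to the combinatorics of the geometric-piece decomposition of Proposition~\ref{prop:lamination} together with elementary set-theoretic manipulations of the definitions of $B$, $C$, $B_C$, $X_C$ and $\Gamma$. First I would prove the decomposition $X=X_C\cup B$ with $X_C\cap B=C$. Writing $X\setminus\lambda$ as the disjoint union of its geometric pieces, the triangular pieces are by definition exactly those not in $B$, so $X=(\bigcup_k \TT_k)\cup B$. Since $B=B_C\cup C$ glued along $\Gamma$ and $X_C=\overline{X\setminus B_C}$, removing $B_C$ leaves the open triangles together with the interiors of the crowns; taking closures identifies $X_C=(\bigcup_k \TT_k)\cup C$. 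The two identities then follow at once: $X_C\cup B=X$ because $C\subset B$, and $X_C\cap B=C$ because the triangles meet $B$ only along bi-infinite leaves of $\partial^{nc}B$, which are themselves contained in the crowns, so that intersection lies in $C$.

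Next I would treat the metric and boundary claim. The surface $X_C$ carries the metric induced from $X$ and is a closed subsurface bounded by geodesics, hence complete; it has finite volume since $X_C\subset X$ and $\mathrm{vol}(X)<\infty$. To identify $\partial X_C$ I would track the boundary of each constituent: the triangles contribute no boundary edge except the closed geodesic leaves that lie in $\partial X$ and are adjacent only to triangles, i.e. the components of $\partial X\setminus\partial B$; each crown $C_i$ contributes its outer geodesic $\gamma_i$, since its inner cycle $c_i$ is glued to triangles inside $X_C$ while $\gamma_i$ is glued to $B_C$ outside $X_C$. This yields $\partial X_C=\Gamma\cup(\partial X\setminus\partial B)$, which is compact, so $X_C$ is a finite hyperbolic surface.

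For the lamination claims I would argue as follows. The lamination $\lambda_B$ is maximal on $B$ because the components of $B\setminus\lambda_B$ are precisely the non-triangular geometric pieces, each of area $\pi$ and hence not further subdivisible by Proposition~\ref{prop:lamination}; finiteness is clear since $B$ has finitely many pieces. The set $\lambda_{X_C}$ is a closed union of pairwise disjoint simple geodesics, and the only leaves meeting $\partial X_C$ are the closed geodesic boundary leaves, while leaves near $\gamma_i$ merely spiral toward it; thus no leaf hits $\partial X_C$ orthogonally and $\lambda_{X_C}$ is a genuine geodesic lamination. The identity $\lambda=\lambda_B\cup\lambda_{X_C}$ rests on one structural observation that I would isolate first: every leaf of $\lambda$ is a single edge shared by at most two geometric pieces, because the ideal or right-angled endpoints of such an edge are exactly the endpoints of the corresponding complete geodesic. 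A case analysis on the two pieces adjacent to a leaf $l$ then settles the identity: if both are non-triangular then $l\subset B$; if both are triangular then $l\subset X_C$; and since a triangle has only bi-infinite edges, the mixed case forces $l$ to be a bi-infinite edge of a quadrilateral, i.e. a component of $\partial^{nc}B$, which lies in a crown and hence in $B\cap X_C$. In every case $l\in\lambda_B\cup\lambda_{X_C}$.

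The hard part will be the precise description in the last bullet, namely that $\lambda_B\cap\lambda_{X_C}$ consists exactly of the non-compact boundary components of $B$. A leaf lies in this intersection iff it is entirely contained in $C=B\cap X_C$, so I must show that the only complete leaves of $\lambda$ contained in the crowns are the cycle components $b_j$. These are contained in $\partial C_i$ by Definition~\ref{def:crown}; conversely, any other leaf entering a crown $C_i$ from its cycle side---for instance a ray-edge of one of the quadrilaterals $Q_j$---must cross the separating geodesic $\gamma_i$ and continue into $B_C$, hence it is not contained in $C$. Making this last point rigorous is the delicate geometric step: it requires the convex-hull definition of the crown together with the disjointness of distinct crowns to guarantee that the region between the cycle $c_i$ and $\gamma_i$ carries no complete leaf other than the $b_j$. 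Once this is established, combining it with the case analysis above gives $\lambda_B\cap\lambda_{X_C}=\partial^{nc}B$ and completes the proof.
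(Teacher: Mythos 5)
Your plan is sound in outline, but it rests on one false lemma and one admittedly unproven step, so as it stands it is not a complete proof. The false lemma is your ``structural observation'' that every leaf of $\lambda$ is a single edge shared by at most two geometric pieces. By Proposition \ref{prop:lamination} the complement $X\setminus\lambda$ has only $2|\chi(S)|$ components, so only finitely many leaves of $\lambda$ occur as edges of pieces; but a maximal lamination may contain a minimal (e.g.\ irrational) sublamination with uncountably many leaves, and a generic such leaf is a limit of leaves from both sides and bounds no complementary piece at all (a leaf can also bound a piece on one side only, another case your trichotomy omits). Hence your case analysis does not cover these leaves, and the proof of $\lambda=\lambda_B\cup\lambda_{X_C}$ is incomplete as written. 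The gap is repairable: a leaf that is not an edge of any piece meets no open piece and---since leaves are pairwise disjoint---no edge either; moreover any leaf meeting $\partial X$ orthogonally is an edge (the finitely many $a_i$-segments cover $\partial X$ up to finitely many feet, and the $l$-edge issuing orthogonally from such a foot coincides with the leaf), so a non-edge leaf avoids $B$ altogether and lies in $\lambda_{X_C}$. But that argument has to be supplied; it is not a rephrasing of yours.

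The second problem is that you explicitly defer the step you yourself call delicate---that the only complete leaves of $\lambda$ contained in a crown $C_i$ are the cycle leaves $b_j$---and that is exactly where the content of the third bullet lies. (For calibration: the paper states this proposition without proof, as an immediate consequence of the constructions, so there is no argument of the paper to compare your route against; the burden of the details is entirely on you.) Two points you would need to nail down: (i) $\gamma_i$ is not itself a leaf of $\lambda$, since a closed leaf in the crown would lie in $\lambda_B\cap\lambda_{X_C}$ and contradict the bullet; this follows because a leaf contained in $B$ must lie in the finite union of $l$-edges of the non-triangular pieces, which are pairwise disjoint simple non-closed geodesics, and a closed geodesic cannot be contained in one of them. (ii) Your set-theoretic manipulations with closures tacitly treat $B$ and $B_C$ as closed subsets of $X$, whereas the paper itself warns that $B\hookrightarrow X$ is only a $1$-$1$ local isometry and that the bi-infinite boundary leaves can spiral in a bounded region of $X$; thus $X_C=\overline{X\setminus B_C}$ may pick up accumulation sets of spiraling spikes, and the identity $X_C\cap B=C$ requires checking that these accumulation sets (which consist of bi-infinite leaves disjoint from $\mathring{B}$) contribute nothing beyond $C$. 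None of this is fatal, but as written your proposal proves the routine assertions and leaves the one nontrivial bullet open.
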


Notice that by definition the following holds: 
\begin{itemize}
\item $\lambda_{X_C}$ does not contain any leaf that hits the boundary of $X$ perpendicularly;
\item $X_C \setminus \lambda_{X_C} = \mathring{C} \cup \bigcup\{ \mathring{\G} ~|~ \G \mbox{ is a triangular geometric piece of } X\setminus \lambda \}\,.$
\end{itemize}
\begin{figure}[htb]
\begin{center}
\psfrag{l}[Br]{$\lambda$}
\psfrag{BC}[b]{$B_C$}
\psfrag{C}{$C$}
\psfrag{SB}{$X_C$}
\psfrag{B}{$B$}
\psfrag{g}{$\Gamma$}
\includegraphics[width=9cm]{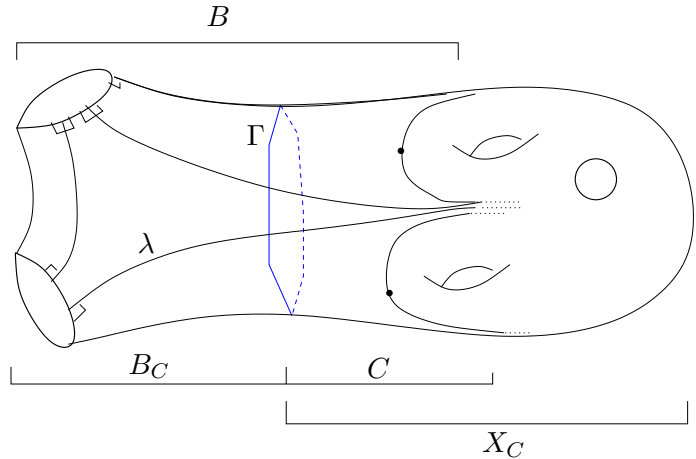}
\caption{The $\lambda$-decomposition of $X$ when $C$ is connected and $\Gamma = \{ \gamma \}$}\label{BCdecomposition}
\end{center}
\end{figure}

\subsection{Stretching the boundary block}   \label{sec:stretchB} 

For every $t\geq 0$ we will now stretch the boundary block along $\lambda_B$. We will denote the new hyperbolic surface by $B_{\lambda_B}^t$ or simply $B^t$. 

As a first step, for every geometric piece $\mathcal{G}_i$ involved in the definition of $B$ consider its stretched analog $\mathcal{G}_i^t$ defined in Section \ref{sec:stretch_1}. We define $B^t$ to be the (possibly disconnected) surface obtained by glueing together these $\mathcal{G}_i^t$'s following the glueing pattern of the corresponding $\mathcal{G}_i$'s in $B$: 
$$ B^t := \bigcup \{ \mathcal{G}_i^t ~| ~ \mathcal{G}_i \mbox{ is a geometric piece of } X \setminus \lambda  \mbox{ of type (2), (3) or (4) }  \}\,.$$
The edges of the pieces are glued pairwise via isometries according to the following rules: 
\begin{itemize}
\item When two pieces are glued along edges of finite length, our definition of the $\mathcal{G}_i^t$ guarantees that the corresponding edges have the same length, hence there is only one way to glue them by an isometry;
\item When two pieces are glued along half-infinite edges, again there is only one way to glue them by an isometry and making the common vertices coincide (for the quadrilateral pieces, the half-infinite edges are $AD$ or $BC$ in Figure \ref{foliation quad}, for the pentagonal pieces the half-infinite edges are $AD$ or $CD$ in Figure \ref{figure_5}); 
\item Two pieces are glued together along a bi-infinite edge if and only if both are quadrilaterals. If $s_0$ is the shear between $\mathcal G_1$ and $\mathcal G_2$ for the surface $B$ according to Definition \ref{def:shear}, we glue $\mathcal G_1^t$ and $\mathcal G_2^t$ by an isometry with shear $e^t \cdot s_0$. 
\end{itemize}
Notice that all the glueings are well-defined. As a consequence, we have the following.
\begin{lemma}
$B^t$ is a (possibly disconnected) complete hyperbolic surface of finite volume diffeomorphic to $B$. 
\end{lemma}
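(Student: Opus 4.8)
The plan is to verify the three asserted properties---hyperbolicity, finite volume, and the diffeomorphism type---by exploiting that $B^t$ is assembled from finitely many complete, finite-area geometric pieces following \emph{exactly} the combinatorial glueing pattern of $B$, the only modifications being that glued finite edges are rescaled by $e^t$ and that the shears along bi-infinite edges are multiplied by $e^t$. I would first check that $B^t$ carries a genuine hyperbolic structure by examining the local model at every point. Away from the vertices the verification is standard: interior points of a glued leaf have a neighborhood obtained by isometrically glueing two hyperbolic half-planes along a geodesic, hence isometric to a hyperbolic disk, while points on $\partial B^t$ have half-disk neighborhoods. The only finite (non-ideal) vertices are the right-angled vertices lying on $\partial X$; since a triangular piece has only ideal vertices, no finite or half-infinite leaf can bound a triangle, so every such leaf is interior to $B$ and carries a non-triangular piece on each side. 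At such a vertex the two adjacent pieces each contribute a right angle along $\partial X$, for a total angle $\pi$, so the point is a smooth boundary point and no cone points or finite corners arise. This local picture is preserved by the stretch because each $\mathcal{G}_i^t$ is a genuine geometric piece of the same type (Lemmas \ref{lemma:triangle} and \ref{lemma:quad}) with right-angled vertices, and the glueing rules guarantee that edges identified in $B^t$ have matching lengths.

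Finite volume is then immediate: by Proposition \ref{prop:lamination} each geometric piece has area $\pi$, there are finitely many of them, and stretching sends a piece to another piece of the same type, so $\mathrm{area}(B^t) = \pi \cdot \#\{\text{pieces of } B\} = \mathrm{area}(B) < \infty$. For the diffeomorphism I would argue combinatorially rather than through the stretch maps, since the maps $\phi^t$ of Lemma \ref{lemma:quad} are not known to be injective. Each $\mathcal{G}_i^t$ is diffeomorphic to $\mathcal{G}_i$ as a labelled manifold-with-corners of the prescribed type, and the restriction of such a diffeomorphism to each edge may be chosen freely. I would therefore fix, for every edge of $B$, a diffeomorphism onto the corresponding edge of $B^t$, extend it compatibly into the two adjacent pieces, and glue the resulting piecewise diffeomorphisms; since the glueing patterns of $B$ and $B^t$ coincide, this produces a global diffeomorphism $B \to B^t$.

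The heart of the lemma, and the step I expect to be the main obstacle, is \emph{completeness}. The ends of $B^t$ are of two kinds: the compact geodesic boundary components in $\partial^c B^t$, which have collar neighborhoods and are manifestly complete, and the non-compact spikes in $\partial^{nc} B^t$, organized into the crowns of Definition \ref{def:crown}. Completeness must be checked at the ideal vertices. Around a crown spike there is no holonomy, since the spike is not encircled by pieces: a neighborhood of it is a finite union of ideal wedges, glued along half-infinite leaves and fanning out between the two bi-infinite boundary geodesics asymptotic to the spike, which is a complete model end. I would verify that the stretched glueing reproduces precisely this model---the vertex-matching rule for half-infinite edges keeps the spikes genuine, and the shear along each bi-infinite edge stays finite (it is only rescaled by $e^t$), so no spiralling incomplete end can appear. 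Should an interior ideal vertex (a cusp) occur, one observes that its closing condition is linear in the shear data, hence preserved under multiplication by $e^t$, so cusps remain parabolic. Combined with the fact that each individual piece is complete, being a closed convex subset of $\mathbb{H}^2$, this yields completeness of $B^t$.
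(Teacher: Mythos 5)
Your proposal is correct, but it is worth noting that the paper itself gives this lemma essentially no proof: it is stated as an immediate consequence of the observation that ``all the glueings are well-defined,'' since $B^t$ is assembled from genuine geometric pieces by isometric edge identifications following the combinatorics of $B$. Your verification is therefore considerably more thorough than the paper's, and usefully so: the local-model analysis at the right-angled corners (two quarter-disks summing to angle $\pi$, with the correct remark that finite and half-infinite leaves never bound a triangle and hence are interior to $B$), the area count via Proposition \ref{prop:lamination}, and the purely combinatorial construction of the diffeomorphism --- rightly decoupled from the maps $\phi^t$ of Lemma \ref{lemma:quad}, which are not known to be injective --- are all sound, as is the identification of completeness at the ends as the only substantive issue. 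Your treatment of the crown spikes of Definition \ref{def:crown} (a finite fan of ideal wedges developing into a complete strip-like region, with no holonomy to control) is also correct.

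The one place where you assert rather than prove is the cusp case: ``one observes that its closing condition is linear in the shear data.'' This is true but not obvious, and it is the only point of the lemma requiring actual computation, because around a cusp of $B$ the pieces are quadrilaterals and pentagons glued rigidly along half-infinite edges, not triangles glued with shears, so the standard linear cusp condition does not apply verbatim. The needed input is in effect supplied by the paper's Lemmas \ref{shear_quads} and \ref{shear_penta}: taking the finite endpoint of each half-infinite edge (and the center $O_\Q$ of each bi-infinite edge) as reference point, the horocyclic shift contributed by each quadrilateral or pentagonal spike equals $s/2$, respectively a combination of $s_1/2$ and $s_2/2$ --- homogeneous of degree one in the shear parameters --- while crossings of half-infinite gluings contribute zero and crossings of bi-infinite gluings contribute the shear itself. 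Hence the total shift around any cusp of $B^t$ is exactly $e^t$ times the total shift for $B$, which vanishes since $B$ is complete. With that one computation anchored to Lemmas \ref{shear_quads} and \ref{shear_penta}, your argument is complete and in fact fills in details the paper leaves implicit.
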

By construction every cycle of bi-infinite leaves $c_i \subset \partial^{nc} B$ corresponds to a cycle $c_i^t \subset \partial^{nc} B^t$. Every $c_i^t$ determines a simple closed geodesic $\gamma_i^t$ and a crown $$C_i^t:= \mathrm{ConvHull}(c_i^t, \gamma_i^t) \subset B^t~.$$ The $C_i^t$'s are all disjoint, we denote their (disjoint) union by $$C^t := \bigcup C_i^t \subset B^t~.$$

\begin{proposition}[Existence of a stretch map for $B$]\label{prop:B}
For every $t\geq0$ there exists a continuous map $\beta^t: B \to B^t$ homotopic to the identity with the following properties: 
\begin{enumerate}
\item $\beta^t(\partial B) = \partial B^t$;
\item $\beta^t$ stretches the arc-length of the leaves of $\lambda_B$ by $e^t$; 
\item on every geometric piece $\mathcal G$ in $B \setminus \lambda_B$ the map $\beta^t$ restricts to $\beta^t|_{\mathcal{G}} = \phi^t: \mathcal{G} \to \mathcal{G}^t$ as in Lemma \ref{lemma:quad}; 
\item $\Lip(\beta^t) = e^t$.  
\end{enumerate}
\end{proposition}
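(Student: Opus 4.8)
The plan is to define $\beta^t$ piece by piece and then check that the pieces assemble into a well-defined continuous map with the required properties. On each geometric piece $\mathcal{G}$ of $B \setminus \lambda_B$ I would set $\beta^t|_{\mathcal{G}} := \phi^t$, the generalized stretch map $\phi^t : \mathcal{G} \to \mathcal{G}^t$ furnished by Lemma \ref{lemma:quad}. By the very definition of $B^t$ the pieces $\mathcal{G}^t$ are glued along the same combinatorial pattern as the $\mathcal{G}$'s in $B$, so that properties (2) and (3) hold tautologically once $\beta^t$ is shown to be well defined, and (1) follows because each $\phi^t$ is label-preserving: it sends the edges $a_i$ lying on $\partial B$ onto the edges $a_i^t$ on $\partial B^t$ and the non-compact boundary leaves onto the corresponding non-compact boundary leaves of $B^t$, so $\beta^t(\partial B)=\partial B^t$.

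The heart of the argument is verifying that the maps $\phi^t$ on two pieces sharing a leaf of $\lambda_B$ agree along that leaf, so that $\beta^t$ is continuous. Since leaves of a lamination are disjoint, every point of $B$ lies in the interior of a piece or in the relative interior of a single shared leaf, so it suffices to treat shared leaves, organized according to the three glueing rules used to build $B^t$. Along a finite-length leaf both restrictions are affine with factor $e^t$ and both send the two right-angled endpoint vertices to the corresponding vertices, and an affine map of an interval is pinned down by its endpoints, so the restrictions coincide. Along a half-infinite leaf both restrictions are again affine with factor $e^t$, both fix the finite endpoint (a vertex) and are onto the target half-infinite edge, hence both equal $u \mapsto e^t u$ in the arc-length parametrisation based at that vertex, and so agree.

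The genuinely delicate case, which I expect to be the main obstacle, is a bi-infinite leaf $e$, shared by the glueing rules precisely between two quadrilateral pieces $\mathcal{Q}_1,\mathcal{Q}_2$, with shear $s_0=\mathrm{shear}_e(\mathcal{Q}_1,\mathcal{Q}_2)$ in $B$ and shear $e^t s_0$ in $B^t$. Parametrising $e$ by signed arc length $u$ with $O_{\mathcal{Q}_1}$ at the origin, so that $O_{\mathcal{Q}_2}$ sits at $u=s_0$, Lemma \ref{lemma:quad}(2),(4) tells us that $\phi^t|_{\mathcal{Q}_1}$ sends $O_{\mathcal{Q}_1}\mapsto O_{\mathcal{Q}_1^t}$ scaling by $e^t$, while $\phi^t|_{\mathcal{Q}_2}$ sends $O_{\mathcal{Q}_2}\mapsto O_{\mathcal{Q}_2^t}$ scaling by $e^t$. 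As $O_{\mathcal{Q}_2^t}$ lies at signed distance $e^t s_0$ from $O_{\mathcal{Q}_1^t}$, both restrictions are the single affine map $u\mapsto e^t u$ and therefore agree. This is exactly the bookkeeping that makes $\psi_t$ and $\psi_t'$ glue inside the proof of Lemma \ref{lemma:quad}, now carried out across pieces; it works only because the shear was scaled by precisely $e^t$ in the construction of $B^t$ and because $\phi^t$ preserves centers.

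Finally I would establish (4). Each connected component of $B$ is a convex hyperbolic surface, so Lemma \ref{LOC-2-GLOB} applies: for $x$ in the interior of a piece a small ball lies in a single piece where $\phi^t$ is $e^t$-Lipschitz, while for $x$ on a shared leaf and $r$ below the injectivity radius the ball $B(x;r)$ is convex, and a geodesic between two of its points $y,z$ meets the leaf at a point $w$ on which the two restrictions agree, so $d(\beta^t y,\beta^t z)\le e^t d(y,w)+e^t d(w,z)=e^t d(y,z)$; hence the local Lipschitz constant is at most $e^t$ everywhere and $\mathrm{Lip}(\beta^t)\le e^t$. The reverse inequality is immediate from (2): $\beta^t$ multiplies the arc length of the geodesic leaves of $\lambda_B$ by $e^t$, so on short leaf segments it realises the factor $e^t$, giving $\mathrm{Lip}(\beta^t)=e^t$. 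That $\beta^t$ is homotopic to the identity follows from the label-preserving property in the usual way: $\beta^t$ and the canonical marking-identification of $B$ with $B^t$ agree up to isotopy on every leaf and boundary edge, and since each piece is a disk with aspherical target these isotopies extend over the pieces.
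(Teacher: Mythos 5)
Your proof is correct and takes essentially the same route as the paper: define $\beta^t$ by glueing the piece maps $\phi^t$ of Lemma \ref{lemma:quad} according to the glueing pattern of $B^t$, obtain $\Lip(\beta^t)\leq e^t$ from the local-to-global criterion of Lemma \ref{LOC-2-GLOB}, and get the reverse inequality from the $e^t$-stretching of the leaves of $\lambda_B$. The paper's proof is terser, leaving the edge-compatibility checks implicit, so your verification of well-definedness along shared leaves (in particular the bi-infinite case, where the $e^t$-scaled shear together with the center-preservation of Lemma \ref{lemma:quad} makes the two restrictions coincide) is a filling-in of detail rather than a different approach.
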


\begin{proof}
We define $\beta^t$ by glueing together the maps $\phi^t: \mathcal{G}_i \to \mathcal{G}_i^t$ as in Lemma \ref{lemma:quad} following the glueings of the $\G_i^t$'s in $B^t$. We have that $\Lip(\beta^t) \leq e^t$ by Lemma \ref{LOC-2-GLOB}, and $\Lip(\beta^t) \geq e^t$ because $\beta^t$ stretches the arc-length of the leaves of $\lambda_B$ by $e^t$.  
\end{proof}

\section{The triangulated surface}    \label{sec:triangulated_surface}
Here we will construct our second auxiliary surface. Again, let $X \in \T(S)$ and $\lambda$ be a maximal lamination on $X$. In Section \ref{subsec:def boundary block}, we defined the subsurface $X_C$ endowed with the lamination $\lambda_{X_C}$. Notice that $\lambda_{X_C}$ is not maximal for $X_C$: it divides $X_C$ into triangles and the crowns $C_j$ (see Figure \ref{BCdecomposition}). We will define a new complete hyperbolic surface: the \emph{triangulated surface} $X_A$. This surface will extend $X_C$, that is, it will come equipped with an isometry $g:X_C \hookrightarrow X_A\,.$
 
The triangulated surface is not embedded in $X$, it will be constructed by suitably gluing new triangles to the crowns $C_j$ of $X_C$. The surface $X_A$ will also be equipped with a maximal lamination $\lambda_A \supset \lambda_{X_C}$ such that $g(\lambda \cap X_C) \subset \lambda_A \cap X_C$ and $X_A \setminus \lambda_A$ is a union of ideal triangles (Figure \ref{S_A}).

\begin{figure}[ht!]
\begin{center}
\psfrag{g}{$\Gamma \sim_f \hat \Gamma$}
\psfrag{R}{$\hat R$}
\psfrag{C'}{$\hat{C}$}
\psfrag{C}[b]{$C \sim_f \hat{C}$}
\psfrag{SB}{$X_C$}
\psfrag{l}{}
\includegraphics[width=10cm]{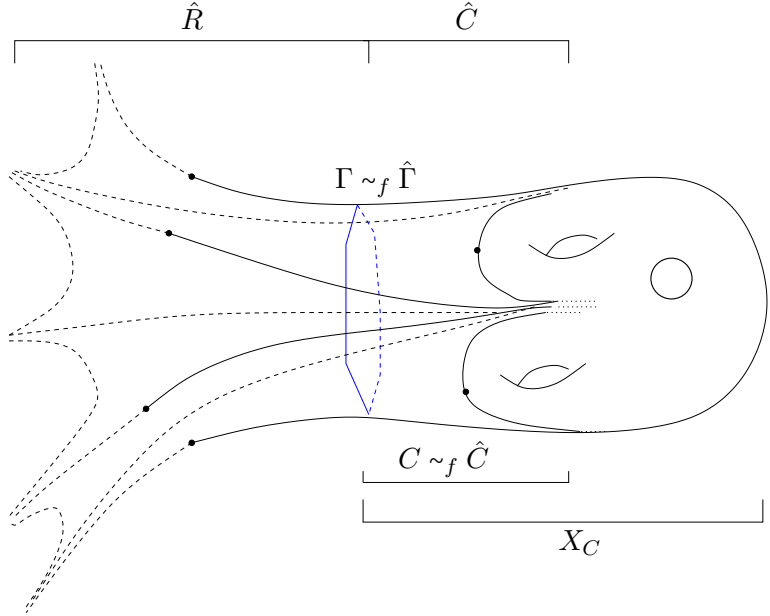}
\caption{The surface $X_{A}$}\label{S_A}
\end{center}
\end{figure}

We will then stretch the triangulated surface $X_A$ by using Bonahon's theory of cataclysms (see Section \ref{Bonahon}), which uses a cocycle to deform a triangulated surface. 
We will first stretch the auxiliary cylinders in Section \ref{stretch_A^t}, and finally we will define the stretched triangulated surface, by constructing a suitable cocycle in Section \ref{sec:S_A}. The stretched boundary block and the stretched triangulated surface will be key ingredients in the construction of the generalized stretch lines discussed in Section \ref{sec:gsl}.

\subsection{Extension of a crown}\label{comfort_cyl_sub}  As a first step in the construction of $X_A$, we will first work with crowns. Consider a crown $C_j\subset X_C$. We will extend it to a complete hyperbolic surface $A_j$, homeomorphic to an annulus, equipped with an isometry 
$$f_j:C_j \hookrightarrow A_j $$ 
and a finite maximal lamination $\delta_{A_j}$ with $f_i(\lambda \cap C_j) \subset \delta_{A_j} \cap C_j\,.$

For every crown $C_j$ and every geometric piece $\G_k$, we define the region $\zeta_k^j := \G_k \cap C_j$, see Figure \ref{crowns}. Now each crown is decomposed as $C_j= \bigcup_k \zeta_k^j$. For every region $\zeta_k^j$, we will now define a triangulated ideal polygon $\widehat{\G_k}$ with an isometry $\zeta_k^j \hookrightarrow \widehat{\G_k}$. After this, we will define $A_j$ by replacing each $\zeta_k^j$ with $\widehat{\G_k}$. 

We will consider three cases. Every region $\zeta_k^j$ has one or two spikes: if it has two spikes, then $\G_k$ is a quadrilateral, see Case (1) below; if it has one spike there are two possibilities, either $\G_k$ is a pentagon, as in Case (2) below, or $\G_k$ is again a quadrilateral and one of the adjacent pieces is again a quadrilateral glued to $\G_k$ along a bi-infinite edge, see Case (3) below.

\paragraph{Defining $\widehat {\mathcal G_k}$: Case (1)}
We assume that $\zeta_k^j$ has two spikes and $\zeta_k^j \subset \G_k = \Q$ is a quadrilateral, denoted by $\Q:=\overline{ABCD} \subset \mathbb H^2$, see Figure \ref{QD}. We orient the finite edge $\overline{CD}$ from $C$ to $D$, according to the orientation of $\Q$. 
We define $\widehat  \G_k$ as the double of $\Q$ along $\overline{CD}$, as in Figure \ref{QD}:
\[\widehat  \G_k = \Q^d:= \overline{ABC'D'}\,.\]

This ideal quadrilateral will be triangulated by adding the diagonal $\overline{BD'}$. The diagonal can be chosen in a consistent way using the orientation of $\overline{CD}$. We will call the edges $\overline{BC'}$ and $\overline{AD'}$ \emph{special edges}, and the points $C$ and $D$ \emph{special points}.

\begin{figure}[ht!]
\begin{center}
\begin{subfigure}[t]{0.3\textwidth}
\psfrag{D'}{$D'$}
\psfrag{C'}[l]{$C'$}
\psfrag{D}[tl]{$D$}
\psfrag{C}[t]{$C$}
\psfrag{B}[b]{$B$}
\psfrag{A}{$A$}
\includegraphics[width=3.5cm]{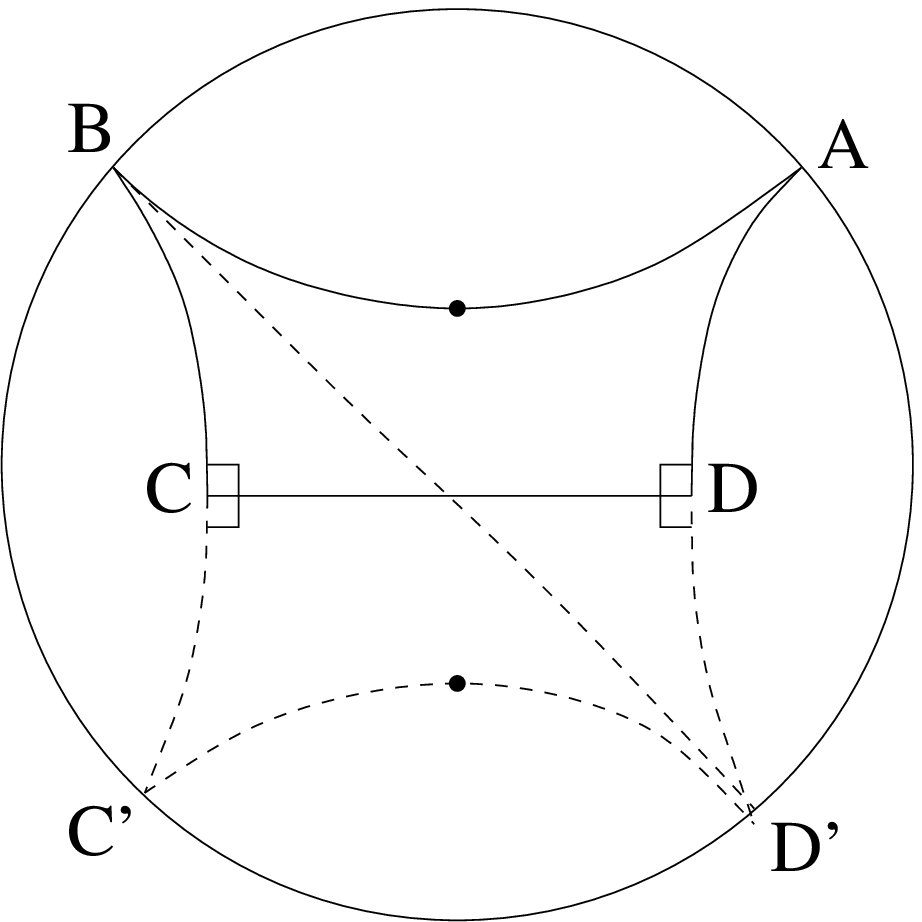}
\caption{$\widehat \Q:=\overline{ABC'D'}$}\label{QD}
\end{subfigure}
\begin{subfigure}[t]{0.3\textwidth}
\psfrag{B'}[b]{$B'$}
\psfrag{B}[l]{$B$}
\psfrag{E'}{$E'$}
\psfrag{E}{$E$}
\psfrag{D'}{$D'$}
\psfrag{C'}{$C'$}
\psfrag{D}[l]{$D$}
\psfrag{C}[l]{$C$}
\psfrag{B}{$B$}
\psfrag{A}{$A$}
\includegraphics[width=4cm]{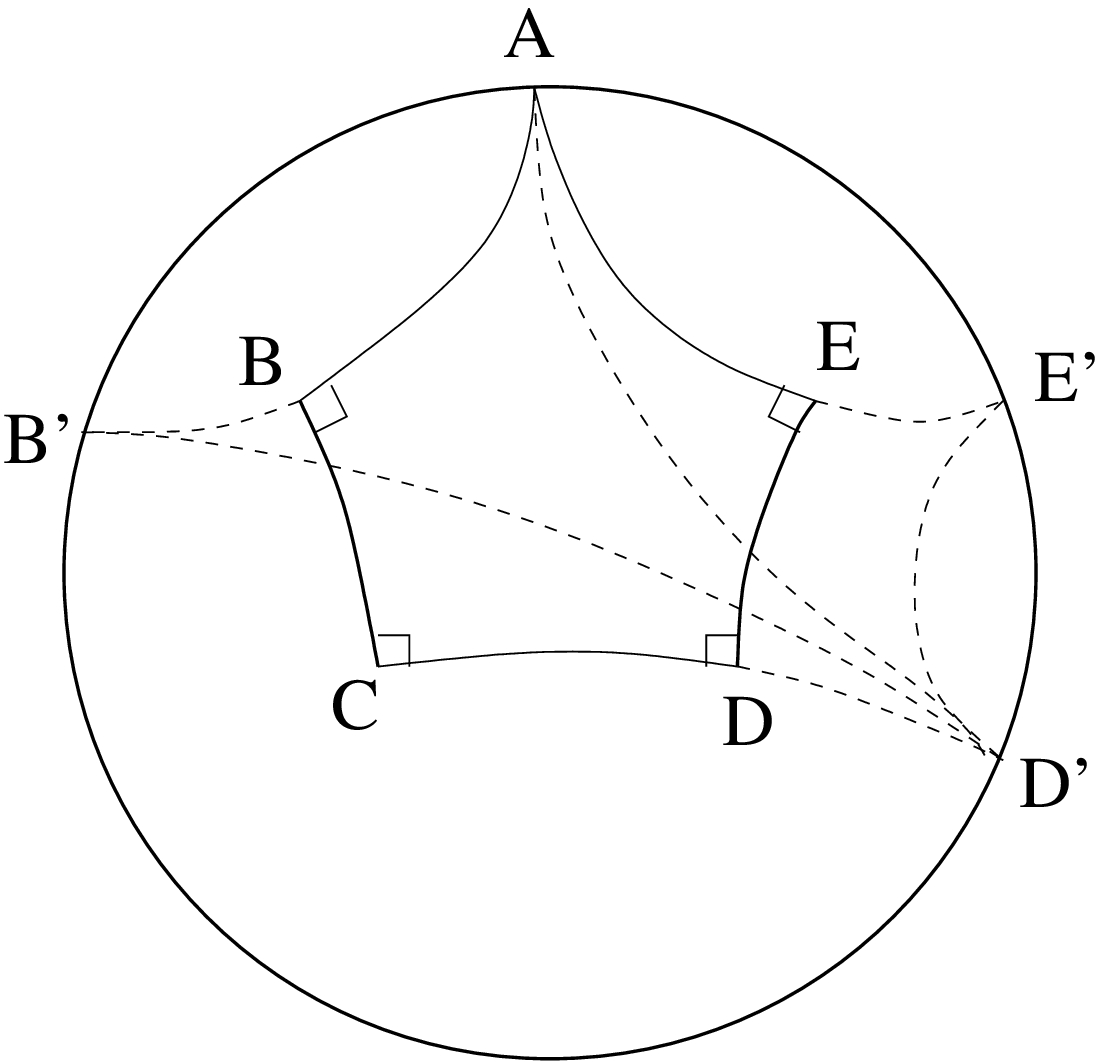}
\caption{$\widehat {\mathcal P}:= \overline{AB'D'E'} $}\label{pentagon}
\end{subfigure}
\begin{subfigure}[t]{0.3\textwidth}
\psfrag{E}{$E$}
\psfrag{D'}{$D'$}
\psfrag{C'}{$C'$}
\psfrag{D}{$D$}
\psfrag{E}{$E$}
\psfrag{E'}[c][r]{$E'$}
\psfrag{F'}[l]{$F'$}
\psfrag{F}[c][l]{~$F$}
\psfrag{C}{$C$}
\psfrag{B'}{$B'~$ }
\psfrag{B}[c][l]{~$B$ }
\psfrag{A}[b][l]{$A$ }
\includegraphics[width=4cm]{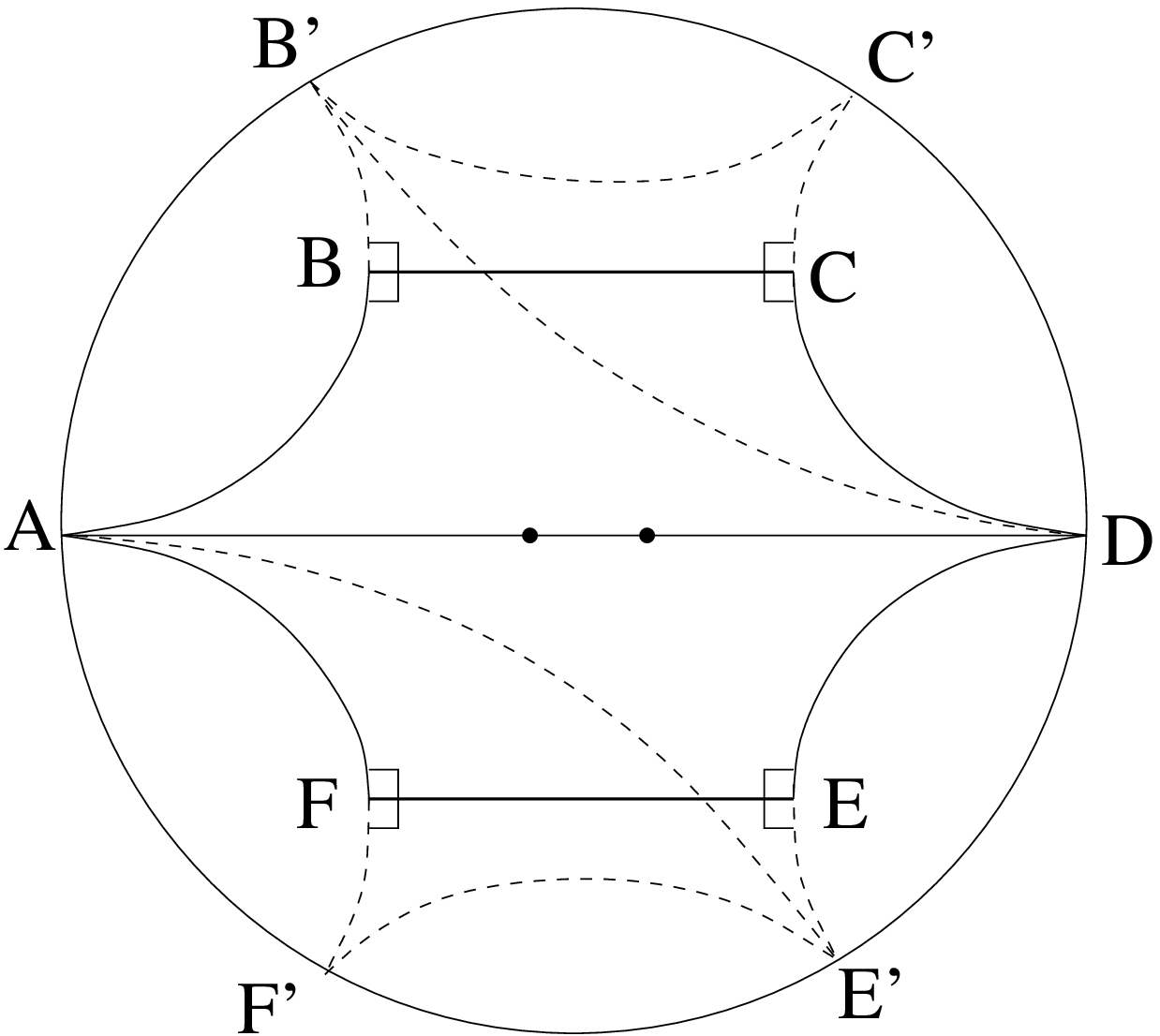}
\caption{$\widehat{\Q} \cup \widehat{\Q'}$}\label{Q6}
\end{subfigure}
\end{center}
\caption{Construction of $\widehat{\G_k}$.}
\end{figure}

\paragraph{Defining $\widehat {\G_k}$: Case (2)} We assume that $\zeta_k^j$ has one spike and $\zeta_k^j \subset \G_k = \PP$ is a pentagon, denoted by $\PP:= \overline{ABCDE} \subset \mathbb H^2$ as in Figure \ref{pentagon}.  
We orient the edge $\overline{CD}$ of $\PP$ opposite to the spike from $C$ to $D$, following the orientation of $\mathcal P$. We define $D', B', E' \in \partial \mathbb H^2$ as the extremes of the geodesics $\overline{CD}, \overline{AB}, \overline{AE}$ on the side of $D, B, E$ respectively.  
We define 
\[\widehat {\G_k}:= \overline{AB'D'E'}\,.\]
This ideal quadrilateral will be triangulated by adding the diagonal $\overline{AD'}$. We will call the edges $\overline{AB'}$ and $\overline{AE'}$ \emph{special edges}, and the points $B$ and $E$ \emph{special points}.

\paragraph{Defining $\widehat {\G_k}$: Case (3)} We assume that $\zeta_k^j, \zeta_{k+1}^j$ both have one spike and $\zeta_k^j \subset \G_k = \Q$,  $\zeta_{k+1}^j \subset \G_{k+1} = \Q'$, where both $\Q,\Q'$ are quadrilaterals, denoted by $\Q:= \overline{ABCD}, \Q':= \overline{AFED}$, where $\overline{AD}$ is the bi-infinite edge they share, see Figure \ref{Q6}. We define $B', C', E', F' \in \partial \mathbb H^2$ as the endpoints of the geodesics $\overline{AB}, \overline{DC}, \overline{DE}, \overline{AF}$ on the side of $B, C, E, F$ respectively.

If the common spike of $\zeta_k^j$ and $\zeta_{k+1}^j$ is $D$, we define
\[\widehat{\G_k}:= \overline{ADC'B'}~ \mbox{ and } \ \ \  \widehat{\G_{k+1}}:= \overline{ADE'}\,.\]
The ideal quadrilateral $\widehat{\G_k}$ will be triangulated by adding the diagonal $\overline{B'D}$. We will call the edges $\overline{AD}$, $\overline{C'D}$ its \emph{special edges}, and the points $O_\Q$ and $C$ its \emph{special points}. For the ideal triangle $\widehat{\G_{k+1}}$, we will call the edges $\overline{AD}$, $\overline{DE'}$ its \emph{special edges} and the points $O_{\Q'}$ and $E$ its \emph{special points}.

If the common spike of $\zeta_k^j$ and $\zeta_{k+1}^j$ is $A$, we define
\[\widehat{\G_k}:= \overline{AB'D}~ \mbox{ and } \ \ \ \widehat{\G_{k+1}}:= \overline{AF'E'D}\,.\]
For the ideal triangle $\widehat{\G_k}$ we will call the edges $\overline{AD}$, $\overline{AB'}$ its \emph{special edges} and the points $O_{\Q}$ and $B$ its \emph{special points}. The ideal quadrilateral $\widehat{\G_{k+1}}$ will be triangulated by adding the diagonal $\overline{AE'}$. We will call the edges $\overline{AD}$, $\overline{AF'}$ its \emph{special edges}, and the points $O_\Q'$ and $F$ its \emph{special points}.  

\paragraph{Definition of the auxiliary cylinder}     \label{comfort_cyl_sub_glue} 
In all cases above, we have an isometry $\zeta_k^j \hookrightarrow \widehat{\G_k}$. Now we define $A_j$ glueing together the $\widehat {\mathcal G_k}$'s according to the glueing pattern of the associated $\mathcal \zeta_k^j \subset \G_k$: 

\[A_j := \bigcup \left\{ \widehat {\mathcal G_k} ~|~  \widehat {\mathcal G_k} \mbox{ is the triangulated ideal polygon tailored to } \G_k \supset \zeta_k^j \mbox{ as above } \right\}/\sim\,, \]
where $\widehat {\mathcal G_h}$ and $\widehat {\mathcal G_{h'}}$ are glued together if and only if $\zeta_h^j$ and $\zeta_{h'}^j$ are adjacent in $C_j$ or, equivalently, if and only if their associated geometric pieces $\G_h$ and $\G_{h'}$ are adjacent in $B$. More precisely, consider two consecutive geometric pieces $\G_h, \G_{h+1}$, and denote by $e_h \subset \G_h$ and $e_{h+1} \subset \G_{h+1}$ the two edges which are glued together in $B$. We denote by $\widehat{e}_h \subset \widehat{\G_h}$ and  $\widehat{e}_{h+1} \subset \widehat{\G_{h+1}}$ the corresponding special edges of $\widehat{\G_{h}}, \widehat{\G_{h+1}}$, and by $\widehat{P_h} \in \widehat{e_h}$ and $\widehat{P_{h+1}} \in \widehat{e_{h+1}}$ their special points.
There are two cases: 
\begin{itemize}
\item if $e_h, e_{h+1}$ are half-infinite edges, glue together $\widehat{e}_{h}, \widehat{e}_{h+1}$ with an isometry that makes the special points $\widehat{P_h}$ and $\widehat{P_{h+1}}$ coincide (see Figure \ref{A1}).
\item (this can happen only in the Case (3)) if $e_h, e_{h+1}$ are bi-infinite edges, the glueing procedure of $\widehat{\G_h}$ and $\widehat{\G_{h+1}}$ is the one described in Case (3) above. The edges $\widehat{e}_{h}, \widehat{e}_{h+1}$ are glued together with an isometry that keeps the special points $\widehat{P_h}$ and $\widehat{P_{h+1}}$ at a distance equal to the shear between the quadrilateral pieces  $\G_h, \G_{h+1}$.
\end{itemize}

In this way, for every crown $C_j \subset C$, we constructed a space $A_j$ that we call the \emph{auxiliary cylinder} of $C_j$. We remark that our construction relies only on the choice of an orientation on $X$. It satisfies the following properties:

\begin{lemma}   \label{lemma:A_j}
For every $C_j$, the surface $A_j$ constructed above is a complete hyperbolic surface whose interior is homeomorphic to an annulus. Moreover, there exists an isometry $f_j: C_j \hookrightarrow A_j$. 
\end{lemma}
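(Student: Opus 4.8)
The plan is to obtain $f_j$ by assembling the piecewise isometric inclusions $\zeta_k^j \hookrightarrow \widehat{\G_k}$ used in the construction, and then to read off the completeness of $A_j$ and its annular topology from the combinatorics of the gluing. First I would record that, in each of the three cases, the inclusion $\zeta_k^j \hookrightarrow \widehat{\G_k}$ is a $1$-$1$ local isometry onto a (generally non-convex) subregion: one has $\zeta_k^j \subset \G_k$, and $\G_k$ sits inside its double $\Q^d$ (Case (1)) or inside the triangulated ideal polygon obtained by prolonging its sides to $\partial \mathbb{H}^2$ (Cases (2), (3)), which is precisely $\widehat{\G_k}$. Consequently $f_j$ will be a $1$-$1$ local isometry, not necessarily an isometric embedding of metric spaces, in accordance with the Remark following the definition of local isometry.

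The heart of the argument, and the step I expect to be the main obstacle, is checking that these inclusions agree along the edges where consecutive regions are identified, so that they descend to a single well-defined map. Two adjacent regions $\zeta_h^j, \zeta_{h+1}^j$ are glued inside $C_j$ along an edge $e$ inherited from the gluing of the geometric pieces $\G_h, \G_{h+1}$ in $B$. By construction, the corresponding special edges $\widehat{e}_h, \widehat{e}_{h+1}$ of $\widehat{\G_h}, \widehat{\G_{h+1}}$ are glued in $A_j$ by an isometry that either makes the special points $\widehat{P_h}, \widehat{P_{h+1}}$ coincide (when $e$ is a half-infinite edge) or keeps them at distance $\mathrm{shear}_e(\G_h, \G_{h+1})$ (when $e$ is the bi-infinite edge of Case (3), cf. Definition \ref{def:shear}). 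Running through the three cases, I would verify that this prescription reproduces exactly the identification already present between $\zeta_h^j$ and $\zeta_{h+1}^j$ along $e$ in $C_j$; hence the two inclusions restrict to the same isometry of $e$ and glue to a well-defined local isometry $f_j : C_j \to A_j$. Injectivity then follows because the $\zeta_k^j$ tile $C_j$ meeting only along the edges $e$, their images meet only along the glued special edges, and the cyclic chain closes up once in $A_j$ exactly as it does in $C_j$.

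It remains to identify $A_j$ itself. Each $\widehat{\G_k}$ is a triangulated ideal polygon glued to its neighbours along complete geodesic edges with matching special points; since $A_j$ is a finite union of such complete, finite-area pieces glued along complete geodesics, it is a complete hyperbolic surface, with the geodesic boundary $\gamma_j$ of the crown preserved and the added material accounting for the cusped end. For the topology, each $\widehat{\G_k}$ is a disk and the gluing pattern is the single cyclic chain dictated by the cycle $c_i = \{b_1, \dots, b_s\}$ spanning the crown, with $\widehat{\G_k}$ glued to $\widehat{\G_{k+1}}$ along one special edge (indices taken cyclically). A closed cyclic chain of disks glued successively along arcs is homotopy equivalent to $S^1$ and has annular interior, matching the cylindrical topology of $C_j$; this establishes the two remaining claims. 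Once the case-by-case compatibility bookkeeping of the second paragraph is settled, the well-definedness and injectivity of $f_j$, completeness, and the annular topology are all formal.
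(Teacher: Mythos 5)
Your proposal is correct, but it follows a different route from the paper's own proof of Lemma \ref{lemma:A_j}. You build $f_j$ globally: you use the tiling $C_j = \bigcup_k \zeta_k^j$, glue the tautological inclusions $\zeta_k^j \hookrightarrow \widehat{\G_k}$, and make the compatibility check along every interior edge the heart of the argument — correctly observing that the special points were designed for exactly this (for half-infinite edges they are the finite endpoints that the gluing in $B$ already matches, and for the bi-infinite edge of Case (3) the prescribed distance between special points is precisely $\mathrm{shear}_e(\G_h,\G_{h+1})$, which by Definition \ref{def:shear} is the signed distance between the centers in $B$); completeness and the annular topology then come from your combinatorial bookkeeping. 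The paper instead argues top-down and locally: it records that $A_j$ is by construction an annulus with boundary cycles $c_j', c_j''$, notes that the construction tautologically yields an isometry $f_\epsilon: N_\epsilon(c_j) \hookrightarrow N_\epsilon(c_j')$ between $\epsilon$-neighborhoods of the cycles only, and then extends $f_\epsilon$ to all of $C_j$ by introducing the core geodesic $\hat\gamma_j$ and the sub-crown $\widehat{C_j} := \mathrm{ConvHull}(c_j', \hat\gamma_j) \subset A_j$, so that $f_j: C_j \hookrightarrow \widehat{C_j}$ is obtained from the rigidity of crowns (an isometry near the boundary cycle extends to the convex hulls) rather than by checking edge compatibility throughout the crown. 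Each approach has a cost: yours is explicit and self-contained but must run the case-by-case verification on all interior edges and leans on the global decomposition $C_j = \bigcup_k \zeta_k^j$ (which the paper also presupposes when defining the regions $\zeta_k^j$, so this is shared background rather than a gap); the paper's is shorter and confines the compatibility check to a collar of $c_j$ where agreement is immediate, but leaves the extension step implicit. One small improvement available to you: since $C_j$ and its image are convex (the image of $\gamma_j$ is a closed geodesic in the core homotopy class, hence equals $\hat\gamma_j$, so $f_j(C_j) = \widehat{C_j}$ is a convex hull), your $f_j$ is in fact distance-preserving onto its image, not merely a $1$-$1$ local isometry — matching the paper's stronger phrasing "isometry $f_j: C_j \hookrightarrow \widehat{C_j}$".
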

\begin{proof}
By construction $A_j$ is an annulus and $\partial A_j = \{ c_j', c_j'' \} $ where $c_j', c_j''$ are cycles of bi-infinite geodesics, with the leaves of $c_j'$ in bijection with leaves of $c_j \in \partial C_j$. Moreover, by 
construction, there exists $\epsilon >0$ and an isometry $f_\epsilon: N_\epsilon(c_j) \hookrightarrow N_\epsilon(c_j')$. Denote by $\hat \gamma_j$ the core geodesic of the annulus and $\widehat C_j := \mathrm{ConvHull}(c_j', \hat \gamma_j) \subset A_j$. We have that $\widehat C_j$ is a complete hyperbolic surface. Moreover $f_\epsilon$ extends to an isometry $f_j : C_j \hookrightarrow \widehat C_j$. 
\end{proof}

\begin{figure}[ht!]
\begin{center}
\psfrag{f}{$f_j$}
\psfrag{A}{$A_j$}
\psfrag{R'}{\hspace{-0.2cm} $\widehat R_j~$~}
\psfrag{C'}{$\widehat {C}_j$}
\psfrag{H}{$\widehat \gamma_j$}
\psfrag{K}[l][l]{\hspace{-0.2cm} \vspace{0.2cm} \tiny $\widehat{\mathcal G}_{h+1}~$~~}
\psfrag{T}{\tiny $\widehat{\mathcal G}_h$}
\psfrag{Gamma}{$\gamma_i$}
\psfrag{G_j}{\tiny $\zeta_h^j$}
\psfrag{G_k}[l][Bl]{\hspace{-0.2cm} \tiny $\zeta_{h+1}^j~~$~~}
\psfrag{C}{$C_j$}
\psfrag{P}[B][B]{\small ~$~P_h$}
\includegraphics[width=11cm]{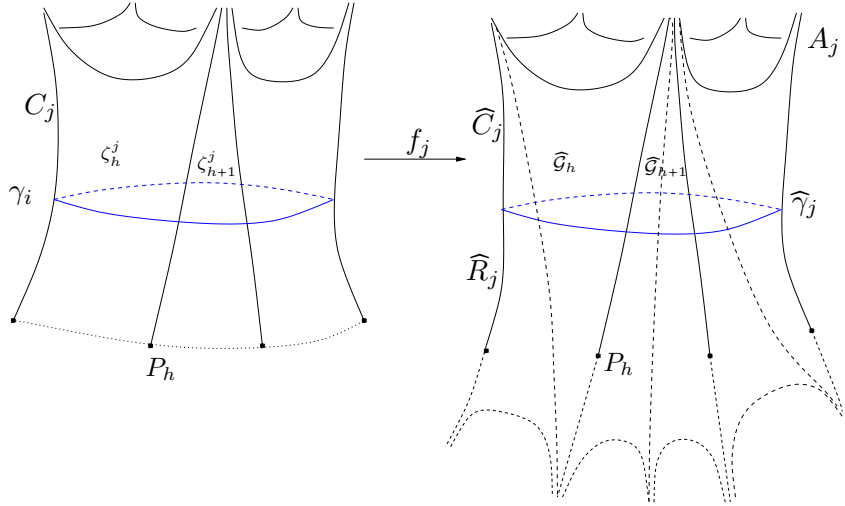}
\caption{ $A_j = \widehat C_j \cup \widehat R_j $ with $\widehat C_j \cap \widehat R_j = \widehat \gamma_j$,  $f_j : C_j \to \widehat C_j \subset A_j$  and $P_h \sim P_{h+1}$}\label{A1}
\end{center}
\end{figure}

We will denote by $\delta_{A_j}$ the (finite) maximal lamination of $A_j$ given by the union of all the edges of the ideal triangles in the triangulations of the $\widehat{\G_k}$'s. This lamination has a natural partition, which will be useful later: we will denote by $\delta_{A_j}'$ the union of all the special edges of the $\widehat{\mathcal G_k}$'s by $\delta_{A_j}''$ is the union of all the extra diagonals we added to the $\widehat{\mathcal G_k}$'s.

\subsection{The triangulated surface} \label{glueing_annulus}

We define the \emph{auxiliary multi-cylinder} $A$ as the disjoint union of all the auxiliary cylinders $A_j$. There is an isometry $f: C \hookrightarrow A$ defined as  $f(z) := f_j(z)$  if $z \in C_j$. This multi-cylinder carries a finite maximal lamination $\delta_A$ defined as the union of all the $\delta_{A_j}$. This is again partitioned in two parts: $\delta_{A}' := \bigcup_j \delta_{A_j}'$ consisting of the special edges, $\delta_{A}'' := \bigcup_j \delta_{A_j}''$ made of the extra diagonals.

\begin{notation}
The following notation will be used here and in the rest of the paper:
\begin{itemize}
\item $\widehat{C_j}:= f_j(C_j)$, $\widehat{C}:= f(C) = \bigsqcup \widehat{C_j}$;
\item $\hat{\gamma}_j := f_j(\gamma_j)$, $\widehat{\Gamma} := f(\Gamma) = \bigsqcup \widehat{\gamma_j}$;
\item $\widehat{R_j} := \overline{A_j \setminus \widehat{C_j}} = \overline{A_j \setminus f_j(C_j)}$, $\widehat{R} := \overline{A \setminus \widehat{C}} = \overline{A \setminus f(C)} = \bigsqcup \widehat{R_j}$.
\end{itemize}
\end{notation}

Thus in this notation we have: $A = \widehat{C} \cup \widehat{R}$ with $\widehat{R} \cap \widehat{C}= \widehat{\Gamma}$ and $f : C \to \widehat{C}$ is an isometry (see Figure \ref{A1}). As above, $\widehat{R}$ is a complete hyperbolic surface. 
\begin{definition}[The triangulated surface] \label{def:X_A}
We define a surface $X_A$, called the \emph{triangulated surface} as follows (see also Figure \ref{S_A}): 
\[X_A:= \faktor{X_C \bigsqcup A}{ \sim }, ~ 
\mbox{ where } z\sim f(z) \mbox{ for every } z \in C\,.\]
Let $\pi: X_C \bigsqcup A \to X_A$ be the quotient map associated. We denote by $g:= \pi_| : A \to X_A$ the restriction of $\pi$ to $A$, and by $\lambda_A$ the lamination 
\[\lambda_A := \pi(\lambda_{X_C}) \cup \pi(\delta_{A}) \subset X_A\,.\]
We denote by $\mu_A$ the closure of $\pi(\delta_A)$ in $X_A$, a sublamination of $\lambda_A$. We denote by $\nu_A$ the lamination $\mu_A \setminus \pi(\delta_A)$, a sublamination of $\lambda_A$. Notice that
\[\nu_A \subset \mu_A \subset \lambda_A\,.\] 
\end{definition}

\begin{proposition}\label{prop:S_A} 
The quotient map $\pi: X_C \bigsqcup A \to X_A$ induces on $X_A$ a structure of (possibly disconnected) complete hyperbolic surface of finite volume with non-compact boundary. The following diagrams are commutative and all arrows are 1-1 local isometries:
$$\xymatrix{ 
& X_C \ar[rd]^{\pi_|} & \\ 
C \ar[ur]^{\iota} \ar[dr]_{f} & & X_A \\ 
& A \ar[ur]_{g:=\pi_|} & 
}  \mbox{ ~~~~~~~~~~~~  }
\xymatrix{ 
& X_C \ar[rd]^{\pi_|} & \\ 
\Gamma \ar[ur]^{\iota_|} \ar[dr]_{f_|} & & X_A \\ 
& A \ar[ur]_{g:=\pi_|} & 
} $$
where $\iota: C \hookrightarrow X_C$ is the canonical inclusion; $g:=\pi_|: A \to X_A$ and $\pi_|: X_C \to X_A$ are the restrictions of $\pi$. Moreover, $\lambda_A:= \pi(\lambda_{X_C}) \cup \pi(\delta_A)$ is a maximal lamination on $X_A$, and $\pi_|: X_C \to  \overline{X_A \setminus g(\widehat{R})}$ is an isometry.
\end{proposition}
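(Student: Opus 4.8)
The plan is to recognize the quotient $X_A = (X_C \sqcup A)/\!\sim$ as an \emph{overlapping gluing} that reduces to the standard gluing of two complete hyperbolic surfaces along a closed geodesic. Since $f = \bigsqcup_j f_j : C \to \widehat C$ is an isometry onto $\widehat C$ (Lemma \ref{lemma:A_j}) and $A = \widehat C \cup \widehat R$ with $\widehat C \cap \widehat R = \widehat\Gamma$, identifying $\widehat C$ with $C \subset X_C$ realizes $X_A$ as $X_C$ with the ``half-crown'' $\widehat R$ attached along $\Gamma = \widehat\Gamma$. First I would record the formal features of $\pi$: the relation $\sim$ only identifies points of $X_C$ with points of $A$, so both $\pi|_{X_C}$ and $g = \pi|_A$ are injective; their images cover $X_A$ and meet exactly in $\pi(C) = \pi(\widehat C)$; the inclusion $\iota$ and the map $f$ are $1$--$1$ local isometries by construction (the latter by Lemma \ref{lemma:A_j}); and for $z \in C$ one has $\pi(\iota(z)) = \pi(z) = \pi(f(z)) = g(f(z))$, which gives commutativity of both diagrams (the second being the restriction to $\Gamma$).

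Next I would put the hyperbolic structure on $X_A$ by declaring $\pi|_{X_C}$ and $g$ to be local isometries. This is well defined on the overlap because the transition is $f$, which is an isometry; away from $\Gamma$ one simply transports the charts of $X_C$ and of $A$. The only point requiring care is the seam $\Gamma$: there $\Gamma \subset \partial X_C$ and $\widehat\Gamma = \widehat C \cap \widehat R$ are both geodesic, and across $\Gamma$ the crown side (coming from $X_C$, identified via $f_j$ with $\widehat C_j$) is glued to $\widehat R_j$. Because the collar isometry $f_\epsilon$ of Lemma \ref{lemma:A_j} extends to an isometry up to $\gamma_j$, the two geodesic-bounded pieces fit together in Fermi coordinates to a smooth hyperbolic collar; hence $X_A$ is a hyperbolic surface and $\pi|_{X_C}, g$ are $1$--$1$ local isometries. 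Completeness and finite volume follow since $X_C$ is a finite hyperbolic surface, each $A_j$ is complete of finite volume (finitely many ideal triangles, Lemma \ref{lemma:A_j}), and the gluing is an isometry near the seam, so no incompleteness is introduced; the boundary $\partial X_A$ retains the non-compact cycles $c_j'' \subset \partial\widehat R$, so it is non-compact.

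For the lamination I would check that $\lambda_A = \pi(\lambda_{X_C}) \cup \pi(\delta_A)$ is a geodesic lamination and then that it is maximal. Disjointness is the only issue, and it occurs only on the overlap $\pi(C)$: by the defining property $f_j(\lambda \cap C_j) \subset \delta_{A_j}$ of the extension, the leaves of $\lambda_{X_C}$ inside a crown are carried by $f$ onto leaves of $\delta_A$, so $\pi(\lambda_{X_C} \cap C) \subset \pi(\delta_A)$ and the two families never cross; off the overlap they live in the disjoint regions $\pi(\text{triangular pieces})$ and $g(\widehat R)$. Maximality then follows by computing the complement: $\pi(\lambda_{X_C})$ leaves the triangular pieces of $X_C$ as ideal triangles, while $\pi(\delta_A)$ cuts the crowns and $\widehat R$ into the ideal triangles of $A \setminus \delta_A$; thus every component of $X_A \setminus \lambda_A$ is an ideal triangle.

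Finally, for the identification $\pi|_{X_C}\colon X_C \to \overline{X_A \setminus g(\widehat R)}$, I would chase the images: $X_A = \pi(X_C) \cup g(\widehat R)$ with $\pi(X_C) \cap g(\widehat R) = g(\widehat C \cap \widehat R) = \pi(\Gamma)$, so $X_A \setminus g(\widehat R) = \pi(X_C \setminus \Gamma)$, whose closure is $\pi(X_C)$ because $\Gamma \subset \overline{X_C \setminus \Gamma}$ and $\pi|_{X_C}$ is a homeomorphism onto its image. Since $\pi|_{X_C}$ is then a bijective local isometry onto $\overline{X_A \setminus g(\widehat R)}$, it is a Riemannian isometry. \textbf{The main obstacle} I anticipate is the seam argument: verifying that the ``overlapping'' identification $C \equiv \widehat C$ really yields a \emph{smooth} complete hyperbolic structure across $\Gamma$ rather than merely a topological gluing; everything else is either a formal diagram chase or a bookkeeping of which leaves and which geometric pieces land where.
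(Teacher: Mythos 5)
Your proposal is correct and takes essentially the same route as the paper: the paper's proof likewise observes that $X_A$ can be rewritten as $(X_C \sqcup \widehat{R})/\!\sim$, glued along the compact geodesic boundary components $\Gamma \subset \partial X_C$ and $\widehat{\Gamma} \subset \partial \widehat{R}$ via the isometry $f_|$, concludes from this standard gluing that $X_A$ is a complete hyperbolic surface with both restrictions of $\pi$ being 1--1 local isometries, and derives maximality of $\lambda_A$ from the facts that $X_C \setminus \lambda_{X_C}$ consists of crown interiors and ideal triangles while $A \setminus \delta_A$ is a union of ideal triangles. Your additional details (the collar argument at the seam and the closure computation identifying $\pi_|\colon X_C \to \overline{X_A \setminus g(\widehat{R})}$) merely make explicit steps the paper treats as standard.
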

\begin{proof}
Notice that, by our constructions, $X_A$ can be equivalently defined as follows: 
$$X_A = \faktor{X_C \bigsqcup \widehat{R}}{ \sim }, ~ 
\mbox{ where } z\sim f(z) \mbox{ for every } z \in \Gamma .$$
So $X_A$ is obtained glueing together two complete hyperbolic surfaces ($X_C$ and $\widehat {R}$) along finitely many compact connected components of their boundary ($\Gamma \subset \partial X_C$ and $\widehat{\Gamma} \subset \partial \widehat{R}$) via a prescribed isometry ($f_|: \Gamma \to \widehat{\Gamma}$). Therefore, $X_A$ is also a (possibly disconnected) complete hyperbolic surface, and the two restrictions $\pi_{|A}$, $\pi_{|X_C}$ of $\pi$ are both 1-1 local isometries. The following holds by our definitions of $B$, $X_C$ and $\lambda_{X_C}$:  
$$X_C \setminus \lambda_{X_C} = \mathring{C} \cup \{ \mathring{\G} ~|~ \G \mbox{ is a triangular geometric piece in } X \setminus \lambda \}~.$$ 
By construction $A \setminus \delta_A$ is a union of ideal triangles. Since $\pi$ identifies $C$ with $\widehat{C} \subset A$, we thus have that $X_A \setminus \lambda_A$ is a union of ideal triangles as well, that is, $\lambda_A$ is maximal.
\end{proof}

\subsection{Stretching the auxiliary cylinders}    \label{stretch_A^t}    \label{Thstretch_A} 

We now want to stretch the triangulated surface $X_A$. We will start by stretching the auxiliary cylinders. In Section \ref{comfort_cyl_sub} we defined the auxiliary cylinder $A_j$ for every crown $C_j$. In Section \ref{sec:stretchB} we defined the stretched boundary block $B^t$ and we introduced the crown $C_j^t$ in $B^t$. We now want to define the stretched auxiliary cylinder $A_j^t$ for every parameter $t\geq 0$.
\begin{definition}[Stretched auxiliary cylinder]
We define the \emph{stretched auxiliary cylinder} $A_j^t$ as the auxiliary cylinder associated to the crown $C_j^t$, i.e. we apply the definition of Section  \ref{comfort_cyl_sub} to the crown $C_j^t$. By construction we also get an isometry $f_j^t:C_j^t \hookrightarrow A_j^t$ and a maximal finite lamination $\delta_{A_j}$ on $A_j^t$. By Lemma \ref{lemma:A_j} $A_j^t$ is a complete hyperbolic surface whose interior is homeomorphic to an annulus. 
\end{definition}

\begin{notation} We denote by $c_j := \bigcup_i b_{i}^j \subset \partial A_j$ the cycle of bi-infinite leaves that corresponds to the cycle with the same name in $B$. We say:
\begin{itemize} 
\item $b_i^j$ is the bi-infinite leaf in $\partial \widehat{\Q_i} \cap \partial A_j$ with $\Q_i:= \Q_{s_i} \subset B$ a quadrilateral piece. The $b_i^j$'s are enumerated so that any two consecutive $b_i^j$,$b_{i+1}^j$ form a spike $a_i^j \subset A_j$; 
\item the leaves of $\delta_{A_j}$ entering the spike $a_{i}^j$ are denoted by $e_{ij}^1, \ldots, e_{ij}^{n_i}$ (Figure \ref{notation}). Note that $\delta_{A_j} = \bigcup_{a_i^j} \{e_{ij}^1, \ldots, e_{ij}^{n_i} \}$.     
\end{itemize} 
\end{notation}

\begin{figure}[ht!]
\begin{center}
\begin{subfigure}[t]{0.4\textwidth}
\psfrag{a_i}[b]{ $~a_i^j$}
\psfrag{b_i}{$b_i^j$}
\psfrag{b_i+1}[Bl][l]{$b_{i+1}^j~$~}
\psfrag{e_1^1}[t][t]{$e_{ij}^1$}
\psfrag{e_1^i}[t][t]{$e_{ij}^k$}
\psfrag{e_1^n}[t][t]{$e_{ij}^{n_i}$}
\includegraphics[width=4cm]{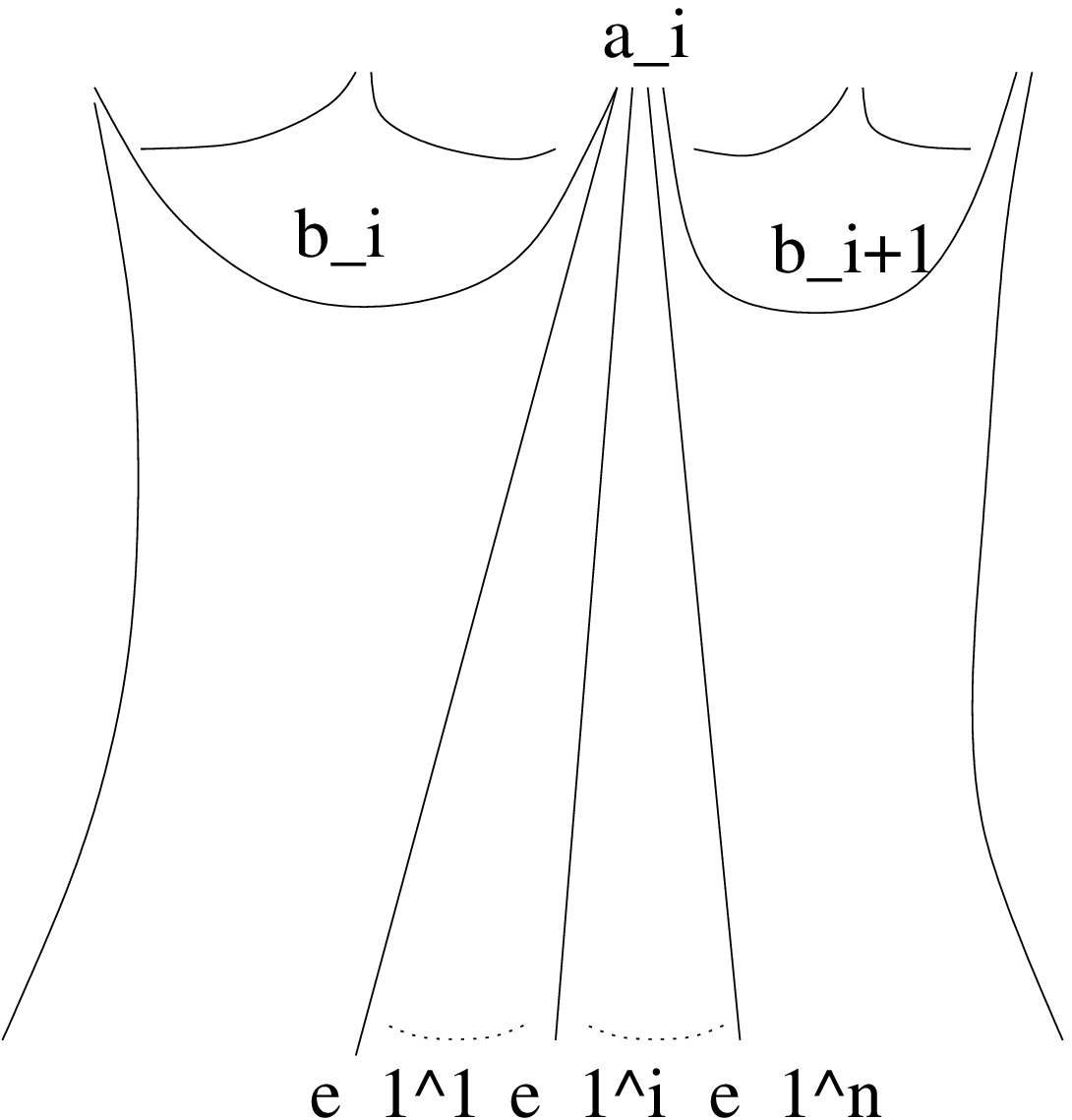}
\caption{The spike $a_i^j$ of ${A_j}$}\label{notation}
\end{subfigure}
\begin{subfigure}[t]{0.4\textwidth}
\psfrag{v_j}{$v_i^j$}
\psfrag{a_j}{$a_i^j$}
\psfrag{b_i}[c][l]{$b_i^j$}
\psfrag{b_i+1}{$b_{i+1}^j$}
\psfrag{e_i^j}{$e_{ij}^1$}
\psfrag{e_i^k}{$e_{ij}^{n_i}$}
\includegraphics[width=3cm]{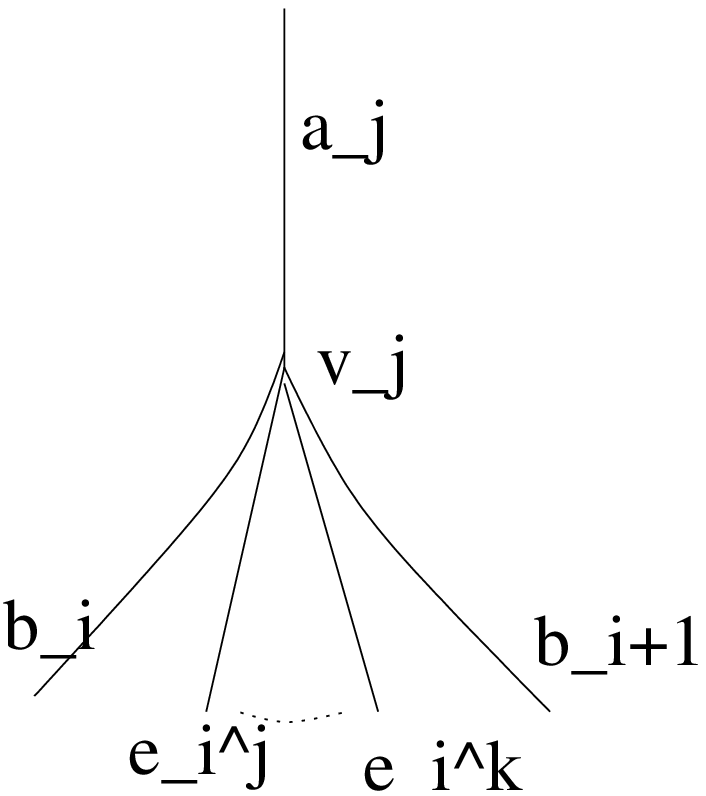}
\caption{Its corresponding subtrack $\tau_{ij} \subset \tau$}\label{traintrack}
\end{subfigure}
\end{center}
\caption{Notation on ${A_j}$}
\end{figure}

The lamination $\delta_{A_j}$ on $A_j$ is actually an ideal triangulation. We associate to each edge the shear between the two adjacent triangles (see Definition \ref{def:shear}). 
\begin{definition}[Shear coordinates for $A_j$ and $A_j^t$] 
We denote the shear coordinates of the hyperbolic structure $A_j$ by $s^0(e_{ij}^k)$, and the shear coordinates of $A_j^t$ by $s^t(e_{ij}^k)$. 
\end{definition}
If $e_{ij}^k \in \delta_{A_j}$ is not a special edge then $s^t(e_{ij}^k) = e^t \cdot s^0(e_{ij}^k)$ by construction of $A_j^t$. Otherwise, in general 
$s^t(e_{ij}^k) \neq e^t \cdot s^0(e_{ij}^k)$ (see Figure \ref{compare_fig} and Proposition \ref{cocycle_difference}).    
\subsubsection{Stretch difference formula}
The auxiliary cylinder $A_j$ is triangulated by $\delta_{A_j}$, so it can be stretched using Thurston's technique 
\cite{Thurston}. We denote by  $(A_j)_{\mathrm{Th}}^t$ the Thurston's stretch of $A_j$. The shear coordinates of  $(A_j)_{\mathrm{Th}}^t$ are $e^t \cdot s^0(e_{ij}^k)$ by construction. In Proposition \ref{cocycle_difference} we quantify the difference between the shear coordinates of $A_j^t$ and $(A_j)_{\mathrm{Th}}^t$.

\begin{figure}[htbp]
\begin{subfigure}[t]{0.4\textwidth}
\begin{center}
\psfrag{C}[b]{$O$}
\psfrag{C'}[l]{$O'$}
\includegraphics[width=4cm]{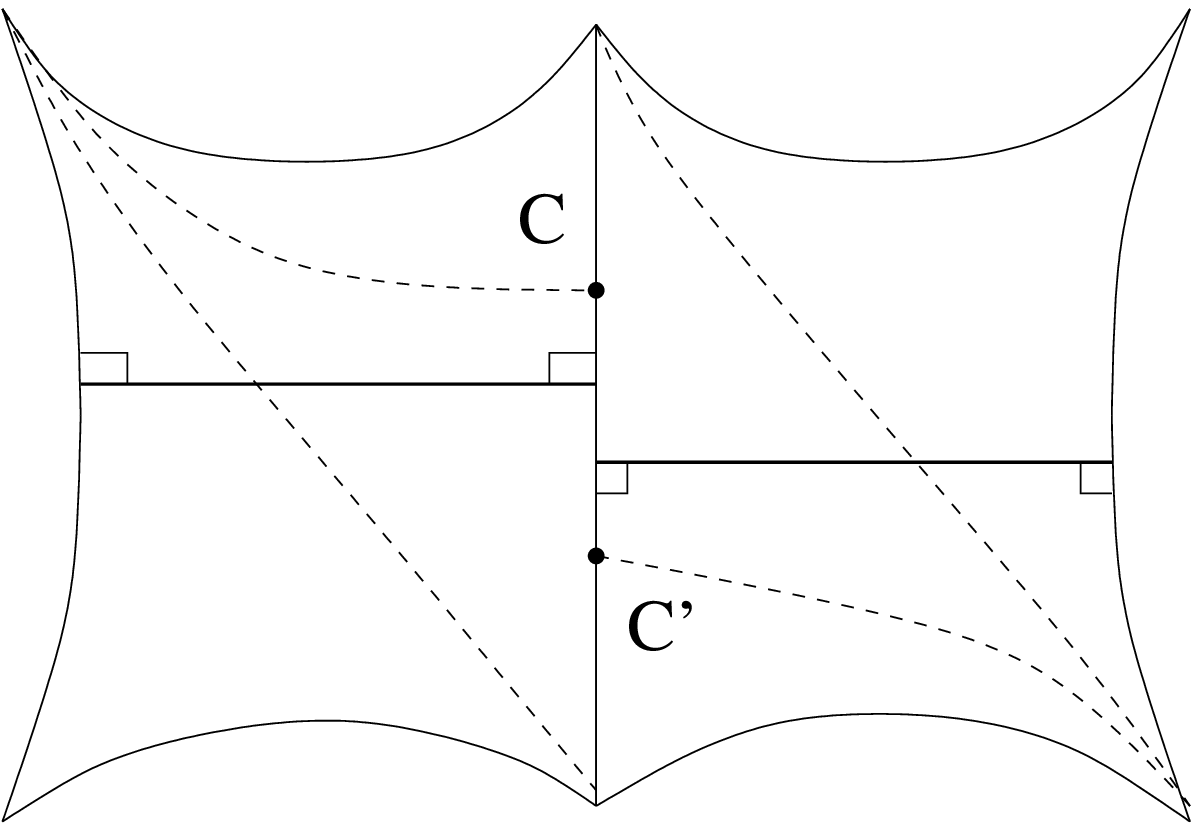}
\end{center}
\caption{$(A_j)^t_{\mathrm{Th}}$}
\end{subfigure}
\begin{subfigure}[t]{0.4\textwidth}
\begin{center}
\psfrag{C}[b]{\small $O$}
\psfrag{C'}[l]{\small $O'$}
\includegraphics[width=4cm]{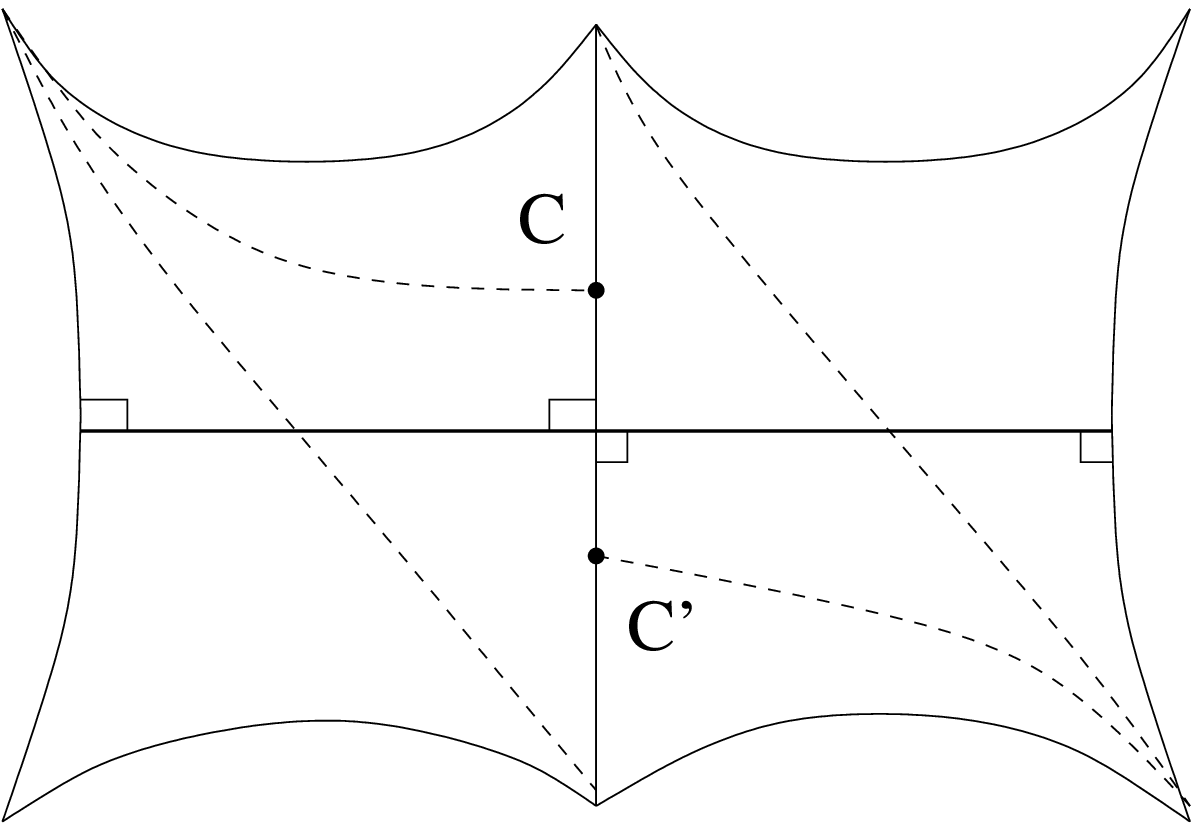}
\end{center}
\caption{$A_j^t$}
\end{subfigure}
\caption{Comparing $A_j^t$ with $(A_j)_{\mathrm{Th}}^t$}\label{compare_fig}
\end{figure}

\begin{notation} \label{not:minus notation}
In this subsection and in the proof of Lemma \ref{lemma:alpha}, if $A,B$ are points on the bi-infinite edge of a quadrilateral piece, we will denote by $A - B$ their signed distance, with the sign given by the orientation of the surface (Figure \ref{fig:delta}).   
\end{notation}

\begin{definition}[Displacement function]\label{def:displacement}
Let $\mathcal Q_{s}$ be the quadrilateral piece of shear $s \in \mathbb R$ and $\mathcal T_s \subset  \mathcal Q_{s}^d$ be the ideal triangle adjacent to the bi-infinite edge $b \subset \Q_s$. Let $O_{\mathcal Q_s}$ be the center of $\Q_s$ and $O_{\mathcal T_s}$ the center of $\mathcal T_s$ on the edge $b$ (see Figure \ref{Q}). We define the \emph{displacement function} $\delta: \mathbb R \to \mathbb R^+$ as follows: 
$$\delta(s) :=  O_{\mathcal T_s} \minus O_{\mathcal Q_s}.$$
(Note that the function $ \delta: \mathbb R \to \mathbb R^+ $ is continuous and bijective.)
\end{definition}

\begin{figure}[htbp]
\begin{center}
\begin{subfigure}[t]{0.4\textwidth}
\psfrag{Q}{$\mathcal Q_s^d$}
\psfrag{C}[b]{\tiny $O_{\mathcal Q_s}$}
\psfrag{C'}[t][l]{\tiny $O_{\mathcal T_s}$}
\psfrag{T}{$\mathcal T_s$}
\includegraphics[width=3.7cm]{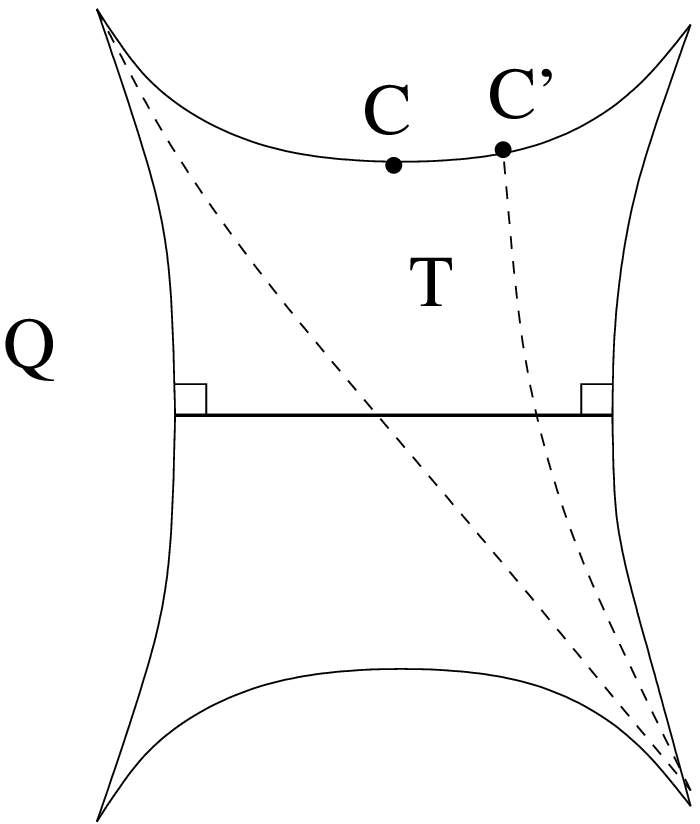}
\caption{ $\delta(s) :=  O_{\mathcal T_s} \minus O_{\mathcal Q_s}$}\label{Q}
\end{subfigure}
\begin{subfigure}[t]{0.5\textwidth}
\psfrag{Q}{$ $}
\psfrag{C}{$A$}
\psfrag{C'}{$B$}
\includegraphics[width=5cm]{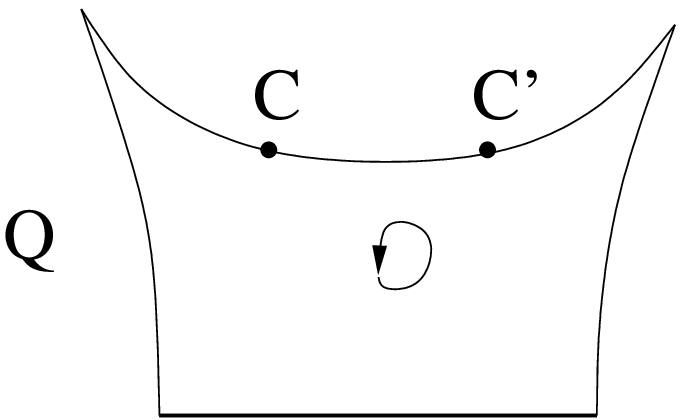}\label{Q3}
\caption{Notation: $A \minus B >0$}
\end{subfigure} 
\end{center}
\caption{The displacement function $\delta$}  \label{fig:delta}
\end{figure}

\begin{definition}[Horocyclic map]
For every spike $a_i^j \in A_j^t$ consider the \emph{horocyclic map} $$\eta^t: b_i^j \to b_{i+1}^j $$ where $\eta^t(P) \in b_{i+1}^j$ is the endpoint of the (unique) horocycle around the spike $a_i^j$ through $P \in b_i^j $ (Figure \ref{horocycle}). 
\begin{figure}[htbp]
\begin{center}
\psfrag{b_i}{$b_i^j$}
\psfrag{b_i+1}{$b_{i+1}^j$}
\psfrag{C}[bl][B]{\tiny $O_{\mathcal Q_i^t}~$~~}
\psfrag{P}{\small $P$}
\psfrag{C'}{\tiny $O_{\mathcal Q_{i+1}^t}$}
\psfrag{h(P)}{$\eta^t(P)$}
\psfrag{a_i}{$a_i^j$}
\includegraphics[width=7cm]{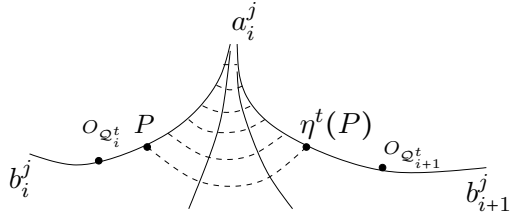}
\end{center}
\caption{The horocycle map $\eta^t: b_i^j \to b_{i+1}^j$}\label{horocycle}
\end{figure}
\end{definition}
By definition of the shear coordinates on $A_j^t$ we have:
\begin{align}\label{horocycle_eq}
\sum_{k=1}^{n_i} s^t(e_{ij}^k) = \eta^t(O_{\mathcal T_i}) \minus O_{\mathcal T_{i+1}^t} ~.
\end{align}

\begin{lemma}\label{lem:horocycle}
For every $t\geq 0$ we have: 
$ \eta^t (O_{\mathcal Q_i^t}) \minus O_{\mathcal Q_{i+1}^t} =  e^t \cdot [\eta^0(O_{\mathcal Q_i^0}) \minus O_{\mathcal Q_{i+1}^0}].$
\end{lemma}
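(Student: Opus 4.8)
The plan is to recognize that the quantity $\eta^t(O_{\mathcal Q_i^t}) \minus O_{\mathcal Q_{i+1}^t}$ is simply the signed horocyclic offset between the two bi-infinite edges $b_i^j,b_{i+1}^j$ as seen from the two quadrilateral centers, and to prove it scales by $e^t$ by exploiting the fact that the stretch map $\beta^t$ (Proposition \ref{prop:B}) carries the horocyclic foliation around the spike $a_i^j$ to itself, multiplying all relevant arc-lengths by $e^t$. First I would record the translation structure of the horocyclic maps: placing the spike $a_i^j$ at $\infty$ in the upper half-plane, the edges $b_i^j,b_{i+1}^j$ become vertical geodesics and $\eta^t$ preserves Euclidean height, so in the coordinate given by signed distance toward the spike it is a translation. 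Concretely, if $P$ lies at signed distance $u$ from $O_{\mathcal Q_i^t}$ along $b_i^j$, then $\eta^t(P) \minus O_{\mathcal Q_{i+1}^t} = u + c_t$, where $c_t := \eta^t(O_{\mathcal Q_i^t}) \minus O_{\mathcal Q_{i+1}^t}$ is exactly the quantity in the statement; the identical formula holds at $t=0$ with an offset $c_0$.

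Next I would establish a commutation relation near the spike. By Lemma \ref{lemma:quad}, the restriction $\beta^t|_{b_i}$ is affine of factor $e^t$ and sends $O_{\mathcal Q_i}$ to $O_{\mathcal Q_i^t}$, so in the signed-distance coordinate it is $u_0 \mapsto e^t u_0$, and likewise on $b_{i+1}$. By Lemma \ref{lem:horocyclic foliation square} the stretch maps of the two quadrilaterals send horocycles centered at the spike to horocycles centered at the spike (scaling the distance from the quadrilateral center by $e^t$), and by Lemma \ref{lem:horocyclic foliation triangle} the same holds on the intermediate triangular pieces. Since $\beta^t$ is continuous and restricts to these maps on each geometric piece, it carries a single horocycle $\mathcal H_0$ centered at $a_i$ — chosen close enough to the spike that it avoids every unfoliated region — to a single horocycle $\mathcal H$ centered at $a_i^j$. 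This gives the commutation $\eta^t \circ \beta^t = \beta^t \circ \eta^0$ on the portion of $b_i$ cut out by such leaves.

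Finally I would combine the two ingredients. For $P_0$ at signed distance $u_0$ from $O_{\mathcal Q_i^0}$ (close to the spike), the commutation together with the affine behaviour of $\beta^t$ on the edges yields $\eta^t(\beta^t(P_0)) \minus O_{\mathcal Q_{i+1}^t} = e^t\bigl(\eta^0(P_0) \minus O_{\mathcal Q_{i+1}^0}\bigr)$. Expanding both sides through the translation formulas, the left-hand side equals $e^t u_0 + c_t$ while the right-hand side equals $e^t(u_0 + c_0)$; the terms $e^t u_0$ cancel and we obtain $c_t = e^t c_0$, which is precisely the assertion. The dependence on $u_0$ drops out, so it suffices to verify the commutation on one leaf interior to the foliated region, and we never need the (possibly unfoliated) horocycle through $O_{\mathcal Q_i^t}$ itself.

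The main obstacle is the gluing step: one must check that $\beta^t$ sends an \emph{entire} horocycle around $a_i$ — which runs successively through $\mathcal Q_i$, the intermediate triangles, and $\mathcal Q_{i+1}$ — to a single horocycle around $a_i^j$, rather than merely mapping each piece's horocyclic arc to some horocyclic arc. This holds because the horocycles centered at the common ideal vertex form a canonical foliation near the spike, independent of the decomposition into geometric pieces, and the piecewise stretch maps agree on the shared edges, where they are affine of factor $e^t$; hence their endpoints on each edge match and the arcs reassemble into one horocycle. Throughout, care with the sign convention of $\minus$ (Notation \ref{not:minus notation}) is required, but since both orientations of $b_i^j,b_{i+1}^j$ point toward the spike, the offset is measured consistently and the scaling relation $c_t = e^t c_0$ is unaffected.
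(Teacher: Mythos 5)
Your argument is correct in substance, but it takes a genuinely different route from the paper's. The paper proves the lemma statically, with no map between the two surfaces: it follows the single horocycle $h^t$ through $O_{\mathcal Q_i^t}$ across the chain of special edges of the triangulated polygons $\widehat{\mathcal G_k}$, and shows by induction along that chain that the distance from $h^t$ to each special point is $e^t$ times its value at $t=0$, using the half-shear identities of Lemmas \ref{shear_quads} and \ref{shear_penta}; the statement is the case of the final edge $b_{i+1}^j$. You instead prove the equivariance $\eta^t\circ\beta^t=\beta^t\circ\eta^0$ on horocycles deep in the spike, and combine it with the fact that $\beta^t$ restricts to an affine map of factor $e^t$ on $b_i^j$, $b_{i+1}^j$ fixing the centers (Lemma \ref{lemma:quad}(2),(4)); the translation structure of $\eta$ then forces $c_t=e^tc_0$. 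Your route isolates the geometric mechanism (the stretch maps preserve spike horocycles, Lemmas \ref{lem:horocyclic foliation square} and \ref{lem:horocyclic foliation penta}), and by verifying the commutation on one deep leaf you neatly sidestep the fact that the horocycle through $O_{\mathcal Q_i^t}$ itself may leave the foliated region -- a point the paper's computation handles differently, since the shear formulas of Lemmas \ref{shear_quads} and \ref{shear_penta} hold for the relevant horocycles regardless of the foliation cutoffs.

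Three repairs are needed, all local. First, the intermediate pieces crossed by a spike horocycle are pentagonal or quadrilateral, never triangular: triangles are excluded from the boundary block $B$ by definition, so your citation of Lemma \ref{lem:horocyclic foliation triangle} should be Lemma \ref{lem:horocyclic foliation penta}, together with Lemma \ref{lem:horocyclic foliation square} again for intermediate quadrilaterals glued along bi-infinite edges (Case (3) of Section \ref{comfort_cyl_sub}). Triangles appear only in the auxiliary cylinder $A_j$, where $\beta^t$ is not defined; be aware that replacing $\beta^t$ by Thurston's stretch of the triangulated $A_j$ would be wrong, since $A_j^t\neq (A_j)^t_{\mathrm{Th}}$ -- that discrepancy is the whole point of Proposition \ref{cocycle_difference}. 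Second, you should say explicitly why you may compute with $\beta^t$ at all: $\eta^t$ lives in $A_j^t$ while $\beta^t$ lives on $B$, and the bridge is that horocycles sufficiently deep in the spike lie in the crown $C_j$, which embeds isometrically in both $B$ and $A_j$ (Lemma \ref{lemma:A_j} and its stretched analogue), so $\eta^t$ can be evaluated inside $B^t$. Third, your parenthetical claim that both $b_i^j$ and $b_{i+1}^j$ are oriented toward the spike is off: with the boundary orientations induced by the surface they point oppositely relative to the spike, so in your coordinates $\eta$ is $v\mapsto -v+c$ rather than a translation; but, as you correctly observe, the identity $c_t=e^tc_0$ comes out the same either way, so this does not affect the conclusion.
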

\begin{proof}
See Figure \ref{horocycle}. Let $h^t$ be the horocycle around the spike $a_i^j$ passing through $O_{\mathcal Q_i^t}$. We denote by $e_{ij}^{k_0}, e_{ij}^{k_1}, \dots, e_{ij}^{k_p}$ the special edges that $h^t$ meets in order. For every $e_{ij}^{k_a}$, denote by ${\widehat{P_{ij}^{k_a}}}^t$ the special point on that edge (see definition in Section \ref{comfort_cyl_sub}).  

Claim (1): $d(h^t \cap e_{ij}^{k_0},{\widehat{P_{ij}^{k_0}}}^t) = e^t d( h^0 \cap e_{ij}^{k_0}, {\widehat{P_{ij}^{k_0}}}^0)\,.$ \\ 
To see this, notice that by Lemma \ref{shear_quads}, $d(h^t \cap e_{ij}^{k_0}, {\widehat{P_{ij}^{k_0}}}^t)$ is equal to the parameter of the quadrilateral $\mathcal Q_i^t$, which is $e^t$ times the parameter of the quadrilateral $\mathcal Q_i^0$, which again by Lemma \ref{shear_quads} is equal to $d(h^0 \cap e_{ij}^{k_0}, {\widehat{P_{ij}^{k_0}}}^0)$.

Claim (2):  $d( h^t \cap e_{ij}^{k_a}, {\widehat{P_{ij}^{k_a}}}^t) = e^t d(h^0 \cap e_{ij}^{k_a}, {\widehat{P_{ij}^{k_a}}}^0),$ for every special edge $e_{ij}^{k_a}$. \\ 
By induction on $a$, this is true for the previous edge $e_{ij}^{k_{a-1}}$. For the edge $e_{ij}^{k_a}$ it then follows from Lemma \ref{shear_penta} or Lemma \ref{shear_quads}. 

Claim (3): $d(h^t \cap b^j_{i+1}, O_{\mathcal Q_{i+1}^t}) = e^t d(h^0 \cap b^j_{i+1}, O_{\mathcal Q_{i+1}^0})\,,$ 
which is the statement. \\ We use Step (2) applied to the case $a=p$ (the last special edge), then we conclude as in Step (1), applying Lemma \ref{shear_quads} to the quadrilaterals $\mathcal Q_{i+1}^t$ and $\mathcal Q_{i+1}^0$.   
\end{proof}

We are now ready to prove the stretch difference formula.

\begin{proposition}[Stretch difference formula]\label{cocycle_difference}
The following holds: 
 $$ \sum_{k=1}^{n_i} s^t(e_{ij}^k) - \sum_{k=1}^{n_i} e^t \cdot s^0(e_{ij}^k) = - \delta(e^t \cdot s_i) + e^t \cdot \delta(s_i) - \delta(e^t \cdot s_{i+1}) + e^t \cdot \delta(s_{i+1})\,.$$
\end{proposition}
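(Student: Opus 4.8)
The plan is to read both sides of the claimed identity directly off the horocyclic balance equation \eqref{horocycle_eq}, written once for the stretched cylinder $A_j^t$ and once for the unstretched cylinder $A_j = A_j^0$, and then to cancel a common horocyclic displacement term by invoking Lemma \ref{lem:horocycle}. All four points involved lie on the single bi-infinite leaf $b_{i+1}^j$, so the entire computation is additivity of the signed distance $\minus$ of Notation \ref{not:minus notation} on one geodesic.

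First I would insert the two quadrilateral centers $\eta^t(O_{\mathcal Q_i^t})$ and $O_{\mathcal Q_{i+1}^t}$ as intermediate markers in the right-hand side of \eqref{horocycle_eq}. Additivity of signed distance gives
\begin{align*}
\sum_{k=1}^{n_i} s^t(e_{ij}^k)
&= \bigl[\eta^t(O_{\mathcal T_i^t}) \minus \eta^t(O_{\mathcal Q_i^t})\bigr]
 + \bigl[\eta^t(O_{\mathcal Q_i^t}) \minus O_{\mathcal Q_{i+1}^t}\bigr]
 + \bigl[O_{\mathcal Q_{i+1}^t} \minus O_{\mathcal T_{i+1}^t}\bigr].
\end{align*}
The middle bracket is exactly the quantity governed by Lemma \ref{lem:horocycle}, so it equals $e^t\,[\eta^0(O_{\mathcal Q_i^0}) \minus O_{\mathcal Q_{i+1}^0}]$. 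The last bracket is $-\delta(e^t s_{i+1})$ straight from Definition \ref{def:displacement}, since the quadrilateral $\mathcal Q_{i+1}^t$ has shear $e^t s_{i+1}$ and hence $O_{\mathcal T_{i+1}^t} \minus O_{\mathcal Q_{i+1}^t} = \delta(e^t s_{i+1})$.

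The first bracket is where the genuinely delicate point sits, and it is the step I expect to require the most care. The horocyclic map $\eta^t: b_i^j \to b_{i+1}^j$ is an isometry, but it \emph{reverses} the signed-distance orientation: placing the common spike $a_i^j$ at $\infty$ in the upper half-plane, $\eta^t$ preserves the Euclidean height $y$, while $b_i^j$ and $b_{i+1}^j$ inherit \emph{opposite} orientations from $\partial A_j^t$, so in arc-length coordinates $\eta^t$ acts as $u \mapsto -u + \mathrm{const}$. Therefore $\eta^t(O_{\mathcal T_i^t}) \minus \eta^t(O_{\mathcal Q_i^t}) = -\,(O_{\mathcal T_i^t} \minus O_{\mathcal Q_i^t}) = -\delta(e^t s_i)$. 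Combining the three brackets I obtain
\[
\sum_{k=1}^{n_i} s^t(e_{ij}^k) = -\delta(e^t s_i) - \delta(e^t s_{i+1}) + e^t\bigl[\eta^0(O_{\mathcal Q_i^0}) \minus O_{\mathcal Q_{i+1}^0}\bigr],
\]
and the identical decomposition at $t=0$ (where no scaling is needed in the middle term) gives $\sum_k s^0(e_{ij}^k) = -\delta(s_i) - \delta(s_{i+1}) + \bigl[\eta^0(O_{\mathcal Q_i^0}) \minus O_{\mathcal Q_{i+1}^0}\bigr]$.

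Finally I would subtract $e^t$ times the $t=0$ identity from the $t$ identity. The two horocyclic displacement terms $e^t\bigl[\eta^0(O_{\mathcal Q_i^0}) \minus O_{\mathcal Q_{i+1}^0}\bigr]$ cancel exactly, leaving
\[
\sum_{k=1}^{n_i} s^t(e_{ij}^k) - e^t\sum_{k=1}^{n_i} s^0(e_{ij}^k) = -\delta(e^t s_i) + e^t\delta(s_i) - \delta(e^t s_{i+1}) + e^t\delta(s_{i+1}),
\]
which is the asserted formula. The only real subtlety is the sign bookkeeping, namely checking that the boundary orientations of $b_i^j$ and $b_{i+1}^j$ at the spike are opposite (so that $\eta^t$ is orientation-reversing) and that this convention is compatible with the orientation fixing the sign of \eqref{horocycle_eq}, with Lemma \ref{lem:horocycle}, and with Definition \ref{def:displacement}; once the conventions are aligned, the two outer $+\delta$ contributions turn into the $-\delta(e^t s_i)$ and $-\delta(e^t s_{i+1})$ above, and everything else is additivity of signed length together with the two cited lemmas.
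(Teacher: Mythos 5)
Your proposal is correct and follows essentially the same route as the paper's proof: the same three-bracket decomposition of \eqref{horocycle_eq} via the intermediate points $\eta^t(O_{\mathcal Q_i^t})$ and $O_{\mathcal Q_{i+1}^t}$, the same use of Lemma \ref{lem:horocycle} for the middle bracket and Definition \ref{def:displacement} for the outer ones, and the same subtraction of $e^t$ times the $t=0$ identity. The one point where you go beyond the paper is in explicitly justifying that $\eta^t$ reverses the signed distance (so that $\eta^t(O_{\mathcal T_i^t}) \minus \eta^t(O_{\mathcal Q_i^t}) = O_{\mathcal Q_i^t} \minus O_{\mathcal T_i^t} = -\delta(e^t s_i)$), a sign fact the paper uses without comment.
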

\begin{proof} 
First compute $\sum_{k=1}^{n_i} s^t(e_{ij}^k)$ using (\ref{horocycle_eq}): 
\begin{equation}
\begin{aligned}
 \sum_{k=1}^{n_i} s^t(e_{ij}^k)  & =  \eta^t(O_{\mathcal T_i}) \minus O_{\mathcal T_{i+1}^t} =\\
                             & =  [\eta^t(O_{\mathcal T_i^t}) \minus \eta^t(O_{\mathcal Q_i^t}) ]  + [\eta^t(O_{\mathcal Q_i^t}) \minus O_{\mathcal Q_{i+1}^t}] + [O_{\mathcal Q_{i+1}^t} \minus O_{\mathcal T_{i+1}^t}]\,.  \label{blablabla}
\end{aligned}
\end{equation}

Let us now compute separately each summand in the second member:
\begin{align}
& \eta^t(O_{\mathcal T_i^t}) \minus \eta^t(O_{\mathcal Q_i^t}) = O_{\mathcal Q_{i}^t} \minus O_{\mathcal T_{i}^t}  =  - \delta(e^t \cdot s_i);  \tag*{by Definition \ref{def:displacement},} \\ 
& \eta^t(O_{\mathcal Q_i^t}) \minus O_{\mathcal Q_{i+1}^t} =  e^t \cdot [\eta^0(O_{\mathcal Q_i^0}) \minus O_{\mathcal Q_{i+1}^0}];  \tag*{by Lemma \ref{lem:horocycle},} \\ 
& O_{\mathcal Q_{i+1}^t} \minus O_{\mathcal T_{i+1}^t} = - \delta(e^t \cdot s_{i+1}). \tag*{by Definition \ref{def:displacement},} 
\end{align}
Substituting the equations above in (\ref{blablabla}),  
we find: 
\begin{align}
\sum_{k=1}^{n_i} s^t(e_{ij}^k) = - \delta(e^t \cdot s_i) - \delta(e^t \cdot s_{i+1}) +  e^t \cdot [\eta^0(O_{\mathcal Q_i^0}) \minus O_{\mathcal Q_{i+1}^0}] ~. \label{comparison}
\end{align}
Now we compute $\sum_{k=1}^{n_i} e^t \cdot s^0(e_{ij}^k)$ using (\ref{comparison}) evaluated in $t=0$:  
\begin{align}\label{10}
\sum_{k=1}^{n_i}e^t \cdot s^0(e_{ij}^k) &=  e^t \cdot \sum_{k=1}^{n_i} s^0(e_{ij}^k) = - e^t\cdot \delta(s_i)  - e^t \cdot \delta(s_{i+1}) + e^t \cdot [\eta^0(O_{\mathcal Q_i}) \minus O_{\mathcal Q_{i+1}^0}]~.
\end{align}
Combining (\ref{comparison}) and (\ref{10})
 , we get the following and we are done: 
\begin{equation*}
\sum_{k=1}^{n_i} s^t(e_{ij}^k) - \sum_{k=1}^{n_i} e^t \cdot s^0(e_{ij}^k) = - \delta(e^t \cdot s_i) + e^t \cdot \delta(s_i) - \delta(e^t \cdot s_{i+1}) + e^t \cdot \delta(s_{i+1})~. \qedhere
\end{equation*}
\end{proof}

\subsection{Stretching the triangulated surface}  \label{sec:S_A}

In this subsection we will stretch the triangulated surface $X_A$ using Bonahon's theory of cataclysms (see Section \ref{Bonahon}). For every $t \geq 0$ we will define a complete hyperbolic surface $((X_A)^t, \lambda_A, g^t)$ with a 1-1 local isometry $g^t: A^t \hookrightarrow (X_A)^t$, where $A^t$ is the hyperbolic surface obtained stretching the auxiliary multi-cylinder along $\delta_A$. 

\begin{definition}[Stretched auxiliary multi-cylinder]
We define the \emph{stretched auxiliary multi-cylinder} $A^t$ and its maximal lamination $\delta_A$ by 
$$A^t:= \bigsqcup_j A_j^t~ \mbox{and } \delta_A := \bigsqcup_j \delta_{A_j} $$ 
It comes with an isometry $f^t:= \bigsqcup_j f_j^t : C_j^t \to A_j^t$. 
\end{definition}
\begin{notation} We will use the following notation: 
\begin{itemize}
\item $\widehat{C_j^t}:= f^t(C_j^t)$; 
\item $\widehat{C^t}:= f^t(C^t)$; 
\item $\widehat{\Gamma^t} := f^t(\Gamma^t)$; 
\item $\widehat{R^t} := \overline{A^t \setminus \widehat{C^t}} = \overline{A^t \setminus f^t(C^t)}$.
\end{itemize}
\end{notation}
 
\subsubsection{Construction of $((X_A)^t, \lambda_A, g^t)$} By Proposition \ref{prop:S_A} we have that $(X_A, \lambda_A)$ is a complete hyperbolic surface of finite volume with non-compact boundary and $\lambda_A$ is a maximal lamination whose complement contains only triangles. We will first consider its double, i.e. the surface $(X_A^d, \lambda_A^d)$ defined as follows.
\begin{definition}[Double of $X_A$] We define:  
$$X_A^d = X_A \sqcup X_A'/\sim\,, $$ 
where $X_A'$ is an isometric copy of $X_A$ with the opposite orientation, and $\sim$ identifies the boundary of $X_A$ and $X_A'$ with the identity map. The lamination $\lambda_A^d$ is defined as 
$$\lambda_A^d = \lambda_A \cup \lambda_A',$$
where $\lambda_A'$ is the copy of $\lambda_A$ on $S_A'$. If $A_1, \dots, A_m$ are the auxiliary cylinders in $X_A$, then $A_{m+1}, \dots, A_{2m}$ are the auxiliary cylinders in $X_A'$ ($A_{i+m}$ is the mirror copy of $A_i$). 
\end{definition}
The following fact is immediate.
\begin{proposition}
The surface $X_A^d$ is a finite hyperbolic surface without boundary and $\lambda_A^d$ is a maximal lamination on $X_A^d$.
\end{proposition}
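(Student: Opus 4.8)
The plan is to verify the two assertions of the proposition separately, relying on Proposition~\ref{prop:S_A} (which gives that $X_A$ is a complete finite-volume hyperbolic surface and that $\lambda_A$ is a maximal lamination whose complementary regions are all ideal triangles) together with the doubling criterion recalled in Section~\ref{sec:laminations}.

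First I would show that $X_A^d$ is a finite hyperbolic surface without boundary. By Proposition~\ref{prop:S_A} the boundary $\partial X_A$ is totally geodesic, so gluing two isometric copies of $X_A$ along $\partial X_A$ via the identity produces a hyperbolic structure that extends smoothly across the glueing locus: a point on a glued geodesic has a neighbourhood made of two half-disks glued along a diameter, hence isometric to an open subset of $\mathbb{H}^2$. In particular $X_A^d$ is again a hyperbolic surface, now with empty boundary, and $\mathrm{vol}(X_A^d) = 2\,\mathrm{vol}(X_A) < \infty$. Completeness and finiteness of volume are clear away from the non-compact boundary; the only point requiring care is the behaviour at the ideal vertices (the spikes) of the non-compact boundary components of $X_A$, i.e.\ the cycles of bi-infinite geodesics. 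There I would observe that a neighbourhood of a spike, bounded by two boundary geodesics sharing an ideal endpoint, becomes after doubling a neighbourhood isometric to a standard cusp. Thus each spike of $X_A$ pairs with its mirror copy to form a cusp of $X_A^d$, which is complete and of finite volume. Hence $X_A^d$ is complete, has finite volume and empty (in particular compact) boundary, so it is a finite hyperbolic surface without boundary.

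Next I would prove that $\lambda_A^d$ is maximal. The doubling criterion from Section~\ref{sec:laminations} asserts that $\lambda_A^d$ is maximal precisely when $\lambda_A$ is maximal and contains no leaf orthogonal to $\partial X_A$. The first condition holds by Proposition~\ref{prop:S_A}. For the second, I would invoke the classification of complementary pieces in Proposition~\ref{prop:lamination}: a leaf meeting $\partial X_A$ orthogonally would force the adjacent complementary region to carry an edge lying in $\partial X_A$, hence to be a quadrilateral, pentagonal or hexagonal piece rather than an ideal triangle. Since $X_A \setminus \lambda_A$ consists only of ideal triangles by Proposition~\ref{prop:S_A}, no such orthogonal leaf can exist. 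Both hypotheses of the criterion are therefore satisfied, and $\lambda_A^d$ is a maximal lamination on $X_A^d$.

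The main obstacle is the verification at the spikes: one must check carefully that doubling along the non-compact boundary cycles genuinely yields a smooth, complete, finite-volume hyperbolic structure whose ends are honest cusps, rather than some incomplete or singular end. Everything else is a direct bookkeeping consequence of Proposition~\ref{prop:S_A} and the doubling criterion, which is why the statement is essentially immediate.
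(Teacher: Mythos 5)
Your proof is correct, and it fills in exactly the standard reasoning the paper takes for granted: the paper states this proposition with no proof at all (it is introduced as ``The following fact is immediate''), and your verification --- smoothness of the doubled metric across the totally geodesic boundary, spikes pairing with their mirror images to form complete finite-area cusps, and maximality of $\lambda_A^d$ via the doubling criterion of Section \ref{sec:laminations} together with the fact that $X_A \setminus \lambda_A$ consists only of ideal triangles --- is precisely the intended argument. One cosmetic caveat: the doubling criterion and Proposition \ref{prop:lamination} are stated for finite hyperbolic surfaces with compact boundary, whereas $\partial X_A$ is non-compact, but your local argument (the foot of a leaf orthogonal to $\partial X_A$ would force a right-angled corner in an adjacent complementary region, impossible when every complementary region is an ideal triangle, so the complement of $\lambda_A^d$ is again a union of ideal triangles) does not actually depend on those statements verbatim, so no genuine gap results.
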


For every $t\geq 0$ we will now define a new hyperbolic structure $(X_A^d)^t$ by defining a suitable cocycle for $\lambda_A^d$ (see Bonahon's Theorem \ref{Bonahon3}). Let $\rho^0$ be the shearing cocycle for the lamination $\lambda_A^d$ associated to the hyperbolic structure $X_A^d$. Note that for every $t \geq 0$ the cocycle $e^t \cdot \rho^0$ is the cocycle of the hyperbolic structure  $(X_A^d)_{Th}^t$ obtained via the Thurston stretch of $X_A^d$. We will define our hyperbolic structure $(X_A^d)^t$ on $X_A^d$ by adding a term to the cocycle $e^t \cdot \rho^0$.    
\begin{definition}[Cocycle $\rho^t$ on $\lambda_A^d$]
Choose a train track $\tau$ snugly carrying $\lambda_A^d$ such that $\tau$ contains one subtrack $\tau_{ij}$ as in Figure \ref{traintrack} for every spike $a_i^j$ in one of the $A_j$, for $j \in \{1,\dots,2m\}$. (Here we label every edge of $\tau_{ij}$ by the unique edge of $\delta_{A_j} \cup \partial A_j$  it carries, and the switch $v_i^j$ corresponds to the spike $a_i^j$.)  

We define an assignment of real weights $\epsilon^t$ on the edges of $\tau$. Define $\epsilon^t(e) := 0$ for every $e \in \tau$ such that $e \not \in \bigcup_{ij} \tau_{ij}$. For $e\in \tau_{ij}$ the assignment $\epsilon^t(e)$ is the following:
\begin{align} 
\epsilon^t(a_i^j) &: = 0, \\ 
\epsilon^t(e_{ij}^k) & :=  - e^t s^0(e_{ij}^k) + s^t(e_{ij}^k) \mbox { for } k=1, \ldots , n_i, \\ 
\epsilon^t(b_i^j) & := \delta(e^t \cdot s_i) + e^t \cdot \delta(s_i), \\ 
\epsilon^t(b_{i+1}^j) & := \delta(e^t \cdot s_{i+1}) + e^t \cdot \delta(s_{i+1}), 
\end{align}
where the functions $s^t$ and $\delta$ were defined in Section \ref{stretch_A^t}. Now we define $(\rho^t(e))_{e\in \tau}$ as 
\[\rho^t(e) := e^t \cdot \rho^0(e) + \epsilon^t(e)\,.\]
 \end{definition}
 
In the next subsection we will prove the following:
\begin{proposition}  \label{shearing_nu} 
For every $t \geq 0$, the assignment of real weights $\rho^t$ on the edges of $\tau$ defines the shearing cocycle for the lamination $\lambda_A^d$ of a hyperbolic structure on $X_A^d$, which we will denote by $(X_A^d)^t$. 
\end{proposition}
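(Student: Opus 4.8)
The plan is to apply Bonahon's characterization of shearing cocycles, Theorem \ref{Bonahon4}: a cocycle $\alpha \in H^0(\lambda_A^d;\mathbb{R})$ is the shearing cocycle of a hyperbolic structure on $X_A^d$ if and only if $\omega(\alpha,\beta)>0$ for every nonzero compactly supported transverse measure $\beta$ for $\lambda_A^d$. Accordingly the proof splits into two parts: first, that $\rho^t$ is a genuine transverse cocycle lying in $H^0(\lambda_A^d;\mathbb{R})$ (the switch relations and the cusp condition), and second, the positivity $\omega(\rho^t,\beta)>0$. Throughout I write $\rho^t = e^t\rho^0 + \epsilon^t$ and use that $\rho^0$, being the shearing cocycle of $X_A^d$, already lies in $H^0(\lambda_A^d;\mathbb{R})$, so that $e^t\rho^0$ does too.

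For the first part, since both conditions are linear and $e^t\rho^0$ satisfies them, it suffices to verify that the weight system $\epsilon^t$ satisfies the switch relations and the cusp condition on the chosen train track $\tau$ (Theorem \ref{Bonahon3}). As $\epsilon^t\equiv 0$ off $\bigcup_{ij}\tau_{ij}$, the only switches to examine are the $v_i^j$; there the switch relation, after inserting the definitions of the weights $\epsilon^t(e_{ij}^k)$ and $\epsilon^t(b_i^j)$, is exactly the content of the stretch difference formula, Proposition \ref{cocycle_difference}. For the cusp condition I would exploit the geometric realization: on each auxiliary cylinder $A_j^t$ (and its mirror) one has $\rho^t(e_{ij}^k)=e^t s^0(e_{ij}^k)+\epsilon^t(e_{ij}^k)=s^t(e_{ij}^k)$, so $\rho^t$ restricted to $A_j^t$ coincides with the shear coordinates of the \emph{honest} complete hyperbolic structure $A_j^t$; since the shearing cocycle of a genuine hyperbolic structure automatically satisfies the cusp condition at the cusp coming from the spike $a_i^j$, the condition holds for $\rho^t$ there as well.

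The heart of the argument is the positivity. The key observation is that $\epsilon^t$ is supported only on the special edges $e_{ij}^k$ and the cycle leaves $b_i^j$, all of which spiral into the spikes $a_i^j$, i.e.\ into the cusps of $X_A^d$. A compactly supported transverse measure $\beta$ for $\lambda_A^d$, however, has support disjoint from the cusps, hence disjoint from all these cusp-bound leaves. Choosing $\tau$ so that the subtracks $\tau_{ij}$ are separated from the support of $\beta$ by the core geodesics $\hat\gamma_j$, the weight $\beta$ then vanishes on every edge of $\bigcup_{ij}\tau_{ij}$ and on every edge adjacent to a switch $v_i^j$. Applying the generic formula for Thurston's symplectic form (Lemma \ref{lemma:sympl form}), each term of $\omega(\epsilon^t,\beta)$ contains a vanishing $\beta$-weight, so $\omega(\epsilon^t,\beta)=0$. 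Therefore $\omega(\rho^t,\beta)=e^t\,\omega(\rho^0,\beta)$, which is strictly positive since $\rho^0$ is the shearing cocycle of $X_A^d$ and $\beta\neq 0$ (Theorem \ref{Bonahon4}). Bonahon's criterion then yields that $\rho^t$ is the shearing cocycle of a hyperbolic structure on $X_A^d$.

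The step I expect to be the main obstacle is making $\omega(\epsilon^t,\beta)=0$ fully rigorous, that is, showing that the support of a compactly supported $\beta$ cannot interact symplectically with the cusp-bound leaves carrying $\epsilon^t$. The delicate point occurs at the interface along the core geodesics $\hat\gamma_j$, where leaves of $\lambda_{X_C}$ and leaves of $\delta_A^d$ both accumulate; one must ensure no switch simultaneously meets an edge in the support of $\beta$ and an edge of some $\tau_{ij}$. I would handle this by first enclosing each spike in a cusp neighborhood disjoint from the support of $\beta$ and then taking $\tau$ subordinate to this decomposition, so that the spiraling part of $\tau$ around each $\hat\gamma_j$ is split cleanly into a $\beta$-carrying side and an $\epsilon^t$-carrying side.
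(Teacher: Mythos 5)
Your skeleton matches the paper's: invoke Bonahon's criterion (Theorem \ref{Bonahon4}), check that $\epsilon^t$ satisfies the switch relations via the stretch difference formula (Proposition \ref{cocycle_difference}), and reduce positivity to $\omega(\epsilon^t,\beta)=0$ so that $\omega(\rho^t,\beta)=e^t\omega(\rho^0,\beta)>0$. Your first part is essentially the paper's. The gap is in your proof of $\omega(\epsilon^t,\beta)=0$, which rests on two claims that fail in general. First, the leaves $e_{ij}^k$ and $b_i^j$ are \emph{not} necessarily cusp-bound: a spike $a_i^j$ of an auxiliary cylinder, viewed inside the finite-volume surface $X_A^d$, either runs into a cusp \emph{or} its leaves wind around and accumulate onto a compactly supported sublamination of $\lambda_A^d$ --- possibly onto $\mathrm{supp}\,\beta$ itself. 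The paper states this dichotomy explicitly in the proof of Lemma \ref{lemma_boh}. In the spiraling case your separation scheme has nothing to separate: the slivers of any snug track carrying the $\epsilon^t$-weighted leaves eventually merge with the slivers carrying $\mathrm{supp}\,\beta$, so no track ``subordinate to cusp neighborhoods'' isolates them; and the core geodesics $\hat\gamma_j$ are on the wrong side anyway, since the subtracks $\tau_{ij}$ sit at the spike mouths, where the cycle leaves $b_i^j$ border the triangulated part of $X_C$ in which the measured leaves live, not behind $\hat\gamma_j$.

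Second, and decisively: even granting that no $\epsilon^t$-weighted leaf lies in $\mathrm{supp}\,\beta$, it does not follow that $\beta$ vanishes on every edge of $\bigcup_{ij}\tau_{ij}$ or on every edge adjacent to $v_i^j$. A branch weight is the $\beta$-measure of a whole tie, and the branches $b_i^j$ and $a_i^j$ (and $f_1,f_2$ after splitting) carry, besides the labelled leaf, all the measured leaves running parallel past the spike; hence $\beta(b_i^j)$ and $\beta(a_i^j)$ can be strictly positive. This is exactly why the paper's Lemma \ref{lemma_boh} proves only $\mu(e_{ij}^k)=0$ (those branches lie inside the cylinders, where the lamination is finite and non-recurrent) and must retain the terms involving $\mu(b_i^j)$; the vanishing is then obtained algebraically, from the switch relations $\epsilon^t(f_1)=-\epsilon^t(b_i^j)$ and $\epsilon^t(b_i^j)+\sum_k\epsilon^t(e_{ij}^k)=-\epsilon^t(b_{i+1}^j)$, followed by a telescoping cancellation around each closed cycle $c_j$:
\begin{equation*}
\sum_{i}\left[-\mu(b_i^j)\,\epsilon^t(b_i^j)+\mu(b_{i+1}^j)\,\epsilon^t(b_{i+1}^j)\right]=0\,,
\end{equation*}
which holds precisely because the $b_i^j$ form a cycle. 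This cyclic cancellation is the actual mechanism and has no counterpart in your argument; a support-separation argument cannot replace it. (Your cusp-condition remark is harmless, but note it too reduces to the same identity $\epsilon^t(b_i^j)+\sum_k\epsilon^t(e_{ij}^k)+\epsilon^t(b_{i+1}^j)=0$, i.e.\ to Proposition \ref{cocycle_difference}.)
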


The lamination $\lambda_A^d$ and the cocycle $\rho^t$ are both symmetric for the involution of $X_A^d$, hence the hyperbolic structure $(X_A^d)^t$ is also symmetric. 
\begin{definition}[Triangulated stretched surface]
The \emph{triangulated stretched surface} $(X_A)^t$ is the restriction of the hyperbolic structure $(X_A^d)^t$ to the surface $X_A$. (Note that $X_A^t$ is a complete hyperbolic surface of finite volume.)
\end{definition}

\begin{proposition}
There is a 1-1 local isometry $g^t: A^t \hookrightarrow (X_A)^t$. When $t=0$ we have $((X_A)^0, \lambda_A, g^0) = (X_A, \lambda_A, g)$.
\end{proposition}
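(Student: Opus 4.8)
The plan is to construct $g^t$ directly from Bonahon's shear coordinates, building it one ideal triangle at a time. The key observation is that by the very definition of the cocycle $\rho^t$, the shearing data of $(X_A^d)^t$ along the edges of $\delta_A$ coincides with the shear coordinates $s^t$ used to stretch the auxiliary multi-cylinder $A^t$. Recall from Proposition \ref{prop:S_A} that $g=g^0\colon A\to X_A$ is a $1$-$1$ local isometry; since it is a \emph{local isometry} it preserves shears, so for every interior edge $e_{ij}^k$ of $\delta_A$ the shear of $X_A$ along $g(e_{ij}^k)$ equals the shear of $A_j$ along $e_{ij}^k$, that is $\rho^0(e_{ij}^k)=s^0(e_{ij}^k)$. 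This identity at $t=0$ is what makes the whole construction work for all $t$.

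First I would record the shear-matching identity. For an interior edge $e_{ij}^k$ of $\delta_{A_j}$ the definition of the correction gives
\[
\rho^t(e_{ij}^k) = e^t\rho^0(e_{ij}^k) + \epsilon^t(e_{ij}^k) = e^t s^0(e_{ij}^k) + \bigl(-e^t s^0(e_{ij}^k) + s^t(e_{ij}^k)\bigr) = s^t(e_{ij}^k),
\]
so the shear of $(X_A^d)^t$ along $g(e_{ij}^k)$ equals the shear coordinate $s^t(e_{ij}^k)$ of $A^t$. Each complementary region of $\delta_A$ in $A^t$ is an ideal triangle, and these triangles are glued along the $e_{ij}^k$ with the shears $s^t$; the corresponding triangles of $(X_A)^t$ (under the combinatorial identification fixed once and for all by $g$) are glued along $g(e_{ij}^k)$ with the \emph{same} shears $\rho^t=s^t$. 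I would therefore map each ideal triangle of $A^t$ isometrically onto the corresponding ideal triangle of $(X_A)^t$; the matching of shears along every $e_{ij}^k$ makes these isometries agree on the common edges, so they assemble into a well-defined local isometry $g^t\colon A^t\to (X_A)^t$ carrying $\delta_A$ into $\lambda_A$. On the crown part this map agrees with the prescribed isometry $f^t$, so $g^t$ restricts correctly on $\widehat{C^t}$.

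The main obstacle is the injectivity of $g^t$, which I would deduce from the injectivity of $g$ together with the defining feature of Bonahon's cataclysm deformation: it keeps the lamination $\lambda_A^d$ fixed and alters only the shears, so the combinatorics of the complementary triangles and of their adjacencies is the same for $(X_A)^t$ as for $X_A$, and likewise for $A^t$ versus $A$. Consequently $g^t$ realizes exactly the same combinatorial identification of the triangles of $A^t$ with a subcollection of the triangles of $(X_A)^t$ as $g=g^0$ does, and since $g$ is injective, distinct triangles of $A^t$ have distinct images. Then, if $g^t(x)=g^t(y)$, the common image lies in a closed triangle $T$ of $(X_A)^t$; by this combinatorial injectivity the points $x,y$ lie in triangles of $A^t$ whose image is $T$, which forces them into the same triangle (or two triangles sharing the edge through the image point), and since $g^t$ is an isometry there we conclude $x=y$. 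This shows $g^t$ is a $1$-$1$ local isometry.

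Finally, for the case $t=0$ I would observe that the correction cocycle vanishes identically: for instance $\epsilon^0(e_{ij}^k)=-s^0(e_{ij}^k)+s^0(e_{ij}^k)=0$ and $\epsilon^0(a_i^j)=0$, and the terms on the boundary leaves $b_i^j$ cancel in the same way, so that $\rho^0$ is precisely the shearing cocycle of $X_A^d$ itself. By Bonahon's Theorem \ref{shear_coordinates} the hyperbolic structure is determined by its shearing cocycle, hence $(X_A^d)^0=X_A^d$ and $(X_A)^0=X_A$; the triangle-matching construction then returns exactly the identification $g$, so $g^0=g$ and $((X_A)^0,\lambda_A,g^0)=(X_A,\lambda_A,g)$.
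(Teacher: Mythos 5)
Your proposal is correct and follows essentially the same route as the paper: the paper's entire proof is the single computation $\rho^t(e_{ij}^k) = e^t\rho^0(e_{ij}^k) + \epsilon^t(e_{ij}^k) = s^t(e_{ij}^k)$, identifying the shears of $(X_A)^t$ along $\delta_A$ with the shear coordinates of $A^t$, exactly your first display. Your additional material (the triangle-by-triangle assembly, the combinatorial injectivity argument, and the $t=0$ check via $\epsilon^0 \equiv 0$ --- where your cancellation on the $b_i^j$ terms is the intended one, the paper's printed plus sign in $\epsilon^t(b_i^j)$ being a typo for a minus, as the switch relation forces) simply makes explicit what the paper leaves implicit.
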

\begin{proof}
By definition, for every $e_{ij}^k \in \delta_{A}$ we have: 
$$\rho^t(e_{ij}^k) = e^t \cdot \rho^0(e_{ij}^k) + \epsilon^t(e_{ij}^k) = e^t \cdot s^0(e_{ij}^k) + (- e^t s^0(e_{ij}^k) + s^t(e_{ij}^k)) = s^t(e_{ij}^k)\,,$$ 
which are the shear coordinates of $A^t$ for the ideal triangulation $\delta_A$.
\end{proof}

We also want to construct a stretch map for the stretched triangulated surface $(X_A)^t$. This will be given by the composition of Thurston's stretch map and a shear map. Consider the two hyperbolic surfaces $(X_A^d)_{Th}^t$ and $(X_A^d)^t$. We will denote by $\kappa^t$ the \emph{shear map}  between them with respect to the lamination $\lambda_A^d$:
$$\kappa^t:(X_A^d)_{Th}^t \setminus \lambda_A^d \rightarrow  (X_A^d)^t \setminus \lambda_A^d.$$
Every triangle in the complement of $\lambda_A^d$ in $(X_A^d)_{Th}^t$ is mapped isometrically to the corresponding triangle in the complement of $\lambda_A^d$ in $(X_A^d)^t$ (see Bonahon \cite[Sec. 4]{Bonahon_pleated}). Thurston \cite{Thurston} called  this map a \emph{cataclysm}.

Consider the sublamination $\mu_A \subset \lambda_A$ defined in Definition \ref{def:X_A}. This is the closure of the image of the lamination $\delta_A$. Its double, $\mu_A^d$ is a lamination on $X_A^d$.  

\begin{lemma}  \label{lemma:cataclysm}
The map $\kappa^t$ extends continuously to an isometry 
$$\bar{\kappa}^t:(X_A^d)_{Th}^t \setminus \mu_A^d \rightarrow  (X_A^d)^t \setminus \mu_A^d~.$$
\end{lemma}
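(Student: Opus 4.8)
The plan is to read the extension directly off the cocycle data, using the fact that a shear map (cataclysm) between two hyperbolic structures carrying the same maximal lamination is completely rigid on complementary triangles. By construction $\kappa^t$ is an isometry on each ideal triangle of $(X_A^d)_{Th}^t\setminus\lambda_A^d$, and the only obstruction to glueing two neighbouring triangle-isometries continuously across a shared leaf $\ell$ is the \emph{relative shearing} of the two structures along $\ell$. The two triangles adjacent to $\ell$ are glued with shear $e^t\rho^0(\ell)$ in $(X_A^d)_{Th}^t$ and with shear $\rho^t(\ell)$ in $(X_A^d)^t$, so the two triangle-isometries agree on $\ell$ — and hence $\kappa^t$ extends continuously across $\ell$ — precisely when these shears coincide, that is, when
$$\rho^t(\ell)-e^t\rho^0(\ell)=\epsilon^t(\ell)=0.$$
This is exactly the local description of the cataclysm recorded by Bonahon \cite{Bonahon_pleated}, and it reduces the lemma to locating the leaves on which the difference cocycle $\epsilon^t$ is non-zero.

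Next I would prove that $\{\epsilon^t\neq 0\}\subset\mu_A^d$, so that $\epsilon^t$ vanishes identically on $\lambda_A^d\setminus\mu_A^d$. By its definition $\epsilon^t$ is supported on the edges of the subtracks $\tau_{ij}$: it is $0$ on every leaf outside $\bigcup_{ij}\tau_{ij}$ and on each spike edge $a_i^j$, while it can be non-zero only on the special edges $e_{ij}^k$ and on the cycle leaves $b_i^j$. The special edges lie in $\pi(\delta_A)\subset\mu_A$. The point then is that the support of $\epsilon^t$, together with its closure, is contained in $\mu_A$: the leaves $e_{ij}^k$ spiral onto the core geodesics of the auxiliary cylinders, which are exactly the leaves of $\nu_A=\mu_A\setminus\pi(\delta_A)$, and the cycle leaves $b_i^j$ sit in this same closure $\mu_A=\overline{\pi(\delta_A)}$. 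Since $\rho^t$ and $\lambda_A^d$ are symmetric under the doubling involution, $\epsilon^t$ is symmetric as well, so its support and its accumulation set are contained in $\mu_A\cup\mu_A'=\mu_A^d$.

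With this in hand the conclusion is immediate: on the open surface $(X_A^d)_{Th}^t\setminus\mu_A^d$ every remaining leaf of $\lambda_A^d$ carries equal shear in the two structures, so the triangle-isometries comprising $\kappa^t$ match along each such leaf and assemble into a continuous map $\bar\kappa^t$. Equivalently, since $\rho^t$ and $e^t\rho^0$ agree on all of $\lambda_A^d\setminus\mu_A^d$, the two hyperbolic structures induce the \emph{same} shearing data on $X_A^d\setminus\mu_A^d$ and therefore restrict to isometric hyperbolic surfaces there, the isometry respecting the triangulation; $\bar\kappa^t$ is this isometry. It is a local isometry because it is an isometry on each triangle and glues isometrically across every no-shear leaf, and it is bijective because running the identical construction with the two structures exchanged produces its inverse, so $\bar\kappa^t$ is a Riemannian isometry onto $(X_A^d)^t\setminus\mu_A^d$.

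The delicate step — and the reason we delete the whole closed sublamination $\mu_A^d$ rather than merely the special edges — is the accumulation of the jump leaves onto $\nu_A$. Across each special edge $e_{ij}^k$ there is a genuine non-zero shear jump, quantified by the stretch difference formula (Proposition \ref{cocycle_difference}), and these jumps pile up as one approaches a core leaf, so no continuous extension can survive at $\nu_A$; removing all of $\mu_A^d$ excises both the jump leaves and their limit set at once. The main technical content is therefore the verification in the second paragraph: that $\mu_A^d$ is exactly the closure of the support of $\epsilon^t$ — in particular that the cycle leaves $b_i^j$, on which $\epsilon^t\neq 0$, lie in $\mu_A^d$ and not in $\lambda_A^d\setminus\mu_A^d$ — and that $\bar\kappa^t$ remains continuous arbitrarily close to $\mu_A^d$. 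This is precisely the behaviour that the construction of $\epsilon^t$ through the stretch difference formula was designed to guarantee.
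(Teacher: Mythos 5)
Your overall strategy is the same as the paper's: identify $\epsilon^t=\rho^t-e^t\rho^0$ as the obstruction, show it is supported in $\mu_A^d$, conclude that the two structures carry the same shearing data off $\mu_A^d$ and that $\kappa^t$ therefore extends there, invoking Bonahon. But the verification you yourself flag as ``the main technical content'' is wrong in both of its mechanisms. The leaves $e_{ij}^k$ do not spiral onto the core geodesics $\hat{\gamma}_j$ of the auxiliary cylinders: every edge of $\delta_{A_j}$ runs from a spike of $A_j$ to a spike of $A_j$, and the cores $\hat{\gamma}_j$ are not leaves of $\lambda_A$ at all — they are \emph{transverse} to the triangulation, being crossed by its diagonals. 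Consequently $\nu_A$ is not ``the core geodesics of the auxiliary cylinders''; it consists of the limit leaves, lying in $\pi(\lambda_{X_C})$, onto which the ends of the crown spikes accumulate in $X_A$ (compare $\nu_X=\mu_X\setminus\pi(\partial^{nc}B^t)$ in Section \ref{subsec:gsl}). Likewise, the cycle leaves $b_i^j$ do not lie in $\mu_A^d$ by any accumulation: each $b_i^j$ borders the interior of a geometric piece on both sides, so it is an isolated leaf and cannot lie in the closure of the other leaves. The true (and much simpler) reason is direct membership: $b_i^j$ is itself an edge of the ideal triangles of $\widehat{\G_k}$ (e.g.\ the side $\overline{AB}$ of the triangle $\overline{ABD'}$ in Case (1)), hence a leaf of $\pi(\delta_A)\subset\mu_A$ by Definition \ref{def:X_A}. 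So the containment $\{\epsilon^t\neq 0\}\subset\mu_A^d$ you need is correct, but as argued in your second and fourth paragraphs it does not hold up.

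The second gap is in the extension argument itself. Your first paragraph glues triangle-isometries across a shared leaf $\ell$ by matching the shear of ``the two triangles adjacent to $\ell$''; this presupposes that $\ell$ is isolated. But $\lambda_A^d\setminus\mu_A^d$ contains the double of $\pi(\lambda_{X_C})$, which is an essentially arbitrary lamination and may have minimal components with uncountably many non-isolated leaves; there a single leaf carries no individual shear (the shearing data is a transverse cocycle, distributed as a measure across families of leaves), and there is no adjacent triangle on at least one side, so the leaf-by-leaf matching never gets started. Your third paragraph asserts that ``same shearing data'' forces the restrictions to be isometric via an extension of $\kappa^t$, but that implication is precisely what the lemma claims and requires proof. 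The paper handles it by localizing: for $x\notin\mu_A^d$ choose a ball $B$ disjoint from the closed set $\mu_A^d$, observe that the \emph{cocycles} (not just leafwise shears) of the two structures agree on all arcs in $B$ since $\epsilon^t$ is supported in $\mu_A^d$, and then run the proof of \cite[Lemma 11]{Bonahon_pleated}, which controls the shear map near non-isolated leaves through the horocyclic foliation (cf.\ Section \ref{subsub:horocyclic}). Without some such device, your argument establishes the continuous extension only across the isolated leaves of $\lambda_A^d\setminus\mu_A^d$, which is strictly weaker than the statement.
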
 
\begin{proof}
Notice that $(X_A^d)_{Th}^t \setminus \mu_A^d$ is an open subset of $(X_A^d)_{Th}^t$. Given a point $x \in (X_A^d)_{Th}^t \setminus \mu_A^d$, we can choose a ball $B$ centered at $x$ and contained in $(X_A^d)_{Th}^t \setminus \mu_A^d$. Notice that the cocycles associated to the two hyperbolic structures $(X_A^d)_{Th}^t$ and $(X_A^d)^t$ differ by the cocycle $\epsilon^t$, which is supported in $\mu_A$. Hence, when we restrict our attention to the ball $B$, the two cocycles agree. To prove the proposition in the ball $B$, we can then proceed as in the proof of \cite[Lemma 11]{Bonahon_pleated}. The proof given there uses the horocyclic foliation of a triangulated surface, see also Section \ref{subsub:horocyclic} where we will generalize that notion for surfaces with boundary.    
\end{proof}

We are now ready to construct the stretch map for $(X_A)^t\setminus \mu_A$. 

\begin{proposition}[Existence of a stretch map for $X_A \setminus \mu_A$]\label{prop:stretch_map_X_A}
For every $t\geq0$, there exists a continuous map $\psi^t: X_A \setminus \mu_A \rightarrow (X_A)^t \setminus \mu_A$ homotopic to the identity with the following properties: 
\begin{enumerate}
\item $\psi^t$ stretches the arc-length of the leaves of $\lambda_A \setminus \mu_A$ by $e^t$; 
\item on every triangular geometric piece $\mathcal T$ in $X_A \setminus \lambda_A$, the map $\psi^t$ restricts to $\psi^t|_{\mathcal{T}} = \phi^t: \mathcal{T} \to \mathcal{T}^t$ as in Lemma \ref{lemma:triangle}; 
\item $\psi^t$ is locally Lipschitz with local Lipschitz constant equal to $e^t$.  
\end{enumerate}
\end{proposition}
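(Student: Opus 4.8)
The plan is to obtain $\psi^t$ as the restriction to $X_A$ of a map defined on the double $X_A^d$, built by post-composing Thurston's stretch map with the cataclysm of Lemma \ref{lemma:cataclysm}. Since $X_A^d$ is a finite hyperbolic surface without boundary carrying the maximal lamination $\lambda_A^d$, Thurston \cite{Thurston} provides a stretch map $\Theta^t \colon X_A^d \to (X_A^d)_{Th}^t$; it maps each leaf of $\lambda_A^d$ to a leaf multiplying arc-length by $e^t$, restricts on each complementary ideal triangle to the map $\phi^t$ of Lemma \ref{lemma:triangle}, and satisfies $\mathrm{Lip}(\Theta^t)=e^t$. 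I would then set
$$\psi_d^t := \bar{\kappa}^t \circ \Theta^t \colon X_A^d \setminus \mu_A^d \longrightarrow (X_A^d)^t \setminus \mu_A^d,$$
which is well defined because $\Theta^t$ preserves $\lambda_A^d$, hence sends $\mu_A^d$ to $\mu_A^d$, and because $\bar{\kappa}^t$ from Lemma \ref{lemma:cataclysm} is defined precisely on the complement of $\mu_A^d$.

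Next I would check that $\psi_d^t$ descends to $X_A$. Both $\lambda_A^d$ and the shearing cocycle $\rho^t$ are symmetric for the involution $\sigma$ of $X_A^d$, so the structures $(X_A^d)_{Th}^t$ and $(X_A^d)^t$ are symmetric, and both $\Theta^t$ (canonical, hence equivariant) and $\bar{\kappa}^t$ (a cataclysm between two symmetric structures) commute with the respective involutions. Thus $\psi_d^t$ is equivariant; since $\psi_d^0$ is the identity and the construction varies continuously in $t$, the map $\psi_d^t$ preserves the half $X_A$ for every $t \geq 0$, so its restriction $\psi^t := \psi_d^t|_{X_A \setminus \mu_A}$ maps $X_A \setminus \mu_A$ to $(X_A)^t \setminus \mu_A$ and is homotopic to the identity. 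Property (2) is then immediate: on an ideal triangle $\mathcal{T}$ of $X_A \setminus \lambda_A$ the map $\Theta^t$ is Thurston's $\phi^t$, while $\bar{\kappa}^t$ sends the triangle isometrically onto the corresponding triangle of $(X_A)^t$; composing an isometry of ideal triangles after $\phi^t$ again yields a map of the form $\phi^t$, so $\psi^t|_{\mathcal{T}}=\phi^t$.

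For property (1), $\Theta^t$ stretches every leaf of $\lambda_A^d$ by $e^t$ and $\bar{\kappa}^t$ is an isometry, so arc-length along the leaves of $\lambda_A^d \setminus \mu_A^d$ is multiplied by $e^t$; restriction gives the claim for $\lambda_A \setminus \mu_A$. For property (3), $\Theta^t$ is globally $e^t$-Lipschitz and $\bar{\kappa}^t$ is a local isometry, so by Lemma \ref{LOC-2-GLOB} the composition $\psi_d^t$, and hence $\psi^t$, is locally $e^t$-Lipschitz; the matching lower bound on the local Lipschitz constant follows from the stretching of leaves established in (1). Note that $\psi^t$ is only \emph{locally} Lipschitz: the shearing encoded by $\bar{\kappa}^t$ accumulates along $\mu_A$, so distances between points separated by $\mu_A$ need not be controlled, which is exactly why the statement is phrased with a local constant.

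The genuinely hard analytic input, namely that the cataclysm extends from a triangle-by-triangle isometry to a continuous isometry $\bar{\kappa}^t$ on the whole complement of $\mu_A^d$, has already been isolated in Lemma \ref{lemma:cataclysm}; granting it, the present argument is a formal composition. The points needing care are the symmetry/descent to $X_A$ (which rests on the symmetry of $\rho^t$) and the identification of the target ideal triangles in property (2). I expect the descent step to be the main obstacle, since one must confirm that the equivariant, identity-homotopic map $\psi_d^t$ preserves the half $X_A$ rather than interchanging the two copies of the double.
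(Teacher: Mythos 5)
Your proposal is correct and follows essentially the same route as the paper, whose proof likewise defines $\psi^t = \bar{\kappa}^t \circ \tau^t$, composing Thurston's stretch map on the double $X_A^d$ with the cataclysm of Lemma \ref{lemma:cataclysm} and restricting to $X_A \setminus \mu_A$, with properties (1)--(3) read off from the two factors exactly as you do. The equivariance/descent step you flag as the main obstacle is left implicit in the paper (which has already noted that $\lambda_A^d$ and $\rho^t$ are symmetric, so $(X_A^d)^t$ is symmetric), and it is in fact immediate since both $\tau^t$ and $\bar{\kappa}^t$ send each leaf and each complementary triangle to the corresponding marked stratum, so the half $X_A$ is preserved without any continuity-in-$t$ argument.
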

\begin{proof}
We will denote Thurston's stretch map by 
$$\tau^t : X_A^d \rightarrow (X_A^d)_{Th}^t.  $$
This map was introduced in \cite{Thurston}. On every triangle in $X_A^d \setminus \lambda_A^d$, this map agrees with the map $\phi^t$ from Lemma \ref{lemma:triangle}. The map $\tau^t$ is continuous, stretches the arc-length of the leaves of $\lambda_A$ by $e^t$ and $\Lip(\tau^t) = e^t$. We define the map $\psi^t$ as follows:
$$\psi^t = \bar{\kappa}^t \circ \tau^t : X_A \setminus \mu_A \rightarrow (X_A)^t \setminus \mu_A.$$
It satisfies the stated properties because of the properties of $\tau^t$ and Lemma \ref{lemma:cataclysm}. 
\end{proof}

\subsubsection{Proof of Proposition \ref{shearing_nu}}

\begin{lemma}
The assignments of real weights $(\epsilon^t(e))_{e\in \tau}$ on the edges of $\tau$ defines a transverse cocycle for the lamination $\lambda_A^d$.
\end{lemma}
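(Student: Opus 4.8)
The plan is to recognize this as the standard setting of Bonahon's dictionary between transverse cocycles and weight systems on train tracks, and to reduce the whole statement to the stretch difference formula already established. Concretely, by the correspondence underlying Theorem \ref{Bonahon3} (in its form not involving the cusp condition), a function $\alpha\colon E_\tau\to\mathbb{R}$ on the edges of a train track $\tau$ snugly carrying $\lambda_A^d$ is the edge-weight system of a transverse cocycle for $\lambda_A^d$ precisely when $\alpha$ satisfies the switch relations at every switch of $\tau$. So the only thing I need to verify is that the weights $\epsilon^t$ obey the switch relations everywhere.

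First I would dispose of the switches lying away from the spikes. Since $\epsilon^t(e)=0$ for every edge $e$ not contained in one of the subtracks $\tau_{ij}$, at any switch that is not one of the spike switches $v_i^j$ every incident edge carries weight zero and the switch relation $0=0$ holds trivially. Hence the verification localizes at each $v_i^j$, the switch modelled on Figure \ref{traintrack}.

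The heart of the proof is then the computation at $v_i^j$. Reading off Figure \ref{traintrack}, the two boundary leaves $b_i^j$, $b_{i+1}^j$ lie on one side of the switch while the leaves $e_{ij}^1,\dots,e_{ij}^{n_i}$ entering the spike lie on the other; the branch $a_i^j$ running into the spike carries weight $\epsilon^t(a_i^j)=0$ and contributes nothing. The switch relation therefore takes the form
\[ \epsilon^t(b_i^j)+\epsilon^t(b_{i+1}^j)=\sum_{k=1}^{n_i}\epsilon^t(e_{ij}^k). \]
Substituting the definitions, the right-hand side is $\sum_{k=1}^{n_i}\bigl(s^t(e_{ij}^k)-e^t s^0(e_{ij}^k)\bigr)$, which is exactly the quantity evaluated by the stretch difference formula, namely $-\delta(e^t s_i)+e^t\delta(s_i)-\delta(e^t s_{i+1})+e^t\delta(s_{i+1})$; and, once the orientation signs are read off from Definition \ref{def:displacement}, this equals $\epsilon^t(b_i^j)+\epsilon^t(b_{i+1}^j)$. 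Thus the switch relation at $v_i^j$ is literally Proposition \ref{cocycle_difference}, and nothing more needs to be computed.

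The step I expect to be the main obstacle is the bookkeeping at the spike switch, rather than any genuine analytic difficulty. One must (i) identify correctly which branches sit on which side of $v_i^j$ so that the switch relation takes the displayed shape; (ii) keep careful track of the orientation conventions of Notation \ref{not:minus notation} and Definition \ref{def:displacement}, since the sign attached to each $\epsilon^t(b_i^j)$ is exactly what makes the relation agree with Proposition \ref{cocycle_difference}; and (iii) check that the weights are assigned consistently on edges shared by two subtracks. For the latter, each diagonal $e_{ij}^k$ has both ideal endpoints at spikes and so is shared by two fans, but its weight depends only on its own shears and is therefore well defined from either side, while each boundary leaf $b_i^j$ is shared by the consecutive switches $v_{i-1}^j$ and $v_i^j$ with a single weight. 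Once these conventions are fixed, the switch relation at every spike is an instance of the already-proven stretch difference formula, and the lemma follows.
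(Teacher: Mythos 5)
Your overall strategy is the paper's: reduce via Bonahon's correspondence (Theorem \ref{Bonahon3}) to checking the switch relations, observe that the relation is trivial at every switch outside $\bigcup\tau_{ij}$, and invoke Proposition \ref{cocycle_difference} at the spike switches. But the verification at $v_i^j$ — the heart of the proof — rests on a wrong reading of the switch combinatorics, and the resulting identity is false. The leaves $b_i^j,e_{ij}^1,\dots,e_{ij}^{n_i},b_{i+1}^j$ \emph{all} enter the spike $a_i^j$, so near the spike they are parallel strands lying on the \emph{same} side of the switch, merging into the single branch $a_i^j$ on the other side; the two boundary leaves cannot sit opposite the fan of diagonals. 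The switch relation at $v_i^j$ is therefore
\begin{equation*}
\epsilon^t(a_i^j)=\epsilon^t(b_i^j)+\epsilon^t(b_{i+1}^j)+\sum_{k=1}^{n_i}\epsilon^t(e_{ij}^k),
\end{equation*}
which, since $\epsilon^t(a_i^j)=0$, says that the weights on \emph{all} the other branches sum to zero — not your relation $\epsilon^t(b_i^j)+\epsilon^t(b_{i+1}^j)=\sum_k\epsilon^t(e_{ij}^k)$, which differs by a sign. (This is confirmed by the relations \eqref{18}--\eqref{19} for the split track in Figure \ref{traintracksplit}.)

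The sign is not cosmetic. Proposition \ref{cocycle_difference} gives $\sum_k\epsilon^t(e_{ij}^k)=-\delta(e^t s_i)+e^t\delta(s_i)-\delta(e^t s_{i+1})+e^t\delta(s_{i+1})=-\bigl(\epsilon^t(b_i^j)+\epsilon^t(b_{i+1}^j)\bigr)$, where $\epsilon^t(b_i^j)=\delta(e^t s_i)-e^t\delta(s_i)$ (the ``$+$'' printed in the definition of $\epsilon^t(b_i^j)$ is evidently a typo: this is the sign used in Lemma \ref{lemma:alpha}, and it is the only one making any switch relation hold). So the correct relation is exactly the vanishing of the total sum, which is the stretch difference formula; your relation would instead force $\epsilon^t(b_i^j)+\epsilon^t(b_{i+1}^j)=0$, i.e.\ $\delta(e^t s_i)-e^t\delta(s_i)+\delta(e^t s_{i+1})-e^t\delta(s_{i+1})=0$ for all $t$, which fails in general — indeed the failure of the homogeneity $\delta(e^t s)=e^t\delta(s)$ is precisely why $A_j^t\neq (A_j)^t_{\mathrm{Th}}$ (Figure \ref{compare_fig}) and why the correction cocycle $\epsilon^t$ is nonzero at all. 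Your points about well-definedness of shared weights and about the cusp condition being irrelevant to this lemma are fine; the proof is repaired simply by putting all of $b_i^j$, $b_{i+1}^j$, $e_{ij}^k$ on the same side of the switch and checking the sum vanishes.
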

\begin{proof}
By Theorem \ref{Bonahon3} we need to check that the switch relations hold at every switch $v \in \tau$. 
First assume $v= v_i^j \in \tau_{ij}$ as in Figure \ref{traintrack}. We will check the switch relation: 
\begin{align}\label{15}
\epsilon^t(a_i^j) = \epsilon^t(b_i^j) + \epsilon^t(b_{i+1}^j) + \sum_{k=1}^{n_i} \epsilon^t(e_{ij}^k)~.
\end{align}
This equation is satisfied because it is equivalent to Lemma \ref{cocycle_difference}: 
\begin{align}\label{20}
 \epsilon^t(b_{i}^j) +\epsilon^t(b_{i+1}^j) + \sum_{k=1}^{n_i} \epsilon^t(e_{ij}^k)  = 0 ~.
\end{align}
If $v$ is a switch of $\tau$ but not a switch of $\tau_{ij}$ then $\epsilon^t(e) =0$ for every edge $e$ of $\tau$ concurring in the switch. Therefore, the switch condition at $v$ is satisfied, and we conclude.
\end{proof}

\begin{lemma}\label{epsilon}
The assignments of real weights $(\rho^t(e))_{e\in \tau}$ on the edges of $\tau$ defines a transverse cocycle for the lamination $\lambda_A^d$.
\end{lemma}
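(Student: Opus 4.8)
The plan is simply to exploit the linearity of the train-track parametrization of transverse cocycles. Fix the train track $\tau$ snugly carrying $\lambda_A^d$, as in the construction above. By Theorem \ref{Bonahon3}, a function $E_\tau \to \R$ defines a transverse cocycle for $\lambda_A^d$ precisely when it satisfies the switch relations at every switch of $\tau$; the set of such functions is a linear subspace of $\R^{E_\tau}$, hence closed under addition and under multiplication by real scalars. This is the only structural fact I would need.

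First I would observe that $e^t \rho^0$ is a transverse cocycle. Indeed, $\rho^0$ is by definition the shearing cocycle of the hyperbolic structure $X_A^d$, so it is a transverse cocycle for $\lambda_A^d$; multiplying all its edge weights by the scalar $e^t$ preserves the homogeneous switch relations, so $e^t \rho^0$ is again a transverse cocycle. Next, the previous lemma already shows that $\epsilon^t$ is a transverse cocycle for $\lambda_A^d$: this is exactly the content of the switch-relation computation of equations (\ref{15}) and (\ref{20}) carried out there, which in turn rests on the stretch difference formula of Proposition \ref{cocycle_difference}.

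Finally, since $\rho^t(e) = e^t \rho^0(e) + \epsilon^t(e)$ for every edge $e \in \tau$, the function $\rho^t$ is a sum of two functions each satisfying the switch relations, and therefore it satisfies the switch relations at every switch as well. By Theorem \ref{Bonahon3} it thus defines a transverse cocycle for $\lambda_A^d$, which is the claim.

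There is no genuine obstacle in this particular step: all of the real work has already been done in the preceding lemma (and hence in Proposition \ref{cocycle_difference}), while the present statement is a one-line consequence of linearity. The only point worth flagging is that Lemma \ref{epsilon} asserts merely that $\rho^t$ is a transverse cocycle; the stronger facts that it satisfies the cusp condition and that it is the shearing cocycle of an honest hyperbolic structure on $X_A^d$, via the positivity criterion for Thurston's symplectic form in Theorem \ref{shear_coordinates}, are precisely what remain to be established in order to complete the proof of Proposition \ref{shearing_nu}.
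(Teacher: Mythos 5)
Your proposal is correct and takes essentially the same route as the paper: the published proof likewise observes that $\epsilon^t$ is a transverse cocycle by the preceding lemma, that $\rho^t = e^t\cdot\rho^0 + \epsilon^t$ is therefore a linear combination of transverse cocycles, and concludes via the vector-space structure guaranteed by Theorem \ref{Bonahon3}. Your closing remark is also accurate: the cusp condition and the positivity of $\omega(\rho^t,\cdot)$ on transverse measures are exactly what the paper defers to Lemma \ref{lemma_boh} and the proof of Proposition \ref{shearing_nu}.
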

\begin{proof}
By Lemma \ref{epsilon}, we have $\epsilon^t \in H(\lambda_A^d, \mathbb R)$, so $\rho^t$ is a linear combination of transverse cocycles for $\lambda_A^d$.  As $H(\lambda_A^d, \mathbb R)$ is a vector space by Theorem \ref{Bonahon3}, we have $\rho^t \in H(\lambda_A^d, \mathbb R)$.
\end{proof}

\begin{lemma}\label{lemma_boh}
For every measure $\mu$ on $\lambda_A^d$, we have $\omega(\epsilon^t, \mu) =0$
\end{lemma}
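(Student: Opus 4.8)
The plan is to compute $\omega(\epsilon^t,\mu)$ directly from the explicit train-track formula of Lemma \ref{lemma:sympl form}, exploiting that $\epsilon^t$ is supported on the subtracks $\tau_{ij}$ while $\mu$, being a genuine transverse measure, must vanish on every edge of these subtracks.

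First I would reduce to a generic train track. The switch $v_i^j$ has high valence, with incoming edge $a_i^j$ and outgoing edges $b_i^j, e_{ij}^1,\dots,e_{ij}^{n_i},b_{i+1}^j$, so Lemma \ref{lemma:sympl form} does not apply verbatim. I would split $v_i^j$ into a fan of trivalent switches; since Thurston's symplectic form does not depend on the chosen train track, this does not change $\omega$ and produces a generic track $\tau'$ still snugly carrying $\lambda_A^d$. On $\tau'$ the weights $\epsilon^t$ and $\mu$ extend by the switch relations, each internal edge of the fan carrying the partial sum of the weights of the outgoing edges lying on one side of it.

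The key geometric input is that $\mu$ vanishes on every edge of each $\tau_{ij}$. Indeed, the edges $b_i^j,b_{i+1}^j$ are crown teeth, i.e.\ isolated bi-infinite leaves, while the $e_{ij}^k$ are the special edges and the extra diagonals, which are either isolated or spiral onto the core geodesic $\hat{\gamma}_j$. In either case such a leaf cannot lie in the support of a compactly supported transverse measure: a single bi-infinite geodesic is not a minimal component carrying a measure, and a leaf spiralling onto a closed geodesic would force a transverse arc near $\hat{\gamma}_j$ to have infinite mass. Hence $\mu(b_i^j)=\mu(b_{i+1}^j)=\mu(e_{ij}^k)=0$, the switch relation at $v_i^j$ then yields $\mu(a_i^j)=0$, and consequently every internal edge of the fan over $v_i^j$ carries zero $\mu$-weight as well.

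Finally I would assemble the computation. Since $\epsilon^t$ vanishes on every edge adjacent to a switch outside $\bigcup_{ij}\tau_{ij}$ (as recorded in the proof that $\epsilon^t$ is a transverse cocycle), only the switches in the fans over the $v_i^j$ can contribute to the sum in Lemma \ref{lemma:sympl form}. But at each such trivalent switch all three adjacent edges carry $\mu$-weight zero, so every summand $\epsilon^t(e_v^r)\mu(e_v^l)-\epsilon^t(e_v^l)\mu(e_v^r)$ vanishes, and therefore $\omega(\epsilon^t,\mu)=0$. The only delicate point, and the step I would write out in full, is the structural claim that $\mu$ is null on the crown teeth and diagonals; once that fact about the support of transverse measures is in hand, the conclusion is a bookkeeping consequence of the switch relations and the locality of $\omega$.
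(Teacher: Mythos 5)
Your proof is correct, and it follows the paper's skeleton up to a point: the paper also splits $\tau$ to a generic track, also observes that $\epsilon^t$-charged edges occur only at the fan switches, and also uses $\mu(e_{ij}^k)=0$ (justified there by the fact that all the $e_{ij}^k$ enter the same spike $a_i^j$, so they either run into a cusp or accumulate on a sublamination, and in either case carry no transverse measure). Where you genuinely diverge is at the edges $b_i^j$: the paper does \emph{not} claim $\mu(b_i^j)=0$. Instead it keeps these weights arbitrary and, using the switch relations for $\epsilon^t$ (its equations (18)--(19)), reduces the contribution of the fan over $v_i^j$ to $-\mu(b_i^j)\epsilon^t(b_i^j)+\mu(b_{i+1}^j)\epsilon^t(b_{i+1}^j)$, which then telescopes to zero around each boundary cycle $c_j=\bigcup_i b_i^j$. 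Your stronger vanishing claim is true, but note the one bridging step you should write out: $\mu(b_i^j)$ is a \emph{branch} weight, i.e.\ the mass of a tie, not the atom of the single leaf $b_i^j$, so from ``an isolated bi-infinite leaf supports no transverse measure'' you still need that the branch carries no other measured leaves. This does hold here: $b_i^j$ is flanked along its entire length by complementary ideal triangles of $\lambda_A^d$ (one in the triangulated cylinder, one in $X_C$), and in a snug track the corresponding trigon regions separate this branch from every other leaf, so its ties meet the lamination only in $b_i^j$; also your parenthetical geography is slightly off (the ends of the $e_{ij}^k$ and $b_i^j$ run into cusps or accumulate on sublaminations of $\lambda_A^d$, not specifically onto the core $\hat{\gamma}_j$), though nothing in the argument depends on this. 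In exchange for this extra care, your route kills every summand of Lemma \ref{lemma:sympl form} edge-by-edge and never needs the cyclic structure of $c_j$, whereas the paper's telescoping argument is insensitive to the leaf-versus-branch subtlety and to any hypothetical extra mass carried along the $b$-branches, at the cost of the additional bookkeeping via Proposition \ref{cocycle_difference} and the switch relations.
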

\begin{proof}
We compute $\omega(\epsilon^t, \mu)$ using Lemma \ref{lemma:sympl form}.  
After a finite sequence of splittings, $\tau$ can be made generic. In particular, splitting each subtrack $\tau_{ij}$ we get to a generic subtrack $\tau_{ij}'$ as in Figure \ref{traintracksplit}. Note that if $v$ is a switch and $v \not \in \tau_{ij}'$ then $\epsilon^t(e_v^r) = \epsilon^t(e_v^l) = 0$ because $\epsilon^t(e) = 0$ for every edge $e \not \in \bigcup \tau_{ij}$.
Therefore,  we just need to look at all the switches $v \in \tau_{ij}'$, and we have: 
$$ \omega(\epsilon^t, \mu) = \sum_{j} \sum_{i} \sum_{w \in \tau_{ij}' } [\epsilon^t(e_w^r) \mu(e_w^l) - \epsilon^t(e_w^l) \mu(e^r_w)] . $$

\begin{figure}[htbp]
\begin{center}
\psfrag{v_j}{\tiny $w_0$}
\psfrag{v}{\tiny $w_{n_i}$}
\psfrag{w}{\tiny $w_1$}
\psfrag{a_j}{$a_i^j$}
\psfrag{b_i}{$b_i^j$}
\psfrag{f1}{$f_1$}
\psfrag{f2}{$f_2$}
\psfrag{b_i+1}{$b_{i+1}^j$}
\psfrag{e_i^j}[c]{$e_{ij}^1$}
\psfrag{e_i^k}[Tr][Br]{~  $~e_{ij~~}^{n_i~} ~$}
\includegraphics[width=3.6cm]{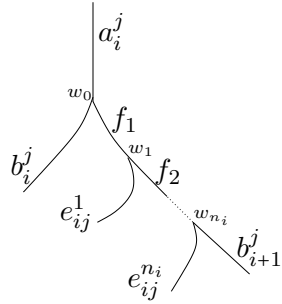}
\end{center}
\caption{Splitting $\tau_{ij} $ to make it generic}\label{traintracksplit}
\end{figure}

Note that for every measure $\mu$ on $\lambda_A^d$, we must have $\mu(e_{ij}^k) = 0$ for all $k=1, \ldots, n_i$. Indeed, all of the $e_{ij}^k$'s enter the same spike $a_i^j \in A_j \subset X_A$, and either all the $ e_{ij}^k$'s enters the same cusp or they accumulate on the same sublamination of $\lambda_A^d$. In either case for each of them $\mu(e_{ij}^k) = 0$. It follows that at every switch $w \neq w_0 \in \tau_{ij}'$ we have: 
\begin{align}\label{17}
\epsilon^t(e_w^r) \mu(e_w^l) &= 0. 
\end{align} 

Moreover, by the switch relations for $\epsilon^t$, we have: 
\begin{align}
\epsilon^t(f_1) &= \epsilon^t(a_i^j) - \epsilon^t(b_i^j) =  - \epsilon^t(b_i^j)  \label{18}\\ 
\epsilon(b_i^j) + \epsilon^t(e_{ij}^1) + \ldots + \epsilon^t(e_{ij}^{n_i}) & =  \epsilon^t(a_i^j) - \epsilon^t(b_{i+1}^j) = - \epsilon^t(b_{i+1}^j) \label{19}
\end{align}
Using (\ref{17}), (\ref{18}) and (\ref{19})  
in the computation of $\omega(\epsilon^t, \mu)$, we have: 
\begin{align*}
\omega(\epsilon^t, \mu) & = \sum_{j} \sum_{i}  \sum_{w \in \tau_{ij}' } [\mu(e_w^l) \epsilon^t(e_w^r) - \epsilon^t(e_w^l) \mu(e^r_w)] \\ 
& =  \sum_{j} \sum_{i} \left[ \mu(b_i^j) \epsilon^t(f_1) - \mu(b_{i+1}^j) [\epsilon^t(b_i^j) + \epsilon^t(e_{ij}^1) + \ldots + \epsilon^t(e_{ij}^{n_i})] \right] \tag*{by (\ref{17}) } \\ 
& =  \sum_{j} \sum_{i} \left[ - \mu(b_i^j) \epsilon^t(b_i^j) + \mu(b_{i+1}^j) \epsilon^t(b_{i+1}^j) \right]  \tag*{by (\ref{18}) and (\ref{19})}\\ 
&=\sum_j 0 = 0 ~. 
\end{align*}
Indeed, for every $j$ we have $$\sum_{i} \left[ - \mu(b_i^j) \epsilon^t(b_i^j) + \mu(b_{i+1}^j) \epsilon^t(b_{i+1}^j) \right]  = 0 ~$$ because the $b_i^j$'s form the cycle $c_j = \bigcup_{i} b_{i}^j \subset \partial A_j^\Sigma$. 
\end{proof}

\begin{proof}[Proof of Proposition \ref{shearing_nu}]
Note that $\rho^0$ is the shearing cocycle associated with a hyperbolic structure, hence by Theorem \ref{Bonahon4} it satisfies 
\[\omega(\rho^0, \alpha) > 0\] 
for every transverse measure $\alpha$ on $\lambda_A^d$. By the bi-linearity of $\omega$ and Lemma \ref{lemma_boh}, for every transverse measure $\alpha$ on $\lambda_A^d$ we have: 
$$\omega(e^t \cdot \rho^0 + \epsilon^t, \alpha) =  e^t \cdot \omega(\rho^0, \alpha) + \omega(\epsilon^t, \alpha) =  e^t \cdot \omega(\rho^0, \alpha) > 0 .$$
The statement then follows from Theorem \ref{Bonahon4}.
\end{proof}

\section{Generalized Stretch Lines}     \label{sec:gsl}

In this section we finally prove Theorem \ref{theorem:S}.

\subsection{Generalized stretch lines}   \label{subsec:gsl}

Let $X\in \T(S)$ and $\lambda$ a maximal lamination on $X$. In this subsection we will define the generalized stretch line starting from $X$ and directed by $\lambda$: for every $t \geq 0$ we will define an element $X_\lambda^t \in \T(S)$. 
 
In Section \ref{sec:S_A}, we defined the stretched triangulated surface $(X_A)^t$, with a 1-1 local isometry of the stretched auxiliary multicylinder:
$$g_t:A^t \rightarrow (X_A)^t.$$
Recall that 
$$A^t = \widehat{C^t} \cup \widehat{R^t},$$
where $\widehat{C^t}$ is the union of $m$ cylinders $\widehat{C_j^t}$ which are isometric to cylinders in the stretched boundary block $B^t$ (See Figure \ref{S_A}.)   

We define:  
$$(X_{C})^t := \overline{({X_A})^t \setminus g^t(\widehat{R^t})} \subset ({X_A})^t .$$ 
This is a hyperbolic structure on a surface with boundary homeomorphic to $X_C$. It contains a copy of $C^t$, we will denote the 1-1 local isometry by $h^t := g^t \circ f^t: C^t \hookrightarrow (X_C)^t$.

We can now define the hyperbolic structure $X_\lambda^t$, for every $t\geq 0$:
$$X_\lambda^t :=  B^t \sqcup (X_{C})^t / \sim,$$
where $\sim$ identifies a point $z \in C^t$ with the point $h^t(z) \in (X_C)^t$.

\begin{proposition}\label{S^t} 
Let $\pi: B^t \cup (X_C)^t  \to X_\lambda^t$ be the projection map. The following diagram is commutative with all arrows 1-1 local isometries, therefore $X_\lambda^t$ is a hyperbolic structure on $S$. 
$$\xymatrix{
   & B^t \ar[dr]^{\pi_|} & \\ 
C^t \ar[ur]^{\iota} \ar[dr]_{h^t} & & X_\lambda^t  \\ 
& (X_C)^t \ar[ur]_{\pi_|} &  
}$$
\end{proposition}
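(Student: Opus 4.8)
The plan is to mimic the proof of Proposition \ref{prop:S_A}, reducing the identification along the two-dimensional crown region $C^t$ to a standard gluing of two finite hyperbolic surfaces along a compact geodesic boundary. First I would record the two maps being glued. The inclusion $\iota\colon C^t\hookrightarrow B^t$ is a $1$-$1$ local isometry because, by the construction in Section \ref{sec:stretchB}, each crown $C_i^t=\mathrm{ConvHull}(c_i^t,\gamma_i^t)$ is literally a subsurface of $B^t$. The map $h^t=g^t\circ f^t\colon C^t\hookrightarrow (X_C)^t$ is a $1$-$1$ local isometry as well, being the composition of the isometry $f^t\colon C^t\to\widehat{C^t}\subset A^t$ (Lemma \ref{lemma:A_j}) with the $1$-$1$ local isometry $g^t\colon A^t\hookrightarrow (X_A)^t$; moreover $h^t(C^t)=g^t(\widehat{C^t})$ lands inside $(X_C)^t=\overline{(X_A)^t\setminus g^t(\widehat{R^t})}$. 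Commutativity $\pi_{|B^t}\circ\iota=\pi_{|(X_C)^t}\circ h^t$ is then immediate from the definition of $\sim$, and each restriction $\pi_|$ is injective since $\sim$ matches points of $B^t$ only with points of $(X_C)^t$ and never identifies two points of the same piece.

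The geometric input for the reduction is the complementary boundary behaviour of the crown on the two sides: in $B^t$ the cycle $c^t$ lies on $\partial^{nc}B^t$ while $\Gamma^t$ is interior (it continues into $B_C^t$), whereas in $(X_C)^t$ the image $h^t(\Gamma^t)$ lies on $\partial(X_C)^t$ while $h^t(c^t)$ spirals into the interior. Writing $B^t=B_C^t\cup C^t$ with $B_C^t\cap C^t=\Gamma^t$, and using that the crown $C^t\subset B^t$ is identified isometrically with the crown $h^t(C^t)$ already contained in $(X_C)^t$, the quotient collapses to
$$X_\lambda^t\;\cong\;(X_C)^t\ \sqcup\ B_C^t\ /\ \big(z\sim h^t(z),\ z\in\Gamma^t\big),$$
exactly as in the passage from $C$ to $\Gamma$ in Proposition \ref{prop:S_A}. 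Here I must check that $B_C^t$ and $(X_C)^t$ are genuine finite hyperbolic surfaces whose relevant boundary is the compact geodesic multicurve $\Gamma^t$: the former is $\overline{B^t\setminus C^t}$, obtained from the finite-volume surface $B^t$ by cutting along the closed geodesics $\gamma_i^t$, and the latter has $\partial(X_C)^t=\Gamma^t\cup(\partial X\setminus\partial B)$, all compact, since the spiralling cycles $h^t(c^t)$ are interior. The gluing map is the isometry $h^t|_{\Gamma^t}$, which matches the lengths of the $\gamma_i^t$ on the two sides.

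Finally I would invoke the standard fact that gluing two complete hyperbolic surfaces of finite volume along compact geodesic boundary components by an isometry produces a complete hyperbolic surface of finite volume: near $\Gamma^t$ the two one-sided geodesic collars assemble into a genuine hyperbolic collar, so the hyperbolic metric extends across the seam and $\pi_{|B^t}$, $\pi_{|(X_C)^t}$ become $1$-$1$ local isometries onto their images, whose union is $X_\lambda^t$. Since the topological recipe is identical to the decomposition $X=X_C\cup_\Gamma B_C$ of Section \ref{subsec:def boundary block}, the result is homeomorphic to $S$ and carries the induced marking, so $X_\lambda^t\in\T(S)$, and the commutative diagram follows. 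The main obstacle is precisely justifying this reduction — that identifying along the thick region $C^t$ is equivalent to attaching $B_C^t$ along the single multicurve $\Gamma^t$ — which rests on the complementary extension property of $\iota$ (past $\Gamma^t$ into $B_C^t$) and of $h^t$ (past $c^t$ into the interior of $(X_C)^t$, via the collar isometry of Lemma \ref{lemma:A_j}).
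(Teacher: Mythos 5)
Your proof is correct and takes essentially the approach the paper intends: the paper in fact states Proposition \ref{S^t} without proof, treating it as immediate from the constructions via the same reduction it uses to prove Proposition \ref{prop:S_A}, namely rewriting the identification along the two-dimensional crown region $C^t$ as a gluing of $(X_C)^t$ and $B_C^t$ along the compact geodesic multicurve $\Gamma^t$ and then invoking the standard fact about gluing complete hyperbolic surfaces along compact geodesic boundary components by an isometry. Your check of the complementary boundary behaviour (that $\Gamma^t$ is boundary in $B_C^t$ and $(X_C)^t$ but interior on the other side, while the cycles $c^t$ spiral into the interior of $(X_C)^t$) makes explicit exactly what the paper leaves implicit.
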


Notice that the lamination $\lambda$ on $X_\lambda^t$ is the union
$$\lambda = \pi(\lambda_B) \cup \pi(\lambda_{X_C}) $$

Consider the set $\pi(\partial^{nc} B^t)$, the image of the union of the non-compact boundary components of $B^t$. This set is a union of finitely many geodesics, but it is in general not closed. Its closure is a sublamination of $\lambda$ which we will call $\mu_X$:
$$\mu_X = \overline{\pi(\partial^{nc} B^t)}.$$
We also denote by $\nu_X$ the lamination 
$$\nu_X = \mu_X \setminus \pi(\partial^{nc} B^t).$$
Notice that
$$\nu_X \subset \mu_X \subset \lambda.$$
These sublaminations are closely related to  
$\mu_A, \nu_A$ from Definition  \ref{def:X_A}. 

\begin{definition}
For every $X \in \T(S)$ and $\lambda$ maximal lamination, the line
$$ s_{X,\lambda}: \R_{\geq 0} \ni t \rightarrow X_\lambda^t \in \T(S) $$
is a \emph{generalized stretch line} starting from $X$ and directed by $\lambda$. 
\end{definition}

\subsection{Generalized stretch maps}  \label{subsec:stretch maps}
We will now see that the generalized stretch lines verify Theorem \ref{theorem:S}. We will need to define the \emph{generalized stretch map} $\Phi^t: X \rightarrow X_\lambda^t$. 

\subsubsection{The generalized stretch map on an open dense subset}

We will first define a map in an open dense subset, and later we will extend it everywhere. Consider the map 
$$\alpha^t: X \setminus \nu_X \rightarrow X_\lambda^t \setminus \nu_X $$
defined as follows:
\begin{align} 
\alpha^t(z):= 
\begin{cases}
\beta^t(z) &\text{ if } z \in B, \\ 
\psi^t(z) &\text{ if } z \not \in B,
\end{cases}
\end{align}
where $\beta^t:B \rightarrow B^t$ is the map constructed in Proposition \ref{prop:B}, and $\psi^t: X_A \setminus \mu_A \rightarrow (X_A)^t \setminus \mu_A$ is the map given by Proposition \ref{prop:stretch_map_X_A}. 

\begin{lemma}    \label{lemma:alpha}
The map $\alpha^t$ is well defined on $X \setminus \nu_X$ and continuous. 
\end{lemma}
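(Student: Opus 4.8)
The plan is to treat the two requirements by localizing everything to the \emph{seam} between the boundary block and the triangular part of $X$, where the two formulas defining $\alpha^t$ meet. First, well-definedness: the case split $z\in B$ versus $z\notin B$ is exhaustive and disjoint, so $\alpha^t$ assigns at most one value to each point, and it remains to check that each branch is defined and lands in the stated codomain $X_\lambda^t\setminus\nu_X$. For $z\in B\setminus\nu_X$ the map $\beta^t$ is defined on all of $B$ by Proposition \ref{prop:B}, takes values in $B^t$, and descends to $X_\lambda^t$ through the local isometry $\pi_|\colon B^t\to X_\lambda^t$ of Proposition \ref{S^t}; since $\beta^t$ carries the core geodesics $\Gamma$ to those of $B^t$, it maps $B\setminus\nu_X$ into $X_\lambda^t\setminus\nu_X$. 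For $z\notin B$ the point lies in the open triangular region $X\setminus B = X_C\setminus C$, which under the isometry of Proposition \ref{prop:S_A} embeds into $X_A$ away from the crown tips and from the core geodesics; since these are the only leaves of $\mu_A$ that could meet this region, we have $z\in X_A\setminus\mu_A$, so $\psi^t(z)$ is defined by Proposition \ref{prop:stretch_map_X_A} and lands in $(X_C)^t\hookrightarrow X_\lambda^t$, again avoiding $\nu_X$.

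For continuity, the interiors present no difficulty: on $\mathrm{int}(B)\setminus\nu_X$ the map equals the continuous $\beta^t$, and on the open triangular region it equals $\psi^t$, which is continuous on $X_A\setminus\mu_A$. The only nontrivial locus is the topological frontier between $B$ and $X\setminus B$. Because a non-triangular piece can be glued to a triangle only along a bi-infinite edge, and among such pieces only quadrilaterals possess one, this frontier is exactly $\partial^{nc}B$: a finite union of crown-tip leaves $b$, each the bi-infinite edge of a quadrilateral $Q_i\subset B$ bordering a triangle of $X\setminus B$. Thus continuity of $\alpha^t$ on $X\setminus\nu_X$ reduces to showing that, along each such $b\notin\nu_X$, the value $\beta^t$ supplies from the $B$-side agrees with the one-sided limit of $\psi^t$ taken from the triangle side.

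I would establish this agreement as follows. Both maps send $b$ to the same leaf $b^t$ of $X_\lambda^t$, via the gluings $\iota$ and $h^t$ of Proposition \ref{S^t}, and both restrict to orientation-preserving affine maps of factor $e^t$: for $\beta^t$ this is Proposition \ref{prop:B}(2), and for $\psi^t=\bar{\kappa}^t\circ\tau^t$ it holds because $\tau^t$ multiplies arc length along $b$ by $e^t$ while the cataclysm $\bar{\kappa}^t$ extends to a one-sided isometry up to $b$ by Lemma \ref{lemma:cataclysm}. An affine self-parametrization of a geodesic is pinned down by its factor together with the image of a single point, so it suffices to compare the two maps at the center $O_{Q_i}$ of $b$. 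On the $B$-side, $\beta^t(O_{Q_i})=O_{Q_i^t}$ by Lemma \ref{lemma:quad}(4). On the triangle side I would compute the one-sided limit $\psi^t(O_{Q_i})$ using the signed-distance bookkeeping of Notation \ref{not:minus notation}: the position of the stretched triangle relative to $b^t$ is governed by the cataclysm shear $\rho^t(b)$, and the correction term $\epsilon^t$, defined through the displacement function $\delta$ of Definition \ref{def:displacement}, together with Lemma \ref{lem:horocycle} and the stretch difference formula of Proposition \ref{cocycle_difference}, is engineered precisely so that this shear again sends $O_{Q_i}$ to $O_{Q_i^t}$. Hence the two maps coincide on $b$, and $\alpha^t$ is continuous there.

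The main obstacle is exactly this last computation: converting the abstract cocycle identity into the concrete statement $\psi^t(O_{Q_i})=O_{Q_i^t}$ requires carefully relating the quadrilateral center to the adjacent triangle's center across the seam and checking that the displacement correction $\epsilon^t$ cancels the discrepancy that would otherwise appear, which is the entire purpose of Proposition \ref{cocycle_difference}. A secondary point that must be addressed is the necessity of deleting $\nu_X$: along the core geodesics the crown-tip leaves accumulate, the local finiteness underlying the one-sided-limit argument breaks down, and the $B$-side and triangle-side values need no longer match, so the continuity statement genuinely requires working on $X\setminus\nu_X$.
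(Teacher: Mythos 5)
Your reduction follows the paper's proof quite closely in outline (the seam is $\partial^{nc}B$; each one-sided boundary map is affine of ratio $e^t$ on a leaf $b$; matching reduces to checking a single point), but the decisive step is exactly the one you defer. Showing that the triangle-side limit $\overline{\psi^t}$ sends $O_{\Q}$ to $O_{\Q^t}$ \emph{is} the content of the lemma, and asserting that $\epsilon^t$ is ``engineered precisely'' for this is circular in a proof of it. Moreover, your pointer to the mechanism is off: Lemma \ref{lem:horocycle} and Proposition \ref{cocycle_difference} are consumed earlier, in verifying the switch relations that make $\epsilon^t$ a transverse cocycle (Proposition \ref{shearing_nu}), i.e.\ in guaranteeing that $(X_A)^t$ exists at all; they do not enter the continuity matching. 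What the paper actually does is introduce the \emph{other} one-sided extension $\underline{\psi}^t$ of $\psi^t$ to $b$, from the auxiliary-cylinder side: by Thurston's normalization it sends the center $O_{\TT}$ of the triangle $\TT_s\subset\Q_s^d$ adjacent to $b$ to $O_{\TT^t}$, and then two displacement identities do the work, namely $\beta^t(z)\minus\underline{\psi^t}(z)=-\delta(e^t s)+e^t\delta(s)$ (from $\beta^t(O_{\Q})=O_{\Q^t}$, $\underline{\psi}^t(O_{\TT})=O_{\TT^t}$ and Definition \ref{def:displacement}) and $\underline{\psi^t}(z)\minus\overline{\psi^t}(z)=\delta(e^t s)-e^t\delta(s)$ (the cataclysm jump across $b$, read off from the value $\epsilon^t(b)$). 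These cancel, giving $\beta^t=\overline{\psi^t}$ on $b$. Any attempt to compute $\overline{\psi^t}(O_{\Q})$ ``directly'' forces this intermediary (or equivalent bookkeeping), because the quadrilateral center and the $X_C$-triangle are related only through the auxiliary triangle and the shear data; without it your argument has no computation to stand on.

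There is also a genuine error in your well-definedness paragraph. The leaves of $\mu_A$ meeting $X_C\setminus C$ are \emph{not} ``the crown tips and the core geodesics'': the core geodesics are not leaves of $\mu_A$ at all (some edge of the ideal triangulation $\delta_{A_j}$ must cross $\hat\gamma_j$ transversally), while $\mu_A=\overline{\pi(\delta_A)}$ (Definition \ref{def:X_A}) contains the limit leaves $\nu_A$, which can perfectly well run through the triangulated region. This is precisely why $\nu_X$ must be deleted already for \emph{well-definedness}, as in the paper's one-line observation that every point of $X$ lying on $\mu_A$ is either in $B$ or in $\nu_X$; your claim as stated would make $\psi^t$ defined on all of $X\setminus B$, contradicting your own closing remark. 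Relatedly, your final picture of $\nu_X$ is wrong: the crown-tip leaves do not accumulate ``along the core geodesics'' --- their ends are pairwise asymptotic at the spikes and accumulate (when they do not run into cusps) onto a sublamination $\nu_X$ of $\lambda$ inside the triangulated part, whereas $\Gamma$ is transverse to $\lambda$ and is not part of $\mu_X$ at all.
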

\begin{proof}
Notice that $\psi^t$ is defined in $X_A \setminus \mu_A$. Every point of $X$ coming from $\mu_A$ is either in $B$ or in $\nu_X$, hence $\alpha^t$ is well defined on $X \setminus \nu_X$. We need to check the continuity of the map at the points of $\partial^{nc} B$. Every connected component $b$ of $\partial^{nc} B$ is the bi-infinite edge of a quadrilateral piece $\Q$, with shear parameter $s$. The geodesic $b$ is part of the lamination $\lambda_A$ of $X_A$, hence the map $\psi^t$ is not defined on $b$. We can extend it to $b$ in two ways: we denote by $\underline{\psi}^t$ the extension from $B$, and by $\overline{\psi^t}$ the extension from $X \setminus B$. Both extensions are mapping $b$ to $b^t$, the copy of $b$ in $X_\lambda^t$: 
$$\underline{\psi}^t,\overline{\psi^t}: b \rightarrow b^t $$ 
\begin{align*} 
\underline{\psi}^t(z) & := \lim_{m \to + \infty} \psi^t(w_m), \ \ \ \ \mbox{ where } \{w_m \}_{m \in \mathbb N} \subset \mathcal Q \mbox{ such that } w_m \to z  \mbox{ as } m \to + \infty \\ 
\overline{\psi^t}(z) & := \lim_{m \to + \infty} \psi^t(w_m), \ \ \ \ \mbox{ where } \{w_m \}_{m \in \mathbb N} \subset X \setminus B \mbox{ such that } w_m \to z  \mbox{ as } m \to + \infty 
\end{align*}
By the definition of $\psi^t$ using shear maps and by the definition of the cocycle $\epsilon^t$, we have: 
\begin{align}
\underline{\psi^t}(z) \minus \overline{\psi^t}(z) &= \delta(e^t \cdot s) - e^t \delta(s) \,, \label{cataclysm_shear}
\end{align}
using Notation \ref{not:minus notation}.
Now we claim that, for every $z \in b$, we have: 
\begin{equation} \label{beta_and_psi}
\beta^t(z) \minus \underline{\psi^t}(z) = - \delta (e^t \cdot s) + e^t \delta(s) \,.
\end{equation} 

In order to see this, notice that 
\begin{enumerate}
\item the map $\underline{\psi}^t: b \to b^t $ fixes $O_{\mathcal T^0}$ and stretches the arc-length of $b$ by a factor $e^t$:
\begin{align*}
\forall P \in b_i ~~:~~ [\underline{\psi}^t(P) \minus O_{\mathcal T^0}] = e^t \cdot (P \minus O_{\mathcal T^0}) ~~\mbox{ with } \underline{\psi}^t(O_{\mathcal{T}^0}) = O_{\mathcal{T}^t} 
\end{align*}
\item the map $\beta^t: b \to b^t$ fixes $O_{\mathcal Q}$ and stretches the arc-length of $b$ by a factor $e^t$: 
\begin{align*}
\forall P \in b ~~:~~ [\beta^t(P) \minus O_{\mathcal Q^t}] = e^t \cdot (P \minus O_{\mathcal Q^0}) ~~\mbox{ with } \beta^t(O_{\mathcal{Q}^0}) = O_{\mathcal{Q}^t} 
\end{align*}
Putting these formulas together we have: 
\begin{align*}
[\beta^t(P) \minus \underline{\psi^t}(P)] &= [O_{{\mathcal Q}^t} \minus O_{{\mathcal T}^t}] + e^t \cdot [O_{{\mathcal Q}^0} \minus O_{{\mathcal T}^0}] =  -\delta(e^t \cdot s ) + e^t \cdot \delta(s)\,,  
\end{align*}
\end{enumerate}
which proves the claim. Now (\ref{cataclysm_shear}) and (\ref{beta_and_psi})  
imply that $\beta^t(z) \minus \overline{\psi^t}(z) = 0 $, hence $\beta^t(z) = \overline{\psi^t}(z)$. This shows the continuity of the map $\alpha^t$ on $X \setminus \nu_X$.
\end{proof}

\subsubsection{The horocyclic foliation on $X$} \label{subsub:horocyclic}
We will now define a partial foliation on $X$, called the horocyclic foliation and denoted by $\mathcal{K}$. Denote by $\{\G_i\}$ the finite set of geometric pieces of $X \setminus \lambda$. For every $\G_i$, we defined a horocyclic foliation $\mathcal{K}_i$ in Definition \ref{def:horfol3}, \ref{def:horfol4} and \ref{def:horfol5}. If $\G_i$ is an hexagonal piece, $\mathcal{K}_i$ is empty.  
For every $\G_i$, we denote by $K_i$ the support of $\mathcal{K}_i$. The support of $\mathcal{K}$ will be the set
$$K = \overline{\bigcup_{i} K_i}\,. $$

On $K \cap B$, we define the partial foliation by glueing the partial foliations $\mathcal{K}_i$ on the pieces $\G_i$ that are in $B$. To define the partial foliation on $K \setminus B$, we notice that $K \setminus B \subset X_A$. The double $(X_A)^d$ is a finite hyperbolic surface without boundary, hence we can apply Thurston's theory \cite{Thurston}, and consider Thurston's horocyclic foliation on $(X_A)^d$. The set $K \setminus B$ is contained in the support of Thurston's foliation, and we define our foliation on $K \setminus B$ as the restriction of Thurston's foliation. This defines a partial foliation on $X$ whose support is $K$. We will call it the \emph{horocyclic foliation} on $X$, and denote it by $\mathcal{K}$. By definition, for every $\G_i$, the restriction of $\mathcal{K}$ to $\G_i$ coincides with $\mathcal{K}_i$. 

We will now describe how $\mathcal{K}$ looks like locally in the neighborhood of every point. If a point lies in $X \setminus \lambda$, then it is in the interior of a piece $\G_i$. In this case we know that the horocyclic foliation around this point looks like one of the explicit models given in Definition \ref{def:horfol3}, \ref{def:horfol4} or \ref{def:horfol5}. If the point is on $\lambda$, then it lies on a geodesic $\ell \subset \lambda$. For every side of $\ell$, there can be a geometric piece bounded by $\ell$ on that side or not. If there is a geometric piece, then again $\mathcal{K}$ looks like one of the explicit models on that side of $\ell$. If there is no geometric piece on that side of $\ell$, the situation is even simpler, as we now describe. For $z\in \ell$, a small ball centered at $z$ is divided by $\ell$ in two parts, which we call \emph{half-balls}, one on every side of $\ell$. 

\begin{lemma} \label{lemma:foliazione}
Let $z \in \ell \subset \lambda$. If on one side $\ell$ does not bound a geometric piece, then there exists a small half-ball $U$ centered at $z$ on that side of $\ell$ such that $U$ is completely foliated by $\mathcal{K}$ with leaves that hit orthogonally $\ell$ and all the leaves of $U \cap \lambda$.  
\end{lemma}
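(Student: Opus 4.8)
The plan is to work in the universal cover and reduce everything to the local model of the horocyclic foliation near an ideal vertex. First I would unwind the hypothesis: since $\lambda$ is maximal and $\ell$ bounds no geometric piece on the given side, the leaf $\ell$ is non-isolated from that side, so there is a sequence of leaves $\ell_n \subset \lambda$ accumulating onto $\ell$ from that side, and the complementary pieces squeezed between consecutive leaves near $z$ are long thin ideal triangles whose sharp spikes point toward the two ideal endpoints of $\ell$ (on the relevant side we are in $K \setminus B \subset X_A$, where $\mathcal{K}$ is by construction the restriction of Thurston's horocyclic foliation on $(X_A)^d$). Lifting to $\mathbb{H}^2$ with $\ell$ realized as the imaginary axis and a lift of $z$ at height one, these leaves become nearly vertical geodesics converging to $\ell$, and the thin triangles present their spikes near $0$ and near $\infty$.

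Second, I would establish the orthogonality clause. Inside each such thin triangle the foliation $\mathcal{K}$ is, by Definitions \ref{def:horfol3}--\ref{def:horfol5}, made of horocyclic arcs centered at one of the triangle's ideal vertices, and near $z$ this vertex is precisely the spike shared by the two bounding leaves. Since a horocycle centered at an ideal endpoint of a geodesic meets that geodesic orthogonally, and the spike is a common endpoint of both bounding leaves, each $\mathcal{K}$-arc crosses the two leaves of $\lambda$ that it meets at right angles. When the arc crosses a leaf $\ell_n$, the arc on the far side is again a horocyclic arc centered at an endpoint of $\ell_n$, hence also orthogonal to $\ell_n$ at the crossing point; the two arcs therefore share a tangent, so the $\mathcal{K}$-leaf is $C^1$ across $\ell_n$. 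Letting $\ell_n \to \ell$, the horocyclic centers tend to the ideal endpoints of $\ell$, so the $\mathcal{K}$-leaves meet $\ell$ orthogonally as well.

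Third, and this is the heart of the matter, I would show that a sufficiently small half-ball $U$ is \emph{completely} foliated, i.e. that it meets none of the non-foliated central regions. The unfoliated region of each complementary triangle has its vertices at the centers $O^i$ of its edges, and the center of an edge is the foot of the perpendicular dropped from the opposite ideal vertex. For a thin triangle near $z$ the vertex opposite a nearly vertical edge lies near the \emph{other} endpoint of $\ell$, so $O^i$ sits at the apex of that nearly vertical geodesic; as the triangles thin out ($\ell_n \to \ell$) these apexes recede to infinity and their hyperbolic distance to $z$ tends to infinity. Hence the non-foliated regions stay uniformly away from $z$, and for $U$ small enough every point of $U$ lies in the fully foliated ``throat'' between a spike and its bounding horocycle $h^0$. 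Combining the three steps proves the lemma.

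The main obstacle I anticipate is exactly this last step: making precise and uniform the claim that the centers (apexes) of the infinitely many thin triangles accumulating at $\ell$ recede from $z$, so that a genuine half-ball---rather than a merely punctured or leaf-by-leaf neighborhood---is entirely foliated. I would handle it by an explicit estimate in the upper half-plane model, bounding from below the height of the apex of each accumulating geodesic in terms of its distance to $\ell$, and checking that the corresponding bounding horocycles $h^0$ lie outside a fixed small ball around $z$.
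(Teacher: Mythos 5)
Your overall strategy --- pass to the universal cover, model everything on horocyclic throats near spikes, and shrink the half-ball until it avoids the unfoliated cores --- is the same as the paper's, and your orthogonality step is acceptable (the paper simply quotes Bonahon's treatment of the horocyclic foliation for the extension of $\mathcal{K}$ to the leaves of $\lambda$ and its perpendicularity, rather than re-deriving the $C^1$-matching across leaves as you do). But the completeness step, which you correctly identify as the heart of the matter, has two genuine gaps.

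First, your reduction to ``long thin ideal triangles'' in $K\setminus B$ is unjustified. The side of $\ell$ on which no piece is bounded can perfectly well be approached by spikes of quadrilateral and pentagonal pieces; this actually occurs in the intended application, where $\ell\subset\nu_X$ is a limit of the bi-infinite quadrilateral edges and the crown portions of quadrilaterals lie in $X_C$. For those pieces the foliated throat has no universal size: the distance from the spike to the points $O_C,O_D$ (Definition \ref{def:horfol4}), respectively $O_W$ (Definition \ref{def:horfol5}), depends on the shear parameters. The paper gets uniformity precisely from the finiteness of the set of geometric pieces (Proposition \ref{prop:lamination}): the boundary horocycles of the unfoliated regions, through the points $O_\TT^j$, $O_C$, $O_D$, $O_W$, have lengths bounded below by a constant $D>0$, and then any radius smaller than $\min\bigl(\tfrac12\log 3,\, D/2\bigr)$ works for all throats at once. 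For triangles alone the constant is universal, so your heuristic happens to be harmless there --- though note that all ideal triangles are isometric, so nothing ``thins out''; what changes is that only deeper portions of spikes enter $U$, and your claim that the third vertex of an accumulating triangle ``lies near the other endpoint of $\ell$'' is false in general (both remaining vertices may sit near the same endpoint). Without the finiteness argument, your plan of explicit height estimates does not cover the quadrilateral and pentagonal throats.

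Second, you implicitly assume that every component of $U\setminus\lambda$ is a throat pinched between two leaves that both cross $U$. There is in general one exceptional component: on the far side of all the leaves crossing $U$, a piece $\G_0$ can meet $U$ along a single edge, and nothing forces that intersection to lie near a spike --- it could contain part of the unfoliated core. The paper isolates exactly this case: a radius smaller than $\tfrac12\log 3$ guarantees that any two edges met by $\widetilde{U}$ share an ideal vertex and that at most one lift $\widetilde{\G_0}$ is met along a single edge; it then disposes of it by observing that $z\notin\overline{\G_0}$ (the only leaf through $z$ is $\ell$, which does not bound $\G_0$ on that side), so a further shrinking of $U$ avoids $\G_0$ altogether. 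Your sketch never rules out this configuration, and your proposed estimate ``in terms of the distance to $\ell$'' would not apply to it, since the bounding leaf of this component need not be close to $\ell$ at all.
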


\begin{proof}
Assume that the radius of $U$ is small so that $U$ does not intersect $\partial X$ nor any leaf of $\lambda$ of finite length. In particular, $U$ does not intersect any hexagonal piece.

Now, let us work in the universal covering $\widetilde{X} \subset \mathbb{H}^2$. We denote by $\widetilde{U}$ a lift of $U$. Every connected component of $\widetilde{X} \setminus \widetilde{\lambda}$ is a geometric piece that is a copy of one of the $\G_i$.   

If we assume that the radius of $U$ is also smaller than $\tfrac{1}{2}\log(3)$ (the radius of the inscribed circle to an ideal triangle), then $\widetilde{U}$ intersects at most two edges of every geometric piece, these two edges meet at an ideal vertex of the piece. There is at most one piece, say $\widetilde{\G}_0$, such that $\widetilde{U}$ intersects only one edge of $\widetilde{\G}_0$, see Figure \ref{fig:lemma foliazione}. 

\begin{figure}[htbp]
\begin{center}
\psfrag{z}{$z$}
\psfrag{U}{$U$}
\psfrag{G}{$\mathcal G_0$}
\psfrag{Q}{$\mathcal \ell$}
\includegraphics[width=4cm]{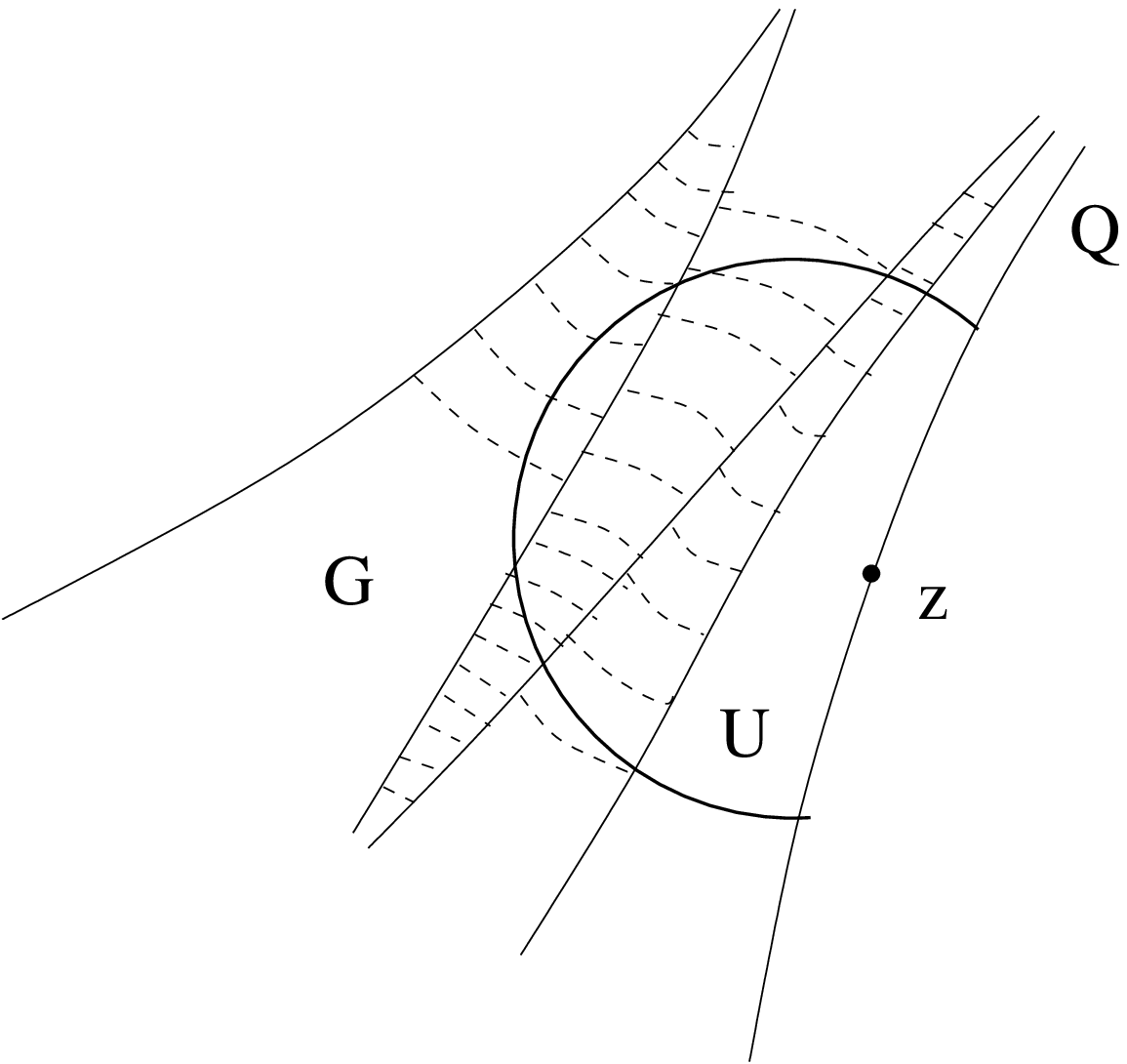}
\caption{Lemma \ref{lemma:foliazione} }  \label{fig:lemma foliazione}
\end{center}
\end{figure}

There are only finitely many $\G_i$, with finitely many values of the parameters. The lengths of the horocycles of the $\mathcal{K}_i$ passing through the points $O_\TT^j$ for a triangular piece, $O_C, O_D$ for a quadrangular piece, $O_W$ for a pentagonal piece have a minimal value $D$. 

We can also assume that the radius of $U$ is smaller than $D/2$, this is the situation represented in Figure \ref{fig:lemma foliazione}. Now if $\widetilde{U}$ meets two edges of a geometric piece, then the intersection of $\widetilde{U}$ with the piece is completely contained in the support of the horocyclic foliation. This might still be false for $\widetilde{\G}_0$, but up to reducing again the radius of $U$, we can make sure that $\widetilde{U}$ does not intersect this piece at all. With this choice of the radius, we have that $U\setminus \lambda$ is completely foliated by $\mathcal{K}$, with a foliation made by pieces of horocycles. This foliation extends nicely to all the leaves of $\lambda$ contained in $U$ and it is perpendicular to them, this is proved in \cite[Section 2]{Bonahon_pleated}.
\end{proof}

The definition of the horocyclic foliation works for every finite hyperbolic surface equipped with a maximal lamination, in particular for $X_\lambda^t$, $(X_A^d)^t$, and $(X_A^d)_{Th}^t$.

\begin{lemma}   \label{lemma:leaves}
Let $z \in \ell$, for a geodesic $\ell \subset \nu_X$. Let $U$ be a half-ball centered at $z$ as in Lemma \ref{lemma:foliazione}. If $x,y \in U\setminus \nu_X$ are in the same leaf for the restriction of $\mathcal{K}$ to $U$, then their images $\alpha^t(x),\alpha^t(y)$ are in the same leaf for the horocyclic foliation on $X_\lambda^t$. 
\end{lemma}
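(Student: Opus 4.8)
The plan is to show that the image $\alpha^t(k)$ of the $\mathcal{K}$-leaf $k\subset U$ through $x$ and $y$ is contained in a single leaf of the horocyclic foliation on $X_\lambda^t$. First I would identify which of the two pieces of $\alpha^t$ governs $U$. Since $\ell\subset\nu_X=\overline{\pi(\partial^{nc}B^t)}\setminus\pi(\partial^{nc}B^t)$, the leaf $\ell$ is a limit of the spiraling bi-infinite boundary leaves of $B$, and the side on which $\ell$ bounds no geometric piece (the side selected in Lemma \ref{lemma:foliazione}) is the crown side, which for $U$ small enough is contained in $\mathring{C}\subseteq B$. Hence $\alpha^t|_U=\beta^t|_U$, and by Proposition \ref{prop:B}(3) the map $\beta^t$ restricts on every geometric piece $\mathcal{G}$ meeting $U$ to the generalized stretch map $\phi^t:\mathcal{G}\to\mathcal{G}^t$ of Lemma \ref{lemma:quad}. (If instead the no-piece side lay in the triangulated part one would use $\alpha^t=\psi^t$ and Proposition \ref{prop:stretch_map_X_A}, together with the fact that $\psi^t=\bar\kappa^t\circ\tau^t$ preserves horocyclic leaves since $\tau^t$ does and $\bar\kappa^t$ is a cataclysm; the argument below is identical.)

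Next I would reduce to a finite computation. Because $x,y\in U\setminus\nu_X$ lie at positive distance from $\ell$ and the leaf $k$ is a horocyclic arc meeting $\ell$ orthogonally, the distance to $\ell$ varies monotonically along $k$, so the subarc of $k$ between $x$ and $y$ stays at distance at least some $\varepsilon>0$ from $\ell$ and therefore crosses only finitely many of the spiraling geometric pieces $\mathcal{G}_{i_1},\dots,\mathcal{G}_{i_N}$. By the construction of $\mathcal{K}$ in Section \ref{subsub:horocyclic}, each intersection $k\cap\mathcal{G}_{i_\ell}$ is a leaf of the piecewise foliation $\mathcal{K}_{i_\ell}$, and shrinking $U$ as in Lemma \ref{lemma:foliazione} guarantees we remain inside the admissible region of that foliation (the warning after Lemma \ref{lem:horocyclic foliation penta} is avoided precisely here). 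Lemmas \ref{lem:horocyclic foliation triangle}, \ref{lem:horocyclic foliation square} and \ref{lem:horocyclic foliation penta} then show that $\phi^t$ carries each such leaf to a leaf of the horocyclic foliation of the stretched piece $\mathcal{G}_{i_\ell}^t$, so that $\alpha^t(k\cap\mathcal{G}_{i_\ell})$ is a horocyclic arc in $X_\lambda^t$.

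Finally I would glue these arcs across the shared bi-infinite edges. Two consecutive pieces meet along a spiraling leaf $b$, and continuity of $\alpha^t=\beta^t$ (established in Lemma \ref{lemma:alpha}) forces the images $\alpha^t(k\cap\mathcal{G}_{i_\ell})$ and $\alpha^t(k\cap\mathcal{G}_{i_{\ell+1}})$ to share the endpoint $\alpha^t(k\cap b)$; applying Lemma \ref{lemma:foliazione} to the target surface $X_\lambda^t$ shows that the horocyclic foliation there crosses $b^t$ orthogonally with leaves matching across $b^t$, so these two arcs lie on one and the same leaf. Iterating over the finitely many pieces yields that $\alpha^t$ maps the arc of $k$ from $x$ to $y$ into a single leaf of the horocyclic foliation on $X_\lambda^t$, proving the claim. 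The main obstacle I anticipate is Step three: upgrading the \emph{continuous matching} of the image arcs at the edges $b^t$ to the stronger statement that they belong to the \emph{same leaf} of the target foliation, which requires carefully invoking the orthogonal-crossing description of $\mathcal{K}$ on $X_\lambda^t$ rather than merely the continuity of $\alpha^t$.
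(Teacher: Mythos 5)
Your opening reduction is where the proof breaks. A leaf $\ell \subset \nu_X$ is by definition a \emph{limit} leaf of the spiraling geodesics $\pi(\partial^{nc}B)$: it does not bound a crown, and no half-ball $U$ around $z \in \ell$ is contained in $\mathring{C} \subseteq B$, however small. On the accumulating side, $U$ meets slices of infinitely many geometric pieces, and these necessarily include ideal-triangle slices interleaved with quadrilateral-spike (crown) slices: every leaf of $\partial^{nc}B$ has a triangle on its non-quadrilateral side by Proposition \ref{prop:lamination}, and strands of the $b_i$'s enter every neighborhood of $z$ because $\ell$ lies in their closure. Hence both branches of the definition of $\alpha^t$ occur inside every such $U$ — $\beta^t$ on the crown slices and $\psi^t = \bar{\kappa}^t \circ \tau^t$ on the rest — and there is no side on which $\alpha^t|_U = \beta^t|_U$. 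Your finiteness claim fails for the same reason: keeping the subarc at distance $\geq \varepsilon$ from $\ell$ does not bound the number of strands crossed, since $f_{x,y}$ may cross families of leaves accumulating on \emph{other} leaves of $\lambda$ inside $U$ (further leaves of $\nu_X$, or leaves of the stump), so there is in general no finite induction over shared edges. Note also that the statement allows $x$ and $y$ to lie in different components of $f \setminus \nu_X$; at a crossing of a $\nu_X$-leaf the map $\alpha^t$ is not even defined, so no continuity-matching argument of your Step 3 can pass through it.

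The second, and fatal, gap is your parenthetical dismissal of the triangulated case: it is false that a cataclysm ``preserves horocyclic leaves.'' The map $\bar{\kappa}^t$ maps each complementary triangle isometrically but shears the triangles relative to one another; two points on the same horocyclic leaf of $(X_A^d)^t_{Th}$ have their images displaced along their leaves by an amount equal to the transverse measure, for the relative cocycle $\epsilon^t$, of the arc of leaf joining them. The actual content of the lemma when $x,y \notin B$ is that this $\epsilon^t$-measure vanishes: $\epsilon^t$ is supported on the special edges $e_{ij}^k$ and on the leaves $b_i^j$, and an arc of a $\mathcal{K}$-leaf with endpoints outside $B$ crosses each spike slice entirely, picking up total measure $\epsilon^t(b_i^j) + \sum_k \epsilon^t(e_{ij}^k) + \epsilon^t(b_{i+1}^j) = 0$ by the switch relation (\ref{20}). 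This cocycle computation — which your argument omits entirely — is exactly how the paper handles the crossings of the triangulated region and of $\nu_X$; your Step 3 (piecewise leaf-preservation via Lemmas \ref{lem:horocyclic foliation triangle}, \ref{lem:horocyclic foliation square}, \ref{lem:horocyclic foliation penta} plus continuity from Lemma \ref{lemma:alpha}) reproduces only the paper's easier case of points in $B$, where continuity upgrades the per-piece statement to each component of $f \setminus \nu_X$, and it cannot substitute for the measure-zero argument at the one place where the lemma is genuinely delicate.
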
     
\begin{proof}
If $U$ satisfies the thesis of Lemma \ref{lemma:foliazione}, it is contained in $X_C$.
Let $f \subset U$ be the leaf for the restriction of $\mathcal{K}$ to $U$ that contains $x,y$. Every component of $f\setminus \lambda$ is contained in a geometric piece, hence it is mapped by $\alpha^t$ to a leaf of $\mathcal{K}$.

Let's first assume that $x,y$ are not in $B$. Recall that $\alpha^t$ on them is defined as $\psi^t=\bar{\kappa}^t \circ \tau^t$. Consider the arc $f_{x,y}$ of $f$ between $x$ and $y$. When $f_{x,y}$ is considered as a subset of $X_A^d$, it lies in a leaf of the horocyclic foliation of $X_A^d$. Thurston's stretch map $\tau^t:X_A^d \rightarrow (X_A^d)^t_{Th}$ sends $f_{x,y}$ in a leaf of the horocyclic foliation of $(X_A^d)^t_{Th}$, hence $\tau^t(x)$ and $\tau^t(y)$ are in the same leaf in $(X_A^d)^t_{Th}$.
We have to check that $\bar{\kappa^t}$ also sends $\tau^t(x)$ and $\tau^t(y)$ to the same leaf in $(X_A^d)^t$. In order to do this, notice that the horocyclic distance between $\bar{\kappa^t}(\tau^t(x))$ and $\bar{\kappa^t}(\tau^t(y))$ is equal to the measure of the arc $\tau^t(f_{x,y})$ for $\epsilon^t$ (see the definition of cocycle associated to a hyperbolic metric in \cite[Section 2]{Bonahon_pleated}). Since $x,y$ do not lie in $B$, the measure of this arc for $\epsilon^t$ is zero by definition of $\epsilon^t$. 

Let us now prove the statement when $x,y$ are in $B$. Notice that every component of $f\setminus \lambda$ is mapped by $\alpha^t$ to a leaf of $\mathcal{K}$. By the continuity of $\alpha^t$ on $X\setminus \nu_X$ (Lemma \ref{lemma:alpha}), every connected component of $f\setminus \nu_X$ is mapped by $\alpha^t$ to a leaf of $\mathcal{K}$. 
\end{proof}

\begin{proposition}   \label{prop:extension}
The map $\alpha^t : X\setminus \nu_X \to X_\lambda^t\setminus \nu_X$ extends to a continuous map $$\Phi^t:X \to X_\lambda^t~.$$
\end{proposition}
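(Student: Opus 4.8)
The plan is to produce $\Phi^t$ as the unique continuous extension of $\alpha^t$ across the sublamination $\nu_X$, exploiting that $X_\lambda^t$ is a complete metric space (Proposition \ref{S^t}) and that near $\nu_X$ the horocyclic foliation $\mathcal{K}$ gives a product-like structure which $\alpha^t$ respects. Since $\alpha^t$ is already continuous on the open dense set $X \setminus \nu_X$ (Lemma \ref{lemma:alpha}), it suffices to define $\Phi^t(z)$ for each $z \in \nu_X$ as a limit of values of $\alpha^t$ and to check that this limit exists and is independent of the approaching sequence; the resulting extension is then automatically continuous and unique.

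First I would fix $z$ lying on a leaf $\ell \subset \nu_X$ and set up the local model. Because $\ell$ belongs to $\overline{\pi(\partial^{nc} B^t)} \setminus \pi(\partial^{nc} B^t)$, it is a limit of leaves of $\lambda$, so on the accumulating side $\ell$ bounds no geometric piece and Lemma \ref{lemma:foliazione} supplies a half-ball $U$ foliated by $\mathcal{K}$ with leaves meeting $\ell$ --- and every leaf of $\lambda \cap U$ --- orthogonally. This yields coordinates $(p,h)$ on $U$, where $p \in \ell$ is the foot of the $\mathcal{K}$-leaf through the point and $h \geq 0$ is its signed distance from $\ell$ measured along that leaf; a point $w$ tends to $z$ exactly when $p \to z$ and $h \to 0$.

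The main step is to show the limit exists and to identify it. Let $f_z \subset U$ be the $\mathcal{K}$-leaf with foot $z$. By Lemma \ref{lemma:leaves}, $\alpha^t$ maps $f_z \setminus \nu_X$ into a single leaf $f_z^t$ of the horocyclic foliation of $X_\lambda^t$, an arc meeting the stretched leaf $\ell^t$ orthogonally at a point $z^t$. In the foliated coordinates, $\alpha^t$ preserves the foliation, stretches arc length along the leaves of $\lambda$ by $e^t$, and sends the lamination to the stretched lamination; hence in target coordinates $(p^t,h^t)$ the foot $p^t$ depends continuously on $p$ with $p^t \to z^t$ as $p \to z$, while the transverse coordinate $h^t$ tends to $0$ as $h \to 0$. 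It follows that $\alpha^t(w) \to z^t$ for every sequence $w \to z$ from the foliated side, so I set $\Phi^t(z) := z^t$. If $\ell$ also bounds a geometric piece $\G$ on the other side, then $\alpha^t$ restricted to $\G$ is the stretch map $\phi^t$ of Lemma \ref{lemma:quad} (or Lemma \ref{lemma:triangle}), which is continuous up to the edge $\ell$ and carries it to $\ell^t$ by the $e^t$-stretch; the induced one-sided limit again lands on $\ell^t$ at the foot determined by $\mathcal{K}$, and the matching of centers used in Lemma \ref{lemma:alpha} (the displacement identities (\ref{cataclysm_shear}) and (\ref{beta_and_psi})) shows it equals $z^t$, so the two one-sided extensions agree.

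I expect the transverse direction to be the hard part: verifying that as $w \to z$ the images $\alpha^t(w)$ converge to the foot $z^t \in \ell^t$ rather than wandering along $f_z^t$, and that the one-sided limits from the foliated side and from an adjacent geometric piece coincide. Both rely essentially on Lemma \ref{lemma:leaves} (preservation of $\mathcal{K}$-leaves) and on the fact that $\alpha^t$ is built from Thurston's stretch $\tau^t$ and the cataclysm $\bar{\kappa}^t$, for which the horocyclic foliation contracts toward the lamination, while completeness of $X_\lambda^t$ guarantees that the limit point exists in the target. Finally, gluing the local extensions over the closed set $\nu_X$ produces the global continuous map $\Phi^t : X \to X_\lambda^t$.
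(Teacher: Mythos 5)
Your proposal follows essentially the same route as the paper: the paper also takes the half-ball $U$ from Lemma \ref{lemma:foliazione}, uses Lemma \ref{lemma:leaves} to see that $\alpha^t$ carries the $\mathcal{K}$-leaf through $z$ into a single leaf of the horocyclic foliation of $X_\lambda^t$, and defines $\Phi^t(z)$ as the point of $\nu_X \subset X_\lambda^t$ on that leaf, noting that $\Phi^t$ maps $U$ homeomorphically onto a half-neighborhood. Your additional discussion of foliated coordinates and of one-sided limits from adjacent geometric pieces is consistent with (indeed more explicit than) the paper's brief argument.
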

\begin{proof}
For every point $z \in \nu_X$, consider a small half-ball $U$ centered in $z$ as in Lemma \ref{lemma:leaves}. If $f$ is the leaf of $\mathcal{K}$ through $z$, then $\alpha^t(f)$ lies on a leaf of the horocyclic foliation on $X_\lambda^t$. We define $\Phi^t(z)$ as the point of $\nu_X \subset X_\lambda^t$ lying on the leaf containing $\alpha^t(f)$. The map $\Phi^t$ maps $U$ homeomorphically to a half-neighborhood of $\Phi^t(z)$ in $X_\lambda^t$.  
\end{proof}

We are ready to prove our main theorem: 

\thurstonstretch*
\begin{proof}
Properties (1)-(5) follow from the construction of $\Phi^t$. For Property (6), from Property (2) we have:
\[d_A(X, X_\lambda^{t}) \leq d_{L\partial}(X, X_\lambda^{t}) \leq t\,.\] 
The arc-length on $\lambda$ is multiplied by $e^{t}$ by Property (4), and since $\lambda$ contains a measurable sublamination we have, by Theorem \ref{thm:sup on laminations}, that $d_A(X, X_\lambda^{t}) \geq t$. This implies 
\begin{equation*}
d_A(X, X_\lambda^{t}) = d_{L\partial}(X, X_\lambda^{t}) = t\,. \qedhere 
\end{equation*}
\end{proof}

\section{The geometry of the arc distance}    \label{sec:geometryT}

In this section we will prove the corollaries of Theorem \ref{theorem:S} stated in the introduction.

\subsection{Stretch lines are geodesics}
 
We will now prove that, if a lamination $\lambda$ contains a measurable sublamination, then a generalized stretch line 
is a geodesic in $\T(S)$ for both the arc distance $d_A$ and the Lipschitz distance $d_{L\partial}$.

\thurstonstretchline*
\begin{proof}
This follows from Theorem \ref{theorem:S}, once we notice that
${\left(X_\lambda^{t_1}\right)}_\lambda^{t_2 - t_1} = X_\lambda^{t_2}\,. $
\end{proof}

\subsection{The Teichm\"uller space is geodesic}  \label{subsec:geodesics}
We will now prove that every pair of points $X,Y \in \T(S)$ is  
connected by a path that is geodesic for both distances $d_A$ and $d_{L\partial}$. The path will be a finite concatenation of generalized stretch segments. Notice that, in a Riemannian manifold, a concatenation of geodesic segments coming from distinct geodesic cannot be a geodesic. These distances are indeed not induced by a Riemannian metric, we will see later that they are instead induced by a Finsler metric. In a Finsler manifold, a geodesic segment might admit several geodesic extensions. 

The first ingredient for the proof is the notion of ratio-maximizing measured lamination. We will first recall Thurston's definition for closed or punctured surfaces and we then extend these notions to surfaces with boundary.
 \begin{definition}[Ratio-maximizing lamination for closed or punctured surfaces \cite{Thurston}]
Let $S$ be a closed or punctured surface. Fix $X,Y \in \T(S)$. A geodesic lamination $\mu$ is a \emph{ratio-maximizing} for $X, Y$ if there exists a homeomorphism $f$ from a neighborhood of $\mu$ in $X$ to a neighborhood of $\mu$ in $Y$ such that
\begin{enumerate}
\item $f$ is $R$-Lipschitz, where $R:= \exp(d_A(X,Y))$.
\item $f$ is homotopic to the identity.
\item $f$ maps the support of $\mu$ in $X$ to the support of $\mu$ in $Y$ stretching the arc-length of $\mu$ affinely by a factor $R$.
\end{enumerate} 
\end{definition}
Thurston \cite{Thurston} proves that for every pair of points $X,Y \in \T(S)$ there exists a unique \emph{largest ratio-maximizing lamination} $\mu(X,Y)$.
\begin{definition}[Ratio-maximizing lamination for surfaces with boundary]
Let $S$ be a surface with boundary and fix $X,Y \in \T(S)$. A geodesic lamination $\mu$ is a \emph{ratio-maximizing lamination} for $X$ and $Y$ if $\mu^d$ is ratio-maximizing for $X^d,Y^d$. Moreover, consider the unique largest ratio-maximizing lamination $\mu(X^d,Y^d)$ in $S^d$. By uniqueness, $\mu(X^d,Y^d)$ is symmetric and restricts to a lamination on $S$ that we denote by $\mu(X,Y)$ and call the \emph{largest ratio-maximizing lamination} for $X, Y \in \T(S)$.  
\end{definition}

\begin{proposition}  \label{prop:ratio of ratio maximizing}
Let $X,Y \in \T(S)$ and let $\mu$ be a measured lamination. Then, the support of $\mu$ is ratio-maximizing for $X,Y$ if and only if $\mu$ realizes the maximum in the formula for $d_A(X,Y)$ given in Theorem \ref{thm:sup on laminations}.
\end{proposition}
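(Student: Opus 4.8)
The plan is to reduce to the case of closed or punctured surfaces via the doubling construction and then invoke Thurston's theory. By the definition of the ratio-maximizing property for surfaces with boundary, the support of $\mu$ is ratio-maximizing for $X,Y$ if and only if the support of $\mu^d$ is ratio-maximizing for $X^d, Y^d$ in Thurston's sense. Hence it suffices to prove two things: (a) that $\mu$ realizes the maximum in the formula of Theorem \ref{thm:sup on laminations} for $d_A(X,Y)$ if and only if $\mu^d$ realizes the maximum in the corresponding formula for $d_{Th}(X^d,Y^d)$; and (b) the closed-surface statement, that for a measured lamination $\nu$ on a closed or punctured surface the support of $\nu$ is ratio-maximizing for two given structures if and only if $\nu$ realizes the maximum in the formula for $d_{Th}$. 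Applying (b) to $\nu = \mu^d$ on $S^d$ and combining with (a) yields the proposition.

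Part (a) is elementary. Doubling scales the length of any measured lamination by a factor that depends only on the lamination and not on the hyperbolic structure (it is $2$ on laminations carried by the interior and on arcs orthogonal to the boundary, and $1$ on boundary curves), so the length ratio is unchanged: $\ell_{Y^d}(\mu^d)/\ell_{X^d}(\mu^d) = \ell_Y(\mu)/\ell_X(\mu)$. Together with the isometry $d_A(X,Y) = d_{Th}(X^d,Y^d)$ of Proposition \ref{prop:doubling}, the condition ``the ratio equals the distance'' on $S$ and on $S^d$ become literally the same numerical equality, so (a) holds.

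For part (b), write $R := \exp(d_{Th}(X^d,Y^d))$ and argue both implications using Thurston's optimal Lipschitz map. If the support of $\nu$ is ratio-maximizing, then by definition there is an $R$-Lipschitz map homotopic to the identity that sends the support of $\nu$ onto its geodesic representative in the target while stretching arc-length affinely by $R$; the image is then geodesic of length $R\,\ell_{X^d}(\nu)$, forcing $\ell_{Y^d}(\nu) = R\,\ell_{X^d}(\nu)$, i.e. $\nu$ realizes the maximum. Conversely, suppose $\nu$ realizes the maximum, so $\ell_{Y^d}(\nu) = R\,\ell_{X^d}(\nu)$. Thurston's theory provides a globally $R$-Lipschitz map $F$ homotopic to the identity; since $F$ sends the geodesic representative of $\nu$ to a curve of length at most $R\,\ell_{X^d}(\nu) = \ell_{Y^d}(\nu)$ in the correct homotopy class, that image must be the geodesic representative and $F$ must multiply arc-length along $\nu$ by exactly $R$. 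Restricting $F$ to a neighborhood of the support of $\nu$ then realizes the definition of ratio-maximizing.

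The main obstacle is the affineness assertion in the converse of (b): I must check that an $R$-Lipschitz map which multiplies the total (transverse-measure-weighted) length of a measured lamination by exactly $R$ is forced to stretch each leaf affinely with slope $R$, rather than merely to preserve the total length. For a single closed geodesic this is immediate from the equality case of the Lipschitz inequality; I would reduce the general case to this one by approximating the measured lamination by weighted simple closed curves, which are dense in $\mathcal{ML}$, and passing to the limit using the continuity of the length function on $\mathcal{ML}(S)$. The remaining care is purely bookkeeping: making the phrase ``stretches the arc-length affinely by $R$'' precise for a lamination carrying a transverse measure, and verifying that the neighborhood restriction of $F$ satisfies all three clauses of the definition.
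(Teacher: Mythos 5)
Your overall route is the same as the paper's: the paper's proof \emph{is} the doubling reduction, with the closed/punctured case (your part (b)) disposed of by citing Thurston. Your part (a) makes explicit a transfer the paper leaves implicit, and it is essentially correct, though the claim that doubling rescales length by a single factor depending only on $\mu$ is literally false when $\mu$ mixes boundary components (factor $1$) with interior leaves or arcs orthogonal to the boundary (factor $2$); this is harmless, since $\ell_Y(\mu)=e^{d_A(X,Y)}\ell_X(\mu)$ holds if and only if it holds for each component of $\mu$ separately (a sum $\sum_i \ell_Y(\mu_i)\leq e^{d_A(X,Y)}\sum_i \ell_X(\mu_i)$ is an equality iff each term is), and the componentwise statement does transfer under doubling with the appropriate factor.

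The genuine gap is in the converse of (b), exactly where you flag it. Approximating $\nu$ by weighted simple closed curves $c_n\gamma_n$ and invoking continuity of $\ell$ on $\mathcal{ML}(S)$ does not deliver the conclusion, because the approximating curves do not themselves realize the maximum: you only get $\ell_{Y^d}(\gamma_n)/\ell_{X^d}(\gamma_n)\to R$, so the equality case of the Lipschitz inequality --- which is what forces affine stretching by $R$ along a single maximally stretched closed geodesic --- never applies to any $\gamma_n$. Continuity of the length function gives convergence of numbers, not the pointwise geometric conclusion along leaves. What is actually needed is an almost-equality analysis: an $R$-Lipschitz map whose average stretch along $\gamma_n^X$ is at least $R-\epsilon_n$ stretches it by nearly $R$ outside a set of small measure, combined with compactness arguments (Hausdorff limits of the geodesics $\gamma_n^X$ containing $\mathrm{supp}(\nu)$, and limits of the restricted maps via Arzel\`a--Ascoli) to conclude that $F$ stretches every leaf of $\mathrm{supp}(\nu)$ affinely by exactly $R$; one must also then verify that $F$ carries the support in $X^d$ onto the support in $Y^d$ so that its restriction to a neighborhood satisfies all three clauses of the definition. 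This is substantially more than bookkeeping; it is the content of Thurston's results in \cite[Section 8]{Thurston} (see also the stretch-locus analysis of Gu\'eritaud--Kassel \cite{Kassel-Gueritaud}). The paper sidesteps all of this by citing Thurston for the closed case, which is also the cleanest way for you to close the gap: replace your part (b) by that citation, and your argument collapses to the paper's.
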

\begin{proof}
If $S$ is closed or punctured, $d_A = d_{Th}$ and the result was proven by Thurston \cite{Thurston}. If $S$ has boundary we can conclude by a doubling argument: the support of $\mu^d$ is ratio-maximizing for $X^d, Y^d$ hence it realizes the maximum for $d_{Th}$ and by Proposition \ref{prop:doubling}, $\mu$ realizes the maximum for $d_A$. 
\end{proof}

\begin{lemma}   \label{lemma:ratio-maximizing}
Let $\lambda$ be a maximal lamination, and $X\in \T(S)$. Then for all $t\geq 0,$ 
$$\mu(X,X_\lambda^t) = \lambda,.$$
\end{lemma}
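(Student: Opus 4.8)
The plan is to show both inclusions $\mu(X,X_\lambda^t) \supseteq \lambda$ and $\mu(X,X_\lambda^t) \subseteq \lambda$. The first inclusion is essentially immediate from the main construction. By Theorem \ref{theorem:S}, the generalized stretch map $\Phi^t : X \to X_\lambda^t$ is $e^t$-Lipschitz, is homotopic to the identity, preserves $\partial X$, and stretches the arc-length of every leaf of $\lambda$ by exactly the factor $e^t = \exp(d_A(X,X_\lambda^t))$ (using property (6), which gives $d_A(X,X_\lambda^t)=t$ since $\lambda$ is maximal and hence contains a non-empty measurable sublamination). Thus $\Phi^t$ witnesses that $\lambda$ is ratio-maximizing for $X,X_\lambda^t$, and since $\mu(X,X_\lambda^t)$ is the \emph{largest} ratio-maximizing lamination, we get $\lambda \subseteq \mu(X,X_\lambda^t)$.

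For the reverse inclusion, I would invoke maximality of $\lambda$. Since $\lambda$ is a maximal lamination, any geodesic lamination containing $\lambda$ must equal $\lambda$: a maximal lamination is by definition not a proper sublamination of any strictly larger lamination. Because $\mu(X,X_\lambda^t)$ is a geodesic lamination and we have just shown $\lambda \subseteq \mu(X,X_\lambda^t)$, maximality of $\lambda$ forces $\mu(X,X_\lambda^t) = \lambda$. This completes the argument cleanly, and in fact the two inclusions together give the equality directly.

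The only genuinely delicate point is verifying the first inclusion carefully, i.e. confirming that $\Phi^t$ really does qualify as a ratio-maximizing map in the precise sense of the definition for surfaces with boundary. That definition is phrased via the double: $\lambda$ is ratio-maximizing for $X,X_\lambda^t$ precisely when $\lambda^d$ is ratio-maximizing for $X^d,(X_\lambda^t)^d$. So I would pass to the double, where $\Phi^t$ induces a symmetric $e^t$-Lipschitz map homotopic to the identity stretching $\lambda^d$ affinely by $e^t$, and note that by Proposition \ref{prop:doubling} we have $d_{Th}(X^d,(X_\lambda^t)^d) = d_A(X,X_\lambda^t) = t$, so the Lipschitz constant $e^t$ is indeed optimal on the double. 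Alternatively, one can bypass the doubling bookkeeping entirely by applying Proposition \ref{prop:ratio of ratio maximizing}: endowing $\lambda$ (or its measurable sublamination) with a transverse measure $\mu$, property (4) of Theorem \ref{theorem:S} shows $\ell_{X_\lambda^t}(\mu)/\ell_X(\mu) = e^t = \exp(d_A(X,X_\lambda^t))$, so $\mu$ realizes the maximum in the formula of Theorem \ref{thm:sup on laminations}, and Proposition \ref{prop:ratio of ratio maximizing} then yields that the support of $\mu$, namely $\lambda$, is ratio-maximizing. I expect the cleanest write-up to use this second route, so the main (minor) obstacle is simply selecting a transverse measure on $\lambda$ and confirming its support is all of $\lambda$ rather than a proper sublamination; but since ratio-maximizing supports accumulate to give $\lambda$ by the combination of properties (4) and (6), and $\lambda$ is maximal, the equality follows.
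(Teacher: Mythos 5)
There is a genuine gap, and it sits exactly at the point you flagged as ``the only genuinely delicate point'' and then glossed over. The definition of a ratio-maximizing lamination (both Thurston's on the double and the paper's via doubling) requires a \emph{homeomorphism} $f$ from a neighborhood of the lamination in $X$ to a neighborhood in $Y$. Your first route takes $\Phi^t$ (or its double) as the witness, but $\Phi^t$ is not known to be a homeomorphism: the paper states explicitly, right after Conjecture \ref{conj:distances}, that since the stretch maps on the non-triangular pieces are built by an implicit averaging construction, their injectivity is unknown. This is precisely why the paper's proof of Lemma \ref{lemma:ratio-maximizing} does not simply quote Theorem \ref{theorem:S}: it patches together a homeomorphism defined only near $\lambda$, using Thurston's map on the triangular pieces (a homeomorphism), the restriction of $\Phi^t$ to the support of the horocyclic foliation $\mathcal{K}$ (where it \emph{is} a homeomorphism, by the proof of Proposition \ref{prop:extension}), an ad hoc homeomorphic extension near the finite-length edges, and Lemma \ref{lemma:foliazione} to verify that these sets together cover a neighborhood of $\lambda$. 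Your proposal contains none of this, and without it the inclusion $\lambda \subseteq \mu(X,X_\lambda^t)$ is not established.

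Your fallback via Proposition \ref{prop:ratio of ratio maximizing} does not repair this. A maximal lamination is in general \emph{not} measurable: any transverse measure on $\lambda$ is supported on the stump, which is typically a proper sublamination (the remaining leaves spiral onto closed geodesics, run into spikes, or hit $\partial X$ orthogonally, and carry no transverse measure). So this route only yields $\mathrm{stump}(\lambda) \subseteq \mu(X,X_\lambda^t)$, and maximality of $\lambda$ is then useless: maximality says nothing strictly contains $\lambda$, but here you only know $\mu(X,X_\lambda^t)$ contains a piece of $\lambda$, which does not force $\mu(X,X_\lambda^t) \subseteq \lambda$, let alone equality. Your closing claim that ``ratio-maximizing supports accumulate to give $\lambda$'' is unsupported and false in general --- e.g.\ if the stump is a single simple closed curve with all other leaves of $\lambda$ spiraling onto it, the only measured sublaminations are multiples of that curve. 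By contrast, your second inclusion (maximality of $\lambda$ forces equality \emph{once} $\lambda \subseteq \mu(X,X_\lambda^t)$ is known) is fine and matches the implicit logic of the paper; the entire content of the lemma is the homeomorphism construction you omitted.
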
  
\begin{proof}
We need to construct a suitable homeomorphism $\phi$ from a neighborhood $M_X$ of $\lambda$ in $X$ to a neighborhood $M_{X_\lambda^t}$ of $\lambda$ in $X_\lambda^t$. We initially describe $\phi$ in every geometric piece. For the triangular pieces, $\phi$ will agree with Thurston's stretch map $\phi^t$. For the other pieces, $\phi$ agrees with our stretch maps $\phi^t$ on the support of the horocyclic foliation $\mathcal{K}$ and on the edges that are leaves of $\lambda$. We can always extend it to a small neighborhood of these edges by a homeomorphism. In the rest of $X$, $\phi$ is defined only on the support of $\mathcal{K}$, and there it agrees with our stretch map $\Phi^t$. 
We know that $\Phi^t$ is a homeomorphism on the support of $\mathcal{K}$ (proof of Proposition \ref{prop:extension}). Using Lemma \ref{lemma:foliazione}, we see that the union of the support of $\mathcal{K}$ with the triangular pieces and with a small neighborhood of the edges of the geometric pieces part of $\lambda$ is a neighborhood of $\lambda$.   
\end{proof}

\begin{lemma}
The  lamination $\mu(X,Y)$ contains a measurable sublamination.
\end{lemma}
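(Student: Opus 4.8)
The plan is to produce a non-empty measurable sublamination of $\mu(X,Y)$ by extracting it from a measured lamination that realizes the arc distance, so that the statement follows almost immediately from Theorem \ref{thm:sup on laminations} and Proposition \ref{prop:ratio of ratio maximizing}.

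First I would check that the maximum in Theorem \ref{thm:sup on laminations} is genuinely attained. Indeed, $\mathcal{PML}(S)$ is compact (it is a sphere), and the assignment $[\mu]\mapsto \log\frac{\ell_Y(\mu)}{\ell_X(\mu)}$ is a well-defined continuous function on $\mathcal{PML}(S)$: the ratio of lengths is invariant under rescaling the transverse measure, and $\ell$ is continuous on $\mathcal{ML}(S)$. Hence there exists a non-trivial measured lamination $\mu_0$ whose projective class realizes $d_A(X,Y)$. Write $\lambda_0$ for its underlying geodesic lamination.

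Next I would argue that $\lambda_0$ is the desired sublamination. By the very definition of a measured lamination, $\lambda_0$ is compactly supported and carries the transverse measure coming from $\mu_0$, so $\lambda_0$ is a non-empty measurable geodesic lamination. It remains to check that $\lambda_0 \subseteq \mu(X,Y)$. Since $\mu_0$ realizes the maximum in Theorem \ref{thm:sup on laminations}, Proposition \ref{prop:ratio of ratio maximizing} shows that $\lambda_0$ is ratio-maximizing for $X,Y$; and because $\mu(X,Y)$ is by construction the \emph{largest} ratio-maximizing lamination, it contains every ratio-maximizing lamination, whence $\lambda_0 \subseteq \mu(X,Y)$. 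Combining these observations, $\lambda_0$ is a non-empty measurable sublamination of $\mu(X,Y)$, which is exactly the claim.

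The only point deserving care is the containment $\lambda_0 \subseteq \mu(X,Y)$: it rests on the fact that the largest ratio-maximizing lamination genuinely dominates every other ratio-maximizing lamination, which in the boundary case is inherited from Thurston's uniqueness statement on the double $S^d$ (together with Proposition \ref{prop:doubling}). There is no real analytic obstacle here, since the substantive work has already been carried out in Theorem \ref{thm:sup on laminations} and Proposition \ref{prop:ratio of ratio maximizing}; the argument is essentially a compactness-plus-maximality bookkeeping.
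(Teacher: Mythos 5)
Your proof is correct and follows exactly the paper's argument: take a measured lamination realizing the maximum in Theorem \ref{thm:sup on laminations}, note its support is ratio-maximizing by Proposition \ref{prop:ratio of ratio maximizing}, and conclude it lies in $\mu(X,Y)$ by maximality. The only difference is that you spell out the compactness of $\mathcal{PML}(S)$ behind the attainment of the maximum and the doubling argument behind the ``largest'' property, both of which the paper leaves implicit.
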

\begin{proof}
This is because the maximum in the formula for $d_A(X,Y)$ given in Theorem \ref{thm:sup on laminations} is always achieved by some measurable lamination, whose support is ratio-maximizing by Proposition \ref{prop:ratio of ratio maximizing}, hence contained in $\mu(X,Y)$. 
\end{proof}

The following lemma is a simple adaptation of a result of Thurston.
 
\begin{lemma}\label{lemma_0}
Let $X,Y \in \T(S)$. If $X_i$ and $Y_i$ are sequences of hyperbolic structures converging to $X$ and $Y$, then $\mu(X,Y)$ contains every lamination in the limit set of $\mu(X_i,Y_i)$ in the Hausdorff topology. 
\end{lemma}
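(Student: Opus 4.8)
The plan is to reduce the statement to the corresponding semicontinuity property on the closed surface $S^d$, which is Thurston's, and to transport it back through the doubling construction; this is natural because $\mu(X,Y)$ is \emph{defined} as the restriction to $S$ of the $\sigma$-symmetric lamination $\mu(X^d,Y^d)$. Fix an arbitrary element $\nu$ of the limit set, so that after passing to a subsequence (which I will not relabel) one has $\mu(X_i,Y_i)\to\nu$ in the Hausdorff topology on the compact space of geodesic laminations of $S$; the goal is to prove $\nu\subseteq\mu(X,Y)$.

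First I would double the data. The doubling embedding $\T(S)\hookrightarrow\T(S^d)$ is continuous, so $X_i^d\to X^d$ and $Y_i^d\to Y^d$ in $\T(S^d)$. On laminations, doubling $\lambda\mapsto\lambda^d$ is continuous for the Hausdorff topology and identifies the geodesic laminations of $S$ with the $\sigma$-symmetric geodesic laminations of $S^d$; hence $\mu(X_i,Y_i)^d\to\nu^d$. By the uniqueness and symmetry of the largest ratio-maximizing lamination one has $\mu(X_i,Y_i)^d=\mu(X_i^d,Y_i^d)$, so in fact $\mu(X_i^d,Y_i^d)\to\nu^d$ in $S^d$.

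Next I would apply Thurston's semicontinuity result on the closed surface $S^d$: from $X_i^d\to X^d$, $Y_i^d\to Y^d$ and $\mu(X_i^d,Y_i^d)\to\nu^d$ it follows that $\nu^d\subseteq\mu(X^d,Y^d)$. Since $\mu(X^d,Y^d)$ is $\sigma$-symmetric and $\nu^d$ is the double of $\nu$, intersecting with $S$ gives
\[\nu=\nu^d\cap S\subseteq\mu(X^d,Y^d)\cap S=\mu(X,Y),\]
which is the assertion.

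The analytic heart of the argument is hidden inside Thurston's closed-surface statement, whose own proof extracts, via Arzel\`a--Ascoli, a limit of optimal (minimal-stretch) maps $f_i$ with uniformly bounded Lipschitz constants $R_i\to R=\exp(d_A(X,Y))$ and checks that the limiting map still stretches the Hausdorff-limit lamination by exactly $R$. The step I expect to require the most care is therefore not the reduction itself but its bookkeeping: because the metrics $X_i$ vary, I must first fix a model for the Hausdorff topology on geodesic laminations that is independent of the hyperbolic structure (for instance a fixed background metric on $S$, or the Chabauty topology on closed subsets of the unit tangent bundle), and then verify that doubling is continuous and commutes with Hausdorff limits so that the passage $\mu(X_i,Y_i)^d\to\nu^d$ is legitimate. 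Once this framework is in place the reduction to Thurston is immediate, which is precisely the sense in which the lemma is a \emph{simple adaptation} of his result.
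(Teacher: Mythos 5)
Your proof is correct and follows essentially the same route as the paper, whose entire argument is the citation of Thurston's Theorem 8.4 for the closed case plus the remark that the boundary case ``follows from Thurston's result via a doubling argument'' --- you have simply written out that doubling argument in full, correctly using that $\mu(X,Y)^d=\mu(X^d,Y^d)$ by the paper's very definition of the largest ratio-maximizing lamination for surfaces with boundary. The bookkeeping you flag (a metric-independent model for the Hausdorff topology and continuity of doubling) is sound and exactly what the paper leaves implicit.
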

\begin{proof}
If $S$ has no boundary, see Thurston \cite[Theorem 8.4]{Thurston}. If $S$ has boundary, it follows from Thurston's result via a doubling argument.  
\end{proof}

Given $X,Y \in \T(S)$, we will now construct a geodesic segment joining them. Our proof goes along similar lines as the proof of Thurston's \cite[Theorem 8.5]{Thurston}. In our case, we need to be more careful because our generalized stretch maps are not known to be homeomorphisms everywhere, in contrast to Thurston's stretch maps. Our maps are known to be homeomorphisms only when restricted to the subset $K$ from Section \ref{subsub:horocyclic}, we already used this fact in Lemma  \ref{lemma:ratio-maximizing}.  
 
The idea of the proof is the following: if $\mu(X,Y)$ is a maximal lamination, we can simply consider the generalized stretch line starting at $X$ with respect to $\mu(X,Y)$, and prove
that it passes through $Y$. If $\mu(X,Y)$ is not maximal, we will first complete it to a maximal lamination $\lambda \supset \mu(X,Y)$, and consider the generalized stretch line starting at $X$ with respect to $\lambda$. This will usually not pass through $Y$, hence we need to stop following this geodesic at some point, and start following another one.

\begin{lemma}\label{lemma_1} 
Let $\lambda$ be a maximal lamination containing $\mu(X,Y)$.
There exists $\epsilon$ such that for every $0< t < \epsilon$ we have: 
\begin{enumerate}
\item $\mu(X_\lambda^t,Y) = \mu(X, Y)$; 
\item $d_A(X_\lambda^t, Y) = d_A(X,Y) - t.$
\end{enumerate}
\end{lemma}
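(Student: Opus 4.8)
The plan is to prove (2) first and then deduce (1). Throughout I write $R := \exp(d_A(X,Y))$. Since $\mu(X,Y)$ contains a measurable sublamination (by the lemma established just above) and $\mu(X,Y)\subseteq\lambda$, the generalized stretch line $s_{X,\lambda}$ is well defined, and by Theorem~\ref{theorem:S}(6) we have $d_A(X,X_\lambda^t)=t$. Moreover Lemma~\ref{lemma:ratio-maximizing} gives $\mu(X,X_\lambda^t)=\lambda$, so by Proposition~\ref{prop:ratio of ratio maximizing} every measured lamination $\nu$ supported on $\lambda$ realizes the maximum for $d_A(X,X_\lambda^t)=t$, i.e.\ it satisfies the key identity $\ell_{X_\lambda^t}(\nu)=e^{t}\,\ell_X(\nu)$; in particular this holds for every $\nu$ supported on $\mu(X,Y)$. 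The lower bound in (2) is then immediate: the triangle inequality gives $d_A(X,Y)\le d_A(X,X_\lambda^t)+d_A(X_\lambda^t,Y)=t+d_A(X_\lambda^t,Y)$, so $d_A(X_\lambda^t,Y)\ge d_A(X,Y)-t$ for all $t\ge 0$.

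The heart of the argument is the matching upper bound $d_A(X_\lambda^t,Y)\le d_A(X,Y)-t$ for small $t$, which I would prove by contradiction using compactness of $\mathcal{PML}(S)$ and the semicontinuity of ratio-maximizing laminations. Suppose it fails; then there are $t_i\downarrow 0$ and, by Theorem~\ref{thm:sup on laminations}, projective measured laminations $[\nu_i]$ with $d_A(X_\lambda^{t_i},Y)=\log\bigl(\ell_Y(\nu_i)/\ell_{X_\lambda^{t_i}}(\nu_i)\bigr)>d_A(X,Y)-t_i$. Normalizing $\ell_X(\nu_i)=1$ and passing to a subsequence, $\nu_i\to\nu_\infty$ in $\mathcal{PML}(S)$. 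As $X_\lambda^{t_i}\to X$ and $\ell$ is continuous on $\T(S)\times\mathcal{ML}(S)$, letting $i\to\infty$ forces $\log\ell_Y(\nu_\infty)\ge\log R$; since $\ell_Y(\nu_\infty)\le R\,\ell_X(\nu_\infty)=R$ always, we get $\ell_Y(\nu_\infty)=R$, so $\nu_\infty$ realizes the maximum for $d_A(X,Y)$ and hence $\mathrm{supp}(\nu_\infty)\subseteq\mu(X,Y)\subseteq\lambda$ by Proposition~\ref{prop:ratio of ratio maximizing}.

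The remaining and most delicate step is to turn this into a contradiction by comparing two defects. Rewriting the assumed inequality as
\[ t_i-\log\frac{\ell_{X_\lambda^{t_i}}(\nu_i)}{\ell_X(\nu_i)} \;>\; \log R-\log\frac{\ell_Y(\nu_i)}{\ell_X(\nu_i)}, \]
the left side measures the failure of $\nu_i$ to be stretched by the full factor $e^{t_i}$ (it vanishes exactly when $\mathrm{supp}(\nu_i)\subseteq\lambda$), while the right side measures the failure of $\nu_i$ to be ratio-maximizing for $(X,Y)$ (it vanishes exactly when $\mathrm{supp}(\nu_i)\subseteq\mu(X,Y)$). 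Because $\mu(X,Y)\subseteq\lambda$, I expect the left defect to be dominated by the right one as $\nu_i\to\nu_\infty$, which would contradict the strict inequality for large $i$. To make this precise I would fix a train track $\tau$ snugly carrying $\mu(X,Y)$ inside a train track carrying $\lambda$, note that $\nu_i$ is carried by $\tau$ for $i$ large, and estimate $\ell_{X_\lambda^{t_i}}(\nu_i)$ from the carried weights using the explicit action of the generalized stretch map on $\lambda$ and on the horocyclic foliation (Theorem~\ref{theorem:S}(4) and Lemmas~\ref{lem:horocyclic foliation square}, \ref{lem:horocyclic foliation penta}): the weights along $\lambda$ are expanded by $e^{t_i}$ and the deficit is governed precisely by the transverse weights that also obstruct ratio-maximality. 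This uniform comparison is the main obstacle; it is the analogue for surfaces with boundary of the core estimate in Thurston's proof of \cite[Theorem~8.5]{Thurston}, and it can alternatively be imported through the doubling isometry of Proposition~\ref{prop:doubling}, which identifies $d_A(X_\lambda^t,Y)$ with $d_{Th}((X_\lambda^t)^d,Y^d)$ and $\mu(X,Y)$ with its symmetric double.

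Granting the upper bound, (2) becomes an equality, and (1) follows. In one direction, $d_A(X_\lambda^t,Y)=d_A(X,Y)-t$ together with $\ell_{X_\lambda^t}(\nu)=e^{t}\ell_X(\nu)$ for $\nu$ supported on $\mu(X,Y)$ shows that such a $\nu$ realizes the maximum for $(X_\lambda^t,Y)$, so $\mu(X,Y)\subseteq\mu(X_\lambda^t,Y)$ by Proposition~\ref{prop:ratio of ratio maximizing}. For the reverse inclusion I would argue once more by contradiction: were there $t_i\downarrow 0$ with leaves of $\mu(X_\lambda^{t_i},Y)$ outside $\mu(X,Y)$, any Hausdorff limit would be a sublamination of $\mu(X,Y)$ by Lemma~\ref{lemma_0}, and the same carrying control as above rules out persistent extra leaves for small $t_i$. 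This yields $\mu(X_\lambda^t,Y)=\mu(X,Y)$ for all $t$ below a uniform $\epsilon$, completing the proof.
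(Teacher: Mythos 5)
Your proof of part (2) has a genuine gap exactly where the lemma's content lies: the upper bound $d_A(X_\lambda^t,Y)\leq d_A(X,Y)-t$. The ``defect domination'' inequality you reduce it to --- that $t_i-\log\bigl(\ell_{X_\lambda^{t_i}}(\nu_i)/\ell_X(\nu_i)\bigr)$ is bounded by $\log R-\log\bigl(\ell_Y(\nu_i)/\ell_X(\nu_i)\bigr)$ for nearly-maximizing $\nu_i$ --- is not established, and you concede it is ``the main obstacle.'' Compactness of $\mathcal{PML}(S)$ only tells you both defects vanish along the subsequence; it gives no uniform comparison with the precise constant $1$ that the strict inequality would need to contradict, and nothing in the paper's train-track or horocyclic-foliation machinery yields such an estimate off the shelf. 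The proposed escape via doubling also does not work as stated: Proposition \ref{prop:doubling} identifies $d_A$ with $d_{Th}$ on doubles, but the path $t\mapsto (X_\lambda^t)^d$ is \emph{not} a Thurston stretch line when $\lambda$ meets $\partial X$ orthogonally (this is precisely Corollary \ref{cor:new geodesics}), so Thurston's estimates for his stretch lines in the proof of \cite[Theorem 8.5]{Thurston} cannot simply be imported for this path. Your lower bound via the triangle inequality, and the length-scaling identity $\ell_{X_\lambda^t}(\nu)=e^t\ell_X(\nu)$ for $\nu$ supported in $\lambda$, do agree with the paper.

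The paper's actual mechanism avoids any quantitative defect comparison, and you had both ingredients in hand but used neither in this role. Let $f:N_X\to N_Y$ be the $e^{d_A(X,Y)}$-Lipschitz homeomorphism witnessing that $\mu=\mu(X,Y)$ is ratio-maximizing, and let $\phi:M_X\to M_{X_\lambda^t}$ be the homeomorphism of neighborhoods of $\lambda\supseteq\mu$ from Lemma \ref{lemma:ratio-maximizing}, which stretches $\mu$ affinely by $e^t$. The composition $f'=f\circ\phi^{-1}$ is an $e^{d_A(X,Y)-t}$-Lipschitz map on a neighborhood of $\mu$ in $X_\lambda^t$; Lemma \ref{lemma_0} (which you cite only for part (1)) supplies the $\epsilon$ such that for $0<t<\epsilon$ the largest ratio-maximizing lamination $\mu(X_\lambda^t,Y)$ lies inside $N_X$, where $f'$ is defined, and since the maximum in Theorem \ref{thm:sup on laminations} is realized on a measured lamination supported in $\mu(X_\lambda^t,Y)$ (Proposition \ref{prop:ratio of ratio maximizing}), the Lipschitz constant of $f'$ directly gives $d_A(X_\lambda^t,Y)\leq d_A(X,Y)-t$. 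This also repairs two secondary defects in your part (1): your forward inclusion only places measurable sublaminations (the stump) of $\mu(X,Y)$ inside $\mu(X_\lambda^t,Y)$, since Proposition \ref{prop:ratio of ratio maximizing} concerns measured laminations while $\mu(X,Y)$ need not carry a transverse measure of full support --- in the paper, $f'$ itself witnesses that all of $\mu$ is ratio-maximizing for $(X_\lambda^t,Y)$; and the reverse inclusion follows not from another unproven carrying estimate but from choosing $N_X$ so small that every geodesic lamination contained in $N_X$ and disjoint from $\mu$ is impossible, whence $\mu(X_\lambda^t,Y)\subset N_X$ forces $\mu(X_\lambda^t,Y)=\mu$.
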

\begin{proof}
Let $\mu$ denote $\mu(X,Y)$. By definition of $\mu$, there exist neighborhoods $N_X, N_Y$ of $\mu$ in $X$ and $Y$ respectively and a Lipschitz homeomorphism $f:N_X \to N_Y$ with $\mathrm{Lip}(f)=e^{d_{A}(X,Y)}$ mapping $\mu$ to itself and stretching its arc length affinely  by $e^{d_A(X,Y)}$. 

By Lemma \ref{lemma:ratio-maximizing}, there exists two neighborhoods $M_X, M_{X_\lambda^t}$ of $\mu$ in $X$, 
$X_\lambda^t$ respectively and a homeomorphism $\phi: M_X \to M_{X_\lambda^t}$ with $\mathrm{Lip}(\phi)=e^t$ and $\phi$ maps $\mu$ to itself by affinely stretching it by $e^t$. The composition 
\[f'= f \circ \phi^{-1}: M_{X_\lambda^t} \to N_Y\] 
has $\mathrm{Lip}(f') = e^{d_{A}(X,Y) - t}$ and maps $\mu$ to itself affinely stretching by $e^{d_A(X,Y) - t }$.

By Lemma \ref{lemma_0} there exists $\epsilon$ such that if $0 < t < \epsilon$ then $\mu(X_\lambda^t, Y ) \subset N_X$. Since $\mathrm{Lip}(f') = e^{d_A(X,Y) - t}$, we have 
\[d_A(X_\lambda^t, Y) \leq d_A(X,Y) - t\,.\]  
On the other end, by the triangle inequality we have  
$d_A(X_\lambda^t, Y) \geq d_A(X,Y) - t$. 
We thus have 
\[d_A(X_\lambda^t, Y) = d_A(X,Y) - t\,.\] 
This implies that $\mu$ is ratio-maximizing for $X_\lambda^t$ and $Y$. If we choose $N_X$ to be small enough, all other laminations in this neighborhood must intersect $\mu$. We saw that $\mu(X_\lambda^t, Y ) \subset N_X$  and this implies 
\begin{equation*}
\mu(X_\lambda^t, Y)= \mu\,. \qedhere
\end{equation*}
\end{proof}

Let $\overline{t} := \overline{t}_{X,Y,\lambda}$ be the supremum of the $\epsilon$'s as in Lemma \ref{lemma_1}. If $\overline{t} = d_A(X,Y)$, Lemma \ref{lemma_1} gives a geodesic segment joining $X$ and $Y$. Otherwise, we will need the following: 

\begin{lemma}\label{lemma_2}
If $\overline{t} < d_A(X,Y)$ then 
$\mu(X, Y) \subsetneq \mu(X_\lambda^{\overline{t}}, Y).$
\end{lemma}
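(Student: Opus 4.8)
The plan is to argue by contradiction, exploiting the definition of $\overline{t}$ as a supremum together with the continuity properties packaged in Lemma \ref{lemma_0}. First I would record the easy inclusion: since $\lambda \supseteq \mu(X,Y)$ and $\overline{t}$ is the supremum of the $\epsilon$'s for which conclusion (1) of Lemma \ref{lemma_1} holds, the identity $\mu(X_\lambda^t,Y)=\mu(X,Y)$ persists for all $0<t<\overline{t}$. Taking a sequence $t_n \nearrow \overline{t}$, the structures $X_\lambda^{t_n}$ converge to $X_\lambda^{\overline{t}}$ (continuity of the generalized stretch line), while $\mu(X_\lambda^{t_n},Y)=\mu(X,Y)$ is constant. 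By Lemma \ref{lemma_0} applied to the sequences $X_i = X_\lambda^{t_i} \to X_\lambda^{\overline{t}}$ and $Y_i = Y \to Y$, the limit lamination $\mu(X,Y)$ in the Hausdorff topology is contained in $\mu(X_\lambda^{\overline{t}},Y)$, which gives the inclusion $\mu(X,Y)\subseteq \mu(X_\lambda^{\overline{t}},Y)$.

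The heart of the matter is upgrading this inclusion to a \emph{strict} inclusion under the hypothesis $\overline{t}<d_A(X,Y)$. My approach is to suppose for contradiction that $\mu(X_\lambda^{\overline{t}},Y)=\mu(X,Y)=:\mu$. First I would verify that the distance still behaves additively up to $\overline{t}$, i.e. $d_A(X_\lambda^{\overline{t}},Y)=d_A(X,Y)-\overline{t}$; this follows by a limiting argument from conclusion (2) of Lemma \ref{lemma_1} for $t<\overline{t}$ together with continuity of $d_A$ and the triangle inequality, and since $\overline{t}<d_A(X,Y)$ this distance is strictly positive. In particular $X_\lambda^{\overline{t}}\neq Y$. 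Now I would apply Lemma \ref{lemma_1} afresh with starting point $X_\lambda^{\overline{t}}$ in place of $X$: because $\lambda\supseteq \mu=\mu(X_\lambda^{\overline{t}},Y)$ by the contradiction hypothesis, the hypotheses of Lemma \ref{lemma_1} are met, so there is some $\epsilon'>0$ such that for all $0<t<\epsilon'$ we have $\mu((X_\lambda^{\overline{t}})_\lambda^t,Y)=\mu$ and $d_A((X_\lambda^{\overline{t}})_\lambda^t,Y)=d_A(X_\lambda^{\overline{t}},Y)-t$.

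The final step is to transport this back along the stretch line. Using the semigroup property ${\left(X_\lambda^{\overline{t}}\right)}_\lambda^t = X_\lambda^{\overline{t}+t}$ observed in the proof of Corollary \ref{corollary:S}, the conclusions of the previous paragraph read $\mu(X_\lambda^{\overline{t}+t},Y)=\mu$ and $d_A(X_\lambda^{\overline{t}+t},Y)=d_A(X,Y)-(\overline{t}+t)$ for all $0<t<\epsilon'$. But these are exactly conclusions (1) and (2) of Lemma \ref{lemma_1} valid for all parameters up to $\overline{t}+\epsilon'>\overline{t}$, contradicting the maximality of $\overline{t}$ as the supremum. Hence the contradiction hypothesis fails and $\mu(X,Y)\subsetneq \mu(X_\lambda^{\overline{t}},Y)$.

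\medskip

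The step I expect to be the main obstacle is justifying the distance equality $d_A(X_\lambda^{\overline{t}},Y)=d_A(X,Y)-\overline{t}$ at the endpoint $\overline{t}$ itself, rather than merely for $t<\overline{t}$: one direction is the triangle inequality, but the other requires passing to the limit, which rests on the continuity of $(X,Y)\mapsto d_A(X,Y)$ and on the continuity of the generalized stretch line $t\mapsto X_\lambda^t$. A subtler point, which I would want to check carefully, is that the semigroup relation $(X_\lambda^{\overline{t}})_\lambda^t=X_\lambda^{\overline{t}+t}$ truly holds as constructed, so that re-starting the stretch construction at $X_\lambda^{\overline{t}}$ genuinely reproduces the original line; this is what makes the maximality argument close, and it is where any gap would most likely hide.
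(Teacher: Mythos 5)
Your proof is correct and follows essentially the same route as the paper: the inclusion via $t_n \nearrow \overline{t}$ and Lemma \ref{lemma_0}, then a contradiction by re-applying Lemma \ref{lemma_1} at $X_\lambda^{\overline{t}}$ and extending past $\overline{t}$ via the semigroup property $(X_\lambda^{\overline{t}})_\lambda^t = X_\lambda^{\overline{t}+t}$. The two points you flag as delicate (the distance equality $d_A(X_\lambda^{\overline{t}},Y)=d_A(X,Y)-\overline{t}$ at the endpoint and the semigroup relation) are exactly the steps the paper uses implicitly, so spelling them out is a faithful elaboration rather than a different argument.
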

\begin{proof}
Let $\{t_n\} $ be a sequence of positive numbers such that  $t_n \nearrow \overline{t}$. By Lemma \ref{lemma_1} $\mu(X_\lambda^{t_n}, Y) = \mu(X, Y)$ for every $n$. Now, Lemma \ref{lemma_0} says that $\mu(X_\lambda^{t_n}, Y) \subseteq \mu(X_\lambda^{\overline{t}},Y)$. 

By contradiction assume that $\mu(X, Y) = \mu(X_\lambda^{\overline{t}},Y)$. Applying Lemma \ref{lemma_1} on $X_\lambda^{\overline{t}}$ and $Y$, we find values bigger than $\overline{t}$ satisfying the same properties. 
\end{proof}

\geodesicspace*
\begin{proof}
We define inductively a sequence of hyperbolic structures $X_0, X_1, \dots, X_k$ in the following way. We set $X_0 = X$. Now assume that $X_i$, $i\geq 0$ has been defined. Choose a maximal lamination  $\lambda_i$  that contains $\mu(X_i,Y)$. Consider the generalized stretch line $(X_i)_{\lambda_i}^t$, and compute $\overline{t}_i:=\overline{t}_{X_i,Y,\lambda_i}$ as defined after Lemma \ref{lemma_1}. If  $\overline{t}_i < d_A(X_i,Y)$, we set $X_{i+1} = (X_i)_{\lambda_i}^{\overline{t}_i}$. If $\overline{t}_i \geq d_A(X_i,Y)$, this implies that $Y$ lies on the generalized stretch line $(X_i)_{\lambda_i}^t$, in this case we set $k=i$ and we stop. 

\begin{figure}[htbp] 
\begin{center}
\psfrag{A}{\hspace{-0.3cm} \small $(X_i)_{\lambda_i}^{\overline{t}_i}=X_{i+1}~$~ }
\psfrag{B}{$ $}
\psfrag{C}{\small $(X_{i+1})_{\lambda_{i+i}}^{t}$}
\psfrag{D}{\small $(X_{i+1})_{\lambda_{i+i}}^{\overline{t}_{i+1}}= X_{i+2}$}
\includegraphics[width=7cm]{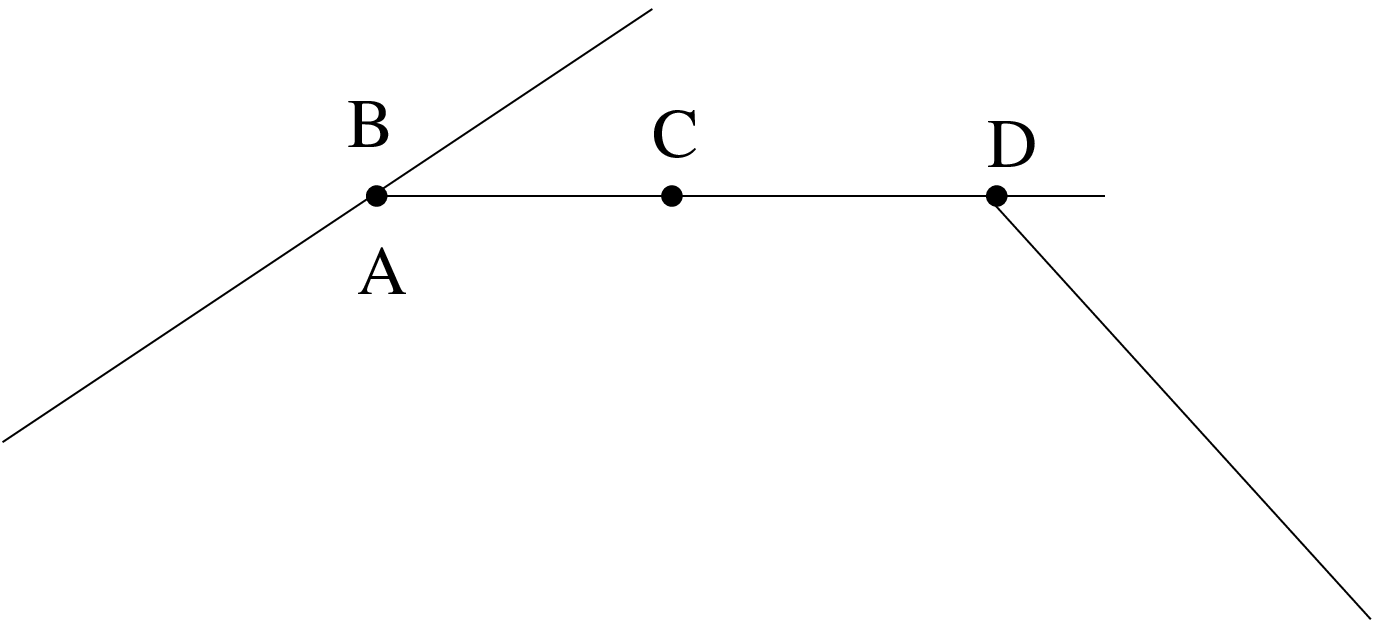}
\caption{Lemma \ref{lemma_2}}\label{thurston:stretch}
\end{center}
\end{figure}

This defines the sequence of the $X_i$'s. Notice that $\mu(X_i,Y) \subsetneq \mu(X_{i+1},Y)$, so we have a strictly increasing chain of geodesic laminations. This implies $k \leq 2|\chi(S)|$. 
We have found a finite sequence  of concatenated geodesic segments
$$ t \rightarrow (X_i)_{\lambda_i}^{t}, \text{ for } 0 \leq t \leq \overline{t}_i$$ 
such that $X$ lies in the first one, and $Y$ in the last one. 
By Lemma \ref{lemma_1}, we have $d_A(X,X_i) + d_A(X_i,Y) = d_A(X,Y)$, hence this concatenation of segments is geodesic.  
\end{proof}

\distancescoincide*
\begin{proof}
Consider the geodesic segment joining $X$ and $Y$ in the proof of Theorem \ref{thm:geodesic space}: it passes through the points $X = X_0, X_1, \dots, X_k = Y$, where $X_{i+1} = (X_i)_{\lambda_i}^{\overline{t}_i}$. Since $X_i$ and $X_{i+1}$ are on the same stretch line, by Theorem \ref{theorem:S} we have a map $\Phi_{i}^{\overline{t}_i}:X_i \rightarrow X_{i+1}$ with $\Lip(\Phi_i^{\overline{t}_i}) = e^{\overline{t}_i}$ and $\Phi_i^{\overline{t}_i}(\partial X_i) \subset \partial X_{i+1}$. Consider the composition:
$$\phi = \Phi_{k-1}^{\overline{t}_{k-1}} \circ \dots \circ \Phi_0^{\overline{t}_0} : X \rightarrow Y \,,$$
which satisfies $\phi(\partial X) \subset \partial Y$. The Lipschitz constant of a composition is bounded by the product of the constants:
$$\Lip(\phi) \leq \prod_i e^{\overline{t}_i} = e^{\sum_i \overline{t}_i} = e^{d_A(X,Y)}\,.$$ 
We know that
$d_A(X,Y) \leq d_{L\partial}(X,Y) \leq \log(\Lip(\phi)) \leq d_A(X,Y)\,. $
Hence $\log(\Lip(\phi)) = d_A(X,Y)$ and $d_A(X,Y) = d_{L\partial}(X,Y)$. 
\end{proof}

\subsection{Geodesics in the Teichm\"uller space of the double}

A \emph{geodesic embedding} between two metric spaces $f: (\Omega, d) \to (\Omega', d')$ is an isometric embedding such that for every pair of points $P,Q \in f(\Omega)$ there exists a geodesic for $d'$ that joins them and it is contained in $f(\Omega)$. The following is a consequence of Theorem \ref{thm:geodesic space} and Proposition \ref{prop:doubling}. 
  
\geodesicembedding*

\newgeodesics*
\begin{proof}
This line is a geodesic by Proposition \ref{prop:doubling}. Notice that it stretches the length of the lamination $\lambda^d$ by a factor $e^t$. If it were a stretch line in the sense of Thurston, it would be directed by a maximal lamination that contains $\lambda^d$. Since $\lambda$ contains a leaf orthogonal to $\partial X$, every extension of $\lambda^d$ to a maximal lamination in $X^d$ is not symmetric, and the corresponding stretch line does not lie in the submanifold of symmetric hyperbolic structures (see Th\'eret \cite{TheretThese}).   
\end{proof}

In this way we find infinitely many examples of new geodesics for the Teichm\"uller spaces of surfaces  without boundary that are not stretch lines in the sense of Thurston. 

\subsection{The Finsler metric}

A \emph{Finsler metric} on a smooth manifold $M$ is a continuous function
$$F:TM \ni (x,v) \rightarrow F_x(v) \in \R_{\geq 0} $$
which is a (possibly asymmetric) norm on the tangent space $T_x M$ at every point $x \in M$. In a Finsler manifold the length of a smooth 
curve $\gamma : [a,b] \to M$ is given by the formula
$$\ell(\gamma) := \int_a^b  F_{\gamma(t)}(\dot{\gamma}(t)) dt,$$
and the (possibly asymmetric) distance induced by a Finsler metric is defined as
$$d_F(x,y) = \inf_{\gamma} \ell(\gamma), $$
where the infimum is taken over all the smooth curves joining $x$ and $y$.

\finslerspace*
\begin{proof}
Consider the map $\T(S) \hookrightarrow \T(S^d)$ as in Corollary \ref{cor:geodesic}. By \cite{Thurston}, the space $(\T(S^d),d_{Th})$ is a Finsler manifold. The space $\T(S)$ can be identified with a  
submanifold of $\T(S^d)$, and naturally inherits the Finsler metric by restriction. Now let's prove that the distance induced is the same as the distance $d_A$. Let $X, Y$ be two points in $\T(S)$, we proved 
that $d_A(X,Y)$ is the same as the length of a geodesic segment joining them. By Corollary \ref{cor:geodesic}, the length of any geodesic segment in $\T(S)$ is the same as in $\T(S^d)$. This in turn equals the length of the curve with respect to the Finsler norm, because the Finsler norm induces the distance $d_{Th}$ on $\T(S^d)$. 
\end{proof}

\bibliographystyle{amsplain}   
\bibliography{references-new}

\end{document}